\theoremstyle{plain}
\newtheorem{theorem}{Theorem}[section]
\newtheorem{proposition}[theorem]{Proposition}
\newtheorem{corollary}[theorem]{Corollary}
\newtheorem{lemma}[theorem]{Lemma}
\theoremstyle{definition}
\newtheorem*{example}{Example}
\newtheorem{definition}[theorem]{Definition}
\newtheorem{remark}[theorem]{Remark}
\DeclareMathOperator{\esssup}{ess \ sup}
\DeclareMathOperator{\osc}{osc}
\DeclareMathOperator{\lip}{Lip}
\DeclareMathOperator{\Var}{Var}
\DeclareMathOperator{\var}{var}
\DeclareMathOperator{\diam}{diam}
\chardef\@x10\chardef\@xv60
\def\tcitime{
\def\@time{%
  \@minute\time\@hour\@minute\divide\@hour\@xv
  \ifnum\@hour<\@x 0\fi\the\@hour:%
  \multiply\@hour\@xv\advance\@minute-\@hour
  \ifnum\@minute<\@x 0\fi\the\@minute
  }}%
\def\x@hyperref#1#2#3{%
   \catcode`\~ = 12
   \catcode`\$ = 12
   \catcode`\_ = 12
   \catcode`\# = 12
   \catcode`\& = 12
   \y@hyperref{#1}{#2}{#3}%
}
\def\y@hyperref#1#2#3#4{%
   #2\ref{#4}#3
   \catcode`\~ = 13
   \catcode`\$ = 3
   \catcode`\_ = 8
   \catcode`\# = 6
   \catcode`\& = 4
}
\def\QCTOpt[#1]#2{%
  \def\QCTOptB{#1}
  \def\QCTOptA{#2}
}
\def\QCTNOpt#1{%
  \def\QCTOptA{#1}
  \let\QCTOptB\empty
}
\def\Qct{%
  \@ifnextchar[{%
    \QCTOpt}{\QCTNOpt}
}
\def\QCBOpt[#1]#2{%
  \def\QCBOptB{#1}%
  \def\QCBOptA{#2}%
}
\def\QCBNOpt#1{%
  \def\QCBOptA{#1}%
  \let\QCBOptB\empty
}
\def\Qcb{%
  \@ifnextchar[{%
    \QCBOpt}{\QCBNOpt}%
}
\def\PrepCapArgs{%
  \ifx\QCBOptA\empty
    \ifx\QCTOptA\empty
      {}%
    \else
      \ifx\QCTOptB\empty
        {\QCTOptA}%
      \else
        [\QCTOptB]{\QCTOptA}%
      \fi
    \fi
  \else
    \ifx\QCBOptA\empty
      {}%
    \else
      \ifx\QCBOptB\empty
        {\QCBOptA}%
      \else
        [\QCBOptB]{\QCBOptA}%
      \fi
    \fi
  \fi
}
\def\GRAPHICSPS#1{%
 \ifcase\GRAPHICSTYPE
   \special{ps: #1}%
 \or
   \special{language "PS", include "#1"}%
 \fi
}%
\def\graffile#1#2#3#4{%
    \bgroup
	   \@inlabelfalse
       \leavevmode
       \@ifundefined{bbl@deactivate}{\def~{\string~}}{\activesoff}%
        \raise -#4 \BOXTHEFRAME{%
           \hbox to #2{\raise #3\hbox to #2{\null #1\hfil}}}%
    \egroup
}%
\def\draftbox#1#2#3#4{%
 \leavevmode\raise -#4 \hbox{%
  \frame{\rlap{\protect\tiny #1}\hbox to #2%
   {\vrule height#3 width\z@ depth\z@\hfil}%
  }%
 }%
}%
\let\nographics=\@msidraft
\newif\ifwasdraft
\def\GRAPHIC#1#2#3#4#5{%
   \ifnum\@msidraft=\@ne\draftbox{#2}{#3}{#4}{#5}%
   \else\graffile{#1}{#3}{#4}{#5}%
   \fi
}
\def\addtoLaTeXparams#1{%
    \edef\LaTeXparams{\LaTeXparams #1}}%
\newif\ifBoxFrame \BoxFramefalse
\newif\ifOverFrame \OverFramefalse
\newif\ifUnderFrame \UnderFramefalse
\def\BOXTHEFRAME#1{%
   \hbox{%
      \ifBoxFrame
         \frame{#1}%
      \else
         {#1}%
      \fi
   }%
}
\def\doFRAMEparams#1{\BoxFramefalse\OverFramefalse\UnderFramefalse\readFRAMEparams#1\end}%
\def\readFRAMEparams#1{%
 \ifx#1\end%
  \let\next=\relax
  \else
  \ifx#1i\dispkind=\z@\fi
  \ifx#1d\dispkind=\@ne\fi
  \ifx#1f\dispkind=\tw@\fi
  \ifx#1t\addtoLaTeXparams{t}\fi
  \ifx#1b\addtoLaTeXparams{b}\fi
  \ifx#1p\addtoLaTeXparams{p}\fi
  \ifx#1h\addtoLaTeXparams{h}\fi
  \ifx#1X\BoxFrametrue\fi
  \ifx#1O\OverFrametrue\fi
  \ifx#1U\UnderFrametrue\fi
  \ifx#1w
    \ifnum\@msidraft=1\wasdrafttrue\else\wasdraftfalse\fi
    \@msidraft=\@ne
  \fi
  \let\next=\readFRAMEparams
  \fi
 \next
 }%
\def\IFRAME#1#2#3#4#5#6{%
      \bgroup
      \let\QCTOptA\empty
      \let\QCTOptB\empty
      \let\QCBOptA\empty
      \let\QCBOptB\empty
      #6%
      \parindent=0pt
      \leftskip=0pt
      \rightskip=0pt
      \setbox0=\hbox{\QCBOptA}%
      \@tempdima=#1\relax
      \ifOverFrame
          \typeout{This is not implemented yet}%
          \show\HELP
      \else
         \ifdim\wd0>\@tempdima
            \advance\@tempdima by \@tempdima
            \ifdim\wd0 >\@tempdima
               \setbox1 =\vbox{%
                  \unskip\hbox to \@tempdima{\hfill\GRAPHIC{#5}{#4}{#1}{#2}{#3}\hfill}%
                  \unskip\hbox to \@tempdima{\parbox[b]{\@tempdima}{\QCBOptA}}%
               }%
               \wd1=\@tempdima
            \else
               \textwidth=\wd0
               \setbox1 =\vbox{%
                 \noindent\hbox to \wd0{\hfill\GRAPHIC{#5}{#4}{#1}{#2}{#3}\hfill}\\%
                 \noindent\hbox{\QCBOptA}%
               }%
               \wd1=\wd0
            \fi
         \else
            \ifdim\wd0>0pt
              \hsize=\@tempdima
              \setbox1=\vbox{%
                \unskip\GRAPHIC{#5}{#4}{#1}{#2}{0pt}%
                \break
                \unskip\hbox to \@tempdima{\hfill \QCBOptA\hfill}%
              }%
              \wd1=\@tempdima
           \else
              \hsize=\@tempdima
              \setbox1=\vbox{%
                \unskip\GRAPHIC{#5}{#4}{#1}{#2}{0pt}%
              }%
              \wd1=\@tempdima
           \fi
         \fi
         \@tempdimb=\ht1
         \advance\@tempdimb by -#2
         \advance\@tempdimb by #3
         \leavevmode
         \raise -\@tempdimb \hbox{\box1}%
      \fi
      \egroup%
}%
\def\DFRAME#1#2#3#4#5{%
  \hfil\break
  \bgroup
     \leftskip\@flushglue
	 \rightskip\@flushglue
	 \parindent\z@
	 \parfillskip\z@skip
     \let\QCTOptA\empty
     \let\QCTOptB\empty
     \let\QCBOptA\empty
     \let\QCBOptB\empty
	 \vbox\bgroup
        \ifOverFrame 
           #5\QCTOptA\par
        \fi
        \GRAPHIC{#4}{#3}{#1}{#2}{\z@}%
        \ifUnderFrame 
           \break#5\QCBOptA
        \fi
	 \egroup
   \egroup
   \break
}%
\def\FFRAME#1#2#3#4#5#6#7{%
  \@ifundefined{floatstyle}
    {
     \begin{figure}[#1]%
    }
    {
	 \ifx#1h
      \begin{figure}[H]%
	 \else
      \begin{figure}[#1]%
	 \fi
	}
  \let\QCTOptA\empty
  \let\QCTOptB\empty
  \let\QCBOptA\empty
  \let\QCBOptB\empty
  \ifOverFrame
    #4
    \ifx\QCTOptA\empty
    \else
      \ifx\QCTOptB\empty
        \caption{\QCTOptA}%
      \else
        \caption[\QCTOptB]{\QCTOptA}%
      \fi
    \fi
    \ifUnderFrame\else
      \label{#5}%
    \fi
  \else
    \UnderFrametrue%
  \fi
  \begin{center}\GRAPHIC{#7}{#6}{#2}{#3}{\z@}\end{center}%
  \ifUnderFrame
    #4
    \ifx\QCBOptA\empty
      \caption{}%
    \else
      \ifx\QCBOptB\empty
        \caption{\QCBOptA}%
      \else
        \caption[\QCBOptB]{\QCBOptA}%
      \fi
    \fi
    \label{#5}%
  \fi
  \end{figure}%
 }%
\def\makeactives{
  \catcode`\"=\active
  \catcode`\;=\active
  \catcode`\:=\active
  \catcode`\'=\active
  \catcode`\~=\active
}
   \gdef\activesoff{%
      \def"{\string"}%
      \def;{\string;}%
      \def:{\string:}%
      \def'{\string'}%
      \def~{\string~}%
    }
\def\FRAME#1#2#3#4#5#6#7#8{%
 \bgroup
 \ifnum\@msidraft=\@ne
   \wasdrafttrue
 \else
   \wasdraftfalse%
 \fi
 \def\LaTeXparams{}%
 \dispkind=\z@
 \def\LaTeXparams{}%
 \doFRAMEparams{#1}%
 \ifnum\dispkind=\z@\IFRAME{#2}{#3}{#4}{#7}{#8}{#5}\else
  \ifnum\dispkind=\@ne\DFRAME{#2}{#3}{#7}{#8}{#5}\else
   \ifnum\dispkind=\tw@
    \edef\@tempa{\noexpand\FFRAME{\LaTeXparams}}%
    \@tempa{#2}{#3}{#5}{#6}{#7}{#8}%
    \fi
   \fi
  \fi
  \ifwasdraft\@msidraft=1\else\@msidraft=0\fi{}%
  \egroup
 }%
\def\TEXUX#1{"texux"}
\def\func#1{\mathop{\rm #1}\nolimits}%
\long\def\QQQ#1#2{%
     \long\expandafter\def\csname#1\endcsname{#2}}%
\long\def\QQA#1#2{}%
\def\QTR#1#2{{\csname#1\endcsname {#2}}}%
\def\EXPAND#1[#2]#3{}%
\def\NOEXPAND#1[#2]#3{}%
\def\LaTeXparent#1{}%
\def\ChildStyles#1{}%
\def\ChildDefaults#1{}%
\def\QTagDef#1#2#3{}%
  \providecommand{\UNICODE}[2][]{\protect\rule{.1in}{.1in}}
  \providecommand{\U}[1]{\protect\rule{.1in}{.1in}}
\def\QQfnmark#1{\footnotemark}
 \def\abstract{%
  \if@twocolumn
   \section*{Abstract (Not appropriate in this style!)}%
   \else \small 
   \begin{center}{\bf Abstract\vspace{-.5em}\vspace{\z@}}\end{center}%
   \quotation 
   \fi
  }%
   \def\registered{\relax\ifmmode{}\r@gistered
                    \else$\m@th\r@gistered$\fi}%
 \def\r@gistered{^{\ooalign
  {\hfil\raise.07ex\hbox{$\scriptstyle\rm\text{R}$}\hfil\crcr
  \mathhexbox20D}}}}{}%
\newdimen\theight
\def\newfmtname{LaTeX2e}
  \DeclareOldFontCommand{\rm}{\normalfont\rmfamily}{\mathrm}
  \DeclareOldFontCommand{\sf}{\normalfont\sffamily}{\mathsf}
  \DeclareOldFontCommand{\tt}{\normalfont\ttfamily}{\mathtt}
  \DeclareOldFontCommand{\bf}{\normalfont\bfseries}{\mathbf}
  \DeclareOldFontCommand{\it}{\normalfont\itshape}{\mathit}
  \DeclareOldFontCommand{\sl}{\normalfont\slshape}{\@nomath\sl}
  \DeclareOldFontCommand{\sc}{\normalfont\scshape}{\@nomath\sc}
\def\alpha{{\Greekmath 010B}}%
\def\beta{{\Greekmath 010C}}%
\def\gamma{{\Greekmath 010D}}%
\def\delta{{\Greekmath 010E}}%
\def\epsilon{{\Greekmath 010F}}%
\def\zeta{{\Greekmath 0110}}%
\def\eta{{\Greekmath 0111}}%
\def\theta{{\Greekmath 0112}}%
\def\iota{{\Greekmath 0113}}%
\def\kappa{{\Greekmath 0114}}%
\def\lambda{{\Greekmath 0115}}%
\def\mu{{\Greekmath 0116}}%
\def\nu{{\Greekmath 0117}}%
\def\xi{{\Greekmath 0118}}%
\def\pi{{\Greekmath 0119}}%
\def\rho{{\Greekmath 011A}}%
\def\sigma{{\Greekmath 011B}}%
\def\tau{{\Greekmath 011C}}%
\def\upsilon{{\Greekmath 011D}}%
\def\phi{{\Greekmath 011E}}%
\def\chi{{\Greekmath 011F}}%
\def\psi{{\Greekmath 0120}}%
\def\omega{{\Greekmath 0121}}%
\def\varepsilon{{\Greekmath 0122}}%
\def\vartheta{{\Greekmath 0123}}%
\def\varpi{{\Greekmath 0124}}%
\def\varrho{{\Greekmath 0125}}%
\def\varsigma{{\Greekmath 0126}}%
\def\varphi{{\Greekmath 0127}}%
\def\nabla{{\Greekmath 0272}}
\def\FindBoldGroup{%
   {\setbox0=\hbox{$\mathbf{x\global\edef\theboldgroup{\the\mathgroup}}$}}%
}
\def\Greekmath#1#2#3#4{%
    \if@compatibility
        \ifnum\mathgroup=\symbold
           \mathchoice{\mbox{\boldmath$\displaystyle\mathchar"#1#2#3#4$}}%
                      {\mbox{\boldmath$\textstyle\mathchar"#1#2#3#4$}}%
                      {\mbox{\boldmath$\scriptstyle\mathchar"#1#2#3#4$}}%
                      {\mbox{\boldmath$\scriptscriptstyle\mathchar"#1#2#3#4$}}%
        \else
           \mathchar"#1#2#3#4%
        \fi 
    \else 
        \FindBoldGroup
        \ifnum\mathgroup=\theboldgroup 
           \mathchoice{\mbox{\boldmath$\displaystyle\mathchar"#1#2#3#4$}}%
                      {\mbox{\boldmath$\textstyle\mathchar"#1#2#3#4$}}%
                      {\mbox{\boldmath$\scriptstyle\mathchar"#1#2#3#4$}}%
                      {\mbox{\boldmath$\scriptscriptstyle\mathchar"#1#2#3#4$}}%
        \else
           \mathchar"#1#2#3#4%
        \fi     	    
	  \fi}
\newif\ifGreekBold  \GreekBoldfalse
\let\SAVEPBF=\pbf
\def\pbf{\GreekBoldtrue\SAVEPBF}%
  \newcounter{equationnumber}  
  \def\mathletters{%
     \addtocounter{equation}{1}
     \edef\@currentlabel{\theequation}%
     \setcounter{equationnumber}{\c@equation}
     \setcounter{equation}{0}%
     \edef\theequation{\@currentlabel\noexpand\alph{equation}}%
  }
    \def\BibTeX{{\rm B\kern-.05em{\sc i\kern-.025em b}\kern-.08em
                 T\kern-.1667em\lower.7ex\hbox{E}\kern-.125emX}}}{}%
\def\AmS{{\protect\usefont{OMS}{cmsy}{m}{n}%
                A\kern-.1667em\lower.5ex\hbox{M}\kern-.125emS}}}{}%
\def\@@eqncr{\let\@tempa\relax
    \ifcase\@eqcnt \def\@tempa{& & &}\or \def\@tempa{& &}%
      \else \def\@tempa{&}\fi
     \@tempa
     \if@eqnsw
        \iftag@
           \@taggnum
        \else
           \@eqnnum\stepcounter{equation}%
        \fi
     \fi
     \global\tag@false
     \global\@eqnswtrue
     \global\@eqcnt\z@\cr}
\def\TCItag{\@ifnextchar*{\@TCItagstar}{\@TCItag}}
\def\@TCItag#1{%
    \global\tag@true
    \global\def\@taggnum{(#1)}}
\def\@TCItagstar*#1{%
    \global\tag@true
    \global\def\@taggnum{#1}}
\def\ExitTCILatex{\makeatother }
\let\DOTSI\relax
\def\RIfM@{\relax\ifmmode}%
\def\FN@{\futurelet\next}%
\def\iint{\DOTSI\intno@\tw@\FN@\ints@}%
\def\iiint{\DOTSI\intno@\thr@@\FN@\ints@}%
\def\iiiint{\DOTSI\intno@4 \FN@\ints@}%
\def\idotsint{\DOTSI\intno@\z@\FN@\ints@}%
\def\ints@{\findlimits@\ints@@}%
\newif\iflimtoken@
\newif\iflimits@
\def\findlimits@{\limtoken@true\ifx\next\limits\limits@true
 \else\ifx\next\nolimits\limits@false\else
 \limtoken@false\ifx\ilimits@\nolimits\limits@false\else
 \ifinner\limits@false\else\limits@true\fi\fi\fi\fi}%
\def\multint@{\int\ifnum\intno@=\z@\intdots@                          
 \else\intkern@\fi                                                    
 \ifnum\intno@>\tw@\int\intkern@\fi                                   
 \ifnum\intno@>\thr@@\int\intkern@\fi                                 
 \int}
\def\multintlimits@{\intop\ifnum\intno@=\z@\intdots@\else\intkern@\fi
 \ifnum\intno@>\tw@\intop\intkern@\fi
 \ifnum\intno@>\thr@@\intop\intkern@\fi\intop}%
\def\intic@{%
    \mathchoice{\hskip.5em}{\hskip.4em}{\hskip.4em}{\hskip.4em}}%
\def\negintic@{\mathchoice
 {\hskip-.5em}{\hskip-.4em}{\hskip-.4em}{\hskip-.4em}}%
\def\ints@@{\iflimtoken@                                              
 \def\ints@@@{\iflimits@\negintic@
   \mathop{\intic@\multintlimits@}\limits                             
  \else\multint@\nolimits\fi                                          
  \eat@}
 \else                                                                
 \def\ints@@@{\iflimits@\negintic@
  \mathop{\intic@\multintlimits@}\limits\else
  \multint@\nolimits\fi}\fi\ints@@@}%
\def\intkern@{\mathchoice{\!\!\!}{\!\!}{\!\!}{\!\!}}%
\def\plaincdots@{\mathinner{\cdotp\cdotp\cdotp}}%
\def\intdots@{\mathchoice{\plaincdots@}%
 {{\cdotp}\mkern1.5mu{\cdotp}\mkern1.5mu{\cdotp}}%
 {{\cdotp}\mkern1mu{\cdotp}\mkern1mu{\cdotp}}%
 {{\cdotp}\mkern1mu{\cdotp}\mkern1mu{\cdotp}}}%
\def\RIfM@{\relax\protect\ifmmode}
\def\text{\RIfM@\expandafter\text@\else\expandafter\mbox\fi}
\let\nfss@text\text
\def\text@#1{\mathchoice
   {\textdef@\displaystyle\f@size{#1}}%
   {\textdef@\textstyle\tf@size{\firstchoice@false #1}}%
   {\textdef@\textstyle\sf@size{\firstchoice@false #1}}%
   {\textdef@\textstyle \ssf@size{\firstchoice@false #1}}%
   \glb@settings}
\def\textdef@#1#2#3{\hbox{{%
                    \everymath{#1}%
                    \let\f@size#2\selectfont
                    #3}}}
\newif\iffirstchoice@
\def\Let@{\relax\iffalse{\fi\let\\=\cr\iffalse}\fi}%
\def\vspace@{\def\vspace##1{\crcr\noalign{\vskip##1\relax}}}%
\def\multilimits@{\bgroup\vspace@\Let@
 \baselineskip\fontdimen10 \scriptfont\tw@
 \advance\baselineskip\fontdimen12 \scriptfont\tw@
 \lineskip\thr@@\fontdimen8 \scriptfont\thr@@
 \lineskiplimit\lineskip
 \vbox\bgroup\ialign\bgroup\hfil$\m@th\scriptstyle{##}$\hfil\crcr}%
\def\Sb{_\multilimits@}%
\def\endSb{\crcr\egroup\egroup\egroup}%
\def\Sp{^\multilimits@}%
\newdimen\ex@
\def\rightarrowfill@#1{$#1\m@th\mathord-\mkern-6mu\cleaders
 \hbox{$#1\mkern-2mu\mathord-\mkern-2mu$}\hfill
 \mkern-6mu\mathord\rightarrow$}%
\def\leftarrowfill@#1{$#1\m@th\mathord\leftarrow\mkern-6mu\cleaders
 \hbox{$#1\mkern-2mu\mathord-\mkern-2mu$}\hfill\mkern-6mu\mathord-$}%
\def\leftrightarrowfill@#1{$#1\m@th\mathord\leftarrow
\mkern-6mu\cleaders
 \hbox{$#1\mkern-2mu\mathord-\mkern-2mu$}\hfill
 \mkern-6mu\mathord\rightarrow$}%
\def\overrightarrow{\mathpalette\overrightarrow@}%
\def\overrightarrow@#1#2{\vbox{\ialign{##\crcr\rightarrowfill@#1\crcr
 \noalign{\kern-\ex@\nointerlineskip}$\m@th\hfil#1#2\hfil$\crcr}}}%
\def\overleftarrow{\mathpalette\overleftarrow@}%
\def\overleftarrow@#1#2{\vbox{\ialign{##\crcr\leftarrowfill@#1\crcr
 \noalign{\kern-\ex@\nointerlineskip}$\m@th\hfil#1#2\hfil$\crcr}}}%
\def\overleftrightarrow{\mathpalette\overleftrightarrow@}%
\def\overleftrightarrow@#1#2{\vbox{\ialign{##\crcr
   \leftrightarrowfill@#1\crcr
 \noalign{\kern-\ex@\nointerlineskip}$\m@th\hfil#1#2\hfil$\crcr}}}%
\def\underrightarrow{\mathpalette\underrightarrow@}%
\def\underrightarrow@#1#2{\vtop{\ialign{##\crcr$\m@th\hfil#1#2\hfil
  $\crcr\noalign{\nointerlineskip}\rightarrowfill@#1\crcr}}}%
\def\underleftarrow{\mathpalette\underleftarrow@}%
\def\underleftarrow@#1#2{\vtop{\ialign{##\crcr$\m@th\hfil#1#2\hfil
  $\crcr\noalign{\nointerlineskip}\leftarrowfill@#1\crcr}}}%
\def\underleftrightarrow{\mathpalette\underleftrightarrow@}%
\def\underleftrightarrow@#1#2{\vtop{\ialign{##\crcr$\m@th
  \hfil#1#2\hfil$\crcr
 \noalign{\nointerlineskip}\leftrightarrowfill@#1\crcr}}}%
\def\qopnamewl@#1{\mathop{\operator@font#1}\nlimits@}
\let\nlimits@\displaylimits
\def\setboxz@h{\setbox\z@\hbox}
\def\varlim@#1#2{\mathop{\vtop{\ialign{##\crcr
 \hfil$#1\m@th\operator@font lim$\hfil\crcr
 \noalign{\nointerlineskip}#2#1\crcr
 \noalign{\nointerlineskip\kern-\ex@}\crcr}}}}
 \def\rightarrowfill@#1{\m@th\setboxz@h{$#1-$}\ht\z@\z@
  $#1\copy\z@\mkern-6mu\cleaders
  \hbox{$#1\mkern-2mu\box\z@\mkern-2mu$}\hfill
  \mkern-6mu\mathord\rightarrow$}
\def\leftarrowfill@#1{\m@th\setboxz@h{$#1-$}\ht\z@\z@
  $#1\mathord\leftarrow\mkern-6mu\cleaders
  \hbox{$#1\mkern-2mu\copy\z@\mkern-2mu$}\hfill
  \mkern-6mu\box\z@$}
\def\projlim{\qopnamewl@{proj\,lim}}
\def\injlim{\qopnamewl@{inj\,lim}}
\def\varinjlim{\mathpalette\varlim@\rightarrowfill@}
\def\varprojlim{\mathpalette\varlim@\leftarrowfill@}
\def\varliminf{\mathpalette\varliminf@{}}
\def\varliminf@#1{\mathop{\underline{\vrule\@depth.2\ex@\@width\z@
   \hbox{$#1\m@th\operator@font lim$}}}}
\def\varlimsup{\mathpalette\varlimsup@{}}
\def\varlimsup@#1{\mathop{\overline
  {\hbox{$#1\m@th\operator@font lim$}}}}
\def\align{\@verbatim \frenchspacing\@vobeyspaces \@alignverbatim
You are using the "align" environment in a style in which it is not defined.}
\let\csname endalign*\endcsname =\endtrivlist
\def\alignat{\@verbatim \frenchspacing\@vobeyspaces \@alignatverbatim
You are using the "alignat" environment in a style in which it is not defined.}
\let\csname endalignat*\endcsname =\endtrivlist
\def\xalignat{\@verbatim \frenchspacing\@vobeyspaces \@xalignatverbatim
You are using the "xalignat" environment in a style in which it is not defined.}
\let\csname endxalignat*\endcsname =\endtrivlist
\def\gather{\@verbatim \frenchspacing\@vobeyspaces \@gatherverbatim
You are using the "gather" environment in a style in which it is not defined.}
\let\csname endgather*\endcsname =\endtrivlist
\def\multiline{\@verbatim \frenchspacing\@vobeyspaces \@multilineverbatim
You are using the "multiline" environment in a style in which it is not defined.}
\let\csname endmultiline*\endcsname =\endtrivlist
\def\arrax{\@verbatim \frenchspacing\@vobeyspaces \@arraxverbatim
You are using a type of "array" construct that is only allowed in AmS-LaTeX.}
\def\tabulax{\@verbatim \frenchspacing\@vobeyspaces \@tabulaxverbatim
You are using a type of "tabular" construct that is only allowed in AmS-LaTeX.}
\let\csname endarrax*\endcsname =\endtrivlist
\let\csname endtabulax*\endcsname =\endtrivlist
 \def\endequation{%
     \ifmmode\ifinner 
      \iftag@
        \addtocounter{equation}{-1} 
        $\hfil
           \displaywidth\linewidth\@taggnum\egroup \endtrivlist
        \global\tag@false
        \global\@ignoretrue   
      \else
        $\hfil
           \displaywidth\linewidth\@eqnnum\egroup \endtrivlist
        \global\tag@false
        \global\@ignoretrue 
      \fi
     \else   
      \iftag@
        \addtocounter{equation}{-1} 
        \eqno \hbox{\@taggnum}
        \global\tag@false%
        $$\global\@ignoretrue
      \else
        \eqno \hbox{\@eqnnum}
        $$\global\@ignoretrue
      \fi
     \fi\fi
 } 
 \newif\iftag@ \tag@false
 \def\TCItag{\@ifnextchar*{\@TCItagstar}{\@TCItag}}
 \def\@TCItag#1{%
     \global\tag@true
     \global\def\@taggnum{(#1)}}
 \def\@TCItagstar*#1{%
     \global\tag@true
     \global\def\@taggnum{#1}}
     \def\tag{\@ifnextchar*{\@tagstar}{\@tag}}
     \def\@tag#1{%
         \global\tag@true
         \global\def\@taggnum{(#1)}}
     \def\@tagstar*#1{%
         \global\tag@true
         \global\def\@taggnum{#1}}
\def\dfrac#1#2{{\displaystyle {#1 \over #2}}}%
\begin{document}

\begin{abstract}
We consider transformations preserving a contracting foliation, such that
the associated quotient map satisfies a Lasota-Yorke inequality. We prove
that the associated transfer operator, acting on suitable normed spaces, has
a spectral gap (on which we have quantitative estimation).

As an application we consider Lorenz-like two dimensional maps (piecewise
hyperbolic with unbounded contraction and expansion rate): we prove that
those systems have a spectral gap and we show a quantitative estimate for
their statistical stability. Under deterministic perturbations of the system
of size $\delta$, the physical measure varies continuously, with a modulus
of continuity $O(\delta \log \delta )$, which is asymptotically optimal for
this kind of piecewise smooth maps.
\end{abstract}

\title[Spectral Gap for 2-dimensional Contracting Fibers Systems]{Spectral
Gap and quantitative statistical stability for systems with contracting
fibers and Lorenz-like maps.}
\author[Stefano Galatolo]{Stefano Galatolo}
\author[Rafael Lucena]{Rafael Lucena}
\date{\today }
\keywords{Spectral Gap, Statistical Properties, Lorenz-Like, Transfer
Operator, Stability.}
\address[Rafael Lucena]{Universidade Federal do Rio de Janeiro - UFRJ and
Universit\`{a} di Pisa - UNIPI}
\email{rafael.lucena@im.ufal.br}
\urladdr{www.mathlucena.blogspot.com}
\address[Stefano Galatolo]{Dipartimento di Matematica, Universit\`{a} di Pisa - UNIPI}
\email{stefano.galatolo@unipi.it}
\urladdr{http://users.dma.unipi.it/galatolo/}
\maketitle


\section{Introduction}

The study of the behaviour of the transfer operator restricted to a suitable
functional space has proven to be a powerful tool for the understanding of
the statistical properties of a dynamical system. This approach gave first
results (see \cite{LY}, \cite{L3} and \cite{RE}) in the study of the dynamics of piecewise expanding maps where the
involved spaces are made of regular, absolutely continuous measures (see 
\cite{Ba}, \cite{L2}, \cite{BG} and \cite{G} for some introductory text). In
recent years the approach was extended to piecewise hyperbolic systems by
the use of suitable anisotropic norms (the expanding and contracting
directions are managed differently), leading to suitable distribution spaces
on which the transfer operator has good spectral properties (see e.g. \cite%
{BT}, \cite{BaG}, \cite{BaG2}, \cite{DL}, \cite{GL} and \cite{B},\cite{D}
for recent papers containing a survey of the topic). \ From these
properties, several limit theorems or stability statements can be deduced.
This approach has proven to be successful in non-trivial classes of systems
like geodesic flows (see \cite{L2}, \cite{BL}) or billiard maps (ess e.g. 
\cite{DZ} \cite{DZ2} where a relatively simple and unified approach to many
limit and perturbative results is given for the Lorentz gas). In these
approaches, usually some condition of boundedness of the derivatives or
transversality between the map's singular set and the contracting directions
is supposed.

In this work, we consider skew product maps preserving a uniformly
contracting foliation. We show how it is possible, in a simple way, to
define suitable spaces of signed measures (with an anisotropic norm) such
that, under small regularity assumptions, the transfer operator associated
to the dynamics has a spectral gap (in the sense given in Theorem \ref{spgap}%
). This shows an exponential convergence to $0$ in a certain norm for the
iteration of a large class of zero average measures by the transfer
operator. In this approach the speed of this convergence can be
quantitatively estimated, and depends on the rate of contraction of the
stable foliation, the coefficients of the Lasota-Yorke inequality and the
rate of convergence to equilibrium of the induced quotient map (see Remark %
\ref{quantitative2}). We also remark that in our approach we can deal with
piecewise continuous maps having piecewise $C^{1+\alpha }$ regularity,
having unbounded derivatives, and where the discontinuity set is parallel to
the contracting direction, as it happen in the Lorenz-like maps we consider
in Section \ref{last}. These results allow to obtain in the second part of
the paper a quantitative statistical stability estimate for deterministic
perturbations of this kind of Lorenz-like systems. The result applies to
deterministic perturbations of skew product maps with a piecewise expanding
map on the base with $C^{2}$ branches and contracting behaviour on the
fibers. Essentially the main theorem of the section states (see Theorem \ref%
{mainstat}) that the physical measure of the system varies with a modulus of
continuity of the type $\delta \log (\delta )$ under perturbations of size $%
\delta $ ( see Section \ref{realast} for precise statements and definitions)
in a strong topology determined by a certain anisotropic space of signed
measures which will be described below. It is worth to remark that this
bound is also asymptotically optimal (see Remark \ref{opti}).

The function spaces we consider are defined by disintegrating signed
measures on the phase space along the contracting foliation. The signed
measure itself is then seen as a family of measures on the contracting
leaves. We can then consider some notion of regularity for this family to
define suitable spaces of more or less \textquotedblleft
regular\textquotedblright measures where to apply our transfer operator. To
give an idea of these function spaces (see section \ref{sec:spaces}), in the
case of skew product maps of the unit square $I\times I$ to itself, the
disintegration gives rise to a one dimensional family (a path) of measures
defined on the contracting leaves, each leaf is isomorphic to the unit
interval $I$, hence a measure on $I\times I$ is seen as a path of measures
on $I$: a path in a metric space. The function spaces are defined by
suitable notions of regularity for these paths. In the case $I\times I$ for
example, the spaces which arise are included in $L^{1}(I,Lip(I)^{\prime })$
\ (the space of $L^{1}$ functions from the interval to the dual of the space
of Lipschitz functions on the interval), imposing some kind of further
regularity. This is a space of distribution valued functions. For simplicity
we will only use normed vector spaces of signed measures in this paper, we
do not need to consider the completion of the space of signed measure, which
would lead to distribution spaces. Similar strong and weak function spaces
have been used in \cite{Gjep} to investigate quantitatively the statistical
stability of slowly mixing toral extensions (skew products with a non
expanding preserved foliation).

\noindent\textbf{Plan of the paper.} The paper is structured as follows:

\begin{itemize}
	\item in Section 2 we introduce the kind of systems we consider in the
	paper. Essentially, these are skew product maps, with a base map satisfying
	a Lasota-Yorke inequality with respect to suitable spaces (piecewise
	expanding maps e.g.) and the fibers are contracted;
	
	\item in Section 3 we introduce the functional spaces used in the paper and
	discussed in the previous paragraphs;
	
	\item in Section 4 we show the basic properties of the transfer operator
	when applied to these spaces. In particular we see that there is an useful
	\textquotedblleft Perron-Frobenius\textquotedblright-like formula (see
	Proposition \ref{niceformulaab}) .
	
	\item In Section 5 we see the basic properties of the iteration of the
	transfer operator on the spaces we consider. In particular we see \emph{%
		Lasota-Yorke inequalities and a convergence to equilibrium statement} (see
	Propositions \ref{lasotaoscilation2} and \ref{5.8}).
	
	\item In Section 6 we use the convergence to equilibrium and the
	Lasota-Yorke inequalities to prove the \emph{spectral gap} for the transfer
	operator associated to the system restricted to a suitable strong space (see
	Theorems \ref{spgap} and \ref{speclinf}).
	
	\item In Section \ref{last} we present an application of our construction,
	showing a \emph{spectral gap for 2-dimensional Lorenz-like maps} (piecewise $%
	C^{1+\alpha }$ hyperbolic maps with unbounded expansion and contraction
	rates).
	
	\item In Section \ref{realast} we consider similar systems with some more
	regularity. We apply our construction to a class of piecewise $C^{2}$,
	two-dimensional Lorenz-like maps. We prove stronger (bounded variation like)
	regularity results for the iteration of probability measures on that
	systems, and use this to prove a \emph{quantitative statistical stability}
	statement with respect to deterministic perturbations: we establish a
	modulus of continuity $\delta \log \delta $ for the stability of the
	physical measure in weak space ($L^{1}(I,Lip(I)^{\prime })$) after \ a
	\textquotedblleft size $\delta $\textquotedblright perturbation (see Theorem %
	\ref{mainstat}). Qualitative statements, for classes of similar maps were
	shown in \cite{AS} and very recently in \cite{BR}.
\end{itemize}

\textbf{Acknowledgments} This work was partially supported by Alagoas Research Foundation - FAPEAL (Brazil) Grants 60030 000587/2016, CNPq (Brazil) Grants 300398/2016-6, CAPES (Brazil) Grants 99999.014021/2013-07 and EU Marie-Curie IRSES Brazilian-European partnership in Dynamical Systems (FP7-PEOPLE- 2012-IRSES 318999 BREUDS).

\section{Contracting Fiber Maps\label{sec2}}

In this section we introduce the kind of systems we are considering in this
paper and show some of its basic properties. Consider $\Sigma =N_{1}\times
N_{2}$, where $N_{1}$ and $N_{2}$ are compact and finite dimensional
Riemannian manifolds such that $\diam(N_{2})=1$, where $\diam(N_{2})$
denotes the diameter of $N_{2}$ with respect to its Riemannian metric $d_{2}$%
. This is not restrictive but will avoid some multiplicative constants.
Denote by $m_{1}$ and $m_{2}$ the Lebesgue measures on $N_{1}$ and $N_{2}$
respectively, generated by their corresponding Riemannian volumes,
normalized so that $m_{1}(N_{1})=m_{2}(N_{2})=1$ and $m=m_{1}\times m_{2}$.
Consider a map $F:\left( \Sigma ,m\right) \longrightarrow \left( \Sigma
,m\right) $, 
\begin{equation*}
	F(x,y)=(T(x),G(x,y)),  \label{1eq}
\end{equation*}%
where $T:N_{1}\longrightarrow N_{1}$ and $G:\Sigma \longrightarrow N_{2}$
are measurable maps. Suppose that these maps satisfy the following conditions

\subsubsection{Properties of $G$}

\begin{description}
	\item[G1] Consider the $F$-invariant foliation 
	\begin{equation*}
		\mathcal{F}^{s}:=\{\{x\}\times N_2\}_{x\in N_1}.  \label{fol}
	\end{equation*}
	We suppose that $\mathcal{F}^{s}$ is contracted: there exists $0<\alpha <1$
	such that for all $x\in N_{1}$ it holds%
	\begin{equation}
		d_2(G(x,y_{1}),G(x,y_{2}))\leq \alpha d_2(y_{1},y_{2}),\ \ \mathnormal{for\
			all}\ \ y_{1},y_{2}\in N_{2}.  \label{contracting1}
	\end{equation}
\end{description}

\subsubsection{Properties of $T$ and of its associated transfer operator.}

Suppose that:

\begin{description}
	\item[T1] $T$ is non-singular with respect to $m_{1}$ ($m_{1}(A)=0%
	\Rightarrow m_{1}(T^{-1}(A)))=0$);
	
	\item[T2] There exists a disjoint collection of open sets $\mathcal{P}%
	=\{P_{1},\cdots ,P_{q}\}$ of $N_{1}$, such that $m_{1}\left(
	\bigcup_{i=1}^{q}{P_{i}}\right) =1$ and $T_{i}:=T|_{P_{i}}$ is a
	diffeomorphism $T_{i}:P_i \to T_{i}(P_i) \subseteq N_1$, with $\det
	DT_i(x)\neq 0$ for all $x \in P_{i}$ and for all $i$, where $DT_i$ is the
	Jacobian matrix of $T _i$ with respect to the Riemannian metric of $N_{1}$;
	
	\item[T3] Let us consider the Perron-Frobenius Operator associated to $T$, $%
	\func{P}_{T}$ \footnote{%
		The unique operator $\func{P}_{T}:L_{m_{1}}^{1}\longrightarrow L_{m_{1}}^{1}$
		such that 
		\begin{equation*}
			\forall \phi \in L_{m_{1}}^{1}\ \ \mathnormal{and}\ \ \forall \psi \in
			L_{m_{1}}^{\infty }\ \ \int {\psi \cdot \func{P}_{T}(\phi )~}dm_1=\int {%
				\left( \psi \circ T\right) \cdot \phi ~}dm_1.
		\end{equation*}%
	}. We will now make some assumptions on the existence of a suitable
	functional analytic setting adapted to ${\func{P}_T}$. Let us hence denote
	the $L_{m_{1}}^{1}$ norm\footnote{\textbf{Notation: }In the following we use 
		$|\cdot| $ to indicate the usual absolute value or norms for signed measures
		on the basis space $N_{1}.$ We will use $|| \cdot||$ for norms defined for
		signed measures on $\Sigma $.
		\par
		{}} by $|\cdot |_{1}$ and suppose that there exists a Banach space $%
	(S_{\_},| \cdot |_{s})$ such that
	
	\item[T3.1] $S_{\_}\subset L_{m_{1}}^{1}$ is $\func{P}_{T}$-invariant, $%
	|\cdot |_{1}\leq |\cdot |_{s}$ and $\func{P}_{T}: S_{\_} \longrightarrow
	S_{\_}$ is bounded;
	
	\item[T3.2] The unit ball of $(S_{\_},|\cdot |_{s})$ is relatively compact
	in $(L_{m_{1}}^{1},|\cdot|_{1})$;
	
	\item[T3.3] (Lasota-Yorke inequality) There exist $k\in \mathbb{N}$, $%
	0<\beta _{0}<1$ and $C>0$ such that, for all $f\in S_{\_}$, it holds 
	\begin{equation*}
		|\func{P}_{T}^{k}f|_{s}\leq \beta _{0}|f|_{s}+C|f|_{1};  \label{LY1}
	\end{equation*}
	
	\item[T3.4] Suppose there is an unique $\psi _{x}\in S_{\_}$ with $\psi
	_{x}\geq 0$ and $|\psi _{x}|_{1}=1$ such that $\func{P}_{T}(\psi _{x})=\psi
	_{x}$, and if $\psi \in S_{\_}$ is another density for a probability
	measure, then ${P}_{T}^{n}(\psi _{x}-\psi )\rightarrow 0$ as $n \rightarrow
	\infty$ in $S_{\_}$.\footnote{%
		This assumption ensures that from our point of view the system is
		indecomposable. For piecewise expanding maps e.g., the assumption follows
		from topological mixing.}
\end{description}

It is known that in this case (\cite{IM}, see also \cite{BG}, \cite{L2} )
the following holds.

\begin{theorem}
	\label{loiub} If \ $T$ satisfy $T3.1,...,T3.4$ then there exist $0<r<1$ and $%
	D>0$ such that for all $f\in S_{\_}$ with $\int {f~}dm_{1}=0$ and for all $%
	n\geq 0,$ it holds 
	\begin{equation}
		|\func{P}_{T}^{n}(f)|_{s}\leq Dr^{n}|f|_{s}.  \label{p2}
	\end{equation}
\end{theorem}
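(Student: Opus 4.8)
The plan is to derive \eqref{p2} from the quasi-compactness of $\func{P}_{T}$ acting on $S_{\_}$ (via the Ionescu-Tulcea--Marinescu / Hennion machinery) together with an analysis of its peripheral spectrum based on hypothesis T3.4. I would first record two elementary consequences of the standing assumptions. Since $\func{P}_{T}$ is a Perron--Frobenius operator it is a weak $L^{1}_{m_{1}}$-contraction, $|\func{P}_{T}f|_{1}\le|f|_{1}$; iterating the Lasota--Yorke inequality \eqref{LY1} therefore gives $|\func{P}_{T}^{nk}f|_{s}\le\beta_{0}^{n}|f|_{s}+\tfrac{C}{1-\beta_{0}}|f|_{1}$, and inserting the at most $k$ remaining iterates (controlled by the $S_{\_}$-boundedness of $\func{P}_{T}$ from T3.1) and using $|f|_{1}\le|f|_{s}$ one gets $\sup_{n}|\func{P}_{T}^{n}f|_{s}\le K|f|_{s}$ for some $K$; so $\func{P}_{T}$ is power bounded on $S_{\_}$, in particular its spectral radius there is $\le 1$. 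The same iterated inequality, combined with the relative compactness of the unit ball of $S_{\_}$ in $L^{1}_{m_{1}}$ (T3.2), puts us in the hypotheses of Hennion's theorem, so $\func{P}_{T}\colon S_{\_}\to S_{\_}$ is quasi-compact with essential spectral radius at most $\beta_{0}^{1/k}<1$. Both properties restrict to $V:=\{\phi\in S_{\_}:\int\phi\,dm_{1}=0\}$, which is a closed ($|\cdot|_{1}\le|\cdot|_{s}$ makes $\phi\mapsto\int\phi\,dm_{1}$ continuous on $S_{\_}$) and $\func{P}_{T}$-invariant ($\int\func{P}_{T}\phi\,dm_{1}=\int\phi\,dm_{1}$) subspace.

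Next I would identify the part of the spectrum of $\func{P}_{T}$ on the unit circle. Let $E\subset S_{\_}$ be the finite-dimensional $\func{P}_{T}$-invariant sum of the generalized eigenspaces for the eigenvalues of modulus exactly $1$. Power boundedness forbids nontrivial Jordan blocks on $E$, so $\func{P}_{T}|_{E}$ and its inverse are both power bounded; in particular there is $c>0$ with $|\func{P}_{T}^{n}w|_{s}\ge c|w|_{s}$ for all $w\in E$ and $n\ge 0$. Since $\func{P}_{T}\psi_{x}=\psi_{x}$ we have $\psi_{x}\in E$, and I claim $E=\mathbb{R}\psi_{x}$. If not, then (using that for a $\lambda$-eigenvector $v$ with $\lambda\neq1$ one has $\int v\,dm_{1}=\lambda\int v\,dm_{1}$, hence $\int v\,dm_{1}=0$, and that a second fixed vector $g$ may be replaced by $g-(\int g\,dm_{1})\psi_{x}$) there is $0\neq w\in E\cap V$. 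On the finite-dimensional space $E$ the norms $|\cdot|_{s}$ and $|\cdot|_{1}$ are equivalent, so it suffices to show $\func{P}_{T}^{n}w\to0$ in $L^{1}_{m_{1}}$ in order to contradict $|\func{P}_{T}^{n}w|_{s}\ge c|w|_{s}>0$. This is where T3.4 enters: for a probability density $\psi\in S_{\_}$ it gives $\func{P}_{T}^{n}(\psi_{x}-\psi)\to0$ in $S_{\_}$, hence in $L^{1}_{m_{1}}$; using the positivity and $L^{1}$-contractivity of $\func{P}_{T}$ to pass from $S_{\_}$-densities to arbitrary $L^{1}_{m_{1}}$-densities and then, by splitting into positive and negative parts, to arbitrary zero-average elements of $L^{1}_{m_{1}}$, one obtains $\func{P}_{T}^{n}f\to0$ in $L^{1}_{m_{1}}$ for every $f\in L^{1}_{m_{1}}$ with $\int f\,dm_{1}=0$; applying this to $w$ yields the contradiction. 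Hence $E=\mathbb{R}\psi_{x}$, so $\func{P}_{T}|_{V}$ has no eigenvalue of modulus $1$; by quasi-compactness (essential spectral radius $\le\beta_{0}^{1/k}$) and the power bound, $\mathrm{spec}(\func{P}_{T}|_{V})$ consists of a part in $\{|z|\le\beta_{0}^{1/k}\}$ together with finitely many eigenvalues of modulus $<1$, so $r_{0}:=\rho(\func{P}_{T}|_{V})<1$.

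Finally, Gelfand's spectral radius formula shows that for any fixed $r$ with $r_{0}<r<1$ the constant $D:=\sup_{n\ge0}\,r^{-n}\,\|\func{P}_{T}^{n}|_{V}\|$ (operator norm) is finite, and then $|\func{P}_{T}^{n}\phi|_{s}\le Dr^{n}|\phi|_{s}$ for all $\phi\in V$ and $n\ge0$, which is exactly \eqref{p2}. The one genuinely delicate point in this scheme is the control of the peripheral spectrum, i.e.\ the identification $E=\mathbb{R}\psi_{x}$: ruling out Jordan blocks at the eigenvalue $1$ is immediate from the $\func{P}_{T}$-invariance of $\int\cdot\,dm_{1}$, but excluding other unimodular eigenvalues (equivalently, a fixed space of dimension $\ge2$) requires upgrading the convergence-to-equilibrium hypothesis T3.4 from differences of densities to arbitrary zero-average elements, and it is precisely this upgrade that uses the positivity of the transfer operator.
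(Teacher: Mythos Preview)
The paper does not give its own proof of this theorem; it states the result as known and refers to \cite{IM}, \cite{BG}, \cite{L2}. Your scheme --- iterate the Lasota--Yorke inequality to get power-boundedness and essential spectral radius $\le\beta_{0}^{1/k}$ via Hennion/Ionescu-Tulcea--Marinescu, then use T3.4 to exclude peripheral eigenvalues other than the simple eigenvalue $1$ at $\psi_{x}$ --- is exactly the route taken in those references, so your approach coincides with what the paper invokes.

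One small remark on the step you yourself flag as delicate. The passage ``from $S_{\_}$-densities to arbitrary $L^{1}_{m_{1}}$-densities'' tacitly uses that $S_{\_}$-densities are $L^{1}$-dense in the set of $L^{1}$-densities (equivalently, that $S_{\_}$ is dense in $L^{1}_{m_{1}}$), which is not literally listed among T3.1--T3.4 although it holds in every concrete space the paper uses ($BV$, $BV_{1,1/p}$, etc.). A variant that stays inside $S_{\_}$ is to use the spectral projection $\Pi$ onto $E$: since for any $S_{\_}$-density $\psi$ the element $\Pi(\psi_{x}-\psi)$ lies in $E$ and $\func{P}_{T}^{n}(\psi_{x}-\psi)\to 0$ by T3.4, the lower bound $|\func{P}_{T}^{n}\cdot|_{s}\ge c|\cdot|_{s}$ on $E$ forces $\Pi\psi=\Pi\psi_{x}$ for every density $\psi$, and then $E=\mathbb{R}\psi_{x}$ follows once differences of $S_{\_}$-densities span $V$ (e.g.\ whenever $1\in S_{\_}$, which the paper does assume implicitly elsewhere). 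Either way the argument is complete in all the intended applications.
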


In order to obtain spectral gap on $L^{\infty }$ like spaces, the following
additional property on $|\cdot|_{s}$ will be supposed at some point in the
paper.

\begin{description}
	\item[N1] There is $H_N\geq 0$ such that $|\cdot|_{\infty }\leq H_N
	|\cdot|_{s}$ (where $|\cdot|_{\infty }$ is the usual $L^{\infty } _{m_1}$
	norm on $N_{1}$).
\end{description}

The following is a standard consequence of item T3.3, allowing to estimate
the behaviour of any given power of the transfer operator.

\begin{corollary}
	There exist constants $B_3>0$, $C_2>0$ and $0<\beta_2<1$, such that for all $%
	f \in S_{\_}$, and all $n \geq 1$, it holds
	
	\begin{equation}
		|\func{P}_{T}^{n}f|_{s} \leq B_3 \beta _2 ^n | f|_{s} + C_2|f|_{1}.
		\label{lasotaiiii}
	\end{equation}
\end{corollary}

\section{Weak and strong spaces\label{sec:spaces}}

\subsection{$L^{1}$-like spaces.}

Through this section we construct some function spaces which are suitable
for the systems defined in section \ref{sec2}. The idea is to define spaces
of signed measures, where the norms are provided by disintegrating measures
along the stable foliation. Thus, a signed measure will be seen as a family
of measures on each leaf. For instance, a measure on the square with a
vertical foliation will be seen as a one parameter family (a path) of
measures on the interval (a stable leaf), where this identification will be
done by means of the Rokhlin's Disintegration Theorem. Finally, in the
vertical direction (on the leaves), we will consider a norm which is the
dual of the Lipschitz norm and in the \textquotedblleft
horizontal\textquotedblright direction we will consider essentially the $%
L_{m_1}^{1}$ norm.

\subsubsection*{Rokhlin's Disintegration Theorem}

Now we present a brief recall about disintegration of measures.

Consider a probability space $(\Sigma,\mathcal{B}, \mu)$ and a partition $%
\Gamma$ of $\Sigma$ by measurable sets $\gamma \in \mathcal{B}$. Denote by $%
\pi : \Sigma \longrightarrow \Gamma$ the projection that associates to each
point $x \in M$ the element $\gamma _x$ of $\Gamma$ which contains $x$, i.e. 
$\pi(x) = \gamma _x$. Let $\widehat{\mathcal{B}}$ be the $\sigma$-algebra of 
$\Gamma$ provided by $\pi$. Precisely, a subset $\mathcal{Q} \subset \Gamma$
is measurable if, and only if, $\pi^{-1}(\mathcal{Q}) \in \mathcal{B}$. We
define the \textit{quotient} measure $\mu _x$ on $\Gamma$ by $\mu _x(%
\mathcal{Q})= \mu(\pi ^{-1}(\mathcal{Q}))$.

The proof of the following theorem can be found in \cite{Kva}, Theorem
5.1.11.

\begin{theorem}
	(Rokhlin's Disintegration Theorem) Suppose that $\Sigma $ is a complete and
	separable metric space, $\Gamma $ is a measurable partition 
	of $\Sigma $ and $\mu $ is a probability on $\Sigma $. Then, $\mu $ admits a
	disintegration relative to $\Gamma $, i.e. a family $\{\mu _{\gamma
	}\}_{\gamma \in \Gamma }$ of probabilities on $\Sigma $ and a quotient
	measure $\mu _{x}$ as above, such that:
	
	\begin{enumerate}
		\item[(a)] $\mu _\gamma (\gamma)=1$ for $\mu _x$-a.e. $\gamma \in \Gamma$;
		
		\item[(b)] for all measurable set $E\subset \Sigma $ the function $\Gamma
		\longrightarrow \mathbb{R}$ defined by $\gamma \longmapsto \mu _{\gamma
		}(E), $ is measurable;
		
		\item[(c)] for all measurable set $E\subset \Sigma $, it holds $\mu (E)=\int 
		{\mu _{\gamma }(E)}d\mu _{x}(\gamma )$.
	\end{enumerate}
	
	\label{rok}
\end{theorem}

The proof of the following lemma can be found in \cite{Kva}, proposition
5.1.7.

\begin{lemma}
	Suppose the $\sigma $-algebra $\mathcal{B}$, on $\Sigma $, has a countable
	generator. If $$(\{\mu _{\gamma }\}_{\gamma \in \Gamma },\mu _{x})$$ and $$%
	(\{\mu _{\gamma }^{\prime }\}_{\gamma \in \Gamma },\mu _{x})$$ are
	disintegrations of the measure $\mu $ relative to $\Gamma $, then $\mu
	_{\gamma }=\mu _{\gamma }^{\prime }$, for $\mu _{x}$-almost every $\gamma
	\in \Gamma $. \label{kv}
\end{lemma}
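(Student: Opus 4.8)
The plan is to reduce the fibrewise identity $\mu_\gamma=\mu'_\gamma$ to countably many scalar almost-everywhere equalities, and then to apply a standard $\pi$--$\lambda$ argument on each fibre. First I would fix a countable generator of $\mathcal{B}$ and pass to the algebra $\mathcal{A}$ it generates; $\mathcal{A}$ is still countable, it is a $\pi$-system, and it generates $\mathcal{B}$. For a fixed $A\in\mathcal{A}$ and an arbitrary measurable $\mathcal{Q}\subset\Gamma$, I would apply item (c) of Theorem \ref{rok} to the set $E=A\cap\pi^{-1}(\mathcal{Q})$. Since $\mu_\gamma$ is concentrated on $\gamma$ by item (a), one has $\mu_\gamma\big(A\cap\pi^{-1}(\mathcal{Q})\big)=\mu_\gamma(A)$ when $\gamma\in\mathcal{Q}$ and $=0$ otherwise, so that
\[
\int_{\mathcal{Q}}\mu_\gamma(A)\,d\mu_x(\gamma)=\mu\big(A\cap\pi^{-1}(\mathcal{Q})\big)=\int_{\mathcal{Q}}\mu'_\gamma(A)\,d\mu_x(\gamma).
\]
By item (b) both integrands are $\widehat{\mathcal{B}}$-measurable, and they are bounded by $1$; since their integrals coincide over every measurable $\mathcal{Q}\subset\Gamma$, they must agree $\mu_x$-almost everywhere. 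Thus for each $A\in\mathcal{A}$ there is a set $\Gamma_A\subset\Gamma$ with $\mu_x(\Gamma\setminus\Gamma_A)=0$ on which $\mu_\gamma(A)=\mu'_\gamma(A)$.

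Next I would assemble these exceptional sets. Because $\mathcal{A}$ is countable, $\Gamma_0:=\bigcap_{A\in\mathcal{A}}\Gamma_A$ (intersected also with the full-measure set where both $\mu_\gamma$ and $\mu'_\gamma$ are probability measures carried by $\gamma$) still has full $\mu_x$-measure. For every $\gamma\in\Gamma_0$ the two Borel probability measures $\mu_\gamma$ and $\mu'_\gamma$ agree on the $\pi$-system $\mathcal{A}$, which generates $\mathcal{B}$; by Dynkin's $\pi$--$\lambda$ theorem they therefore agree on all of $\mathcal{B}$, i.e. $\mu_\gamma=\mu'_\gamma$. Hence $\mu_\gamma=\mu'_\gamma$ for $\mu_x$-almost every $\gamma\in\Gamma$, which is the assertion of the lemma.

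The only step requiring a little care is the implication ``equal integrals over all measurable $\mathcal{Q}$'' $\Rightarrow$ ``equal $\mu_x$-a.e.'', which is the elementary fact that two integrable functions with the same integral over every measurable set coincide almost everywhere — but this genuinely uses the measurability granted by item (b). The rest is routine; the point where the hypothesis that $\mathcal{B}$ has a countable generator is indispensable is precisely the passage from an a-priori uncountable family of almost-everywhere statements (one for every $E\in\mathcal{B}$) to a countable one, so that a single common null set can be discarded before invoking the $\pi$--$\lambda$ theorem.
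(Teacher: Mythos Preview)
Your argument is correct and is the standard proof of uniqueness of disintegrations. Note, however, that the paper does not actually prove this lemma: it simply cites \cite{Kva}, Proposition~5.1.7, so there is no in-paper proof to compare against. That said, the argument in the cited reference proceeds along exactly the lines you describe --- showing $\mu_\gamma(A)=\mu'_\gamma(A)$ for $\mu_x$-a.e.\ $\gamma$ and each $A$ in a countable generating family, then intersecting the countably many full-measure sets and extending to all of $\mathcal{B}$ --- so your write-up matches the intended approach.
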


\subsubsection{The $\mathcal{L}^{1}$ and $S^1$ spaces}

Let $\mathcal{SB}(\Sigma )$ be the space of Borel signed measures on $\Sigma 
$. Given $\mu \in \mathcal{SB}(\Sigma )$ denote by $\mu ^{+}$ and $\mu ^{-}$
the positive and the negative parts of its Jordan decomposition, $\mu =\mu
^{+}-\mu ^{-}$ (see remark {\ref{ghtyhh}). Let $\pi _{x}:\Sigma
	\longrightarrow N_{1}$ \ be the projection defined by $\pi (x,y)=x$, denote
	by $\pi _{x\ast }:$}$\mathcal{SB}(\Sigma )\rightarrow \mathcal{SB}(N_{1})${\
	the pushforward map associated to $\pi _{x}$. Denote by $\mathcal{AB}$ the
	set of signed measures $\mu \in \mathcal{SB}(\Sigma )$ such that its
	associated positive and negative marginal measures, $\pi _{x\ast }\mu ^{+}$
	and $\pi _{x\ast }\mu ^{-},$ are absolutely continuous with respect to the
	volume measure $m_{1}$, i.e. 
	\begin{equation*}
		\mathcal{AB}=\{\mu \in \mathcal{SB}(\Sigma ):\pi _{x\ast }\mu ^{+}<<m_{1}\ \ 
		\mathnormal{and}\ \ \pi _{x\ast }\mu ^{-}<<m_{1}\}.  \label{thespace1}
	\end{equation*}%
}Given a \emph{probability measure} $\mu \in \mathcal{AB}$ on $\Sigma $,
theorem \ref{rok} describes a disintegration $\left( \{\mu _{\gamma
}\}_{\gamma },\mu _{x}\right) $ along $\mathcal{F}^{s}$ (see equation (\ref%
{fol})) by a family $\{\mu _{\gamma }\}_{\gamma }$ of probability measures
on the stable leaves\footnote{%
	In the following to simplify notations, when no confusion is possible we
	will indicate the generic leaf or its coordinate with $\gamma $.} and, since 
$\mu \in \mathcal{AB}$, $\mu _{x}$ can be identified with a non negative
marginal density $\phi _{x}:N_{1}\longrightarrow \mathbb{R}$, defined almost
everywhere, with $|\phi _{x}|_{1}=1$. \ For a general (non normalized)
positive measure $\mu \in \mathcal{AB}$ we can define its disintegration in
the same way. In this case $\mu _{\gamma }$ are still probability measures, $%
\phi _{x}$ is still defined and $\ |\phi _{x}|_{1}=\mu (\Sigma )$.


\begin{definition}
	Let $\pi _{y}:\Sigma \longrightarrow N_{2}$ be the projection defined by $%
	\pi _{y}(x,y)=y$. Let $\gamma \in \mathcal{F}^{s}$, let us consider $\pi
	_{\gamma ,y}:\gamma \longrightarrow N_{2}$, the restriction of the map $\pi
	_{y}:\Sigma \longrightarrow N_{2}$ to the vertical leaf $\gamma $ and the
	associated pushforward map $\pi _{\gamma ,y\ast }$. Given a positive measure 
	$\mu \in \mathcal{AB}$ and its disintegration along the stable leaves $%
	\mathcal{F}^{s}$, $\left( \{\mu _{\gamma }\}_{\gamma },\mu _{x}=\phi
	_{x}m_{1}\right) $, we define the \textbf{restriction of $\mu $ on $\gamma $}
	and denote it by $\mu |_{\gamma }$ as the positive measure on $N_{2}$ (not
	on the leaf $\gamma $) defined, for all mensurable set $A\subset N_{2}$, as 
	\begin{equation*}
		\mu |_{\gamma }(A)=\pi _{\gamma ,y\ast }(\phi _{x}(\gamma )\mu _{\gamma
		})(A).
	\end{equation*}%
	For a given signed measure $\mu \in \mathcal{AB}$ and its Jordan
	decomposition $\mu =\mu ^{+}-\mu ^{-}$, define the \textbf{restriction of $%
		\mu $ on $\gamma $} by%
	\begin{equation*}
		\mu |_{\gamma }=\mu ^{+}|_{\gamma }-\mu ^{-}|_{\gamma }.
	\end{equation*}%
	\label{restrictionmeasure}
\end{definition}

\begin{remark}
	\label{ghtyhh}As we will prove in Corollary \ref{lasttttt}, the restriction $%
	\mu |_{\gamma }$ does not depend on the decomposition. Precisely, if $\mu
	=\mu _{1}-\mu _{2}$, where $\mu _{1}$ and $\mu _{2}$ are any positive
	measures, then $\mu |_{\gamma }=\mu _{1}|_{\gamma }-\mu _{2}|_{\gamma }$ $%
	m_{1}$-a.e. $\gamma \in N_{1}$.
\end{remark}

Let $(X,d)$ be a compact metric space, $g:X\longrightarrow \mathbb{R}$ be a
Lipschitz function and let $L(g)$ be its best Lipschitz constant, i.e. 
\begin{equation}\label{lipsc}
	\displaystyle{L(g)=\sup_{x,y\in X,x\neq y}\left\{ \dfrac{|g(x)-g(y)|}{d(x,y)}%
		\right\} }.
\end{equation}

\begin{definition}
	Given two signed measures $\mu $ and $\nu $ on $X,$ we define a \textbf{\
		Wasserstein-Kantorovich Like} distance between $\mu $ and $\nu $ by 
	\begin{equation*}
		W_{1}^{0}(\mu ,\nu )=\sup_{L(g)\leq 1,|g|_{\infty }\leq 1}\left\vert \int {\
			g}d\mu -\int {g}d\nu \right\vert .
	\end{equation*}%
	\label{wasserstein}
\end{definition}

From now, we denote%
\begin{equation}
	||\mu ||_{W}:=W_{1}^{0}(0,\mu ).  \label{WW}
\end{equation}%
As a matter of fact, $||\cdot ||_{W}$ defines a norm on the vector space of
signed measures defined on a compact metric space. It is worth to remark
that this norm is equivalent to the dual of the Lipschitz norm.

\begin{definition}
	Let $\mathcal{L}^{1}\subseteq \mathcal{AB}$ be defined as%
	\begin{equation*}
		\mathcal{L}^{1}=\left\{ \mu \in \mathcal{AB}:\int_{N_{1}}{W_{1}^{0}(\mu
			^{+}|_{\gamma },\mu ^{-}|_{\gamma })}dm_{1}(\gamma )<\infty \right\}
		\label{L1measurewithsign}
	\end{equation*}%
	and define a norm on it, $||\cdot ||_{1}:\mathcal{L}^{1}\longrightarrow 
	\mathbb{R}$, by%
	\begin{equation*}
		||\mu ||_{1}=\int_{N_{1}}{W_{1}^{0}(\mu ^{+}|_{\gamma },\mu ^{-}|_{\gamma })}%
		dm_{1}(\gamma ).  \label{l1normsm}
	\end{equation*}%
	\label{l1likespace} Here the measurability of the integrand follows by the
	measurability of the disintegration established at Item b) of Theorem \ref%
	{rok}.
\end{definition}

Now, we define the following set of signed measures on $\Sigma $,%
\begin{equation}  \label{S1}
	S^{1}=\left\{ \mu \in \mathcal{L}^{1};\phi _{x}\in S_{\_}\right\}.
\end{equation}
Consider the function $||\cdot ||_{S^{1}}:S^{1}\longrightarrow \mathbb{R}$,
defined by%
\begin{equation*}
	||\mu ||_{S^{1}}=|\phi _{x}|_{s}+||\mu ||_{1},
\end{equation*}%
where we denote $\phi _{x}=\phi _{x}^{+}-\phi _{x}^{-}$ with $\phi
_{x}^{\pm} $ being the marginals of $\mu ^{\pm}$ as explained before.
Moreover, $\phi _{x}$ is the marginal density of the disintegration of $\mu $
and we remark that $\phi _{x}^{+}$ is not necessarily equal to the positive
part of $\phi _{x}$.

The proof of the next proposition is straightforward. Details can be found
in \cite{L}.

\begin{proposition}
	$\left( \mathcal{L}^{1},||\cdot ||_{1}\right) $ and $\left( S^{1},||\cdot
	||_{S^{1}}\right) $ are normed vector spaces. \label{propnorm1}
\end{proposition}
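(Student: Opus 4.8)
The plan is to verify the three defining properties of a normed vector space—positive definiteness, absolute homogeneity, and the triangle inequality—for each of the two candidate norms, reducing everything to the already-established fact (stated after Definition \ref{wasserstein}) that $||\cdot||_W$ is a norm on the space of signed measures on a compact metric space, together with the fact that $|\cdot|_s$ is a Banach-space norm on $S_{\_}$ and $|\cdot|_1$ is the usual $L^1$ norm. The only genuinely non-routine point is checking that $\mathcal{L}^1$ and $S^1$ are actually vector subspaces of $\mathcal{SB}(\Sigma)$, i.e. closed under addition and scalar multiplication, since the defining conditions are phrased in terms of the Jordan decomposition $\mu=\mu^+-\mu^-$ and the disintegration, both of which behave non-linearly in $\mu$.

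First I would address $\left(\mathcal{L}^1,||\cdot||_1\right)$. For scalar multiplication, note that for $c\geq 0$ one has $(c\mu)^{\pm}=c\mu^{\pm}$ and hence $(c\mu)|_\gamma=c(\mu|_\gamma)$ leafwise, while for $c<0$ the roles of $\mu^+$ and $\mu^-$ are exchanged; in either case $W_1^0\big((c\mu)^+|_\gamma,(c\mu)^-|_\gamma\big)=|c|\,W_1^0(\mu^+|_\gamma,\mu^-|_\gamma)$ by homogeneity of $W_1^0$, so $\mathcal{L}^1$ is scaling-invariant and $||c\mu||_1=|c|\,||\mu||_1$. For closure under addition the key observation is that $W_1^0(\mu^+|_\gamma,\mu^-|_\gamma)$ equals $||\mu|_\gamma||_W$ where $\mu|_\gamma:=\mu^+|_\gamma-\mu^-|_\gamma$ is a well-defined signed measure on $N_2$ depending linearly on $\mu$ (this linearity is the one slightly delicate step: the restriction operation $\mu\mapsto\mu|_\gamma$ is additive because disintegration and the pushforwards $\pi_{\gamma,y}^*$ are, even though the Jordan decomposition is not). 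Granting that, $||(\mu+\nu)|_\gamma||_W\leq||\mu|_\gamma||_W+||\nu|_\gamma||_W$ leafwise by the triangle inequality for $||\cdot||_W$; integrating over $N_1$ shows $\mu+\nu\in\mathcal{L}^1$ and gives the triangle inequality $||\mu+\nu||_1\leq||\mu||_1+||\nu||_1$. Positive definiteness follows since $||\mu||_1=0$ forces $\mu|_\gamma=0$ for $m_1$-a.e. $\gamma$, hence $\mu=0$ by the disintegration formula (Theorem \ref{rok}(c)).

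Next I would treat $\left(S^1,||\cdot||_{S^1}\right)$. Since the marginal density map $\mu\mapsto\phi_x$ (defined via $\phi_x=\phi_x^+-\phi_x^-$ using the marginals of $\mu^\pm$) is linear in $\mu$ by the same argument as above, and $S_{\_}$ is a vector space, $S^1=\{\mu\in\mathcal{L}^1:\phi_x\in S_{\_}\}$ is a linear subspace of $\mathcal{L}^1$. Then $||\mu||_{S^1}=|\phi_x|_s+||\mu||_1$ is a sum of two seminorms each coming from a genuine norm composed with a linear map, so it is a seminorm; homogeneity and the triangle inequality are immediate from those of $|\cdot|_s$ and $||\cdot||_1$. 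For positive definiteness, $||\mu||_{S^1}=0$ already implies $||\mu||_1=0$, hence $\mu=0$ by the previous paragraph (the term $|\phi_x|_s$ is not even needed here).

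The main obstacle, as indicated, is purely bookkeeping: establishing that the assignments $\mu\mapsto\mu|_\gamma$ and $\mu\mapsto\phi_x$ are well-defined and $\mathbb{R}$-linear despite being built from the (non-linear) Jordan decomposition. This is handled by observing that for a signed measure $\mu=\mu^+-\mu^-$ the object $\mu|_\gamma$ does not depend on the particular choice of a decomposition of $\mu$ as a difference of positive measures—any such decomposition yields the same signed measure on $N_2$ after disintegrating and pushing forward—so one may compute $(\mu+\nu)|_\gamma$ using the decomposition $(\mu^++\nu^+)-(\mu^-+\nu^-)$ and read off additivity directly. Once this is in place, all the remaining verifications are the routine ones recalled above, and the full details are as in \cite{L}.
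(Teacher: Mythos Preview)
Your proposal is correct and matches what the paper does: the paper itself gives no proof, calling it ``straightforward'' and deferring details to \cite{L}, and your outline is precisely the routine verification one expects, with the one genuinely non-obvious point---that the leafwise restriction $\mu\mapsto\mu|_\gamma$ is linear despite being defined through the Jordan decomposition---correctly isolated and handled via independence from the choice of positive decomposition together with additivity of disintegration. Nothing further is needed.
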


In the following $\left( \mathcal{L}^{1},||\cdot ||_{1}\right) $ and $\left(
S^{1},||\cdot ||_{S^{1}}\right) $ will play the role of a strong and weak
space, for which we will prove a Lasota-Yorke inequality and deduce other
important consequences, as the exponential convergence to equilibrium and
spectral gap for the operator considered on the strong space.

\subsection{$L^{\infty }$ like spaces}

Stronger spaces which can be considered with the above approach can be
defined easily, we show an example of a $L^{\infty }$ like space.

\begin{definition}
	Let $\mathcal{L}^{\infty }\subseteq \mathcal{AB}(\Sigma )$ be defined as%
	\begin{equation*}
		\mathcal{L}^{\infty }=\left\{ \mu \in \mathcal{AB}:\esssup ({W_{1}^{0}(\mu
			^{+}|_{\gamma },\mu ^{-}|_{\gamma }))}<\infty \right\},
	\end{equation*}%
	where the essential supremum is taken over $N_{1}$ with respect to $m_{1}$.
	Define the function $||\cdot ||_{\infty }:\mathcal{L}^{\infty
	}\longrightarrow \mathbb{R}$ by%
	\begin{equation*}
		||\mu ||_{\infty }=\esssup ({W_{1}^{0}(\mu ^{+}|_{\gamma },\mu ^{-}|_{\gamma
			}))}.
	\end{equation*}
\end{definition}

Finally, consider the following set of signed measures on $\Sigma $%
\begin{equation}\label{sinfi}
	S^{\infty }=\left\{ \mu \in \mathcal{L}^{\infty };\phi _{x}\in
	S_{\_}\right\},
\end{equation}%
and the function, $||\cdot ||_{S^{\infty }}:S^{\infty }\longrightarrow 
\mathbb{R}$, defined by%
\begin{equation*}
	||\mu ||_{S^{\infty }}=|\phi _{x}|_{s}+||\mu ||_{\infty }.
\end{equation*}

The proof of the next proposition is straightforward and can be found in 
\cite{L}.

\begin{proposition}
	$\left( \mathcal{L}^{\infty },||\cdot ||_{\infty }\right) $ and $\left(
	S^{\infty },||\cdot||_{S^{\infty }}\right) $ are normed vector spaces.
\end{proposition}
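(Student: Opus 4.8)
The plan is to follow the same scheme as Proposition \ref{propnorm1}, the only change being that an essential supremum over leaves replaces the integral over leaves. The first step is a reformulation: for every leaf $\gamma$ one has $W_{1}^{0}(\mu ^{+}|_{\gamma },\mu ^{-}|_{\gamma })=\|\mu |_{\gamma }\|_{W}$, where $\mu |_{\gamma }=\mu ^{+}|_{\gamma }-\mu ^{-}|_{\gamma }$ is the signed measure on $N_{2}$ of Definition \ref{restrictionmeasure}; this is immediate from Definition \ref{wasserstein} and \eqref{WW}, since $\int g\,d\mu ^{+}|_{\gamma }-\int g\,d\mu ^{-}|_{\gamma }=\int g\,d(\mu |_{\gamma })$. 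Thus $\|\mu \|_{\infty }=\esssup _{\gamma }\|\mu |_{\gamma }\|_{W}$, and the problem reduces to understanding the map $\mu \mapsto \{\mu |_{\gamma }\}_{\gamma }$ and then invoking that $\|\cdot \|_{W}$ is a norm on the signed measures on $N_{2}$ (as remarked after Definition \ref{wasserstein}).

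The key step, and the one requiring a little care, is that $\mu \mapsto \{\mu |_{\gamma }\}_{\gamma }$ is \emph{linear}. The difficulty is that the restriction is defined through the Jordan decomposition $\mu =\mu ^{+}-\mu ^{-}$, which is not additive. One resolves this by characterizing the family: for $\mu \in \mathcal{AB}$, the family $\{\mu |_{\gamma }\}_{\gamma }$ of signed measures on $N_{2}$ is, modulo an $m_{1}$-null set of leaves, the unique family for which $\gamma \mapsto \mu |_{\gamma }(A)$ is measurable with $\int _{N_{1}}|\mu |_{\gamma }|(N_{2})\,dm_{1}<\infty $ and $\mu (B\times A)=\int _{B}\mu |_{\gamma }(A)\,dm_{1}(\gamma )$ for all Borel $B\subseteq N_{1}$, $A\subseteq N_{2}$. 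Existence follows by applying Theorem \ref{rok} to $\mu ^{+}$ and $\mu ^{-}$ separately (legitimate because $\mu \in \mathcal{AB}$ makes both marginals absolutely continuous, so their quotient measures are $\phi _{x}^{\pm }m_{1}$) and subtracting, using part (c) of Theorem \ref{rok}; uniqueness follows from Lemma \ref{kv}, by fixing $A$ in a countable algebra generating the Borel sets of $N_{2}$, deducing $m_{1}$-a.e. equality of $\mu |_{\gamma }(A)$ for each such $A$, and intersecting the countably many null sets. Since $\{\mu |_{\gamma }+\nu |_{\gamma }\}_{\gamma }$ (resp.\ $\{\lambda \,\mu |_{\gamma }\}_{\gamma }$) satisfies the characterizing property for $\mu +\nu $ (resp.\ $\lambda \mu $), uniqueness yields $(\mu +\nu )|_{\gamma }=\mu |_{\gamma }+\nu |_{\gamma }$ and $(\lambda \mu )|_{\gamma }=\lambda \,\mu |_{\gamma }$ for $m_{1}$-a.e.\ $\gamma $.

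With linearity in hand the rest is bookkeeping. First, $\mathcal{AB}$ is a vector space, since $(\mu +\nu )^{\pm }\le \mu ^{\pm }+\nu ^{\pm }$ as measures forces $\pi _{x}^{\ast }(\mu +\nu )^{\pm }\ll m_{1}$; and $\mathcal{L}^{\infty }$ is a subspace because $\|\mu +\nu \|_{\infty }\le \|\mu \|_{\infty }+\|\nu \|_{\infty }<\infty $, which itself follows from the leafwise triangle inequality for $\|\cdot \|_{W}$ together with the subadditivity and positive homogeneity of $\esssup $ on nonnegative measurable functions. Those same properties show $\|\cdot \|_{\infty }$ is a seminorm, and it is a norm: if $\|\mu \|_{\infty }=0$ then $\|\mu |_{\gamma }\|_{W}=0$, hence $\mu |_{\gamma }=0$, for $m_{1}$-a.e.\ $\gamma $, so $\mu (B\times A)=0$ on all rectangles and $\mu =0$ by a monotone-class argument. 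For $S^{\infty }$: the marginal $\phi _{x}$ depends linearly on $\mu $ because $\pi _{x}^{\ast }$ is linear on signed measures, so $\phi _{x}\in S_{\_}$ is stable under linear combinations ($S_{\_}$ being a Banach space), whence $S^{\infty }$ is a subspace of $\mathcal{L}^{\infty }$; and $\|\mu \|_{S^{\infty }}=|\phi _{x}|_{s}+\|\mu \|_{\infty }$ is the sum of the seminorm $\mu \mapsto |\phi _{x}|_{s}$ and the norm $\|\cdot \|_{\infty }$, hence a norm. The only genuinely nontrivial point in the whole argument is the linearity of the restriction map, which is precisely why the uniqueness statement of Lemma \ref{kv} must be applied in the form above; everything else is as in \cite{L}.
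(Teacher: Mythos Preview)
Your argument is correct. The paper itself does not supply a proof; it merely declares the result ``straightforward'' and refers to \cite{L}, so there is no approach to compare against beyond that remark. Your write-up supplies exactly the details one would expect: the identification $\|\mu\|_{\infty}=\esssup_{\gamma}\|\mu|_{\gamma}\|_{W}$, the observation that the only genuinely non-routine step is the $m_{1}$-a.e.\ linearity of $\mu\mapsto\{\mu|_{\gamma}\}_{\gamma}$ (because the restriction is defined through the Jordan decomposition), and the resolution of that point via the characterizing disintegration identity $\mu(B\times A)=\int_{B}\mu|_{\gamma}(A)\,dm_{1}(\gamma)$ together with a countable-generator uniqueness argument. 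One small remark: your appeal to Lemma~\ref{kv} is really an adaptation of its idea rather than a direct application, since that lemma as stated concerns probability measures and Rokhlin disintegrations; the self-contained Radon--Nikodym style argument you actually give (fix $A$, deduce a.e.\ equality, intersect null sets) is what does the work, and it stands on its own.
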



\section{The transfer operator associated to $F$}

In this section we consider the transfer operator associated to skew product
maps as defined in Section 2, acting on our disintegrated measures spaces
defined in Section 3. For such transfer operators and measures we prove a
kind of Perron-Frobenius formula, which is somewhat similar to the one used
for one-dimensional maps.

Consider the pushforward map $\func{F}_{\ast }$ associated with $F$, defined
by 
\begin{equation*}
	\lbrack \func{F}_{\ast }\mu ](E)=\mu (F^{-1}(E)),
\end{equation*}%
for each signed measure $\mu \in \mathcal{SB}(\Sigma )$ and for each
measurable set $E\subset \Sigma $. When $\func{F}_{\ast }$ is considered on
the vector space $\mathcal{SB}(\Sigma )$ or on suitable vector subspaces of
more regular measures, $\func{F}_{\ast }$ is a linear map, beacuse of this
we also call it "transfer operator associated to $F$".

\begin{lemma}
	\label{transformula}For all probability $\mu \in \mathcal{AB}$ disintegrated
	by $(\{\mu _{\gamma }\}_{\gamma },\phi _{x})$, the disintegration $(\{(\func{%
		F}_{\ast }\mu )_{\gamma }\}_{\gamma },(\func{F}_{\ast }\mu )_{x})$ of the
	pushforward $\func{F}_{\ast }\mu $ \ satisfies the following relations%
	\begin{equation}
		(\func{F}_{\ast }\mu )_{x}=\func{P}_{T}(\phi _{x})m_{1}  \label{1}
	\end{equation}
	and
	\begin{equation}
		(\func{F}_{\ast }\mu )_{\gamma }=\nu _{\gamma }:=\frac{1}{\func{P}_{T}(\phi
			_{x})(\gamma )}\sum_{i=1}^{q}{\frac{\phi _{x}}{|\det DT_{i}|}\circ
			T_{i}^{-1}(\gamma )\cdot \chi _{T_{i}(P_{i})}(\gamma )\cdot \func{F}_{\ast
			}\mu _{T_{i}^{-1}(\gamma )}}  \label{2}
	\end{equation}
	when $\func{P}_{T}(\phi _{x})(\gamma )\neq 0$. Otherwise, if $\func{P}%
	_{T}(\phi _{x})(\gamma )=0$, then $\nu _{\gamma }$ is the Lebesgue measure
	on $\gamma $ (the expression $\displaystyle{\frac{\phi _{x}}{|\det DT_{i}|}%
		\circ T_{i}^{-1}(\gamma )\cdot \frac{\chi _{T_{i}(P_{i})}(\gamma )}{\func{P}%
			_{T}(\phi _{x})(\gamma )}\cdot \func{F}_{\ast }\mu _{T_{i}^{-1}(\gamma )}}$
	is understood to be zero outside $T_{i}(P_{i})$ for all $i=1,\cdots ,q$).
	Here and above, $\chi _{A}$ is the characteristic function of the set $A$.
\end{lemma}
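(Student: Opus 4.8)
The plan is to verify formula~\eqref{2} by testing $(\func{F}^{\ast }\mu)$ against an arbitrary measurable function on $\Sigma$ and checking that the candidate disintegration reproduces the correct value; by the uniqueness part of Rokhlin's theorem (Lemma~\ref{kv}, using that $\mathcal{B}$ has a countable generator) this suffices. First I would establish \eqref{1}: for a measurable $\mathcal{Q}\subseteq \mathcal{F}^s$, identified with a measurable $A\subseteq N_1$, we have $(\func{F}^{\ast}\mu)_x(\mathcal{Q}) = \func{F}^{\ast}\mu(\pi_x^{-1}A) = \mu(F^{-1}\pi_x^{-1}A) = \mu(\pi_x^{-1}T^{-1}A)$, since $\pi_x\circ F = T\circ \pi_x$ by the skew-product form~\eqref{1eq}; this equals $\mu_x(T^{-1}A) = \int_{T^{-1}A}\phi_x\,dm_1 = \int_A \func{P}_T(\phi_x)\,dm_1$ by definition of the Perron--Frobenius operator, which is exactly \eqref{1}. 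In particular the new marginal density is $\func{P}_T(\phi_x)$, explaining the normalizing factor $1/\func{P}_T(\phi_x)(\gamma)$ appearing in \eqref{2}.

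Next I would compute, for a bounded measurable $\psi:\Sigma\to\mathbb{R}$,
\begin{equation*}
\int_{\Sigma}\psi\, d(\func{F}^{\ast}\mu) = \int_{\Sigma}(\psi\circ F)\,d\mu = \int_{N_1}\left(\int_{\gamma}(\psi\circ F)\,d\mu_{\gamma}\right)\phi_x(\gamma)\,dm_1(\gamma),
\end{equation*}
using the disintegration $(\{\mu_\gamma\},\phi_x m_1)$ of $\mu$. On the leaf $\gamma = \{x\}\times N_2$ the map $F$ sends $\gamma$ into the leaf $\gamma' = \{T(x)\}\times N_2$, acting there as $G(x,\cdot)$; so $\int_{\gamma}(\psi\circ F)\,d\mu_{\gamma} = \int_{\gamma'}\psi\, d(\func{F}^{\ast}\mu_{\gamma})$ where $\func{F}^{\ast}\mu_\gamma$ is the pushforward to the image leaf. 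Then I would split $N_1$ according to the partition $\mathcal{P}$, change variables $\gamma = T_i(\gamma'')$ on each $P_i$ (legitimate by T2, with Jacobian $|\det DT_i|$), and relabel. This turns the integral into
\begin{equation*}
\int_{N_1}\left(\int_{\gamma}\psi\, d\!\left(\sum_{i=1}^{q}\frac{\phi_x}{|\det DT_i|}\circ T_i^{-1}(\gamma)\,\chi_{T_i(P_i)}(\gamma)\,\func{F}^{\ast}\mu_{T_i^{-1}(\gamma)}\right)\right)dm_1(\gamma),
\end{equation*}
and dividing/multiplying by $\func{P}_T(\phi_x)(\gamma)$ and using \eqref{1} identifies the inner measure, after normalization, with $\nu_\gamma$ and the outer measure with $(\func{F}^{\ast}\mu)_x$. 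Comparing with $\int_\Sigma \psi\,d(\func{F}^{\ast}\mu) = \int_{N_1}(\int_\gamma \psi\, d(\func{F}^{\ast}\mu)_\gamma)\,d(\func{F}^{\ast}\mu)_x(\gamma)$ and invoking uniqueness gives $(\func{F}^{\ast}\mu)_\gamma = \nu_\gamma$ for $(\func{F}^{\ast}\mu)_x$-a.e.\ $\gamma$.

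The main obstacle is bookkeeping rather than any deep difficulty: one must handle with care the set where $\func{P}_T(\phi_x)(\gamma)=0$ (there the relevant leaves carry no mass of $\func{F}^{\ast}\mu$, so $\nu_\gamma$ may be defined arbitrarily, e.g.\ as Lebesgue measure on $\gamma$, without affecting the identity, which is why the statement allows this), and one must check measurability of $\gamma\mapsto \nu_\gamma$ and that each $\nu_\gamma$ is indeed a probability on $\gamma$ — the latter follows by testing against $\psi\equiv 1$ on a single leaf, which reduces precisely to the pointwise Perron--Frobenius identity $\sum_i \frac{\phi_x}{|\det DT_i|}\circ T_i^{-1}(\gamma)\,\chi_{T_i(P_i)}(\gamma) = \func{P}_T(\phi_x)(\gamma)$. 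I would also note that the formula as written is for probability measures; the signed case is then obtained by applying it to $\mu^+$ and $\mu^-$ separately and using linearity of $\func{F}^{\ast}$, consistent with Definition~\ref{restrictionmeasure}.
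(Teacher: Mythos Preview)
Your proposal is correct and follows essentially the same route as the paper: verify the candidate disintegration by integration and a change of variables $\gamma = T_i(\beta)$ on each branch, then invoke the uniqueness in Lemma~\ref{kv}. The paper tests against characteristic functions of measurable sets $E\subset\Sigma$ rather than general bounded $\psi$, and handles the null sets by explicitly partitioning $N_1$ into $B_1=\{T^{-1}(\gamma)=\emptyset\}$, $B_2=\{\func{P}_T(\phi_x)=0\}\setminus B_1$, and their complement $B_3$, but these are cosmetic differences; your separate derivation of \eqref{1} and your remarks on the zero set and on $\nu_\gamma$ being a probability are in fact a bit more explicit than what the paper writes out.
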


\begin{proof}
	By the uniqueness of the disintegration (see Lemma \ref{kv} ) is enough to
	prove the following equation 
	\begin{equation*}
		\func{F}_{\ast }\mu (E)=\int_{N_{1}}{\nu _{\gamma }(E\cap \gamma )}\func{P}%
		_{T}(\phi _{x})(\gamma )dm_1(\gamma) ,
	\end{equation*}%
	for a measurable set $E\subset \Sigma $. For this purpose, let us define the
	sets $B_{1}=\left\{ \gamma \in N_{1};T^{-1}(\gamma )=\emptyset \right\} $, $%
	B_{2}=\left\{ \gamma \in B_{1}^{c};\func{P}_{T}(\phi _{x})(\gamma
	)=0\right\} $ and $B_{3}=\left( B_{1}\cup B_{2}\right) ^{c}$. The following
	properties can be easily proven:
	
	\begin{enumerate}
		\item[1.] $B_i \cap B_j = \emptyset$, $T^{-1}(B_i) \cap T^{-1}(B_j) =
		\emptyset$, for all $1\leq i,j \leq 3$ such that $i \neq j$ and $\bigcup
		_{i=1} ^{3} {B_i} = \bigcup _{i=1} ^{3} {T^{-1}(B_i)} = N_1$;
		
		\item[2.] $m_{1}(T^{-1}(B_{1}))=\phi _{x}m_{1}(T^{-1}(B_{2}))=0$;
	\end{enumerate}
	
	Using the change of variables $\gamma =T_{i}(\beta )$ and the definition of $%
	\nu _{\gamma }$ (see (\ref{2})), we have 
	\begin{equation*}
		\begin{split}
			&\int_{N_{1}}{\nu _{\gamma }(E\cap \gamma )}\func{P}_{T}(\phi _{x})(\gamma
			)dm_1(\gamma) \\
			=&\int_{B_{3}}{\sum_{i=1}^{q}{\ {\frac{\phi _{x}}{|\det DT_{i}|}%
						\circ T_{i}^{-1}(\gamma )\func{F}_{\ast }\mu _{T_{i}^{-1}(\gamma )}(E)\chi
						_{T_{i}(P_{i})(\gamma )}}}}dm_{1}(\gamma ) \\
			=&\sum_{i=1}^{q}{\int_{T_{i}(P_{i})\cap B_{3}}{\ {\frac{\phi _{x}}{|\det
							DT_{i}|}\circ T_{i}^{-1}(\gamma )\func{F}_{\ast }\mu _{T_{i}^{-1}(\gamma
							)}(E)}}}dm_{1}(\gamma ) \\
			=&\sum_{i=1}^{q}{\int_{P_{i}\cap T_{i}^{-1}(B_{3})}{\ {\phi _{x}(\beta )\mu
						_{\beta }(F^{-1}(E))}}}dm_{1}(\beta ) \\
			=&{\int_{T^{-1}(B_{3})}{\ {\phi _{x}(\beta )\mu _{\beta }(F^{-1}(E))}}}%
			dm_{1}(\beta ) \\
			=&\int_{\bigcup_{i=1}^{3}{T^{-1}(B_{i})}}{\ {\ \mu _{\beta }(F^{-1}(E))}}%
			d\phi _{x}m_{1}(\beta ) \\
			=&\int_{N_{1}}{\ {\ \mu _{\beta }(F^{-1}(E))}}d\phi _{x}m_{1}(\beta ) \\
			=&\mu (F^{-1}(E)) \\
			=&\func{F}_{\ast }\mu (E).
		\end{split}
	\end{equation*}%
	And the proof is done.
\end{proof}

As said in Remark \ref{ghtyhh}, Corollary \ref{lasttttt} yields that the
restriction $\mu |_{\gamma }$ does not depend on the decomposition. Thus,
for each $\mu \in \mathcal{L}^{1}$, since $\func{F}^{\ast }\mu $ can be
decomposed as $\func{F}_{\ast }\mu =\func{F}_{\ast }(\mu ^{+})-\func{F}%
_{\ast }(\mu ^{-})$, we can apply the above Lemma to $\func{F}_{\ast }(\mu
^{+})$ and $\func{F}_{\ast }(\mu ^{-})$ to get the following.

\begin{proposition}
	\label{niceformulaab}Let $\gamma \in \mathcal{F}^{s}$ be a stable leaf. Let
	us define the map $F_{\gamma }:N_{2}\longrightarrow N_{2}$ by 
	\begin{equation}\label{ritiruwt}
		F_{\gamma }=\pi _{y}\circ F|_{\gamma }\circ \pi _{\gamma ,y}^{-1}.
	\end{equation}%
	Then, for each $\mu \in \mathcal{L}^{1}$ and for almost all $\gamma \in
	N_{1} $ (interpreted as the quotient space of leaves) it holds 
	\begin{equation}
		(\func{F}_{\ast }\mu )|_{\gamma }=\sum_{i=1}^{q}{\dfrac{\func{F}%
				_{T_{i}^{-1}(\gamma )\ast }\mu |_{T_{i}^{-1}(\gamma )}}{|[\det
				DT_{i}](T_{i}^{-1}(\gamma ))|}\chi _{T_{i}(P_{i})}(\gamma )}\ \ m_{1}%
		\mathnormal{-a.e.}\ \ \gamma \in N_{1}  \label{niceformulaa}
	\end{equation}%
	where $\func{F}_{T_{i}^{-1}(\gamma )\ast }$ is the pushforward map
	associated to $\func{F}_{T_{i}^{-1}(\gamma )}.$
\end{proposition}

\section{Basic properties of the norms and convergence to equilibrium}

In this section, we show important properties of the norms and their
behaviour with respect to the transfer operator. In particular, we prove
that the $\mathcal{L}^{1}$ norm is weakly contracted. We prove Lasota-Yorke
like inequalities for the strong norms and exponential convergence to
equilibrium. All these properties will be used in next section to prove the
spectral gap for the transfer operator associated to the system $F:\Sigma
\rightarrow \Sigma $.

\begin{proposition}[The weak norm is weakly contracted by $\func{F}_{\ast }$]

	\label{l1} If $\mu \in \mathcal{L}^{1}$ then 
	\begin{equation*}
		||\func{F}_{\ast }\mu ||_{1}\leq ||\mu ||_{1}.
	\end{equation*}%
	\label{weakcontral11234}
\end{proposition}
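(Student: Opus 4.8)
The plan is to use the ``Perron--Frobenius''-like formula from Proposition~\ref{niceformulaab}, which expresses $(\func{F}^{\ast}\mu)|_{\gamma}$ as a finite weighted sum of push-forwards of the restrictions $\mu|_{T_i^{-1}(\gamma)}$, and then to integrate $W_1^0((\func{F}^{\ast}\mu)^{+}|_{\gamma},(\func{F}^{\ast}\mu)^{-}|_{\gamma})$ over $\gamma\in N_1$, performing a change of variables to undo the action of $T$. First I would observe that applying Proposition~\ref{niceformulaab} to $\mu^{+}$ and to $\mu^{-}$ separately, and using that the push-forward of a positive measure is positive, we get
\begin{equation*}
(\func{F}^{\ast}\mu)^{+}|_{\gamma}\leq \sum_{i=1}^{q}\frac{(F_{T_i^{-1}(\gamma)}^{\ast}\mu^{+}|_{T_i^{-1}(\gamma)})}{|\det DT_i(T_i^{-1}(\gamma))|}\chi_{T_i(P_i)}(\gamma),
\end{equation*}
and similarly for $(\func{F}^{\ast}\mu)^{-}|_{\gamma}$; more precisely $(\func{F}^{\ast}\mu)^{+}|_{\gamma}-(\func{F}^{\ast}\mu)^{-}|_{\gamma}$ equals the signed sum, so by the triangle inequality for $W_1^0$ (which follows from its definition as a sup of linear functionals, together with the fact that the minimum of the Jordan decomposition is dominated: $W_1^0(\nu^{+},\nu^{-})\le W_1^0(\rho_1,\rho_2)$ whenever $\nu=\rho_1-\rho_2$ with $\rho_1,\rho_2\ge 0$).

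The key elementary facts I would isolate are: (i) $W_1^0$ satisfies the triangle-type bound $W_1^0\bigl(\sum a_i\rho_i^{+},\sum a_i\rho_i^{-}\bigr)\le\sum a_i W_1^0(\rho_i^{+},\rho_i^{-})$ for nonnegative coefficients $a_i$ and nonnegative measures $\rho_i^{\pm}$ on $N_2$; and (ii) push-forward by the maps $F_\gamma:N_2\to N_2$ does not increase $W_1^0$, i.e. $W_1^0(F_\gamma^{\ast}\rho_1,F_\gamma^{\ast}\rho_2)\le W_1^0(\rho_1,\rho_2)$. Fact (ii) holds because if $g:N_2\to\mathbb{R}$ has $\|g\|_\infty\le 1$ and $L(g)\le 1$, then $g\circ F_\gamma$ has $\|g\circ F_\gamma\|_\infty\le 1$ and, by the contraction hypothesis \textbf{G1} (equation~(\ref{contracting1})), $L(g\circ F_\gamma)\le\alpha<1\le 1$; hence $\int g\, d(F_\gamma^{\ast}\rho_1-F_\gamma^{\ast}\rho_2)=\int g\circ F_\gamma\, d(\rho_1-\rho_2)$ is controlled by $W_1^0(\rho_1,\rho_2)$. (Here one only needs $L(g\circ F_\gamma)\le 1$, so even $\alpha$ close to $1$ suffices; the genuine contraction will be exploited later, in the Lasota--Yorke estimates, not here.)

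Combining these, for almost every $\gamma$,
\begin{equation*}
W_1^0\bigl((\func{F}^{\ast}\mu)^{+}|_{\gamma},(\func{F}^{\ast}\mu)^{-}|_{\gamma}\bigr)\le\sum_{i=1}^{q}\frac{\chi_{T_i(P_i)}(\gamma)}{|\det DT_i(T_i^{-1}(\gamma))|}\, W_1^0\bigl(\mu^{+}|_{T_i^{-1}(\gamma)},\mu^{-}|_{T_i^{-1}(\gamma)}\bigr).
\end{equation*}
I would then integrate over $\gamma\in N_1$ with respect to $m_1$, split the sum, and in the $i$-th term change variables via $\gamma=T_i(\beta)$, $\beta\in P_i$. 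Since $T_i$ is a diffeomorphism onto $T_i(P_i)$ with Jacobian $|\det DT_i|$, the factor $|\det DT_i(T_i^{-1}(\gamma))|^{-1}$ is exactly cancelled by the change-of-variables Jacobian, yielding $\sum_{i=1}^q\int_{P_i}W_1^0(\mu^{+}|_\beta,\mu^{-}|_\beta)\,dm_1(\beta)=\int_{N_1}W_1^0(\mu^{+}|_\beta,\mu^{-}|_\beta)\,dm_1(\beta)=\|\mu\|_1$, using $m_1(\bigcup P_i)=1$ from \textbf{T2}. This gives $\|\func{F}^{\ast}\mu\|_1\le\|\mu\|_1$.

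The main obstacle is not any single hard estimate but the careful bookkeeping with the signed/Jordan decompositions: one must check that the restrictions $(\func{F}^{\ast}\mu)^{\pm}|_\gamma$ appearing in the norm are indeed dominated, leafwise, by the corresponding nonnegative combinations coming from $\mu^{\pm}$, so that the monotonicity property (i) of $W_1^0$ applies. This is where one uses that $\func{F}^{\ast}$ maps positive measures to positive measures and that the formula in Proposition~\ref{niceformulaab} was derived for $\mu\in\mathcal{L}^1$ by applying Lemma~\ref{transformula} to $\mu^{+}$ and $\mu^{-}$. Once that domination is in place, everything else is the routine change of variables above; no use of the strong norm $|\cdot|_s$ or of hypotheses \textbf{T3.1}--\textbf{T3.4} is needed for this proposition.
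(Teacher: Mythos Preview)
Your proof is correct and follows essentially the same route as the paper: use the leafwise formula from Proposition~\ref{niceformulaab}, apply the triangle inequality for $||\cdot||_W$, use that push-forward by the fiber contraction $F_\gamma$ does not increase $||\cdot||_W$ (this is exactly the paper's Lemma~\ref{niceformulaac}, your fact (ii)), and then change variables $\gamma=T_i(\beta)$ so the Jacobian cancels. The paper's write-up is slightly terser because it works directly with the signed restriction $(\func{F}^*\mu)|_\gamma$ and its norm $||(\func{F}^*\mu)|_\gamma||_W$, rather than with the pair $((\func{F}^*\mu)^+|_\gamma,(\func{F}^*\mu)^-|_\gamma)$.

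One remark on your ``main obstacle'': the bookkeeping worry about Jordan decompositions dissolves once you notice that $W_1^0(\rho_1,\rho_2)=\sup_g\big|\int g\,d(\rho_1-\rho_2)\big|=||\rho_1-\rho_2||_W$ depends only on the difference $\rho_1-\rho_2$, not on the particular nonnegative decomposition. Hence $W_1^0\big((\func{F}^*\mu)^+|_\gamma,(\func{F}^*\mu)^-|_\gamma\big)=||(\func{F}^*\mu)|_\gamma||_W$, and by Proposition~\ref{niceformulaab} this equals $\big\|\sum_i \tfrac{\chi_{T_i(P_i)}}{|\det DT_i\circ T_i^{-1}|}\,\func{F}^*_{T_i^{-1}(\gamma)}\mu|_{T_i^{-1}(\gamma)}\big\|_W$; from here the triangle inequality and your fact (ii) apply directly, with no need for a domination argument on the positive and negative parts separately. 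This is why the paper's proof can skip that discussion entirely.
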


In the proof of the proposition we will use the following lemma about the
behaviour of the $||\cdot ||_W$ norm (see equation (\ref{WW})) which says
that a contraction cannot increase the $||\cdot ||_W$ norm.

\begin{lemma}
	\label{niceformulaac} For every $\mu \in \mathcal{AB}$ and a stable leaf $%
	\gamma \in \mathcal{F}^{s}$, it holds 
	\begin{equation}
		||\func{F}_{\gamma \ast }\mu |_{\gamma }||_{W}\leq ||\mu |_{\gamma }||_{W},
		\label{weak1}
	\end{equation}%
	where $F_{\gamma }:N_{2}\longrightarrow N_{2}$ is defined in Proposition \ref%
	{niceformulaab} and $\func{F}_{\gamma \ast }$ is the associated pushforward
	map. Moreover, if $\mu $ is a probability measure on $N_{2}$, it holds 
	\begin{equation}
		||\func{F}_{\gamma \ast }{^{n}}\mu ||_{W}=||\mu ||_{W}=1,\ \ \forall \ \
		n\geq 1.  \label{simples}
	\end{equation}
\end{lemma}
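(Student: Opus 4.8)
The plan is to reduce the whole statement to the single fact that $F_\gamma$ is a contraction of $N_2$, and that pre-composing an admissible test function with a contraction yields again an admissible test function in the definition of $\|\cdot\|_W$. First I would make $F_\gamma$ explicit. If $\gamma=\{x\}\times N_2$, then from $F(x,y)=(T(x),G(x,y))$ and the definition $F_\gamma=\pi_y\circ F|_\gamma\circ\pi_{\gamma,y}^{-1}$ of Proposition \ref{niceformulaab} one gets $F_\gamma(y)=G(x,y)$; hence property G1 (inequality (\ref{contracting1})) gives $d_2(F_\gamma(y_1),F_\gamma(y_2))\le\alpha\, d_2(y_1,y_2)$ for all $y_1,y_2\in N_2$, i.e. $L(F_\gamma)\le\alpha<1$.

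Next, fix any $g:N_2\longrightarrow\mathbb{R}$ with $L(g)\le 1$ and $\|g\|_\infty\le 1$. I would observe that $g\circ F_\gamma$ is again admissible: clearly $\|g\circ F_\gamma\|_\infty\le\|g\|_\infty\le 1$, and $L(g\circ F_\gamma)\le L(g)\,L(F_\gamma)\le\alpha\le 1$. Writing $\mu|_\gamma=\mu^+|_\gamma-\mu^-|_\gamma$ and using the change-of-variables identity for the pushforward on each of $\mu^+|_\gamma$ and $\mu^-|_\gamma$ (then subtracting, by linearity), one has
\begin{equation*}
\int g\,d(\func{F}_\gamma^\ast\mu|_\gamma)=\int (g\circ F_\gamma)\,d(\mu|_\gamma),
\end{equation*}
so that $\bigl|\int g\,d(\func{F}_\gamma^\ast\mu|_\gamma)\bigr|=\bigl|\int (g\circ F_\gamma)\,d(\mu|_\gamma)\bigr|\le\|\mu|_\gamma\|_W$, the last inequality precisely because $g\circ F_\gamma$ lies in the admissible class. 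Taking the supremum over all admissible $g$ gives $\|\func{F}_\gamma^\ast\mu|_\gamma\|_W\le\|\mu|_\gamma\|_W$, which is the first assertion.

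For the ``moreover'' part I would argue separately, since it does not really use the leafwise estimate: if $\mu$ is a probability measure on $\Sigma$ then, $F^n$ being measurable, $\func{F}^{\ast n}\mu$ is again a probability measure for every $n\ge 1$. Now for any probability measure $\rho$ on $\Sigma$ and any admissible $g$ one has $\bigl|\int g\,d\rho\bigr|\le\|g\|_\infty\,\rho(\Sigma)\le 1$, while the constant function $g\equiv 1$ is admissible and yields $\int g\,d\rho=\rho(\Sigma)=1$; hence $\|\rho\|_W=1$. Applying this with $\rho=\mu$ and with $\rho=\func{F}^{\ast n}\mu$ gives (\ref{simples}).

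The main obstacle is essentially non-existent: the only point requiring genuine care is verifying that $g\circ F_\gamma$ stays in the admissible class, which combines the trivial preservation of the sup-norm bound with the Lipschitz bound, the latter being exactly where the contraction hypothesis G1 is used; the remaining subtlety is the routine bookkeeping of splitting the signed measure $\mu|_\gamma$ into positive and negative parts so that the pushforward change-of-variables formula applies termwise (note that $\func{F}_\gamma^\ast(\mu^\pm|_\gamma)$ need not be the Jordan parts of $\func{F}_\gamma^\ast\mu|_\gamma$, but this is irrelevant for evaluating $\|\cdot\|_W$, which only sees the signed measure through its integrals against test functions).
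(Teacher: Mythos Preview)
Your proof is correct and follows essentially the same approach as the paper: both arguments hinge on the observation that $F_\gamma$ is an $\alpha$-contraction, so composing an admissible test function $g$ with $F_\gamma$ yields another admissible test function, and the inequality follows from the change-of-variables identity together with the definition of $\|\cdot\|_W$. Your treatment is slightly more detailed (making $F_\gamma(y)=G(x,y)$ explicit, handling the signed decomposition of $\mu|_\gamma$, and spelling out why $\|\rho\|_W=1$ for a probability by exhibiting $g\equiv 1$), but the underlying idea is identical.
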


\begin{proof}
	(of Lemma \ref{niceformulaac}) Indeed, since $F_{\gamma }$ is an $\alpha $%
	-contraction, if $|g|_{\infty }\leq 1$ and $Lip(g)\leq 1$ the same holds for 
	$g\circ F_{\gamma }$. Since 
	\begin{equation*}
		\left\vert \int {g~}d\func{F}_{\gamma \ast }\mu |_{\gamma }\right\vert
		=\left\vert \int {g(F_{\gamma })~}d\mu |_{\gamma }\right\vert ,
	\end{equation*}%
	taking the supremum over $g$ \ such that $|g|_{\infty }\leq 1$ and $%
	Lip(g)\leq 1$ we finish the proof of the inequality (\ref{weak1}).
	
	In order to prove equation (\ref{simples}), consider a probability measure $%
	\mu $ on $N_{2}$ and a Lipschitz function $g:N_{2}\longrightarrow \mathbb{R}$%
	, such that $||g||_{\infty }\leq 1$ we get immediately $|\int {g}d\mu |\leq
	||g||_{\infty }\leq 1$, which yields $||\mu ||_{W}\leq 1$. Considering $%
	g\equiv 1$ we get $||\mu ||_{W}=1$.
\end{proof}

\begin{proof}
	(of Proposition \ref{l1} )
	
	In the following, we consider for all $i$, the change of variable $\gamma
	=T_{i}(\alpha )$. Thus, Lemma \ref{niceformulaac} and equation (\ref%
	{niceformulaa}) yield 
	\begin{eqnarray*}
		||\func{F}_{\ast }\mu ||_{1} &=&\int_{N_{1}}{\ ||(\func{F}_{\ast }\mu
			)|_{\gamma }||_{W}}dm_{1}(\gamma ) \\
		&\leq &\sum_{i=1}^{q}{\int_{T(P_{i})}{\ \left\vert \left\vert \dfrac{\func{F}%
					_{T_{i}^{-1}(\gamma )\ast }\mu |_{T_{i}^{-1}(\gamma )}}{|\det
					DT_{i}(T_{i}^{-1}(\gamma ))|}\right\vert \right\vert _{W}}dm_{1}(\gamma )} \\
		&=&\sum_{i=1}^{q}{\int_{P_{i}}{\left\vert \left\vert \func{F}_{\alpha \ast
				}\mu |_{\alpha }\right\vert \right\vert _{W}}dm_{1}(\alpha )} \\
		&=&\sum_{i=1}^{q}{\int_{P_{i}}{\left\vert \left\vert \mu |_{\alpha
				}\right\vert \right\vert _{W}}}dm_{1}(\alpha ) \\
		&=&||\mu ||_{1}.
	\end{eqnarray*}
\end{proof}

The following proposition shows a regularizing action of the transfer
operator with respect to the strong norm. Such inequalities are usually
called Lasota-Yorke or Doeblin-Fortet inequalities.

\begin{proposition}[Lasota-Yorke inequality for $S^{1}$]
	Let $F:\Sigma \longrightarrow \Sigma $ be a map satisfying T1, T2 and T3.
	Then, there exist $A$, $B_{2}>0$ and $\lambda <1$ such that, for all $\mu
	\in S^{1}$, it holds%
	\begin{equation}
		||\func{F}_{\ast }^{n}\mu ||_{S^{1}}\leq A\lambda ^{n}||\mu
		||_{S^{1}}+B_{2}||\mu ||_{1},\ \ \forall n\geq 1.  \label{xx}
	\end{equation}%
	\label{lasotaoscilation2}
\end{proposition}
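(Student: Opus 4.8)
The plan is to decompose the $S^1$-norm of $\func{F}^{\ast n}\mu$ into its two pieces, the marginal-density part $|\phi_x^{(n)}|_s$ (where $\phi_x^{(n)}$ is the marginal density of $\func{F}^{\ast n}\mu$) and the $\mathcal{L}^1$-part $\|\func{F}^{\ast n}\mu\|_1$, and to control each separately. By Lemma~\ref{transformula}, the marginal density of $\func{F}^{\ast n}\mu$ is exactly $\func{P}_T^n(\phi_x)$, so the first piece is governed entirely by the one-dimensional dynamics: the iterated Lasota--Yorke estimate \eqref{lasotaiiii} gives $|\func{P}_T^n(\phi_x)|_s \leq B_3\beta_2^n|\phi_x|_s + C_2|\phi_x|_1$. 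Since $|\phi_x|_1 \leq \|\mu\|_1$ (the total mass of the marginal is dominated by the $\mathcal{L}^1$-norm, using $\|g\|_\infty\le 1$ in the definition of $W_1^0$ to compare with total variation) and $|\phi_x|_s \le \|\mu\|_{S^1}$, this handles the strong part with the correct exponential rate. The second piece, $\|\func{F}^{\ast n}\mu\|_1$, is immediately handled by iterating Proposition~\ref{l1}: $\|\func{F}^{\ast n}\mu\|_1 \le \|\mu\|_1$ for all $n$.

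Adding the two bounds, I would get
\begin{equation*}
\|\func{F}^{\ast n}\mu\|_{S^1} = |\func{P}_T^n(\phi_x)|_s + \|\func{F}^{\ast n}\mu\|_1 \leq B_3\beta_2^n|\phi_x|_s + C_2|\phi_x|_1 + \|\mu\|_1 \leq B_3\beta_2^n\|\mu\|_{S^1} + (C_2+1)\|\mu\|_1,
\end{equation*}
so the proposition holds with $A = B_3$, $\lambda = \beta_2$, and $B_2 = C_2 + 1$. The only slightly delicate point is the passage from signed measures to their positive and negative parts: the disintegration in Definition~\ref{restrictionmeasure} and the norms are defined via $\mu^+,\mu^-$, and one must be sure that $\func{F}^{\ast}$ interacts correctly with this decomposition — this is where Proposition~\ref{niceformulaab} (the leafwise Perron--Frobenius formula, obtained by applying Lemma~\ref{transformula} to $\mu^+$ and $\mu^-$ separately) is used, and it is already in place.

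The main obstacle, such as it is, is bookkeeping rather than conceptual: one must verify the inequality $|\phi_x|_1 \le \|\mu\|_1$ relating the marginal's $L^1$-norm to the weak norm on $\Sigma$, and confirm that the marginal density of $\func{F}^{\ast n}\mu$ really is $\func{P}_T^n(\phi_x)$ when $\mu$ is signed (not just a probability) — both follow by linearity and the positive/negative part decomposition, but they are the steps that need care. Everything else is a direct assembly of \eqref{lasotaiiii}, Proposition~\ref{l1}, and the definition of $\|\cdot\|_{S^1}$.
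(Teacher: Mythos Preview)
Your proposal is correct and follows essentially the same approach as the paper: decompose $\|\func{F}^{\ast n}\mu\|_{S^1}$ into the marginal-density piece (controlled by the Lasota--Yorke inequality for $\func{P}_T$) and the $\mathcal{L}^1$ piece (controlled by the weak contraction of Proposition~\ref{l1}), using $|\phi_x|_1\le\|\mu\|_1$ to close. The only cosmetic difference is that the paper first proves a one-step inequality at the $k$-th iterate of $\func{F}^\ast$ (Lemma~\ref{lasotaoscilation}, from T3.3 directly) and then iterates at the level of $\func{F}^\ast$, whereas you invoke the already-iterated bound \eqref{lasotaiiii} for $\func{P}_T$ and lift once; the ingredients and constants ($\lambda=\beta_0^{1/k}$, $A=M_1/\beta_0$) coincide.
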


\begin{proof}
	
	Firstly, we recall that $\phi _{x}$ is the marginal density of the
	disintegration of $\mu $. Precisely, $\phi _{x}=\phi _{x}^{+}-\phi _{x}^{-}$%
	, where $\phi _{x}^{+}=\dfrac{d\pi _{x}^{\ast }\mu ^{+}}{dm_{1}}$ and $\phi
	_{x}^{-}=\dfrac{d\pi _{x}^{\ast }\mu ^{-}}{dm_{1}}$. By the definition of
	the Wasserstein norm it follows that for every $\gamma $ it holds $||\mu
	|_{\gamma }||_{W}\geq \int 1~d(\mu |_{\gamma })=\phi _{x}(\gamma )$. Thus, 
	$|\phi _{x}|_{1}\leq ||\mu ||_{1}.$ By this last remark, equation (\ref%
	{lasotaiiii}) and Proposition \ref{l1} we have%
	\begin{eqnarray*}
		||\func{F}_{\ast }^{n}\mu ||_{S^{1}} &=&|\func{P}_{T}^{n}\phi _{x}|_{s}+||%
		\func{F}_{\ast }^{n}\mu ||_{1} \\
		&\leq &B_{3}\beta _{2}^{n}|\phi _{x}|_{s}+C_{2}|\phi _{x}|_{1}+||\mu ||_{1}
		\\
		&\leq &B_{3}\beta _{2}^{n}||\mu ||_{S^{1}}+(C_{2}+1)||\mu ||_{1}.
	\end{eqnarray*}%
	We finish the proof by setting $\lambda =\beta _{2}$, $A=B_{3}$ and $%
	B_{2}=C_{2}+1$.
\end{proof}

\subsection{Convergence to equilibrium}

Let $X$ be a compact metric space. Consider the space $\mathcal{SB}(X)$ of
signed Borel measures on $X$. \ In the following we consider two further
normed vectors spaces of signed Borel measures on $X.$ The spaces $%
(B_{s},||~||_{s})\subseteq (B_{w},||~||_{w})\subseteq \mathcal{SB}(X)$ with
norms satisfying%
\begin{equation*}
	||~||_{w}\leq ||~||_{s}.
\end{equation*}%
We say that the a Markov operator $\func{L}:B_{w}\rightarrow B_{w}$ has
convergence to equilibrium with speed at least $\Phi $ and with respect to
the norms $||\cdot ||_{s}$ and $||\cdot ||_{w}$, if for each $\mu \in 
\mathcal{V}_{s}$, where 
\begin{equation}
	\mathcal{V}_{s}=\{\mu \in B_{s},\mu (X)=0\}  \label{vs}
\end{equation}%
is the space of zero-average measures, it holds 
\begin{equation*}
	||\func{L}^{n}(\mu )||_{w}\leq \Phi (n)||\mu ||_{s},  \label{wwe}
\end{equation*}%
where $\Phi (n)\longrightarrow 0$ as $n\longrightarrow \infty $.

In this section, we prove that $F_{\ast }$ has exponential convergence to
equilibrium. This is weaker with respect to the spectral gap. However, the
spectral gap follows from the above Lasota-Yorke inequality and the
convergence to equilibrium. Before the main statements we need some
preliminary lemmata. The following is somewhat similar to Lemma \ref%
{niceformulaac} considering the behaviour of the $||\cdot ||_{W}$ norm after
a contraction. It gives a finer estimate for zero average measures. The
following Lemma is useful to estimate the behaviour of our $W$ norms under
contractions.

\begin{lemma}
	For all signed measures $\mu $ on $N_{2}$ and for all $\gamma \in \mathcal{F}%
	^{s}$, it holds%
	\begin{equation*}
		||\func{F}_{\gamma \ast }\mu ||_{W}\leq \alpha ||\mu ||_{W}+\mu (N_{2})
	\end{equation*}%
	($\alpha $ is the rate of contraction of $G$, see \eqref{contracting1}). In
	particular, if $\mu (N_{2})=0$ then%
	\begin{equation*}
		||\func{F}_{\gamma \ast }\mu ||_{W}\leq \alpha ||\mu ||_{W}.
	\end{equation*}%
	\label{quasicontract}
\end{lemma}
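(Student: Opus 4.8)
The plan is to reduce the statement to a comparison between test functions on $N_2$ and their pullbacks under the contraction $F_\gamma$. Fix a signed measure $\mu$ on $N_2$ and a leaf $\gamma$. Take any test function $g:N_2\to\mathbb R$ with $\|g\|_\infty\le 1$ and $L(g)\le 1$; we must bound $\left|\int g\, d\func{F}_\gamma^\ast\mu\right| = \left|\int (g\circ F_\gamma)\, d\mu\right|$. The difficulty compared with Lemma \ref{niceformulaac} is that $g\circ F_\gamma$ is automatically $\alpha$-Lipschitz (because $F_\gamma$ is an $\alpha$-contraction by property \textbf{G1}), but it need not satisfy $\|g\circ F_\gamma\|_\infty\le\alpha$; its sup norm is only bounded by $1$. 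So the naive argument used in Lemma \ref{niceformulaac} gives only $\alpha$ in the Lipschitz slot but loses the gain on the sup-norm slot.

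The key trick will be to subtract a suitable constant. Pick a point $y_0\in N_2$ and write $g\circ F_\gamma = (g\circ F_\gamma - c) + c$ where $c := g(F_\gamma(y_0))$. Then $h:=g\circ F_\gamma - c$ has $L(h)\le\alpha$, and since $F_\gamma(N_2)$ has diameter at most $\alpha\,\diam(N_2)=\alpha$ (using $\diam(N_2)=1$), we get $\|h\|_\infty\le\alpha$ as well; hence $\alpha^{-1}h$ is an admissible test function and $\left|\int h\, d\mu\right|\le\alpha\|\mu\|_W$. For the constant part, $\left|\int c\, d\mu\right| = |c|\cdot|\mu(N_2)|\le|\mu(N_2)|$ since $|c|\le\|g\|_\infty\le 1$. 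Adding the two estimates and taking the supremum over all admissible $g$ yields $\|\func{F}_\gamma^\ast\mu\|_W\le\alpha\|\mu\|_W+|\mu(N_2)|$. (Strictly, if the intended statement has $\mu(N_2)$ rather than $|\mu(N_2)|$ one should note that $\int c\,d\mu$ can be bounded by $|\mu(N_2)|$; when $\mu(N_2)=0$ this term vanishes regardless.) The particular case $\mu(N_2)=0$ is then immediate, giving $\|\func{F}_\gamma^\ast\mu\|_W\le\alpha\|\mu\|_W$.

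The only real point requiring care — the step I expect to be the main obstacle, though it is mild — is justifying $\|h\|_\infty\le\alpha$: this uses that the image $F_\gamma(N_2)\subset N_2$ lies in a set of diameter $\le\alpha$, which follows from \eqref{contracting1} together with the normalization $\diam(N_2)=1$, so that $|g(F_\gamma(y))-g(F_\gamma(y_0))|\le L(g)\, d_2(F_\gamma(y),F_\gamma(y_0))\le\alpha\, d_2(y,y_0)\le\alpha$ for every $y$. Everything else is a direct computation with the definition of $\|\cdot\|_W$ from Definition \ref{wasserstein} and the change-of-variables identity $\int g\, d\func{F}_\gamma^\ast\mu=\int g\circ F_\gamma\, d\mu$.
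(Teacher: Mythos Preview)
Your proof is correct and is essentially the same as the paper's: both subtract the constant $c=g(F_\gamma(y_0))$ (the paper calls it $\theta=g\circ F_\gamma(z)$), use $\diam(N_2)=1$ together with the $\alpha$-contraction to get $\|g\circ F_\gamma-c\|_\infty\le\alpha$, and then split the integral into the $h$-part bounded by $\alpha\|\mu\|_W$ and the constant part bounded by $|\mu(N_2)|$. Your remark that the argument actually yields $|\mu(N_2)|$ rather than $\mu(N_2)$ is accurate and matches what the paper's computation really gives.
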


\begin{proof}
	If $Lip(g)\leq 1$ and $||g||_{\infty }\leq 1$, then $g\circ F_{\gamma }$ is $%
	\alpha $-Lipschitz. Moreover, since $||g||_{\infty }\leq 1$, then $||g\circ
	F_{\gamma }-\theta ||_{\infty }\leq \alpha $, for some $\theta $ such that $%
	|\theta |\leq 1$. Indeed, let $z\in N_{2}$ be such that $|g\circ F_{\gamma
	}(z)|\leq 1$, set $\theta =g\circ F_{\gamma }(z)$ and let $d_{2}$ be the
	Riemannian metric of $N_{2}$. Since $\diam(N_{2})=1$, we have 
	\begin{equation*}
		|g\circ F_{\gamma }(y)-\theta |\leq \alpha d_{2}(y,z)\leq \alpha
	\end{equation*}%
	and consequently $||g\circ F_{\gamma }-\theta ||_{\infty }\leq \alpha $.
	
	This implies,
	
	\begin{align*}
		\left\vert \int_{N_{2}}{g}d\func{F}_{\gamma \ast }\mu \right\vert &
		=\left\vert \int_{N_{2}}{g\circ F_{\gamma }}d\mu \right\vert \\
		& \leq \left\vert \int_{N_{2}}{g\circ F_{\gamma }-\theta }d\mu \right\vert
		+\left\vert \int_{N_{2}}{\theta }d\mu \right\vert \\
		& =\alpha \left\vert \int_{N_{2}}{\frac{g\circ F_{\gamma }-\theta }{\alpha }}%
		d\mu \right\vert +|\theta ||\mu (N_{2})|.
	\end{align*}%
	And taking the supremum over $g$ such that $|g|_{\infty }\leq 1$ and $%
	Lip(g)\leq 1$ we have $||\func{F}_{\gamma \ast }\mu ||_{W}\leq \alpha ||\mu
	||_{W}+\mu (N_{2})$. In particular, if $\mu (N_{2})=0$, we get the second
	part.
\end{proof}

Now we are ready to show a key estimate regarding the behaviour of our weak $%
|| \ ||_{1} $ norm in Lorenz-like systems, as defined at beginning of
Section \ref{sec2}.

\begin{proposition}
	\label{5.6} For all signed measure $\mu \in \mathcal{L}^{1}$, it holds 
	\begin{equation}
		||\func{F}_{\ast }\mu ||_{1}\leq \alpha ||\mu ||_{1}+(\alpha +1)|\phi
		_{x}|_{1}.  \label{abovv}
	\end{equation}
\end{proposition}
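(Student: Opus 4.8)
The plan is to mimic the proof of Proposition \ref{l1}, replacing the crude leafwise contraction estimate of Lemma \ref{niceformulaac} by the sharper Lemma \ref{quasicontract}, which controls $||\func{F}_{\gamma}^{\ast}\,\cdot\,||_W$ after the fiber contraction in terms of the total fiber mass rather than requiring it to vanish.

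First I would unfold the definition of the weak norm and insert the Perron--Frobenius-type formula (\ref{niceformulaa}) of Proposition \ref{niceformulaab}: for a.e.\ leaf $\gamma\in N_1$,
\[
(\func{F}^{\ast}\mu)|_{\gamma}=\sum_{i=1}^{q}\frac{\func{F}_{T_i^{-1}(\gamma)}^{\ast}\mu|_{T_i^{-1}(\gamma)}}{|\det DT_i(T_i^{-1}(\gamma))|}\,\chi_{T_i(P_i)}(\gamma),
\]
so that, by the triangle inequality for $||\cdot||_W$,
\[
||(\func{F}^{\ast}\mu)|_{\gamma}||_{W}\le\sum_{i=1}^{q}\frac{||\func{F}_{T_i^{-1}(\gamma)}^{\ast}\mu|_{T_i^{-1}(\gamma)}||_{W}}{|\det DT_i(T_i^{-1}(\gamma))|}\,\chi_{T_i(P_i)}(\gamma).
\]
To each summand I would apply Lemma \ref{quasicontract} on the leaf $\beta=T_i^{-1}(\gamma)$ with the signed measure $\mu|_{\beta}$ on $N_2$. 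By Definition \ref{restrictionmeasure} the total mass of a leafwise restriction is the value of the marginal density, $\mu|_{\beta}(N_2)=\phi_x^{+}(\beta)-\phi_x^{-}(\beta)=\phi_x(\beta)$, so the lemma yields $||\func{F}_{\beta}^{\ast}(\mu|_{\beta})||_{W}\le\alpha||\mu|_{\beta}||_{W}+|\phi_x(\beta)|$.

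Then, exactly as in the proof of Proposition \ref{l1}, I would perform for each $i$ the change of variables $\gamma=T_i(\beta)$, $\beta\in P_i$, under which $dm_1(\gamma)=|\det DT_i(\beta)|\,dm_1(\beta)$: the Jacobian weights cancel, the indicator becomes $\chi_{P_i}(\beta)$, and summing over $i$ (using $m_1(\bigcup_i P_i)=1$) transforms $\int_{N_1}||(\func{F}^{\ast}\mu)|_{\gamma}||_W\,dm_1(\gamma)$ into
\[
\alpha\int_{N_1}||\mu|_{\beta}||_{W}\,dm_1(\beta)+\int_{N_1}|\phi_x(\beta)|\,dm_1(\beta)=\alpha||\mu||_{1}+|\phi_x|_{1},
\]
which is in fact slightly stronger than the asserted bound $\alpha||\mu||_1+(\alpha+1)|\phi_x|_1$.

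The only point requiring care — and I expect it to be a bookkeeping subtlety rather than a genuine obstacle — is the compatibility of the two leafwise restrictions involved: $||\func{F}^{\ast}\mu||_1$ is defined through the Hahn--Jordan decomposition of $\func{F}^{\ast}\mu$ itself, whereas (\ref{niceformulaa}) produces $(\func{F}^{\ast}\mu^{+})|_{\gamma}-(\func{F}^{\ast}\mu^{-})|_{\gamma}$. These agree because $W_1^0$ of two positive measures depends only on their difference and because the leafwise restriction is additive on positive measures of $\mathcal{AB}$; this is precisely the identification already used implicitly in Proposition \ref{l1}, so nothing new has to be proved here, and the rest is the routine substitution, triangle inequality and change of variables described above.
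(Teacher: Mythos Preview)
Your argument is correct and in fact yields the sharper bound $||\func{F}^{\ast}\mu||_{1}\le \alpha||\mu||_{1}+|\phi_{x}|_{1}$. The bookkeeping point you flag is exactly right: the leafwise restriction is additive on positive measures of $\mathcal{AB}$, hence independent of the positive decomposition, so $(\func{F}^{\ast}\mu)|_{\gamma}=(\func{F}^{\ast}\mu^{+})|_{\gamma}-(\func{F}^{\ast}\mu^{-})|_{\gamma}$ and formula (\ref{niceformulaa}) applies verbatim; this is the same identification already used in Proposition \ref{l1}.

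The paper takes a different, more laborious route. It writes $\mu^{\pm}|_{\beta}=\phi_{x}^{\pm}(\beta)\,\overline{\mu^{\pm}}|_{\beta}$ with $\overline{\mu^{\pm}}|_{\beta}$ probabilities, then splits the integrand by inserting and subtracting the cross term $\func{F}_{\beta}^{\ast}\overline{\mu^{+}}|_{\beta}\,\phi_{x}^{-}(\beta)$, obtaining two pieces $I_{1}$ and $I_{2}$: the first isolates the factor $|\phi_{x}^{+}-\phi_{x}^{-}|$ against a probability (contributing $|\phi_{x}|_{1}$), and the second applies Lemma \ref{quasicontract} only to the zero-average difference $\overline{\mu^{+}}|_{\beta}-\overline{\mu^{-}}|_{\beta}$, followed by a further triangle-inequality detour that produces the extra $\alpha|\phi_{x}|_{1}$. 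Your approach applies Lemma \ref{quasicontract} directly to the signed leaf measure $\mu|_{\beta}$, using its full strength (the mass term $|\mu|_{\beta}(N_{2})|=|\phi_{x}(\beta)|$) rather than only its zero-average case; this avoids the auxiliary splitting and the resulting loss of a factor $(\alpha+1)$ versus $1$ in front of $|\phi_{x}|_{1}$. Both constants suffice for the iteration in Corollary \ref{nicecoro}, so the gain is cosmetic, but your proof is genuinely shorter.
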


\begin{proof}
	Consider a signed measure $\mu \in \mathcal{L}^{1}$ and its restriction on
	the leaf $\gamma $, $\mu |_{\gamma }=\pi _{\gamma ,y\ast }(\phi _{x}(\gamma
	)\mu _{\gamma })$. Set%
	\begin{equation*}
		\overline{\mu }|_{\gamma }=\pi _{\gamma ,y\ast }\mu _{\gamma }.
	\end{equation*}%
	If $\mu $ is a positive measure then $\overline{\mu }|_{\gamma }$ is a
	probability on $N_{2}$ and $\mu |_{\gamma }=\phi _{x}(\gamma )\overline{\mu }%
	|_{\gamma }$. Then, the expression given by Proposition \ref{niceformulaab}
	yields%
	\begin{equation*}
		\begin{split}
			&||\func{F}_{\ast }\mu ||_{1} \\
			&\leq \sum_{i=1}^{q}{\ \int_{T(P_{i})}{\
					\left\vert \left\vert \frac{\func{F}_{T_{i}^{-1}(\gamma )\ast }\overline{\mu
							^{+}}|_{T_{i}^{-1}(\gamma )}\phi _{x}^{+}(T_{i}^{-1}(\gamma ))}{|\det
						DT_{i}|\circ T_{i}^{-1}(\gamma )}-\frac{\func{F}_{T_{i}^{-1}(\gamma )\ast }%
						\overline{\mu ^{-}}|_{T_{i}^{-1}(\gamma )}\phi _{x}^{-}(T_{i}^{-1}(\gamma ))%
					}{|\det DT_{i}|\circ T_{i}^{-1}(\gamma )}\right\vert \right\vert _{W}}%
				dm_{1}(\gamma )} \\
			&\leq \func{I}_{1}+\func{I}_{2},
		\end{split}
	\end{equation*}%
	where%
	\begin{equation*}
		\func{I}_{1}=\sum_{i=1}^{q}{\ \int_{T(P_{i})}{\ \left\vert \left\vert \frac{%
					\func{F}_{T_{i}^{-1}(\gamma )\ast }\overline{\mu ^{+}}|_{T_{i}^{-1}(\gamma
						)}\phi _{x}^{+}(T_{i}^{-1}(\gamma ))}{|\det DT_{i}|\circ T_{i}^{-1}(\gamma )}%
				-\frac{\func{F}_{T_{i}^{-1}(\gamma )\ast }\overline{\mu ^{+}}%
					|_{T_{i}^{-1}(\gamma )}\phi _{x}^{-}(T_{i}^{-1}(\gamma ))}{|\det
					DT_{i}|\circ T_{i}^{-1}(\gamma )}\right\vert \right\vert _{W}}dm_{1}(\gamma )%
		}
	\end{equation*}%
	and%
	\begin{equation*}
		\func{I}_{2}=\sum_{i=1}^{q}{\ \int_{T(P_{i})}{\ \left\vert \left\vert \frac{%
					\func{F}_{T_{i}^{-1}(\gamma )\ast }\overline{\mu ^{+}}|_{T_{i}^{-1}(\gamma
						)}\phi _{x}^{-}(T_{i}^{-1}(\gamma ))}{|\det DT_{i}|\circ T_{i}^{-1}(\gamma )}%
				-\frac{\func{F}_{T_{i}^{-1}(\gamma )\ast }\overline{\mu ^{-}}%
					|_{T_{i}^{-1}(\gamma )}\phi _{x}^{-}(T_{i}^{-1}(\gamma ))}{|\det
					DT_{i}|\circ T_{i}^{-1}(\gamma )}\right\vert \right\vert _{W}}dm_{1}(\gamma )%
		}.
	\end{equation*}%
	In the following we estimate $\func{I}_{1}$ and $\func{I}_{2}$. By Lemma \ref%
	{niceformulaac} and a change of variable we have 
	\begin{eqnarray*}
		\func{I}_{1} &=&\sum_{i=1}^{q}{\ \int_{T(P_{i})}{\ \left\vert \left\vert 
				\func{F}_{T_{i}^{-1}(\gamma )\ast }\overline{\mu ^{+}}|_{T_{i}^{-1}(\gamma
					)}\right\vert \right\vert _{W}\frac{|\phi _{x}^{+}-\phi _{x}^{-}|}{|\det
					DT_{i}|}\circ T_{i}^{-1}(\gamma )}dm_{1}(\gamma )} \\
		&\leq &\int_{N_{1}}{\ \left\vert \left\vert \func{F}_{\beta \ast }\overline{%
				\mu ^{+}}|_{\beta }\right\vert \right\vert _{W}|\phi _{x}^{+}-\phi
			_{x}^{-}|(\beta )}dm_{1}(\beta ) \\
		&=&\int_{N_{1}}{\ |\phi _{x}^{+}-\phi _{x}^{-}|(\beta )}dm_{1}(\beta ) \\
		&=&|\phi _{x}|_{1},
	\end{eqnarray*}%
	and by Lemma \ref{quasicontract} we have%
	\begin{eqnarray*}
		\func{I}_{2} &=&\sum_{i=1}^{q}{\ \int_{T(P_{i})}{\ \left\vert \left\vert 
				\func{F}_{T_{i}^{-1}(\gamma )\ast }\left( \overline{\mu ^{+}}%
				|_{T_{i}^{-1}(\gamma )}-\overline{\mu ^{-}}|_{T_{i}^{-1}(\gamma )}\right)
				\right\vert \right\vert _{W}\frac{\phi _{x}^{-}}{|\det DT_{i}|}\circ
				T_{i}^{-1}(\gamma )}dm_{1}(\gamma )} \\
		&\leq &\sum_{i=1}^{q}{\ \int_{P_{i}}{\ \left\vert \left\vert \func{F}_{\beta
					\ast }\left( \overline{\mu ^{+}}|_{\beta }-\overline{\mu ^{-}}|_{\beta
				}\right) \right\vert \right\vert _{W}\phi _{x}^{-}(\beta )}dm_{1}(\beta )} \\
		&\leq &\alpha \int_{N_{1}}{\ \left\vert \left\vert \overline{\mu ^{+}}%
			|_{\beta }-\overline{\mu ^{-}}|_{\beta }\right\vert \right\vert _{W}\phi
			_{x}^{-}(\beta )}dm_{1}(\beta ) \\
		&\leq &\alpha \int_{N_{1}}{\ \left\vert \left\vert \overline{\mu ^{+}}%
			|_{\beta }\phi _{x}^{-}(\beta )-\overline{\mu ^{-}}|_{\beta }\phi
			_{x}^{-}(\beta )\right\vert \right\vert _{W}}dm_{1}(\beta ) \\
		&\leq &\alpha \int_{N_{1}}{\ \left\vert \left\vert \overline{\mu ^{+}}%
			|_{\beta }\phi _{x}^{-}(\beta )-\overline{\mu ^{+}}|_{\beta }\phi
			_{x}^{+}(\beta )\right\vert \right\vert _{W}}dm_{1}(\beta )\\&+&\alpha
		\int_{N_{1}}{\ \left\vert \left\vert \overline{\mu ^{+}}|_{\beta }\phi
			_{x}^{+}(\beta )-\overline{\mu ^{-}}|_{\beta }\phi _{x}^{-}(\beta
			)\right\vert \right\vert _{W}}dm_{1}(\beta ) \\
		&=&\alpha |\phi _{x}|_{1}+\alpha ||\mu ||_{1}.
	\end{eqnarray*}%
	Summing the above estimates we finish the proof.
\end{proof}

Iterating (\ref{abovv}) we get the following corollary.

\begin{corollary}
	For all signed measure $\mu \in \mathcal{L}^{1}$ it holds 
	\begin{equation*}
		||\func{F}_{\ast }^{n}\mu ||_{1}\leq \alpha ^{n}||\mu ||_{1}+\overline{%
			\alpha }|\phi _{x}|_{1},
	\end{equation*}%
	where $\overline{\alpha }=\frac{1+\alpha }{1-\alpha }$. \label{nicecoro}
\end{corollary}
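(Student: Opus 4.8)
The plan is to iterate the one-step estimate \eqref{abovv} of Proposition \ref{5.6}, keeping careful track of how the marginal density of the iterates behaves. The only point requiring attention is that the bound \eqref{abovv} for $||\func{F}^{\ast}\nu||_1$ involves the marginal density $\phi_x^\nu$ of $\nu$ itself, so when we apply it to $\nu = \func{F}^{\ast (n-1)}\mu$ we must control the marginal density of $\func{F}^{\ast (n-1)}\mu$.

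First I would recall, from Lemma \ref{transformula} (equation \eqref{1}), that the marginal density of $\func{F}^{\ast}\mu$ is $\func{P}_T(\phi_x)$, and hence by induction the marginal density of $\func{F}^{\ast j}\mu$ is $\func{P}_T^{j}(\phi_x)$. Since $\func{P}_T$ is a weak contraction on $L^1_{m_1}$, namely $|\func{P}_T(h)|_1 \leq |h|_1$ for all $h\in L^1_{m_1}$ (a fact already used above, e.g. before \eqref{lasotakk2}), we get $|\func{P}_T^{j}(\phi_x)|_1 \leq |\phi_x|_1$ for every $j\geq 0$.

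Next I would apply Proposition \ref{5.6} to the measure $\func{F}^{\ast (n-1)}\mu \in \mathcal{L}^1$, whose marginal density is $\func{P}_T^{n-1}(\phi_x)$, to obtain
\begin{equation*}
||\func{F}^{\ast n}\mu||_1 \leq \alpha\, ||\func{F}^{\ast (n-1)}\mu||_1 + (\alpha+1)\,|\func{P}_T^{n-1}(\phi_x)|_1 \leq \alpha\, ||\func{F}^{\ast (n-1)}\mu||_1 + (\alpha+1)\,|\phi_x|_1.
\end{equation*}
Iterating this recursion from $n$ down to $0$ gives
\begin{equation*}
||\func{F}^{\ast n}\mu||_1 \leq \alpha^n ||\mu||_1 + (\alpha+1)\,|\phi_x|_1 \sum_{j=0}^{n-1}\alpha^{j} \leq \alpha^n ||\mu||_1 + \frac{\alpha+1}{1-\alpha}\,|\phi_x|_1,
\end{equation*}
using $0<\alpha<1$ to sum the geometric series, which is exactly the claim with $\overline{\alpha}=\frac{1+\alpha}{1-\alpha}$.

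There is essentially no hard step here: the whole content is the bookkeeping observation that the marginal density of the $j$-th iterate is $\func{P}_T^{j}(\phi_x)$ together with the $L^1$-weak-contraction of $\func{P}_T$, which prevents the inhomogeneous term from growing under iteration. If anything, the only thing to be careful about is not to confuse $|\phi_x|_1$ with $||\mu||_1$ — the former can be strictly smaller, and it is the former that appears (unchanged) in the inhomogeneous term at every stage.
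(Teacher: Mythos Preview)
Your proof is correct and follows exactly the approach indicated in the paper, which simply states that the corollary is obtained by iterating \eqref{abovv}. Your careful tracking of the marginal density of $\func{F}^{\ast j}\mu$ as $\func{P}_T^{j}(\phi_x)$ and the use of the $L^1$-weak-contraction of $\func{P}_T$ are precisely the details that make the iteration work.
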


Let us consider the set of zero average measures in $S^{1}$ defined by 
\begin{equation}
	\mathcal{V}_{s}=\{\mu \in S^{1}:\mu (\Sigma )=0\}.  \label{mathV}
\end{equation}%
Note that, for all $\mu \in \mathcal{V}_{s}$ we have $\pi _{x\ast }\mu
(N_{1})=0$. Moreover, since $\pi _{x\ast }\mu =\phi _{x}m_{1}$ ($\phi
_{x}=\phi _{x}^{+}-\phi _{x}^{-}$), we have $\displaystyle{\int_{N_{1}}{\phi
		_{x}}dm_{1}=0}$. This allows us to apply Theorem \ref{loiub} in the proof of
the next proposition.

\begin{proposition}[Exponential convergence to equilibrium]
	\label{5.8} There exist $D_{2}\in \mathbb{R}$ and $0<\beta _{1}<1$ such that
	for every signed measure $\mu \in \mathcal{V}_{s}$, it holds 
	\begin{equation*}
		||\func{F}_{\ast }^{n}\mu ||_{1}\leq D_{2}\beta _{1}^{n}||\mu ||_{S^{1}},
	\end{equation*}%
	for all $n\geq 1$. \label{quasiquasiquasi}
\end{proposition}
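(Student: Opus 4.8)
The plan is to split the orbit $\func{F}^{\ast n}\mu$ into two halves of (roughly) equal length and control the base and fiber directions separately: the base marginal decays because $\mu$ has zero average and Theorem \ref{loiub} applies, while the fiber part is damped by the contraction rate $\alpha$ via Corollary \ref{nicecoro}. Concretely, for $n\geq 1$ write $n=2m+j$ with $j\in\{0,1\}$ and $m\geq 0$. If $m=0$ then $n=1$ and Proposition \ref{l1} already gives $||\func{F}^{\ast }\mu||_{1}\leq ||\mu||_{1}\leq ||\mu||_{S^{1}}$, so assume $m\geq 1$.

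First I would apply Corollary \ref{nicecoro} not to $\mu$ itself but to the measure $\func{F}^{\ast m}\mu$. By Lemma \ref{transformula} the marginal density of $\func{F}^{\ast m}\mu$ is $\func{P}_{T}^{m}\phi_{x}$, so
\begin{equation*}
||\func{F}^{\ast 2m}\mu||_{1}=||\func{F}^{\ast m}(\func{F}^{\ast m}\mu)||_{1}\leq \alpha^{m}||\func{F}^{\ast m}\mu||_{1}+\overline{\alpha}\,|\func{P}_{T}^{m}\phi_{x}|_{1}.
\end{equation*}
For the first term, Proposition \ref{l1} gives $||\func{F}^{\ast m}\mu||_{1}\leq ||\mu||_{1}\leq ||\mu||_{S^{1}}$. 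For the second term, I would use the zero-average property: as noted just before the statement, $\mu\in\mathcal{V}_s$ forces $\int_{N_{1}}\phi_{x}\,dm_{1}=0$, so Theorem \ref{loiub} yields $|\func{P}_{T}^{m}\phi_{x}|_{s}\leq D r^{m}|\phi_{x}|_{s}$; combining with $|\cdot|_{1}\leq |\cdot|_{s}$ (assumption \textbf{T3.1}) and $|\phi_{x}|_{s}\leq ||\mu||_{S^{1}}$ gives $|\func{P}_{T}^{m}\phi_{x}|_{1}\leq D r^{m}||\mu||_{S^{1}}$. Hence
\begin{equation*}
||\func{F}^{\ast 2m}\mu||_{1}\leq \bigl(\alpha^{m}+\overline{\alpha}D r^{m}\bigr)||\mu||_{S^{1}}.
\end{equation*}

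To finish, set $\beta_{1}:=\max\{\sqrt{\alpha},\sqrt{r}\}<1$, so that $\alpha^{m}+\overline{\alpha}D r^{m}\leq (1+\overline{\alpha}D)\beta_{1}^{2m}$, giving the claim for even $n=2m$. For odd $n=2m+1$ I would apply Proposition \ref{l1} once more, $||\func{F}^{\ast(2m+1)}\mu||_{1}\leq ||\func{F}^{\ast 2m}\mu||_{1}\leq (1+\overline{\alpha}D)\beta_{1}^{2m}||\mu||_{S^{1}}=\beta_{1}^{-1}(1+\overline{\alpha}D)\beta_{1}^{2m+1}||\mu||_{S^{1}}$, so that $D_{2}:=\beta_{1}^{-1}(1+\overline{\alpha}D)$ works uniformly (it also covers the $n=1$ case since $D_2\beta_1\geq 1$). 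The only point that needs care — and it is bookkeeping rather than a real obstacle — is precisely the observation that Corollary \ref{nicecoro} must be invoked on $\func{F}^{\ast m}\mu$ so that its "bad" term becomes $|\func{P}_{T}^{m}\phi_{x}|_{1}$, which is small exactly because the marginal has zero average; the remaining work is merely to merge the two exponential rates $\alpha$ and $r$ and to handle the parity of $n$.
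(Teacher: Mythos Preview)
Your proof is correct and follows essentially the same approach as the paper: split $n$ in half, apply Corollary \ref{nicecoro} to one half so that the ``bad'' $|\phi_x|_1$ term becomes $|\func{P}_T^{m}\phi_x|_1$, then kill that term with Theorem \ref{loiub} using the zero-average property, and merge the two rates via $\beta_1=\max\{\sqrt{\alpha},\sqrt{r}\}$. The only cosmetic difference is that the paper absorbs the parity of $n$ into its division $n=2l+d$ (applying the corollary to $\func{F}^{\ast\,l+d}\mu$), whereas you handle the odd case by a final application of Proposition \ref{l1}; the resulting constant $D_2=\beta_1^{-1}(1+\overline{\alpha}D)$ is the same.
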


\begin{proof}
	Given $\mu \in \mathcal{V}_s$ and denoting $\phi _{x}=\phi _{x}^{+}-\phi
	_{x}^{-}$, it holds that $\int {\phi }_{x}dm _1=0$. Moreover, Theorem \ref%
	{loiub} yields $|\func{P}_{T}^{n}(\phi _{x})|_{s}\leq Dr^{n}|\phi _{x}|_{s}$
	for all $n\geq 1$, then $|\func{P}_{T}^{n}(\phi _{x})|_{s}\leq Dr^{n}||\mu
	||_{S^{1}}$ for all $n\geq 1$.
	
	Let $l$ and $0\leq d\leq 1$ be the coefficients of the division of $n$ by $2$%
	, i.e. $n=2l+d$. Thus, $l=\frac{n-d}{2}$ (by Proposition \ref%
	{weakcontral11234}, we have $||\func{F}_{\ast }^{n}\mu ||_{1}\leq ||\mu
	||_{1}$, for all $n$, and $||\mu ||_{1}\leq ||\mu ||_{S^{1}}$) and by
	Corollary \ref{nicecoro}, it holds (below, set $\beta _{1}=\max \{\sqrt{r},%
	\sqrt{\alpha }\}$)
	
	\begin{eqnarray*}
		||\func{F}_{\ast }^{n}\mu ||_{1} &\leq &||\func{F}_{\ast }^{2l+d}\mu ||_{1}
		\\
		&\leq &\alpha ^{l}||\func{F}_{\ast }^{l+d}\mu ||_{1}+\overline{\alpha }%
		\left\vert \dfrac{d(\pi _{x\ast }(\func{F}^{\ast l+d}\mu ))}{dm_{1}}%
		\right\vert _{1} \\
		&\leq &\alpha ^{l}||\mu ||_{1}+\overline{\alpha }|\func{P}_{T}^{l}(\phi
		_{x})|_{1} \\
		&\leq &(1+\overline{\alpha }D)\beta _{1}^{-d}\beta _{1}^{n}||\mu ||_{S^{1}}
		\\
		&\leq &D_{2}\beta _{1}^{n}||\mu ||_{S^{1}},
	\end{eqnarray*}%
	where $D_{2}=\dfrac{1+\overline{\alpha }D}{\beta _{1}}$.
\end{proof}

\begin{remark}
	\label{quantitative} The rate of convergence to equilibrium, $\beta _{1}$,
	for the map $F$ found above, is directly related to the rate of contraction, 
	$\alpha $, of the stable foliation, and to the rate of convergence to
	equilibrium, $r$, of the induced basis map $T$ (see equation \ref{p2}). More
	precisely, $\beta _{1}=\max \{\sqrt{\alpha },\sqrt{r}\}$. Similarly, we have
	an explicit estimate for the constant $D_{2}$, provided we have an estimate
	for $D$ in the basis map\footnote{%
		It can be difficult to find a sharp estimate for $D$. An approach allowing
		to find some useful upper estimates is shown in \cite{GNS}.}.
\end{remark}

Now we show that under the assumptions taken, the system has a unique
invariant measure $\mu _{0}\in S^{1}$.

\begin{lemma}
	\label{existinv}A contracting fiber map $(N_{1}\times N_{2},F)$ satisfying
	assumptions $G1,T1,...,T3.4$ has a unique invariant measure in $S^{1}$.
\end{lemma}

Before the proof of Lemma \ref{existinv} we need a preliminary lemma.

\begin{lemma}
	\label{compact}Let $\mu _{n}$ be a sequence of probability measures which is
	a Cauchy sequence for the Wassertein like norm $||~||_{W}$ on a compact
	manifold $N$. Then this sequence has a limit in the space of probability
	measures $\mathcal{PB}(N)$ on $N.$ In other words $\mathcal{PB}(N)$ is a
	complete metric space with the distance induced by $||~||_{W}.$
\end{lemma}

\begin{proof}
	Consider $\mathcal{PB}(N)$ with the weak* topology, i.e. the topology in
	which $\mu _{n}\rightarrow \mu $ if and only if for each continuous $%
	f:N\rightarrow \mathbb{R}$ it holds $\int f~d\mu _{n}\rightarrow \int f~d\mu 
	$. This space is compact. Then $\mu _{n}$ has subsequences $\mu _{n_{k}}$
	converging to some $\mu _{0}\in \mathcal{PB}(N)$ in this topology. Since $N$
	is compact we can approximate uniformly every contunuous function $f$ with
	Lipschitz functions $g_{i}$. Given $f\in C^{0}(N),\epsilon >0$ let us choose 
	$g_{i}$ such that $||f-g_{i}||_{\infty }\leq \epsilon $ we have%
	\begin{equation*}
		|\int fd(\mu _{n}-\mu _{m})|\leq |\int (f-g_{i})d(\mu _{n}-\mu _{m})|+|\int
		g_{i}d(\mu _{n}-\mu _{m})|\leq \epsilon +o(m,n)
	\end{equation*}%
	with $o(m,n)\rightarrow 0$ \ as $\min (m,n)\rightarrow \infty $ hence $|\int
	fd(\mu _{n}-\mu _{m})|\leq 2\epsilon $ as $\min (m,n)\rightarrow \infty $.
	Since $\epsilon $ is arbitrary we get $|\int fd(\mu _{n}-\mu
	_{m})|\rightarrow 0$ as $\min (m,n)\rightarrow \infty .$ This shows that $%
	\mu _{n}$ is a Cauchy sequence in the weak* topology, and then it converges
	to $\mu _{0}$ in that topology. Now conversely, suppose that this
	convergence was not in the $||~||_{W}$ norm, there is a subsequence $\mu
	_{n_{k}}$ \ such that $\forall k$ $||\mu _{n_{k}}-\mu _{0}||_{W}\geq
	\epsilon ,$ for some $\epsilon <0$. Then it means there are uniformly
	bounded, $1$-Lipschitz functions $g_{i}$ such that for each $i,$ we have 
	\begin{equation}
		\int g_{i}~d[\mu _{n_{i}}-\mu _{0}]\geq \frac{\epsilon }{2}.  \label{AscArz}
	\end{equation}%
	By Ascoli-Arzel\`{a} theorem however a subsequence $g_{i_{j}}$converges
	uniformly to some continuous function $\hat{g}$, for which $\int g_{i}~d[\mu
	_{n_{i}}-\mu _{0}]\rightarrow 0$, contraddicting $($\ref{AscArz}$)$. Then $%
	||\mu _{n_{k}}-\mu _{0}||_{W}\rightarrow 0$, proving the stetement.
\end{proof}

\begin{proof}[Proof of Lemma \protect\ref{existinv}]
	By assumption $T3.4$, the base map $T$ \ has a unique invariant measure $%
	\psi _{x}\in $ $S_{\_}\subseteq L^{1}$. Let us consider the following set of
	measures having $\psi _{x}$ as a marginal: 
	\begin{equation*}
		M_{\psi }=\{\mu \in S^{1},\pi _{x\ast }(\mu )=\psi _{x}\}.
	\end{equation*}%
	By Proposition \ref{5.8} $F_{\ast }$ is a contraction on $M_{\psi }$, thus if
	we prove that there is a fixed point in $M_{\psi }$ this is unique. Let us
	consider the measure $\nu _{0}:=\psi _{x}\times m_{2}\in S^{1}$ and let us
	iterate this by $F.$ Every iterate $\nu _{n}:=F_{\ast }^{n}(\nu )$ is a
	positive measure and because of Corollary \ref{nicecoro} of $\nu _{n}\in
	S^{1}$. Furthermore, for each $n$, $\pi _{x\ast }(F_{\ast }^{n}(\nu ))=\psi
	_{x}.$ By Proposition \ref{5.8} $\nu _{n}$ is a Cauchy sequence in $M_{\psi }
	$, for the $||~||_{1}$ norm. Let us consider the completion $\overline{%
		M_{\psi }}$ of $M_{\psi }$. Being a contraction $F_{\ast }$ can be extended
	continuously to $\overline{M_{\psi }}$. Let $\mu _{0}$ be hence the limit of 
	$\nu _{n}$ in $\overline{M_{\psi }}$. $\mu _{0}$ is then a fixed point of
	the contraction $F_{\ast }$. We now prove that $\mu _{0}$ is a Borel
	probability measure.
	
	Let us consider the set of Borel probability measures $\mathcal{PB}%
	(N_{1}\times N_{2})$ equipped with the Wassertein distance $d_{W}$ defined
	by $d_{W}(\mu ,\nu )=\underset{Lip(g)\leq 1}{\sup }|\mu (g)-\nu (g)|.$ $%
	M_{\psi }$ \ is a closed subset of $\mathcal{PB}(N_{1}\times N_{2})$ for
	this topology. Indeed for each $\mu \in \mathcal{PB}(N_{1}\times N_{2}),$
	the projection $\pi _{x\ast }(\mu )\in \mathcal{PB}(N_{1})$ can be also
	characterized by its action on suitable Lipschitz observables: let $f\in
	Lip(N_{1})$, consider $\hat{f}\in Lip(N_{1}\times N_{2})$ be defined by $%
	\hat{f}(x,y)=f(x).$ The projection $\pi _{x\ast }(\mu )$ can be also defined
	by the measure on $N_{1}$ for which%
	\begin{equation*}
		\int_{N_{1}}f~d\pi _{x\ast }(\mu )=\int_{N_{1}\times N_{2}}\hat{f}d\mu .
	\end{equation*}
	
	If $\mu _{n}\rightarrow \mu $ in the $d_{W}$ topology and $\mu _{n}\in
	M_{\psi }$ for such a function $\hat{f}$ we have $\int_{N_{1}\times N_{2}}%
	\hat{f}d\nu _{n}\rightarrow \int_{N_{1}\times N_{2}}\hat{f}d\mu $ \ this
	shows that $\pi _{x\ast }(\mu )=\psi _{x}$.
	
	Furthermore, we have that if $\mu ,\nu \in M_{\psi }$ it holds $d_{W}(\mu
	,\nu )\leq ||\mu -\nu ||_{1}$. Indeed for every $g$ such that $Lip(g)\leq 1$%
	, disintegrating the two measures on the stable foliation it holds 
	\begin{equation*}
		\int gd[\mu -\nu ]=\int_{\gamma \in N_{1}}~\int_{N_{2}}g(\gamma ,\cdot
		)d[\mu _{\gamma }-\nu _{\gamma }]~d\psi _{x}.
	\end{equation*}%
	For every $\gamma $ $\ \ g(\gamma ,\cdot )$ is 1-Lipschitz on the stable
	leaf. Hence 
	\begin{equation*}
		\int gd[\mu -\nu ]\leq \int_{\gamma \in N_{1}}~||\mu _{\gamma }-\nu _{\gamma
		}||_{W}~d\psi _{x}=||\mu -\nu ||_{1}.
	\end{equation*}
	
	By this a Cauchy sequence for the $||~||_{1}$ norm \ is also a Cauchy
	sequence for $d_{W}(\mu ,\nu )$. By Lemma \ref{compact} we have that $\nu
	_{n}$ has a limit in $\mathcal{PB}(N_{1}\times N_{2})$ in the $d_{W}$
	topology. Since $M_{\psi }$ is closed in this topology, we get $\mu _{0}\in
	M_{\psi }\subseteq S^{1}.$ Since this invariant measure is the fixed point
	of a contraction, it is unique.
\end{proof}

Another construction \ to show the existence of an invariant measure in the
context of fiber contracting maps can be found in \cite{AP} (subsection
7.3.4.1). If the system satisfies the assumption \textbf{N1} we can also
prove a stronger statement

\begin{proposition}
	\label{belongs} If \textbf{N1} is satisfied, $\mu _{0}$ is the unique $F$%
	-invariant probability in $S^{\infty }$. \footnote{See  $($\ref{sinfi}$)$ for the definition of the space.}
\end{proposition}

\begin{proof}
	Let $\mu _{0}$ be the $F$-invariant measure found in Lemma \ref{existinv}
	such that $\pi _{x\ast }\mu _{0}=\psi _{x}$ where $\psi _{x}$ is the unique $%
	T$-invariant density (see T3.4) in $S_{-}$. If \textbf{N1} is satisfied, we
	have $|\cdot |_{\infty }\leq |\cdot |_{s}$. Suppose that $%
	g:N_{2}\longrightarrow \mathbb{R}$ is a Lipschitz function such that $%
	|g|_{\infty }\leq 1$ and $L(g)\leq 1$. Then, it holds $\left\vert \int {g}%
	d(\mu _{0}|_{\gamma })\right\vert \leq |g|_{\infty }\psi _{x}(\gamma )\leq
	|\psi _{x}|_{\infty }\leq |\psi _{x}|_{s}$. Hence, $\mu _{0}\in S^{\infty }$.
\end{proof}

\subsection{$L^{\infty }$ norms}

In this section we consider an $L^{\infty }$ like anisotropic norm. We show
how a Lasota Yorke inequality can be proved for this norm too.

\begin{lemma}
	Under the assumptions $G1$, $T1,...,T3.3$, for all signed measure $\mu \in
	S^{\infty }$ with marginal density $\phi _{x}$ it holds 
	\begin{equation*}
		||\func{F}_{\ast }\mu ||_{\infty }\leq \alpha |\func{P}_{T}1|_{\infty }||\mu
		||_{\infty }+|\func{P}_{T}\phi _{x}|_{\infty }.
	\end{equation*}
\end{lemma}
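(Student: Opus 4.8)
The plan is to imitate the proof of Proposition \ref{5.6}, replacing the integral over $N_1$ by an essential supremum: express $(\func{F}^\ast\mu)|_\gamma$ on each leaf through the Perron--Frobenius--type formula of Proposition \ref{niceformulaab}, estimate the $||\cdot||_W$ norm of each summand with the leafwise contraction bound of Lemma \ref{quasicontract}, and then recognise the resulting ``horizontal'' sums as $\func{P}_T$ applied to $1$ and to $\phi_x$.

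First I would fix $\gamma$ in a full $m_1$-measure subset of $N_1$ and set $\beta_i := T_i^{-1}(\gamma)$. By Proposition \ref{niceformulaab},
\[
(\func{F}^\ast\mu)|_\gamma=\sum_{i=1}^{q}\frac{\func{F}^\ast_{\beta_i}\!\big(\mu|_{\beta_i}\big)}{|\det DT_i(\beta_i)|}\,\chi_{T_i(P_i)}(\gamma),
\]
so, the sets $T_i(P_i)$ being essentially disjoint, $\big\|(\func{F}^\ast\mu)|_\gamma\big\|_W\le\sum_{i=1}^{q}|\det DT_i(\beta_i)|^{-1}\,\big\|\func{F}^\ast_{\beta_i}(\mu|_{\beta_i})\big\|_W\,\chi_{T_i(P_i)}(\gamma)$. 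Since $\mu|_{\beta_i}$ is a signed measure on $N_2$ of total mass $\mu|_{\beta_i}(N_2)=\phi_x^+(\beta_i)-\phi_x^-(\beta_i)=\phi_x(\beta_i)$, Lemma \ref{quasicontract} gives $\big\|\func{F}^\ast_{\beta_i}(\mu|_{\beta_i})\big\|_W\le\alpha\,\big\|\mu|_{\beta_i}\big\|_W+\phi_x(\beta_i)$. If one prefers to follow Proposition \ref{5.6} verbatim, first decompose $\mu|_{\beta_i}=\phi_x(\beta_i)\,\overline{\mu^+}|_{\beta_i}+\phi_x^-(\beta_i)\big(\overline{\mu^+}|_{\beta_i}-\overline{\mu^-}|_{\beta_i}\big)$ into $\phi_x(\beta_i)$ times a probability measure plus a zero-average part, then apply \eqref{simples} to the first and the sharp form of Lemma \ref{quasicontract} to the second; the conclusion is the same.

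Next I would bound $\big\|\mu|_{\beta}\big\|_W\le\|\mu\|_\infty$ for $m_1$-a.e.\ $\beta$ and use the change-of-variables identity $\sum_{i=1}^{q}|\det DT_i(T_i^{-1}\gamma)|^{-1}f(T_i^{-1}\gamma)\,\chi_{T_i(P_i)}(\gamma)=(\func{P}_Tf)(\gamma)$, valid $m_1$-a.e., with $f\equiv 1$ and with $f=\phi_x$, to obtain
\[
\big\|(\func{F}^\ast\mu)|_\gamma\big\|_W\le\alpha\,(\func{P}_T1)(\gamma)\,\|\mu\|_\infty+(\func{P}_T\phi_x)(\gamma)\qquad (m_1\text{-a.e. }\gamma).
\]
Taking the essential supremum over $\gamma\in N_1$ then yields $\|\func{F}^\ast\mu\|_\infty\le\alpha\,|\func{P}_T1|_\infty\,\|\mu\|_\infty+|\func{P}_T\phi_x|_\infty$.

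The only delicate point is the mass bookkeeping for signed measures: one must make sure that the non-contracted remainder produced by Lemma \ref{quasicontract} is exactly the leafwise total mass $\phi_x(\beta_i)$ — which by Lemma \ref{transformula} sums over the preimages to the marginal density $\func{P}_T\phi_x$ of $\func{F}^\ast\mu$ — and is not absorbed a second time into the $\|\mu\|_\infty$ term. Once this is arranged, the remaining ingredients (the contraction estimate of Lemma \ref{quasicontract}, the normalisation \eqref{simples}, and the positivity and change-of-variables formula for $\func{P}_T$) are routine.
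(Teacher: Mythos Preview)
Your approach is correct and essentially identical to the paper's: the paper also starts from the expression of $(\func{F}^\ast\mu)|_\gamma$ in Proposition \ref{niceformulaab}, applies the triangle inequality, uses Lemma \ref{quasicontract} to bound each term by $\alpha\|\mu|_{T_i^{-1}(\gamma)}\|_W+\phi_x(T_i^{-1}(\gamma))$, replaces $\|\mu|_{T_i^{-1}(\gamma)}\|_W$ by $\|\mu\|_\infty$, recognises the two resulting sums as $(\func{P}_T1)(\gamma)$ and $(\func{P}_T\phi_x)(\gamma)$, and takes the supremum over $\gamma$. Your optional detour through the $\overline{\mu^\pm}$ decomposition of Proposition \ref{5.6} is not needed here, and the paper does not use it; the direct application of Lemma \ref{quasicontract} suffices.
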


\begin{proof}
	Let $T_{i}$ be the branches of $T$, for all $i=1\cdots q$. Applying Lemma %
	\ref{quasicontract} on the third line below, we have 
	\begin{eqnarray*}
		||(\func{F}_{\ast }\mu )|_{\gamma }||_{W} &=&\left\vert \left\vert
		\sum_{i=1}^{q}\frac{\func{F}_{T_{i}^{-1}(\gamma )\ast }\mu
			|_{T_{i}^{-1}(\gamma )}}{|\det DT_{i}(T_{i}^{-1}(\gamma ))|}\chi
		_{T(P_{i})}(\gamma )\right\vert \right\vert _{W} \\
		&\leq &\sum_{i=1}^{q}\frac{||\func{F}_{T_{i}^{-1}(\gamma )\ast }\mu
			|_{T_{i}^{-1}(\gamma )}||_{W}}{|\det DT_{i}(T_{i}^{-1}(\gamma ))|}\chi
		_{T(P_{i})}(\gamma ) \\
		&\leq &\sum_{i=1}^{q}\frac{\alpha ||\mu |_{T_{i}^{-1}(\gamma )}||_{W}+\phi
			_{x}(T_{i}^{-1}(\gamma ))}{|\det DT_{i}(T_{i}^{-1}(\gamma ))|}\chi
		_{T(P_{i})}(\gamma ) \\
		&\leq &\alpha ||\mu ||_{\infty }\sum_{i=1}^{q}\frac{\chi _{T(P_{i})}(\gamma )%
		}{|\det DT_{i}(T_{i}^{-1}(\gamma ))|}+\sum_{i=1}^{q}\frac{\phi
			_{x}(T_{i}^{-1}(\gamma ))}{|\det DT_{i}(T_{i}^{-1}(\gamma ))|}\chi
		_{T(P_{i})}(\gamma ).
	\end{eqnarray*}%
	Hence, taking the supremum on $\gamma $, we finish the proof of the
	statement.
\end{proof}

Applying the last lemma to $\func{F}^{\ast n}$ instead of $\func{F}$ one
obtains.

\begin{lemma}
	\label{inftycontraction}Under the assumptions $G1$, $T1,...,T3.4$, for all
	signed measure $\mu \in S^{\infty }$ it holds%
	\begin{equation*}
		||\func{F}_{\ast }^{n}\mu ||_{\infty }\leq \alpha ^{n}|\func{P}%
		_{T}^{n}1|_{\infty }||\mu ||_{\infty }+|\func{P}_{T}^{n}\phi _{x}|_{\infty },
	\end{equation*}%
	where $\phi _{x}$ is the marginal density of $\mu $.
\end{lemma}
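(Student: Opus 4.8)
The plan is to regard $F^{n}$ as a contracting fiber map of exactly the same type as $F$ and to repeat, almost verbatim, the argument of the previous lemma, with $F^{n}$, $T^{n}$, $\alpha ^{n}$ and $\func{P}_{T}^{n}$ playing the roles of $F$, $T$, $\alpha $ and $\func{P}_{T}$. This is the content of the phrase ``applying the last lemma to $\func{F}^{\ast n}$''.

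First I would check that $F^{n}$ fits the framework of Section \ref{sec2}. Writing $F^{n}(x,y)=(T^{n}(x),G^{(n)}(x,y))$, iterating (\ref{contracting1}) gives $d_{2}(G^{(n)}(x,y_{1}),G^{(n)}(x,y_{2}))\le \alpha ^{n}d_{2}(y_{1},y_{2})$, so $F^{n}$ contracts $\mathcal{F}^{s}$ with rate $\alpha ^{n}<1$; this is the only property of the fiber maps that enters the proof (it is used through Lemma \ref{quasicontract}). The base map $T^{n}$ is non-singular by \textbf{T1}; it admits the finite partition $\mathcal{P}^{(n)}=\bigvee _{j=0}^{n-1}T^{-j}\mathcal{P}$ into open sets on each of which it is a diffeomorphism with non-vanishing Jacobian (chain rule together with \textbf{T2}); and its Perron--Frobenius operator satisfies $\func{P}_{T^{n}}=\func{P}_{T}^{n}$, as one checks from the defining duality relation in \textbf{T3}. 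Hence Proposition \ref{niceformulaab}, applied to $F^{n}$, yields, for almost every leaf $\gamma $,
\begin{equation*}
(\func{F}^{\ast n}\mu )|_{\gamma }=\sum _{\sigma }\frac{(F^{n})_{\sigma ^{-1}(\gamma )}^{\ast }\,\mu |_{\sigma ^{-1}(\gamma )}}{|\det D(T^{n})_{\sigma }(\sigma ^{-1}(\gamma ))|}\,\chi _{(T^{n})_{\sigma }(P^{(n)}_{\sigma })}(\gamma ),
\end{equation*}
where $\sigma $ runs over the branches of $T^{n}$, $(T^{n})_{\sigma }$ is the corresponding injective branch with domain $P^{(n)}_{\sigma }\in \mathcal{P}^{(n)}$, and $(F^{n})_{\beta }^{\ast }$ is the transfer operator of the ($\alpha ^{n}$-contracting) fiber map of $F^{n}$ over the leaf $\beta $.

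Then I would run the same chain of estimates as in the previous lemma: the triangle inequality for $||\cdot ||_{W}$, followed by Lemma \ref{quasicontract} applied to each $\alpha ^{n}$-contraction $(F^{n})_{\sigma ^{-1}(\gamma )}$ (using that $\mu |_{\beta }(N_{2})=\phi _{x}(\beta )$ for the marginal density $\phi _{x}=\phi _{x}^{+}-\phi _{x}^{-}$ of $\mu $, by Definition \ref{restrictionmeasure}), and finally $||\mu |_{\beta }||_{W}\le ||\mu ||_{\infty }$ for almost every leaf (legitimate after the change of variables $\gamma =(T^{n})_{\sigma }(\beta )$, since $T^{n}$ is non-singular). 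This gives, for almost every $\gamma $,
\begin{align*}
||(\func{F}^{\ast n}\mu )|_{\gamma }||_{W}&\le \alpha ^{n}||\mu ||_{\infty }\sum _{\sigma }\frac{\chi _{(T^{n})_{\sigma }(P^{(n)}_{\sigma })}(\gamma )}{|\det D(T^{n})_{\sigma }(\sigma ^{-1}(\gamma ))|}+\sum _{\sigma }\frac{\phi _{x}(\sigma ^{-1}(\gamma ))}{|\det D(T^{n})_{\sigma }(\sigma ^{-1}(\gamma ))|}\chi _{(T^{n})_{\sigma }(P^{(n)}_{\sigma })}(\gamma )\\
&=\alpha ^{n}||\mu ||_{\infty }\,\func{P}_{T^{n}}(1)(\gamma )+\func{P}_{T^{n}}(\phi _{x})(\gamma )=\alpha ^{n}||\mu ||_{\infty }\,\func{P}_{T}^{n}1(\gamma )+\func{P}_{T}^{n}\phi _{x}(\gamma ),
\end{align*}
and taking the essential supremum over $\gamma $, bounding $\func{P}_{T}^{n}\phi _{x}(\gamma )\le |\func{P}_{T}^{n}\phi _{x}|_{\infty }$ exactly as in the previous lemma, yields the claimed inequality.

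The one point that genuinely requires care is to avoid a naive one-step induction on $F$: iterating the previous lemma directly replaces $|\func{P}_{T}^{n}\phi _{x}|_{\infty }$ by the strictly worse $(1+\alpha +\cdots +\alpha ^{n-1})|\func{P}_{T}^{n}\phi _{x}|_{\infty }$, because the additive error term $\mu (N_{2})$ of Lemma \ref{quasicontract} would be incurred at each of the $n$ steps. Passing to $F^{n}$, whose fiber maps contract by $\alpha ^{n}$, that error is incurred only once, and this is precisely what produces the clean bound; the remaining work (enumerating the branches of $T^{n}$, the chain-rule identity for $\det D(T^{n})_{\sigma }$, and the identity $\func{P}_{T^{n}}=\func{P}_{T}^{n}$) is routine.
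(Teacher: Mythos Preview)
Your proposal is correct and follows exactly the approach the paper indicates: the paper's proof is literally the one-line remark ``Applying the last lemma to $\func{F}^{\ast n}$ instead of $\func{F}$ one obtains,'' and you have carried this out in full detail, including the verification that $F^{n}$ satisfies the structural hypotheses (contraction rate $\alpha^{n}$, partition $\mathcal{P}^{(n)}$, $\func{P}_{T^{n}}=\func{P}_{T}^{n}$) and the useful observation that naive step-by-step induction on the previous lemma would produce a worse constant.
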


\begin{proposition}[Lasota-Yorke inequality for $S^{\infty }$]
	Suppose $F$ satisfies the assumptions $G1$, $T1,...,T3.4$ and $N1$. Then,
	there are $0<\alpha _{1}<1$ and $A_{1},B_{4}\in \mathbb{R}$ such that for
	all $\mu \in S^{\infty }$, it holds%
	\begin{equation*}
		||\func{F}_{\ast }^{n}\mu ||_{S^{\infty }}\leq A_{1}\alpha _{1}^{n}||\mu
		||_{S^{\infty }}+B_{4}||\mu ||_{1}.
	\end{equation*}%
	\label{LYinfty}
\end{proposition}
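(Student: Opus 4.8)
The plan is to split the strong norm as $\|\mu\|_{S^\infty}=|\phi_x|_s+\|\mu\|_\infty$ and to estimate the two pieces of $\func{F}^{\ast n}\mu$ separately, using that (by Lemma \ref{transformula}, iterated) the marginal density of $\func{F}^{\ast n}\mu$ is $\func{P}_T^n\phi_x$, while its restrictions to leaves are handled by the preceding Lemma.

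First I would treat the marginal part. Applying the iterated Lasota--Yorke inequality (\ref{lasotaiiii}) to $f=\phi_x\in S_{\_}$ gives $|\func{P}_T^n\phi_x|_s\le B_3\beta_2^{\,n}|\phi_x|_s+C_2|\phi_x|_1$, and since $|\phi_x|_s\le\|\mu\|_{S^\infty}$ and $|\phi_x|_1\le\|\mu\|_1$ (the latter because, testing $W_1^0$ against the constant function $g\equiv1$, one has $|\phi_x(\gamma)|\le W_1^0(\mu^+|_\gamma,\mu^-|_\gamma)$ for a.e.\ $\gamma$), this term is already of the desired form $B_3\beta_2^{\,n}\|\mu\|_{S^\infty}+C_2\|\mu\|_1$.

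Next I would treat the $\|\cdot\|_\infty$ part via the preceding Lemma, which yields $\|\func{F}^{\ast n}\mu\|_\infty\le\alpha^n|\func{P}_T^n 1|_\infty\,\|\mu\|_\infty+|\func{P}_T^n\phi_x|_\infty$. Assumption \textbf{N1} converts the $L^\infty_{m_1}$ norms on $N_1$ into $|\cdot|_s$: for the second summand $|\func{P}_T^n\phi_x|_\infty\le H_N|\func{P}_T^n\phi_x|_s$, which is already controlled above; for the first, $|\func{P}_T^n 1|_\infty\le H_N|\func{P}_T^n 1|_s$, and applying (\ref{lasotaiiii}) once more to the constant density $1\in S_{\_}$ (with $|1|_1=1$ and $\beta_2<1$) gives the uniform bound $|\func{P}_T^n 1|_\infty\le H_N(B_3|1|_s+C_2)=:M_2$ for all $n$. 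Collecting everything and bounding $\|\mu\|_\infty\le\|\mu\|_{S^\infty}$, one obtains $\|\func{F}^{\ast n}\mu\|_{S^\infty}\le\big(B_3(1+H_N)\beta_2^{\,n}+M_2\alpha^n\big)\|\mu\|_{S^\infty}+C_2(1+H_N)\|\mu\|_1$, so the statement follows with $\alpha_1:=\max\{\alpha,\beta_2\}<1$, $A_1:=B_3(1+H_N)+M_2$ and $B_4:=C_2(1+H_N)$.

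The only genuinely delicate point is the term $\alpha^n|\func{P}_T^n 1|_\infty$: the stable contraction factor $\alpha^n$ must absorb whatever growth $|\func{P}_T^n 1|_\infty$ may exhibit (for maps with unbounded expansion this is not automatic from expansion alone). This is precisely where one uses that the constant density belongs to $S_{\_}$, so that the Lasota--Yorke inequality applies to it; together with \textbf{N1} this makes $|\func{P}_T^n 1|_\infty$ bounded uniformly in $n$, and the term becomes harmless. Everything else is routine bookkeeping with the two halves of the anisotropic norm.
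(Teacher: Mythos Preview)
Your proof is correct and follows essentially the same route as the paper: split $\|\func{F}^{\ast n}\mu\|_{S^\infty}$ into the marginal piece $|\func{P}_T^n\phi_x|_s$ (handled by (\ref{lasotaiiii})) and the $\|\cdot\|_\infty$ piece (handled by the preceding lemma), then use \textbf{N1} and (\ref{lasotaiiii}) applied to $f=1$ to get the uniform bound on $|\func{P}_T^n 1|_\infty$; the resulting constants $\alpha_1=\max\{\alpha,\beta_2\}$ and $B_4=C_2(1+H_N)$ coincide with the paper's, and your $A_1$ differs only by the harmless factor $|1|_s$ in $M_2$.
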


\begin{proof}
	By equation (\ref{lasotaiiii}) and (N1) it follows $|\func{P}%
	_{T}^{n}1|_{\infty }\leq H_{N}(B_{3}+C_{2}),$ for each $n$. Then, 
	\begin{eqnarray*}
		||\func{F}_{\ast }^{n}\mu ||_{S^{\infty }} &=&|\func{P}_{T}^{n}\phi
		_{x}|_{s}+||\func{F}\mu ||_{\infty } \\
		&\leq &[B_{3}\beta _{2}^{n}|\phi _{x}|_{s}+C_{2}|\phi _{x}|_{1}]+[\alpha
		^{n}|\func{P}_{T}^{n}1|_{\infty }||\mu ||_{\infty }+|\func{P}_{T}^{n}\phi
		_{x}|_{\infty }] \\
		&\leq &[B_{3}\beta _{2}^{n}|\phi _{x}|_{s}+C_{2}|\phi _{x}|_{1}] \\
		&+&[\alpha ^{n}H_{N}(B_{3}+C_{2})||\mu ||_{\infty }+H_{N}(B_{3}\beta
		_{2}^{n}|\phi _{x}|_{s}+C_{2}|\phi _{x}|_{1})]. \\
		&\leq &[\max (\alpha ,\beta _{2})]^{n}[B_{3}(1+2H_{N})+H_{N}C_{2}]||\mu
		||_{S^{\infty }}+C_{2}(1+H_{N})||\mu ||_{1},
	\end{eqnarray*}%
	where $|\phi _{x}|_{1}\leq ||\mu ||_{1}$ and $|\phi _{x}|_{s}\leq ||\mu
	||_{S^{\infty }}$. We finish the proof, setting $\alpha _{1}=\max (\alpha
	,\beta _{2})$, $A_{1}=[B_{3}(1+2H_{N})+H_{N}C_{2}]$ and $%
	B_{4}=C_{2}(1+H_{N}) $.
\end{proof}

\section{Spectral gap}

In this section, we prove a spectral gap statement for the transfer operator
applied to our strong spaces. For this, we will directly use the properties
proved in the previous section, and this will give a kind of constructive
proof. We remark that, we cannot apply the traditional Hennion, or
Ionescu-Tulcea and Marinescu's approach to our function spaces because there
is no compact immersion of the strong space into the weak one. This comes
from the fact that we are considering the same \textquotedblleft dual of
Lipschitz\textquotedblright distance (see Definition \ref{wasserstein}) in
the contracting direction for both spaces.

\begin{theorem}[Spectral gap on $S^{1}$]
	\label{spgap}If $F$ satisfies \textbf{G1}, \textbf{T1},...,\textbf{T3.4}
	given at beginning of section \ref{sec2}, then the operator $\func{F}_{\ast
	}:S^{1}\longrightarrow S^{1}$ (see \eqref{S1}) can be written as 
	\begin{equation*}
		\func{F}_{\ast }=\func{P}+\func{N},
	\end{equation*}%
	where
	
	\begin{enumerate}
		\item[a)] $\func{P}$ is a projection i.e. $\func{P} ^2 = \func{P}$ and $\dim
		Im (\func{P})=1$;
		
		\item[b)] there are $0<\xi <1$ and $K>0$ such that \footnote{%
			We remark that, the spectral radius of $\func{\overline{N}}$ satisfies $\rho
			(\func{\overline{N}})<1$, where $\func{\overline{N}}$ is the extension of $%
			\func{N}$ to $\overline{S^1}$ (the completion of $S_1$). This gives us
			spectral gap, in the usual sense, for the operator $\func{\overline{F}}:%
			\overline{S_1} \longrightarrow \overline{S_1}$. The same remark holds for
			Theorem \ref{speclinf}.} $\forall \mu \in S^1$ 
		\begin{equation*}
			||\func{N}^{n}(\mu )||_{S^{1}}\leq ||\mu||_{S^{1}} \xi ^{n}K;
		\end{equation*}
		
		\item[c)] $\func{P}\func{N}=\func{N}\func{P}=0$.
	\end{enumerate}
\end{theorem}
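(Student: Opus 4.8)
The plan is to exhibit the spectral projection explicitly, using the unique invariant measure $\mu_{0}\in S^{1}$ furnished by Proposition \ref{belongs}, and then to read off the exponential decay of the complementary part by combining the Lasota--Yorke inequality (Proposition \ref{lasotaoscilation2}) with the exponential convergence to equilibrium (Proposition \ref{quasiquasiquasi}). Concretely I would set
\[
\func{P}\mu=\mu(\Sigma)\,\mu_{0},\qquad \func{N}=\func{F}^{\ast}-\func{P}.
\]
First I would dispatch the algebraic points (a) and (c). The functional $\mu\mapsto\mu(\Sigma)$ is bounded on $S^{1}$ (taking $g\equiv 1$ in Definition \ref{wasserstein} gives $|\mu(\Sigma)|\le\|\mu\|_{1}\le\|\mu\|_{S^{1}}$), and $\mu_{0}\in S^{1}$ by Proposition \ref{belongs}, so $\func{P}$ is a bounded operator on $S^{1}$; since $\mu_{0}(\Sigma)=1$ one gets $\func{P}^{2}=\func{P}$, and $\mathrm{Im}(\func{P})=\mathbb{R}\,\mu_{0}$ is one-dimensional because $\mu_{0}$ is a probability measure. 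As $\func{F}^{\ast}$ preserves total mass, $(\func{F}^{\ast}\mu)(\Sigma)=\mu(\Sigma)$, and $\func{F}^{\ast}\mu_{0}=\mu_{0}$, one has $\func{P}\func{F}^{\ast}=\func{F}^{\ast}\func{P}=\func{P}$, hence $\func{P}\func{N}=\func{N}\func{P}=0$ and, by an immediate induction, $\func{N}^{n}=\func{F}^{\ast n}-\func{P}$ for all $n\ge 1$.

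For (b) I would reduce to zero-average measures. For any $\mu\in S^{1}$ the measure $\nu:=\mu-\func{P}\mu$ lies in $\mathcal{V}_{s}=\{\nu\in S^{1}:\nu(\Sigma)=0\}$ (see \eqref{mathV}), satisfies $\func{N}^{n}\mu=\func{F}^{\ast n}\mu-\func{P}\mu=\func{F}^{\ast n}\nu$, and obeys $\|\nu\|_{S^{1}}\le(1+\|\mu_{0}\|_{S^{1}})\|\mu\|_{S^{1}}$. Thus it is enough to find $0<\xi<1$ and $K'>0$ with $\|\func{F}^{\ast n}\nu\|_{S^{1}}\le K'\xi^{n}\|\nu\|_{S^{1}}$ for every $\nu\in\mathcal{V}_{s}$. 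This I would get by a time-splitting argument: writing $n=2m$ (handling odd $n$ at the end by one extra application of the operator $\func{F}^{\ast}$, which is bounded on $S^{1}$ by Proposition \ref{lasotaoscilation2} with $n=1$), Proposition \ref{lasotaoscilation2} gives
\[
\|\func{F}^{\ast 2m}\nu\|_{S^{1}}\le A\lambda^{m}\|\func{F}^{\ast m}\nu\|_{S^{1}}+B_{2}\|\func{F}^{\ast m}\nu\|_{1}.
\]
Applying Proposition \ref{lasotaoscilation2} once more together with $\|\cdot\|_{1}\le\|\cdot\|_{S^{1}}$ bounds $\|\func{F}^{\ast m}\nu\|_{S^{1}}\le(A+B_{2})\|\nu\|_{S^{1}}$, while $\func{F}^{\ast m}\nu$ is again a zero-average element of $S^{1}$, so Proposition \ref{quasiquasiquasi} yields $\|\func{F}^{\ast m}\nu\|_{1}\le D_{2}\beta_{1}^{m}\|\nu\|_{S^{1}}$. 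With $\sigma:=\max(\lambda,\beta_{1})<1$ this gives $\|\func{F}^{\ast 2m}\nu\|_{S^{1}}\le\big(A(A+B_{2})+B_{2}D_{2}\big)\sigma^{m}\|\nu\|_{S^{1}}$, so that $\xi:=\sqrt{\sigma}$ and a suitable $K$ (absorbing the odd-$n$ correction and the factor $1+\|\mu_{0}\|_{S^{1}}$) work.

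The individual estimates here are routine; the genuinely delicate points are conceptual. First, one must know that $\mu_{0}$ really lies in the \emph{strong} space $S^{1}$ — this is exactly Proposition \ref{belongs}, and it rests on the structure of the disintegration along $\mathcal{F}^{s}$. Second, and this is why the argument is carried out by hand rather than by citing a standard theorem, the strong space $S^{1}$ does not embed compactly in $(\mathcal{L}^{1},\|\cdot\|_{1})$ (the same ``dual of Lipschitz'' distance is used in the contracting direction for both norms), so neither Hennion's nor the Ionescu-Tulcea--Marinescu quasi-compactness machinery is available, and the spectral gap must be reassembled directly from the Lasota--Yorke inequality and the quantitative convergence to equilibrium. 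I would close with the observation that, because every ingredient is explicit, the resulting $\xi$ and $K$ are effective in terms of $\alpha$, the Lasota--Yorke coefficients, and the rate $r$ of the quotient map, as anticipated in Remark \ref{quantitative}.
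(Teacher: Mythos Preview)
Your proposal is correct and follows essentially the same route as the paper: define $\func{P}\mu=\mu(\Sigma)\mu_{0}$, set $\func{N}=\func{F}^{\ast}-\func{P}$ (equivalently $\func{N}=\func{F}^{\ast}\circ(\mathrm{Id}-\func{P})$, which is how the paper writes it), reduce part (b) to the action of $\func{F}^{\ast n}$ on $\mathcal{V}_{s}$, and then split $n\approx 2m$ to combine the Lasota--Yorke inequality with the exponential convergence to equilibrium, obtaining $\xi=\sqrt{\max(\lambda,\beta_{1})}$. The only cosmetic differences are that the paper writes $n=2m+d$ with $d\in\{0,1\}$ rather than treating odd $n$ separately, and it first normalizes to $\|\mu\|_{S^{1}}\le 1$; your observation that $\func{N}^{n}=\func{F}^{\ast n}-\func{P}$ is equivalent to the paper's $\func{N}^{n}(\mu)=\func{F}^{\ast n}(\func{S}(\mu))$.
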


\begin{proof}
	First, let us show there exist $0<\xi <1$ and $K_{1}>0$ such that, for all $%
	n\geq 1$, it holds 
	\begin{equation*}
		||\func{F}_{\ast }^{n}||_{{\mathcal{V}_{s}}\rightarrow {\mathcal{V}_{s}}%
		}\leq \xi ^{n}K_{1}  \label{quaselawww}
	\end{equation*}%
	where ${\mathcal{V}_{s}}$ is the zero average space defined in $($\ref{mathV}%
	$)$. Indeed, consider $\mu \in \mathcal{V}_{s}$ (see \eqref{mathV}) s.t. $%
	||\mu ||_{S^{1}}\leq 1$ and for a given $n\in \mathbb{N}$ let $m$ and $0\leq
	d\leq 1$ be the coefficients of the division of $n$ by $2$, i.e. $n=2m+d$.
	Thus $m=\frac{n-d}{2}$. By the Lasota-Yorke inequality (Proposition \ref%
	{lasotaoscilation2}) we have the uniform bound $||\func{F}_{\ast }^{n}\mu
	||_{S^{1}}\leq B_{2}+A$ for all $n\geq 1$. Moreover, by Propositions \ref%
	{quasiquasiquasi} and \ref{weakcontral11234} there is some $D_{2}$ such that
	it holds (below, let $\lambda _{0}$ be defined by $\lambda _{0}=\max \{\beta
	_{1},\lambda \}$)%
	\begin{eqnarray*}
		||\func{F}_{\ast }^{n}\mu ||_{S^{1}} &\leq &A\lambda ^{m}||\func{F}_{\ast
		}^{m+d}\mu ||_{S^{1}}+B_{2}||\func{F}_{\ast }^{m+d}\mu ||_{1} \\
		&\leq &\lambda ^{m}A(A+B_{2})+B_{2}||\func{F}_{\ast }^{m}\mu ||_{1} \\
		&\leq &\lambda ^{m}A(A+B_{2})+B_{2}D_{2}\beta _{1}^{m} \\
		&\leq &\lambda _{0}^{m}\left[ A(A+B_{2})+B_{2}D_{2}\right] \\
		&\leq &\lambda _{0}^{\frac{n-d}{2}}\left[ A(A+B_{2})+B_{2}D_{2}\right] \\
		&\leq &\left( \sqrt{\lambda _{0}}\right) ^{n}\left( \frac{1}{\lambda _{0}}%
		\right) ^{\frac{d}{2}}\left[ A(A+B_{2})+B_{2}D_{2}\right] \\
		&=&\xi ^{n}K_{1},
	\end{eqnarray*}%
	where $\xi =\sqrt{\lambda _{0}}$ and $K_{1}=\left( \frac{1}{\lambda _{0}}%
	\right) ^{\frac{1}{2}}\left[ A(A+B_{2})+B_{2}D_{2}\right] $. Thus, we arrive
	at 
	\begin{equation}
		||(\func{F}_{\ast }|_{_{\mathcal{V}_{s}}}){^{n}}||_{S^{1}\rightarrow
			S^{1}}\leq \xi ^{n}K_{1}.  \label{just}
	\end{equation}
	
	Now, recall that $\func{F}_{\ast }:S^{1}\longrightarrow S^{1}$ has an unique
	fixed point $\mu _{0}\in S^{1}$, which is a probability (see Proposition \ref%
	{belongs}). Consider the operator $\func{P}:S^{1}\longrightarrow \left[ \mu
	_{0}\right] $ ($\left[ \mu _{0}\right] $ is the space spanned by $\mu _{0}$%
	), defined by $\func{P}(\mu )=\mu (\Sigma )\mu _{0}$. By definition, $\func{P%
	}$ is a projection and $\dim Im(P)=1$. Define the operator%
	\begin{equation*}
		\func{S}:S^{1}\longrightarrow \mathcal{V}_{s},
	\end{equation*}%
	by%
	\begin{equation*}
		\func{S}(\mu )=\mu -\func{P}(\mu ),\ \ \ \mathnormal{\forall }\ \ \mu \in
		S^{1}.
	\end{equation*}%
	Thus, we set $\func{N}=\func{F}_{\ast }\circ \func{S}$ and observe that, by
	definition, $\func{P}\func{N}=\func{N}\func{P}=0$ and $\func{F}_{\ast }=%
	\func{P}+\func{N}$. Moreover, $\func{N}^{n}(\mu )=\func{F}_{\ast }{^{n}}(%
	\func{S}(\mu ))$ for all $n\geq 1$. Since $\func{S}$ is bounded and $\func{S}%
	(\mu )\in \mathcal{V}_{s}$, we get by (\ref{just}), $||\func{N}^{n}(\mu
	)||_{S^{1}}\leq \xi ^{n}K||\mu ||_{S^{1}}$, for all $n\geq 1$, where $%
	K=K_{1}||\func{S}||_{S^{1}\rightarrow S^{1}}$.
\end{proof}

In the same way, using the $\mathcal{L}^{\infty }$ Lasota-Yorke inequality
of Proposition \ref{LYinfty}, and Lemma \ref{inftycontraction} it is
possible to obtain exponential convergence to equilibrium (see the proof of
Proposition \ref{5.8}) and spectral gap on the $L^{\infty }$ like strong and
weak spaces $\left( \mathcal{L}^{\infty },||\cdot ||_{\infty }\right) $ and $%
\left( S^{\infty },||\cdot ||_{S^{\infty }}\right) .$ We omit the proof
which is essentially the same as above:

\begin{theorem}[Spectral gap on $S^{\infty }$]
	If $F$ satisfies the assumptions $G1$, $T1,...,T3.4$ and $N1$, then the
	operator $\func{F}_{\ast }:S^{\infty }\longrightarrow S^{\infty }$ can be
	written as 
	\begin{equation*}
		\func{F}_{\ast }=\func{P}+\func{N},
	\end{equation*}%
	where
	
	\begin{enumerate}
		\item[a)] $\func{P}$ is a projection i.e. $\func{P} ^2 = \func{P}$ and $\dim
		Im (\func{P})=1$;
		
		\item[b)] there are $0<\xi _1 <1$ and $K_2>0$ such that $||\func{N}^{n}(\mu
		)||_{S^{\infty }}\leq ||\mu ||_{S^{\infty }}\xi _1 ^{n}K_2$ $\forall \ \ \mu
		\in S^{\infty} $;
		
		\item[c)] $\func{P}\func{N}=\func{N}\func{P}=0$.
	\end{enumerate}
	
	\label{speclinf}
\end{theorem}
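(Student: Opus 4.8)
The plan is to mimic, almost verbatim, the proof of Theorem \ref{spgap}, replacing the Lasota--Yorke inequality of Proposition \ref{lasotaoscilation2} by the one for the $S^{\infty}$ norm, Proposition \ref{LYinfty}. First I would record the elementary facts that, since $m_{1}$ is a probability measure, $||\cdot ||_{1}\leq ||\cdot ||_{\infty }$, so that $\mathcal{L}^{\infty }\subset \mathcal{L}^{1}$, $S^{\infty }\subset S^{1}$, and $||\mu ||_{S^{1}}\leq ||\mu ||_{S^{\infty }}$ for every $\mu \in S^{\infty }$. This inclusion is exactly what makes the convergence-to-equilibrium statement of Proposition \ref{quasiquasiquasi} (which is phrased in the weak norm $||\cdot ||_{1}$) applicable to the zero-average subspace $\mathcal{V}_{s}^{\infty }:=\{\mu \in S^{\infty }:\mu (\Sigma )=0\}$; I would also note that this subspace is $\func{F}^{\ast }$-invariant, since $\func{F}^{\ast }$ preserves total mass.

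The core of the argument is to produce $0<\xi _{1}<1$ and $K>0$ with $||\func{F}^{\ast n}||_{\mathcal{V}_{s}^{\infty }\rightarrow \mathcal{V}_{s}^{\infty }}\leq \xi _{1}^{n}K$ for all $n\geq 1$. For this I would fix $\mu \in \mathcal{V}_{s}^{\infty }$ with $||\mu ||_{S^{\infty }}\leq 1$ (hence $||\mu ||_{1}\leq ||\mu ||_{S^{1}}\leq 1$), extract from Proposition \ref{LYinfty} the uniform bound $||\func{F}^{\ast j}\mu ||_{S^{\infty }}\leq A_{1}+B_{4}$, and from Proposition \ref{quasiquasiquasi} the decay $||\func{F}^{\ast j}\mu ||_{1}\leq D_{2}\beta _{1}^{j}$. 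Then I would run the standard Hennion-type interpolation: for $n$ large, writing $n=2m+d$ with $d\in \{0,1\}$ and applying Proposition \ref{LYinfty} with exponent $m$ to the measure $\func{F}^{\ast (m+d)}\mu $, I expect
\begin{equation*}
||\func{F}^{\ast n}\mu ||_{S^{\infty }}\leq A_{1}\alpha _{1}^{m}||\func{F}^{\ast (m+d)}\mu ||_{S^{\infty }}+B_{4}||\func{F}^{\ast (m+d)}\mu ||_{1}\leq A_{1}(A_{1}+B_{4})\alpha _{1}^{m}+B_{4}D_{2}\beta _{1}^{m},
\end{equation*}
which, with $\lambda _{0}:=\max \{\alpha _{1},\beta _{1}\}<1$ and $m=\tfrac{n-d}{2}$, collapses to a bound of the form $(\sqrt{\lambda _{0}})^{n}K$, so $\xi _{1}=\sqrt{\lambda _{0}}$; the finitely many small values of $n$ are absorbed into $K$ via the uniform bound above, exactly as in Theorem \ref{spgap}.

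Finally I would assemble the spectral decomposition. Using Proposition \ref{belongs} (here \textbf{N1} enters) to get the unique $F$-invariant measure $\mu _{0}\in S^{\infty }$, I set $\func{P}(\mu )=\mu (\Sigma )\mu _{0}$, a rank-one projection which is bounded on $S^{\infty }$ because $|\mu (\Sigma )|=\left\vert \int \phi _{x}\,dm_{1}\right\vert \leq |\phi _{x}|_{1}\leq ||\mu ||_{1}\leq ||\mu ||_{S^{\infty }}$; then $\func{S}(\mu )=\mu -\func{P}(\mu )$ is a bounded operator $S^{\infty }\rightarrow \mathcal{V}_{s}^{\infty }$ and $\func{N}=\func{F}^{\ast }\circ \func{S}$. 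The relations $\func{F}^{\ast }=\func{P}+\func{N}$, $\func{P}^{2}=\func{P}$, $\dim Im(\func{P})=1$ and $\func{P}\func{N}=\func{N}\func{P}=0$ follow at once from $\func{F}^{\ast }\mu _{0}=\mu _{0}$, while $\func{N}^{n}(\mu )=\func{F}^{\ast n}(\func{S}(\mu ))$ combined with the contraction estimate yields part (b) with $K_{2}=K\,||\func{S}||_{S^{\infty }\rightarrow S^{\infty }}$.

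I do not expect a genuinely hard step: the argument is structurally identical to the proof of Theorem \ref{spgap}. The only point worth flagging --- and the reason one cannot simply quote Hennion or Ionescu-Tulcea--Marinescu, as already remarked before Theorem \ref{spgap} --- is that there is no compact inclusion $S^{\infty }\hookrightarrow \mathcal{L}^{\infty }$, so the Perron--Frobenius decomposition must be built by hand from the Lasota--Yorke inequality plus convergence to equilibrium; and the single new verification compared with the $S^{1}$ case is precisely that convergence to equilibrium, proved only in the weak norm, remains available on $\mathcal{V}_{s}^{\infty }$, which is guaranteed by $S^{\infty }\subset S^{1}$ with $||\cdot ||_{S^{1}}\leq ||\cdot ||_{S^{\infty }}$.
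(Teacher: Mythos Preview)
Your proposal is correct and follows precisely the approach the paper intends: the paper explicitly omits the proof, stating it is ``essentially the same as above'' (i.e.\ as Theorem \ref{spgap}) but using the $\mathcal{L}^{\infty}$ Lasota--Yorke inequality of Proposition \ref{LYinfty}. Your identification of the one additional point to check --- that $S^{\infty}\subset S^{1}$ with $||\cdot||_{S^{1}}\leq ||\cdot||_{S^{\infty}}$, so Proposition \ref{quasiquasiquasi} still applies on $\mathcal{V}_{s}^{\infty}$ --- is exactly the hinge that makes the transplant work.
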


\begin{remark}
	\label{quantitative2} The constant $\xi $ for the map $F$, found in Theorem %
	\ref{spgap}, is directly related to the coefficients of the Lasota-Yorke
	inequality and the rate of convergence to equilibrium of $F$ found before
	(see Remark \ref{quantitative}). More precisely, $\xi =\max \{\sqrt{\lambda }%
	,\sqrt{\beta _{1}}\}$. We remark that, from the above proof we also have an
	explicit estimate for $K$ in the exponential convergence, while many
	classical approaches are not suitable for this.
\end{remark}


\subsection{Exponential Decay of Correlations}\label{decay} In this section, we present one of the standard consequences of spectral gap. We will show how Theorems \ref{spgap} and \ref{speclinf} implies an exponential rate of convergence for the limit $$\lim {C_n(f,g)}=0,$$where $$C_n(f,g):=\left| \int{(g \circ F^n )  f}d\mu_0 - \int{g  }d\mu_0 \int{f  }d\mu_0 \right|,$$ $g: \Sigma \longrightarrow \mathbb{R} $ is a Lipschitz function and $f \in \Theta _{\mu _0} ^1$ or $f \in \Theta _{\mu _0} ^\infty$. The sets $\Theta _{\mu _0} ^1$ and $\Theta _{\mu _0} ^\infty$ are defined as $$\Theta _{\mu _0} ^1:= \{ f: \Sigma \longrightarrow \mathbb{R}; f\mu_0 \in S^1\}$$ and $$\Theta _{\mu _0} ^\infty := \{ f: \Sigma \longrightarrow \mathbb{R}; f\mu_0 \in S^\infty \},$$ where the measure $f\mu_0$ is defined by $f\mu_0(E):=\int _E{f}d\mu_0$ for all measurable set $E$.

\begin{proposition}
	For all Lipschitz function $g: \Sigma \longrightarrow \mathbb{R} $ and all $f \in \Theta _{\mu _0} ^1$, it holds $$\left| \int{(g \circ F^n )  f}d\mu_0 - \int{g  }d\mu_0 \int{f  }d\mu_0 \right| \leq ||f \mu _0||_{S^{1}} K |g|_{\lip}  \xi ^{n} \ \ \forall n \geq 1,$$where $\xi$ and $K$ are from Theorem \ref{spgap} and $|g|_{\lip} := |g|_\infty + L(g)$. 
\end{proposition}

\begin{proof}
	
	Let $g: \Sigma \longrightarrow \mathbb{R} $ be a Lipschitz function and $f \in \Theta _{\mu _0} ^1$. By Theorem \ref{spgap}, we have
	
	\begin{eqnarray*}
		\left| \int{(g \circ F^n )  f}d\mu_0 - \int{g  }d\mu_0 \int{f  }d\mu_0 \right| 
		&=& \left| \int{g  }d \func{F^*}{^n} (f\mu_0) - \int{g  }d\func{P}(f\mu_0) \right|
		\\&\leq& \left|\left|  \func{F^*}{^n} (f\mu_0) - \func{P}(f\mu_0) \right|\right|_W \max\{L(g), ||g||_\infty\}
		\\&=& \left|\left|  \func{N}{^n}(f\mu_0) \right|\right|_W \max\{L(g), ||g||_\infty\}
		\\&\leq & \left|\left|  \func{N}{^n}(f\mu_0) \right|\right|_{S^1} \max\{L(g), ||g||_\infty\}
		\\&\leq & ||f \mu _0||_{S^{1}} K |g|_{\lip}  \xi ^{n}.
	\end{eqnarray*}
	
\end{proof}

By the same argument as above and by Theorem \ref{speclinf} it holds the following.

\begin{proposition}
	For all Lipschitz function $g: \Sigma \longrightarrow \mathbb{R} $ and all $f \in \Theta _{\mu _0} ^\infty$, it holds $$\left| \int{(g \circ F^n )  f}d\mu_0 - \int{g  }d\mu_0 \int{f  }d\mu_0 \right| \leq ||f \mu _0||_{S^{1}} K |g|_{\lip}  \xi ^{n} \ \ \forall n \geq 1,$$where $\xi_1$ and $K_2$ are from Theorem \ref{speclinf}.
\end{proposition}

In Proposition \ref{disisisi} we will see that under some further assumptions on the system, the sets $ \Theta _{\mu _0} ^1$ contains the set of Lipschitz functions on $\Sigma$.

\section{Application to Lorenz-like maps \label{last}}

In this section, we apply Theorems \ref{spgap} and \ref{speclinf} to a large
class of maps which are Poincar\'{e} maps for suitable sections of
Lorenz-like flows. In these systems (see e.g \cite{AP}), it can be proved
that there is a two dimensional Poincar\'{e} section $\Sigma$ which can be
supposed to be a rectangle $I^2$, where $I=[0,1]$, whose return map $%
F_{L}:I^2 \rightarrow I^2$, after a suitable change of coordinates, has the
form $F_{L}(x,y)=(T_{L}(x),G_{L}(x,y))$, satisfying the properties, G1 and
T1-T3, of section \ref{sec2}. The map $T_{L}: I \longrightarrow I$, in this
case, can be supposed to be piecewise expanding with $C^{1+\alpha }$
branches.

Hence, we consider a class of skew product maps $F_{L}:I^2 \to I^2$, where $%
I=[0,1]$, satisfying $(G1),(T1),(T2),$ and the following properties on $%
T_{L}:$

\subsubsection{Properties of $T_{L}$ in Lorenz-like systems\label{lorenzk}}

\begin{enumerate}
	\item[(P'1)] $\dfrac{1}{|T_{L}^{\prime }|}$ is of universal bounded $p$%
	-variation, i.e. for $p\geq 1$ 
	\begin{equation*}  \label{pvariation}
		\var_p\left(\dfrac{1}{|T_{L}^{\prime }|}\right):= \sup_{0\leq x_{0}<\cdots
			<x_{n}\leq 1}\left( \sum_{i=0}^{n}{|\dfrac{1}{|T_{L}^{\prime }{(x_{i})}|}-%
			\dfrac{1}{|T_{L}^{\prime }{(x_{i-1})}|}|^{p}}\right) ^{\frac{1}{p}}<\infty;
	\end{equation*}
\end{enumerate}

\begin{enumerate}
	\item[(P'2)] $\inf {|{T_{L}^{n_0}}^{\prime }|} \geq \lambda _1>1$, for some $%
	n_0 \in \mathbb{N}$.
\end{enumerate}

The universal bounded $p$-variation, $\var_{p}$, is a generalization of the
usual bounded variation. It is a weaker notion, allowing piecewise Holder
functions. Indeed, for $p\geq 1$, a $1/p$-Holder function is of universal
bounded $p$-variation. This definition is adapted to maps having $%
C^{1+\alpha }$ regularity.

From properties P'1 and P'2, it follows (see \cite{Gk}) that there exists a
suitable strong space (the space $S_{-}$ in T3.1) for the Perron-Frobenius
operator $P_T$ associated to such a $T_{L}$, in a way that it satisfies the
assumptions $T1,...,T3.3$ and $N1$. In this case, supposing a property like $%
T3.4$ then we can apply our results. Therefore, let us introduce the space
of generalized bounded variation functions with respect to the Lebesgue
measure: $BV_{1,\frac{1}{p}}$. The functions of universal bounded $p$%
-variation are included in this space (for more details and results see \cite%
{Gk}, in particular Lemma 2.7 for a comparison of the two spaces).

A piecewise expanding map satisfying assumptions (P'1) and (P'2) has an
invariant measure with density in $BV_{1,\frac{1}{p}}$, moreover the
transfer operator restricted to this space satisfies a Lasota-Yorke
inequality and other interesting properties, as we will see in the following.

\begin{definition}
	Let $m_1$ be the Lebesgue measure on $I=[0,1]$. For an arbitrary function $%
	h:I\longrightarrow \mathbb{C}$ and $\epsilon >0$ define $\osc(h,B_{\epsilon
	}(x)):I\longrightarrow \lbrack 0,\infty ]$ by 
	\begin{equation*}
		\osc(h,B_{\epsilon }(x))=\esssup\{|h(y_{1})-h(y_{2})|;y_{1},y_{2}\in
		B_{\epsilon }(x)\},
	\end{equation*}%
	where $B_{\epsilon }(x)$ denotes the open ball of center $x$ and radius $%
	\epsilon $ and the essential supremum is taken with respect to the product
	measure $m_1^{2}$ on $I^2$. Also define the real function $\osc %
	_{1}(h,\epsilon )$, on the variable $\epsilon $, by 
	\begin{equation*}
		\osc_{1}(h,\epsilon )=\int {\osc(h,B_{\epsilon }(x))}dm(x).
	\end{equation*}
\end{definition}

\begin{definition}
	Fix $A_1>0$ and denote by $\Phi $ the class of all isotonic maps $\phi
	:(0,A_1]\longrightarrow \lbrack 0,\infty ]$, i.e. such that $x\leq
	y\Longrightarrow \phi (x)\leq \phi (y)$ and $\phi (x)\longrightarrow 0$ if $%
	x\longrightarrow 0$. Set
	
	\begin{itemize}
		\item $R_{1}=\{h:I\longrightarrow \mathbb{C};\osc_{1}(h,.)\in \Phi \}$;
		
		\item For $n\in \mathbb{N}$, define $R_{1,n\cdot p}=\{h\in R_{1};\osc%
		_{1}(h,\epsilon )\leq n\cdot \epsilon ^{\frac{1}{p}}\ \ \forall \epsilon \in
		(0,A_1]\}$;
		
		\item And set $S_{1,p}=\bigcup_{n\in \mathbb{N}}{R_{1,n\cdot p}}$.
	\end{itemize}
	
	\label{AAA}
\end{definition}

\begin{definition}
	Let us consider the following spaces and semi-norms:
	
	\begin{enumerate}
		\item $BV_{1,\frac{1}{p}}$ is the space of $m_1$-equivalence classes of
		functions in $S_{1,p}$;
		
		\item Let $h:I\longrightarrow \mathbb{C}$ be a measurable function. Set 
		\begin{equation*}
			\var _{1,\frac{1}{p}}(h)=\sup_{0\leq \epsilon \leq A_1}{\left( \frac{1}{%
					\epsilon ^{\frac{1}{p}}}\osc_{1}(h,\epsilon )\right) }.
		\end{equation*}
	\end{enumerate}
\end{definition}

Since $BV_{1,1/p}$ was defined using a probability measure, $m_1$, then $\var%
_{1,1/p}(h)\leq 2^{1/p}\var_{p}(h)$ (see \cite{Gk}, Lemma 2.7). 

Let us consider $|\cdot |_{1,\frac{1}{p}}:BV_{1,\frac{1}{p}}\longrightarrow 
\mathbb{R}$ defined by%
\begin{equation*}
	|f|_{1,\frac{1}{p}}=\var_{1,\frac{1}{p}}(f)+|f|_{1},
\end{equation*}%
it holds the following (see \cite{Gk}).

\begin{proposition}
	\label{ultima} $\left( BV_{1,\frac{1}{p}}, | \cdot |_{1, \frac{1}{p}}
	\right) $ is a Banach space.
\end{proposition}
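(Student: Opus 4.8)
To show that $\left(BV_{1,\frac1p}, |\cdot|_{1,\frac1p}\right)$ is a Banach space, I would proceed in the standard two stages: first verify that $|\cdot|_{1,\frac1p}$ is genuinely a norm on the vector space $BV_{1,\frac1p}$, and then prove completeness by showing every Cauchy sequence converges. The norm axioms are mostly routine: homogeneity and the triangle inequality for $\var_{1,\frac1p}$ follow from the corresponding properties of $\osc_1(\cdot,\epsilon)$ (which in turn come from $\osc(h_1+h_2, B_\epsilon(x)) \leq \osc(h_1,B_\epsilon(x)) + \osc(h_2,B_\epsilon(x))$ and the linearity of the integral), taking the supremum over $\epsilon$ at the end; adding $|\cdot|_1$ preserves these. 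Positive-definiteness uses that $|f|_1 = 0$ already forces $f = 0$ as an $m$-equivalence class, so $|f|_{1,\frac1p}=0 \Rightarrow f = 0$. One should also check that $BV_{1,\frac1p}$ is closed under addition and scalar multiplication, i.e. that $S_{1,p}$ is a vector space modulo $m$-null functions: if $h_j \in R_{1,n_j p}$ then $h_1 + h_2 \in R_{1,(n_1+n_2)p}$ by subadditivity of $\osc_1$, and $ch \in R_{1, \lceil |c| \rceil n p}$, so the union $S_{1,p} = \bigcup_n R_{1,np}$ is indeed a subspace.

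**Completeness.** This is the substantive part. Let $(f_k)$ be Cauchy in $|\cdot|_{1,\frac1p}$. Since $|\cdot|_1 \leq |\cdot|_{1,\frac1p}$, the sequence is Cauchy in $L^1_m$, hence converges in $L^1$ to some $f \in L^1_m$; passing to a subsequence we may assume $f_k \to f$ $m$-a.e. as well. I must show (i) $f \in BV_{1,\frac1p}$, i.e. $\var_{1,\frac1p}(f) < \infty$, and (ii) $|f_k - f|_{1,\frac1p} \to 0$. For (i), the key observation is a lower-semicontinuity property of $\osc_1$ under $L^1$ (or a.e.) convergence: for each fixed $\epsilon$, $\osc(f, B_\epsilon(x)) \leq \liminf_k \osc(f_k, B_\epsilon(x))$ $m$-a.e. (this uses that $f_k \to f$ a.e. together with Fatou-type reasoning on the essential supremum over the ball), and then Fatou's lemma gives $\osc_1(f,\epsilon) \leq \liminf_k \osc_1(f_k,\epsilon)$; dividing by $\epsilon^{1/p}$ and taking $\sup_\epsilon$ shows $\var_{1,\frac1p}(f) \leq \liminf_k \var_{1,\frac1p}(f_k) \leq \sup_k |f_k|_{1,\frac1p} < \infty$, the last bound because Cauchy sequences are bounded. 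For (ii), apply the same lower-semicontinuity to the differences: fix $k$ and let $j \to \infty$ in $\var_{1,\frac1p}(f_k - f_j) \leq \liminf$ along the sequence, obtaining $\var_{1,\frac1p}(f_k - f) \leq \liminf_{j} \var_{1,\frac1p}(f_k - f_j) \leq \sup_{j \geq k} |f_k - f_j|_{1,\frac1p}$, which tends to $0$ as $k \to \infty$ by the Cauchy property; combined with $|f_k - f|_1 \to 0$ this gives $|f_k - f|_{1,\frac1p}\to 0$.

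**Main obstacle.** The one step that needs genuine care, rather than bookkeeping, is the lower semicontinuity of $\osc_1$ under $L^1$/a.e. convergence — specifically justifying that $\osc(f, B_\epsilon(x)) \leq \liminf_k \osc(f_k, B_\epsilon(x))$ holds $m$-a.e., given that $\osc$ is defined via an essential supremum over pairs $(y_1,y_2)$ in the ball with respect to $m^2$. Because the essential supremum ignores null sets, one must argue that for $m$-a.e. $x$ and $m^2$-a.e. pair $(y_1,y_2) \in B_\epsilon(x)^2$ one can choose, along the a.e.-convergent subsequence, points where $f_k \to f$, so that $|f(y_1)-f(y_2)| = \lim_k |f_k(y_1)-f_k(y_2)| \leq \liminf_k \osc(f_k, B_\epsilon(x))$; taking the essential supremum over such pairs then yields the claim. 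This is a Fubini/Fatou argument that is conceptually clean but must be written carefully to handle the essential suprema correctly. For this it is convenient to invoke the properties of $\osc_1$ established in \cite{Gk}, where this space and its elementary functional-analytic properties are developed; alternatively, one reduces to the already-known completeness of the classical generalized bounded variation space and the comparison Lemma 2.7 of \cite{Gk}. Everything else — the norm axioms and the extraction of an a.e.-convergent subsequence — is routine.
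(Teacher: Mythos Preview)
The paper does not actually prove this proposition: it is stated without proof and immediately followed by a list of properties attributed to Keller~\cite{Gk}, the reference in which the space $BV_{1,\frac1p}$ was introduced and its Banach-space structure established. So there is no ``paper's own proof'' to compare against; the result is simply quoted from~\cite{Gk}.

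Your sketch is a correct and standard outline of how completeness is obtained for spaces of this type: extract an $L^1$ (and a.e.) limit, then use lower semicontinuity of $\osc_1(\cdot,\epsilon)$ under a.e.\ convergence together with Fatou's lemma to bound $\var_{1,\frac1p}(f)$ and $\var_{1,\frac1p}(f_k-f)$. You also correctly identify the only delicate point, namely justifying $\osc(f,B_\epsilon(x))\le \liminf_k \osc(f_k,B_\epsilon(x))$ when the oscillation is an essential supremum; and you correctly note that this is exactly what is carried out in~\cite{Gk}. In short, your proposal goes beyond what the paper does (which is merely to cite the result), and your outline matches the argument one finds in Keller's original paper.
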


In the above setting, G. Keller has shown (see \cite{Gk}) that there is an $%
A_{1}>0$ (we recall that definition \ref{AAA} depends on $A_{1}$) such that:

\begin{enumerate}
	\item[ (a)] $BV_{1,\frac{1}{p}}\subset L^{1}$ is $\func{P}_{T}$-invariant, $%
	\func{P}_{T}: BV_{1,\frac{1}{p}} \longrightarrow BV_{1,\frac{1}{p}}$ is
	continuous and it holds $|\cdot |_{1}\leq |\cdot |_{1,\frac{1}{p}}$;
	
	\item[ (b)] The unit ball of $(BV_{1,\frac{1}{p}},|\cdot |_{1,\frac{1}{p}})$
	is relatively compact in $(L^{1},|\cdot |_{1})$;
\end{enumerate}

\begin{enumerate}
	\item[(c)] There exists $k\in \mathbb{N}$, $0<\beta _{0}<1$ and $C>0$ such
	that
	
	\begin{equation*}
		|\func{P}_{T}^{k}f|_{1,\frac{1}{p}}\leq \beta _{0}|f|_{1,\frac{1}{p}%
		}+C|f|_{1}.  \label{quasilasotaosc2}
	\end{equation*}
\end{enumerate}

Analogously to the proof of inequality (\ref{lasotaiiii}), we have

\begin{equation}
	|\func{P}_{T}^{n}f|_{1,\frac{1}{p}} \leq B_3 \beta _2 ^n | f|_{1,\frac{1}{p}%
	} + C_2|f|_{1}, \ \ \forall n , \ \ \forall f \in BV_{1,\frac{1}{p}},
	\label{lasotaiiii2}
\end{equation}%
for $B_3, C_2 > 0$ and $0<\beta _2 <1$.

Moreover, in \cite{AGP} (Lemma 2), it was shown that

\begin{enumerate}
	\item[(d)] 
	\begin{equation}  \label{jfhfhlfwe}
		|\cdot |_{\infty }\leq A_1^{\frac{1}{p}-1}|\cdot |_{1,\frac{1}{p}}.
	\end{equation}
\end{enumerate}

Therefore, the properties $T1,T2,T3.1,..,T3.3,N1$ of section \ref{sec2} are
satisfied with $S\_=BV_{1,\frac{1}{p}}$. If $T3.4$ is also satisfied, then we can apply our construction to
such maps.

Thus, for $1 \leq p < \infty$, we set 
\begin{equation*}
	\mathcal{BV}_{1,\frac{1}{p}}:=\left\{ \mu \in \mathcal{L}^{1};\var_{1,\frac{1%
		}{p}}(\phi _{x})<\infty, \ \mathnormal{where} \ \phi_x = \dfrac{d\mu _x}{dm_1}
	\right\}
\end{equation*}
and consider $||\cdot||_{1,\frac{1}{p}}: \mathcal{BV}_{1,\frac{1}{p}%
}\longrightarrow \mathbb{R}$, defined by

\begin{equation*}
	||\mu ||_{1,\frac{1}{p}}=|\phi _{x}|_{1,\frac{1}{p}}+||\mu ||_{1}.
\end{equation*}

Clearly, $\left( \mathcal{BV}_{1,\frac{1}{p}},||\cdot ||_{1,\frac{1}{p}%
}\right) $ is a normed space. If we suppose that the system, $T_L:I
\longrightarrow I$, satisfies $T3.4$, then it has an unique absolutely
continuous invariant probability with density $\varphi _{x}\in BV_{1,\frac{1%
	}{p}}.$

As defined in equation (\ref{mathV}), for $1\leq p<\infty $, consider the
set of zero average measures in $\mathcal{BV}_{1,\frac{1}{p}}$, 
\begin{equation*}
	\mathcal{V}_{s}=\{\mu \in \mathcal{BV}_{1,\frac{1}{p}}:\mu (\Sigma )=0\}.
\end{equation*}%
Directly from the above settings, Proposition \ref{5.8} and from Theorem \ref%
{spgap}, using $\mathcal{BV}_{1,\frac{1}{p}}$ as a strong space (playing the
role of $S^{1}$ in Theorem \ref{spgap}) it follows convergence to
equilibrium and spectral gap for these kind of maps.

\begin{proposition}[Exponential convergence to equilibrium]
	\label{5.10} If $\func{F}_{L}$ satisfies assumptions $G1$, $T1$,$T2$, $T3.4$%
	, $P^{\prime }1$ and $P^{\prime }2$, then there exist $D_2>0$ and $0<\beta
	_{2}<1$ such that, for every signed measure $\mu \in \mathcal{V}_s \subset 
	\mathcal{BV}_{1,\frac{1}{p}}, \ 1 \leq p < \infty$, it holds 
	\begin{equation*}
		||\func{F_L^{\ast n }}\mu ||_{1}\leq D_{2}\beta _{1}^{n}||\mu ||_{1,\frac{1}{%
				p}},
	\end{equation*}
	for all $n\geq 1$.
\end{proposition}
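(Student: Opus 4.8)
The plan is to show that the Lorenz-like map $F_L$ is an instance of the abstract setting of Section \ref{sec2}, and then to quote Proposition \ref{quasiquasiquasi} (i.e. Proposition \ref{5.8}) verbatim. First I would record that, by the results of G.\ Keller recalled just above (items (a)--(d), following \cite{Gk} and \cite{AGP}), the choice $S_{\_}=BV_{1,\frac 1p}$ with strong norm $|\cdot|_{s}=|\cdot|_{1,\frac 1p}$ makes the Perron--Frobenius operator $\func{P}_{T_{L}}$ satisfy assumptions \textbf{T3.1}, \textbf{T3.2}, \textbf{T3.3} and \textbf{N1}, as soon as $T_{L}$ obeys $(P'1)$ and $(P'2)$. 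Since $(T1)$ and $(T2)$ are assumed directly, $(T3.4)$ is a standing hypothesis, and $(G1)$ is assumed, the full list \textbf{G1},\textbf{T1},\dots,\textbf{T3.4} holds, so every result of Sections \ref{sec2}--6 that only uses these assumptions applies to $F_{L}$. In particular Theorem \ref{loiub}, applied to $\func{P}_{T_{L}}$ on $BV_{1,\frac 1p}$, provides $0<r<1$ and $D>0$ with $|\func{P}_{T_{L}}^{n}\phi|_{1,\frac 1p}\le Dr^{n}|\phi|_{1,\frac 1p}$ for zero-average $\phi\in BV_{1,\frac 1p}$.

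Next I would set up the dictionary between the abstract objects and their concrete counterparts: with $S_{\_}=BV_{1,\frac 1p}$, the space $S^{1}$ of Section \ref{sec:spaces} is exactly $\mathcal{BV}_{1,\frac 1p}$, the norm $||\cdot||_{S^{1}}$ is exactly $||\cdot||_{1,\frac 1p}=|\phi_{x}|_{1,\frac 1p}+||\mu||_{1}$, and the zero-average subspace $\mathcal{V}_{s}$ of (\ref{mathV}) is the set $\mathcal{V}$ appearing in the statement. Under these identifications, Proposition \ref{quasiquasiquasi} yields constants $D_{2}>0$ and $0<\beta_{1}<1$ (with $\beta_{1}=\max\{\sqrt{\alpha},\sqrt{r}\}$ as in Remark \ref{quantitative}) such that $||\func{F}_{L}^{\ast n}\mu||_{1}\le D_{2}\beta_{1}^{n}||\mu||_{1,\frac 1p}$ for every $\mu\in\mathcal{V}$ and every $n\ge 1$, which is precisely the claim.

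There is no genuine obstacle here: the analytic content --- the Lasota--Yorke inequality (\ref{lasotaiiii2}), the weak contraction $||\func{F}^{\ast}\mu||_{1}\le||\mu||_{1}$ of Proposition \ref{weakcontral11234}, the quasi-contraction estimate of Corollary \ref{nicecoro}, and the exponential decay of $\func{P}_{T_{L}}^{n}$ on zero-average densities --- has already been established in full generality. The only step demanding a little care is verifying that Keller's framework really does supply \textbf{T3.1}--\textbf{T3.3} (and \textbf{N1}) for the $BV_{1,\frac 1p}$ norm, so that, together with the assumed \textbf{T3.4}, the hypothesis of Theorem \ref{loiub} is met and the exponential-decay input feeding Proposition \ref{quasiquasiquasi} is legitimately available; once that is in place the proposition follows immediately.
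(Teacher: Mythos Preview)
Your proposal is correct and follows essentially the same approach as the paper: the paper simply remarks that once Keller's results (items (a)--(d)) establish that $S_{\_}=BV_{1,\frac{1}{p}}$ satisfies \textbf{T3.1}--\textbf{T3.3} and \textbf{N1}, Proposition \ref{5.8} applies verbatim with the identification $S^{1}=\mathcal{BV}_{1,\frac{1}{p}}$ and $||\cdot||_{S^{1}}=||\cdot||_{1,\frac{1}{p}}$. Your write-up makes this dictionary a bit more explicit than the paper does, but the argument is identical.
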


\begin{theorem}[Spectral gap for $\mathcal{BV}_{1,\frac{1}{p}}$ ]
	\label{lorgap}If $\func{F}_{L}$ satisfies assumptions $G1$, $T1$,$T2$, $T3.4$%
	, $P^{\prime }1$ and $P^{\prime }2$, then the operator $\func{F}_{L\ast }:%
	\mathcal{BV}_{1,\frac{1}{p}}\longrightarrow \mathcal{BV}_{1,\frac{1}{p}}$
	can be written as 
	\begin{equation*}
		\func{F}_{L\ast }=\func{P}+\func{N}
	\end{equation*}%
	where
	
	\begin{enumerate}
		\item[a)] $\func{P}$ is a projection i.e. $\func{P} ^2 = \func{P}$ and $\dim
		Im (\func{P})=1$;
		
		\item[b)] there are $0<\xi <1$ and $K>0$ such that for all $\mu \in \mathcal{%
			BV}_{1,\frac{1}{p}}$ 
		\begin{equation*}
			||\func{N}^{n}(\mu )||_{ \mathcal{BV}_{1,\frac{1}{p}}}\leq \xi ^{n}K ||\mu
			||_{ \mathcal{BV}_{1,\frac{1}{p}}};
		\end{equation*}
		
		\item[c)] $\func{P}\func{N}=\func{N}\func{P}=0$.
	\end{enumerate}
	
	\label{spect12}
\end{theorem}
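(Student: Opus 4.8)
The plan is to deduce this statement as a direct application of the abstract spectral gap result, Theorem \ref{spgap}, once we have checked that the Lorenz-like data fit the abstract framework of Section \ref{sec2}. First I would fix $S_{\_}:=BV_{1,\frac{1}{p}}$ with the strong norm $|\cdot|_{s}:=|\cdot|_{1,\frac{1}{p}}$ and verify assumptions $T1,\dots,T3.4$ for $T_{L}$ and its Perron--Frobenius operator. Properties $T1$, $T2$ and $T3.4$ are assumed in the hypotheses, and $G1$ is assumed as well; the remaining properties $T3.1$, $T3.2$, $T3.3$ are exactly Keller's statements (a), (b), (c) recalled above from \cite{Gk} (invariance of $BV_{1,\frac{1}{p}}$ with $\func{P}_{T}$ bounded and $|\cdot|_{1}\leq|\cdot|_{1,\frac{1}{p}}$; relative compactness of the unit ball in $L^{1}$; and the Lasota--Yorke inequality (\ref{quasilasotaosc2})). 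We do not need $N1$ here, though it also holds by the estimate (d) of \cite{AGP}.

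Next I would observe that, with this choice of $S_{\_}$, the space $\mathcal{BV}_{1,\frac{1}{p}}$ coincides with the space $S^{1}$ of Section \ref{sec:spaces} and the norm $||\cdot||_{1,\frac{1}{p}}$ coincides with $||\cdot||_{S^{1}}$: indeed $\mathcal{BV}_{1,\frac{1}{p}}=\{\mu\in\mathcal{L}^{1}:\phi_{x}\in BV_{1,\frac{1}{p}}\}$ and $||\mu||_{1,\frac{1}{p}}=|\phi_{x}|_{1,\frac{1}{p}}+||\mu||_{1}$. Hence $\func{F}_{L}^{\ast}:\mathcal{BV}_{1,\frac{1}{p}}\longrightarrow\mathcal{BV}_{1,\frac{1}{p}}$ is precisely the operator $\func{F}^{\ast}$ on $S^{1}$ to which Theorem \ref{spgap} applies. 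I would also note, for clarity, that the ingredients feeding the proof of Theorem \ref{spgap} — the $S^{1}$ Lasota--Yorke inequality (Proposition \ref{lasotaoscilation2}) and the exponential convergence to equilibrium (Proposition \ref{quasiquasiquasi}, recorded here as Proposition \ref{5.10}) — are themselves consequences of $G1$ and $T1,\dots,T3.4$, so no additional hypothesis on $F_{L}$ is required. Invoking Theorem \ref{spgap} then yields the decomposition $\func{F}_{L}^{\ast}=\func{P}+\func{N}$ with the projection property (a), the exponential decay (b), and the orthogonality relations (c).

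There is essentially no genuine obstacle: the substance of the argument is the verification in the first step, i.e. matching Keller's generalized bounded variation space to the abstract axioms $T3.1$--$T3.3$ (in particular that the compact immersion and the Lasota--Yorke inequality hold with the exponent $\tfrac1p$ built into the $\osc_{1}$-seminorm), and this is available in \cite{Gk} and \cite{AGP}. The rest is bookkeeping, so the ``hard part'' is only making sure the identification $\mathcal{BV}_{1,\frac{1}{p}}=S^{1}$ is literal and that all the abstract hypotheses are indeed in force before quoting Theorem \ref{spgap}.
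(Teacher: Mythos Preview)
Your proposal is correct and matches the paper's approach exactly: the paper states that the result follows ``directly from the above settings, Proposition \ref{5.8} and from Theorem \ref{spgap}'', i.e.\ by identifying $(\mathcal{BV}_{1,\frac{1}{p}},\|\cdot\|_{1,\frac{1}{p}})$ with $(S^{1},\|\cdot\|_{S^{1}})$ once Keller's results verify $T3.1$--$T3.3$ for $S_{\_}=BV_{1,\frac{1}{p}}$. There is nothing to add.
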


We can get the same kind of results for stronger $L^{\infty }$ like norms.
Let us consider 
\begin{equation*}
	\mathcal{BV}_{1,\frac{1}{p}}^{\infty }:=\left\{ \mu \in \mathcal{L}^{\infty
	};\frac{d(\pi _{x\ast }\mu )}{dm_1}\in BV_{1,\frac{1}{p}}\right\}
\end{equation*}%
and the function, $||\cdot ||_{1,\frac{1}{p}}^{\infty }:\mathcal{BV}_{1,%
	\frac{1}{p}}^{\infty }\longrightarrow \mathbb{R}$, defined by

\begin{equation*}
	||\mu ||_{1,\frac{1}{p}}^{\infty }=|\phi _{x}|_{1,\frac{1}{p}}+||\mu
	||_{\infty }.
\end{equation*}%
Applying Theorem \ref{speclinf} \ using $\mathcal{BV}_{1,\frac{1}{p}%
}^{\infty }$ as a strong space (playing the role of $S^{\infty }$)\ we get

\begin{theorem}[Spectral gap for $\mathcal{BV}_{1,\frac{1}{p}}^{\infty }$]
	\label{lorgapinf}If $\func{F}_{L}$ satisfies assumptions $G1$, 
	$T1$,$T2$,$T3.4$, $P^{\prime }1$ and $P^{\prime }2$, then the operator $%
	\func{F}_{L\ast }:\mathcal{BV}_{1,\frac{1}{p}}^{\infty }\longrightarrow 
	\mathcal{BV}_{1,\frac{1}{p}}^{\infty }$ can be written as 
	\begin{equation*}
		\func{F}_{L\ast }=\func{P}+\func{N},
	\end{equation*}%
	where
	
	\begin{enumerate}
		\item[a)] $\func{P}$ is a projection i.e. $\func{P} ^2 = \func{P}$ and $\dim
		Im (\func{P})=1$;
		
		\item[b)] there are $0<\xi _1 <1$ and $K_2>0$ such that for all $\mu \in 
		\mathcal{BV}_{1,\frac{1}{p}} ^\infty$ 
		\begin{equation*}
			||\func{N}^{n}(\mu )||_{{1,\frac{1}{p}}} ^\infty \leq \xi _1 ^{n} K_2 ||\mu
			||_{{1,\frac{1}{p}}} ^\infty;
		\end{equation*}
		
		\item[c)] $\func{P}\func{N}=\func{N}\func{P}=0$.
	\end{enumerate}
	
\end{theorem}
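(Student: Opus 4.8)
The plan is to recognize Theorem~\ref{lorgapinf} as a direct instance of the abstract Theorem~\ref{speclinf}, with the strong base space taken to be $S_{-}=BV_{1,\frac{1}{p}}$ equipped with $|\cdot|_s=|\cdot|_{1,\frac{1}{p}}$. All the real content is in checking that the Lorenz-like class under consideration satisfies the abstract hypotheses of Section~\ref{sec2} for this particular choice of $S_{-}$, and in identifying the concrete spaces $\mathcal{BV}_{1,\frac{1}{p}}^\infty$, $||\cdot||_{1,\frac{1}{p}}^\infty$ with the abstract $S^\infty$, $||\cdot||_{S^\infty}$ of Section~\ref{sec:spaces}.

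First I would verify the hypotheses. The properties $G1$, $T1$, $T2$ are assumed for the class $\func{F}_L$, and $T3.4$ is assumed explicitly in the statement. The remaining properties $T3.1$, $T3.2$, $T3.3$ and $N1$ are supplied, for $\func{P}_{T_L}$ acting on $BV_{1,\frac{1}{p}}$, by Keller's results (a)--(c) recalled above (from \cite{Gk}): $BV_{1,\frac{1}{p}}\subset L^1$ is $\func{P}_{T_L}$-invariant with $\func{P}_{T_L}$ continuous and $|\cdot|_1\le|\cdot|_{1,\frac{1}{p}}$ (this is $T3.1$), its unit ball is relatively compact in $(L^1,|\cdot|_1)$ (this is $T3.2$), and the Lasota--Yorke inequality (\ref{quasilasotaosc2}) holds (this is $T3.3$); here assumptions $(P'1)$ (universal bounded $p$-variation of $1/|T_L'|$) and $(P'2)$ (eventual uniform expansion) are what make this work, the latter being what allows the Lasota--Yorke coefficient to be made $<1$ after a fixed number $k$ of iterates. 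Property $N1$ is exactly the embedding (d) of \cite{AGP}, namely $|\cdot|_\infty\le A_1^{\frac{1}{p}-1}|\cdot|_{1,\frac{1}{p}}$, so $H_N=A_1^{\frac{1}{p}-1}$. In particular the iterated inequality (\ref{lasotaiiii2}) — which is the precise form used inside the proof of Proposition~\ref{LYinfty}, hence inside the proof of Theorem~\ref{speclinf} — is available.

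Then I would match the objects: with $S_{-}=BV_{1,\frac{1}{p}}$, the abstract space $S^\infty$ becomes precisely $\mathcal{BV}_{1,\frac{1}{p}}^\infty=\{\mu\in\mathcal{L}^\infty:\phi_x\in BV_{1,\frac{1}{p}}\}$ and the abstract norm $||\mu||_{S^\infty}=|\phi_x|_s+||\mu||_\infty$ becomes $||\mu||_{1,\frac{1}{p}}^\infty=|\phi_x|_{1,\frac{1}{p}}+||\mu||_\infty$. Since every hypothesis of Theorem~\ref{speclinf} is met, its conclusion applies verbatim and yields the decomposition $\func{F}_L^\ast=\func{P}+\func{N}$ with properties (a), (b), (c) as stated; the constants $0<\xi_1<1$ and $K_2>0$ are those produced in the proof of that theorem out of the $S^\infty$ Lasota--Yorke coefficients of Proposition~\ref{LYinfty} together with the $\mathcal{L}^1$ exponential convergence to equilibrium (Proposition~\ref{5.10}).

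The only delicate point is the bookkeeping in the verification step: one must be careful that Keller's generalized bounded-variation space genuinely satisfies the abstract axioms in exactly the form the abstract machinery consumes — in particular that the relative compactness and the iterated Lasota--Yorke estimate hold with the norm $|\cdot|_{1,\frac{1}{p}}$ as written. Once this is settled there is nothing further to do: the statement follows by invoking Theorem~\ref{speclinf}.
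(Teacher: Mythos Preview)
Your proposal is correct and matches the paper's approach exactly: the paper simply states ``Applying Theorem~\ref{speclinf} we get'' Theorem~\ref{lorgapinf}, and your argument spells out precisely what that application entails --- verifying that Keller's properties (a)--(d) for $BV_{1,\frac{1}{p}}$ supply $T3.1$--$T3.3$ and $N1$, and identifying $(\mathcal{BV}_{1,\frac{1}{p}}^\infty,\|\cdot\|_{1,\frac{1}{p}}^\infty)$ with the abstract $(S^\infty,\|\cdot\|_{S^\infty})$.
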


By Proposition \ref{belongs} we immediately get

\begin{proposition}
	\label{rtyrtsyw} If $\func{F}_{L}$ satisfies assumptions $G1$, 
	$T1$,$T2$,$T3.4$, $P^{\prime }1$ and $P^{\prime }2$, then the unique invariant probability for the system $F_{L}$ in $%
	\mathcal{BV}_{1,\frac{1}{p}}$ is $\mu _{0}$. Moreover, since \textbf{N1} is
	satisfied (equation (\ref{jfhfhlfwe})), $\mu _{0}$ is the unique $F_{L}$%
	-invariant probability in $\mathcal{BV}_{1,\frac{1}{p}}^{\infty }$.
\end{proposition}

\section{Quantitative Statistical  Stability }\label{realast}

Throughout this section, we consider small perturbations of the transfer operator of a particular system of the kind described in the previous sections and study the dependence of the physical invariant measure with respect to the perturbation. 
A classical tool that can be applied for this type of problems is the Keller-Liverani stability theorem \cite{KL}. Since in our setting the strong space is not compactly immersed in the weak one, we cannot directly apply it. We will use another approach giving us precise bounds on the statistical stability. In this section, this approach will be applied to a class of Lorenz-like maps with slightly stronger regularity assumptions than used in Section \ref{last}. We call such a system by \textit{BV Lorenz-like map} (see Definition \ref{lorenzlikemap}) and precisely, we need the additional property stated in item (1) of Definition \ref{lorenzlikemap}.

\subsubsection{Uniform Family of Operators}\label{unifop} In this subsection we present a general {\em quantitative} result relating the {\em stability } of the invariant measure of an \textit{uniform family of operators} (Definition \ref{UF}) and {\em convergence to equilibrium}. 

In the following definition, for all $\delta \in [0,1) $, let $\func{L_\delta}$ be a Markov operator acting on two vector subspaces of signed measures on $X$, $\func{L_\delta}:(B_{s}, ||\cdot||_{s} ) \longrightarrow (B_{s}, ||\cdot||_{s} )$ and $\func{L_\delta}: (B_{w}, ||\cdot||_{w} ) \longrightarrow (B_{w}, ||\cdot||_{w} )$, endowed with two norms, the strong norm $||\cdot||_{s}$ on $B_{s},$ and the weak
norm $||\cdot||_{w}$ on $B_{w}$, such that $||\cdot||_{s}\geq ||\cdot||_{w}$. Suppose that,
\begin{equation*}
	B_{s}\mathcal{\subseteq }B_{w}\mathcal{\subseteq }\mathcal{SB}(X),
\end{equation*} where $\mathcal{SB}(X)$ denotes the space of Borel signed measures on $X$. 

\begin{definition}\label{UF}
	
	A one parameter family of transfer operators $\{\func{L}  _{\delta }\}_{\delta \in \left[  0,1 \right)} $ is said to be an \textbf{uniform family of operators}  with respect to the weak space $(B_{w}, ||\cdot||_{w} )$ and the strong space $(B_{s}, ||\cdot||_{s} )$ if $||\cdot||_{s}\geq ||\cdot||_{w}$ and it satisfies
	\begin{enumerate}
		\item [\textbf{UF1}] Let  $\mu_{\delta }\in B_{s}$ be a probability measure fixed under the operator $\func{L}  _{\delta }$. Suppose there is $M>0$ such that for all $\delta \in [0,1)$, it holds $$||\mu_{\delta}||_{s}\leq M;$$
		\item [\textbf{UF2}] $\func{L}  _{\delta }$ approximates $\func{L}  _{0}$ when $\delta $ is
		small in the following sense: there is $C\in \mathbb{R}^+$ such that:%
		\begin{equation*}
			||(\func{L}  _{0}-\func{L}  _{\delta })\mu_{\delta }||_{w}\leq \delta C;
		\end{equation*}
		\item[\textbf{UF3}] $\func{L} _{0}$ has exponential convergence to equilibrium with
		respect to the norms $||\cdot||_{s}$ and $||\cdot||_{w}$: there exists $0<\rho_2 <1$ and $C_2>0$ such that  $$\forall \ \mu \in \mathcal{V}_s: =\{\mu \in B_{s}: \mu(X)=0 \}$$ it holds $$||\func{L}^{n}_0 \mu||_{w} \leq \rho _2 ^n C_2 ||\mu||_{s};$$
		\item[\textbf{UF4}] The iterates of the operators are uniformly bounded for the weak norm: there exists $M_2 >0$ such that $$\forall \delta ,n,\nu \in B_{s}  \ \textnormal{it holds} \ ||\func{L} _{\delta
		} ^{ n}\nu||_{w}\leq M_{2}||\nu||_{w}.$$
	\end{enumerate}
\end{definition}

Under these assumptions we can ensure that the invariant
measure of the system varies continuously (in the weak norm) when $\func{L} _{0}$ is perturbed to $\func{L}  _{\delta }$, for small values of $\delta$. Moreover, the modulus of continuity can be estimated. We postpone the proof of Proposition \ref{dlogd} to the Appendix 3 (section \ref{jsdhjfnsd}).
\begin{proposition}
	\label{dlogd}Suppose $\{\func{L}_{\delta }\}_{\delta \in \left[0, 1 \right)}$ is a uniform family of operators as in Definition \ref{UF}, where $\mu_{0}$ is the unique fixed point of $\func{L}_{0}$ in $B_{w}$ and $%
	\mu_{\delta }$ is a fixed point of $\func{L} _{\delta }$. Then, there exists $\delta _0 \in (0,1)$ such that for all $\delta \in [0,\delta _0)$, it holds
	
	\begin{equation*}
		||\mu_{\delta }-\mu_{0}||_{w}=O(\delta \log \delta ).
	\end{equation*}
\end{proposition}

\subsection{Quantitative stability of Lorenz-like maps}

In this subsection  we apply the above general result on uniform family of operators (Proposition \ref{dlogd}) to a suitable family of bounded variation Lorenz-like maps.
We consider families of maps as defined in Section \ref{last}, with some further regularity assumptions defining uniform families of  Bounded Variation Lorenz-like maps (see Definitions \ref{lorenzlikemap} and \ref{UFL}).
For these families we prove that the invariant measures associated to a size $\delta$ perturbation varies continuously as the map is perturbed, with modulus of continuity $\delta \log \delta$. Precisely, the aim of this section is to prove the following theorem:

\begin{theorem}[Quantitative stability for deterministic perturbations]\label{mainstat}
	Let $\{F_{\delta }\}_{\delta \in [0,1)} $ be a Uniform BV Lorenz-like family (see definition \ref{UFL}). Denote by $\mu_\delta$ the fixed probability measeres of $\func{F}{_*}{_\delta}$ in $\mathcal{BV} _{1,1}$ (also in $\mathcal{BV}^\infty _{1,1}$), for all $\delta$. Then, there exists $\delta _0 \in (0,1)$ such that for all $\delta \in [0,\delta _0)$, it holds
	\begin{equation*}
		||\mu_{\delta }-\mu_{0}||_{1}=O(\delta \log \delta ).
	\end{equation*}
	\label{htyttigu}
\end{theorem}


The proof will be postponed to the end of the section. 

\begin{remark}
	We believe that using the techniques of \cite{Gjep} in which a sort of generalized bounded variation for disintegrated measures is considered in the spirit of the work \cite{Gk} we could get a similar result removing the  additional Bounded Variation  regularity to the Lorenz-like family. 
\end{remark}

\begin{remark}
	A straightforward computation (see the proof of Lemma \ref{existinv}) yields $||\cdot ||_W \leq ||\cdot||_1$. Then, by Theorem (\ref{htyttigu}), it holds $$||\mu_{\delta }-\mu_{0}||_{W}\leq A\delta \log \delta ,$$for some $A>0$. Therefore, for all Lipschitz function $g:[0,1]^2 \longrightarrow\mathbb{R}$, the following estimate holds $$|\int{g}d\mu_\delta - \int{g}d\mu_0| \leq A ||g||_{Lip} \delta \log \delta,$$where $||g||_{Lip} = ||g||_\infty + L(g)$ (see equation (\ref{lipsc}), for the definition of $L(g)$). Thus, for all Lipschitz functions, $g:[0,1]^2 \longrightarrow\mathbb{R}$, the limit $\displaystyle{\lim _{\delta \longrightarrow 0} {\int{g}d\mu_\delta} = \int{g}d\mu_0}$ holds, with rate of convergence smaller than or equal to $\delta \log \delta$.
\end{remark}

\begin{remark}\label{opti} It is well known (see \cite{G} e.g) that the modulus of continuity $\delta \log(\delta) $ is optimal for suitable deterministic perturbations of piecewise expanding maps (which are the basis maps of our piecewise hyperbolic system). Therefore, the estimate given in Theorem \ref{mainstat} is optimal too. To realize this, consider a sequence of piecewise expanding maps $T_n$ with absolutely continuous invariant measures $\mu_n$,  realizing the modulus of continuity $\delta \log(\delta) $.  Consider $F_n:I^2\to I^2$ given by $F_n (x,y)=(T_n(x), \frac12)$ (the second component contracts  everything to $\frac12$). The sequence $F_n$  has a sequence of invariant measures $\nu_n$ of the kind $\nu_n=\mu_n\times \delta_{\frac12}$ for which is easy to see that $||\nu_n-\nu_0||_1\geq A \delta_n \log(\delta_n)$.
\end{remark}

We now precise the definition of  BV Lorenz-like map and  BV Lorenz-like family considered in the Theorem \ref{mainstat}.

\begin{definition}
	A map $F_L:[0,1]^2 \longrightarrow [0,1]^2$, $F_L(x,y) = (T_{L}(x), G_L(x,y))$, is said to be a \textbf{BV Lorenz-like map} if it satisfies

	\begin{enumerate}
		\item There are $H\geq 0$ and a partition $\mathcal{P}'=\{J_{i}:=(b_{i-1},b_{i}),i=1,
		\cdots ,d\}$ of $I$ such that for all $x_{1},x_{2} \in J_i$ and for all $ y\in I$ the following inequality holds
		\begin{equation}\label{rty}
			|G_L(x_{1},y)-G_L(x_{2},y)|\leq H \cdot |x_{1}-x_{2}|;
		\end{equation}
		
		\item $F_L$ satisfy property $G1$ (hence is
		uniformly contracting on each leaf $\gamma $ with rate of contraction $\alpha$);
		
		\item $T_{L}:I\rightarrow I$ is a piecewise expanding map satisfying the assumptions given in the following definition \ref{defpiece123C1}.
	\end{enumerate}
	
	\label{lorenzlikemap}
\end{definition}

The following definition characterizes a class of piecewise expanding maps of the interval with bounded variation derivative
$T_L:I \longrightarrow I$ which is a subclass of the ones considered in section \ref{lorenzk}.

\begin{definition}[Piecewise expanding functions with bounded variation inverse of the derivative]
	Suppose there exists a partition $\mathcal{P}=\{P_{i}:=(a_{i-1},a_{i}),i=1, \cdots ,q\}$ of $I$ s.t. $T_L:I\longrightarrow I$ satisfies the following conditions. For all $i$

	\begin{enumerate}
		\item[1)] $T_{L_{i}}= T _{L}|_{P _i}$ is of class $C^1$ and $g_{i} = \dfrac{1}{|{T_{L_{i}}}^{\prime }|}$ satisfies (P'1) of section \ref{last}, for $p=1$. 
	\end{enumerate}
	%
	
	\begin{enumerate}
		\item[2)] $T_L$ satisfies (P'2) of section \ref{last}: $\inf {|{T_{L}^{n_0}}^{\prime }|} \geq \lambda _1 >1$ for some $n_0 \in \mathbb{N}$.
	\end{enumerate}
	
	\begin{enumerate}
		\item[3)] $T_{L}$ satisfies T3.4.
	\end{enumerate}
	
	\label{defpiece123C1}
\end{definition}

In particular we assume that $T_{L_i}$ and $g_{i}$ admit a continuous extension to $\overline{P _{i}}=[a_{i-1},a_{i}]$ for all $i=1,\cdots ,q$. 


\begin{remark}
	The definition \ref{defpiece123C1} allows infinite derivative for $T_{L}$ at
	the extreme points of its regularity intervals.
\end{remark}



\begin{definition}Let $T_1$ and $T_2$ be two piecewise expanding maps of definition (\ref{defpiece123C1}). Define the set $Int_n$, by $$Int_n=\{A\subset [0,1], s.t. \ A =I_1\cup...\cup I_n, \ \textnormal{where} \ I_i  \ \textnormal{are intervals}\}$$ the set of subsets of  $[0,1]$ which is the union of at most $n$ intervals. Set $$\mathcal{C}(n,T_1,T_2) = \left\{ 
	\begin{array}{c}
	\epsilon: \exists A_{1}\in Int_n \ \textnormal{ and} \ \exists \  \sigma :I\rightarrow
	I \ \textnormal{a diffeomorphism} \ \textnormal{s.t.}~m_1(A_{1})\geq 1-\epsilon , \\ 
	\ T_{1}|_{A_{1}}=T_{2}\circ \sigma |_{A_{1}} \ \textnormal{and} \ \forall
	x\in A_{1},|\sigma (x)-x|\leq \epsilon ,|\frac{1}{\sigma ^{\prime }(x)}%
	-1|\leq \epsilon%
	\end{array}%
	\right\}$$ and define a distance from $T_{1}$ to $T_{2}$ as:
	
	\begin{equation}\label{shock}
		d_{S,n}(T_{1},T_{2})=\inf \left\{\epsilon| \epsilon \in  \mathcal{C}(n,T_1,T_2)\right\}. 
	\end{equation} 
\end{definition}If we denote by $d_{S}$ the classical notion of Skorokhod distance (see \cite{BG} e.g.), it is obvious that $\forall n \ d_{S,n}\geq d_{S}$.
By \cite{BG}, Lemma 11.2.1, it follows that $\forall n $:
\begin{equation}\label{P}
	|\func{P_{T_{1}}}-\func{P_{T_{2}}}|_{BV\rightarrow L^{1}}\leq 14d_{S,n}(T_{1},T_{2}). 
\end{equation}


\begin{definition}\label{UFL}
	A family of maps $\{F_\delta\}_{\delta \in [0,1)}$ is said to be a \textbf{Uniform BV Lorenz-like family} if $F_\delta$ is a BV Lorenz-like map (see definition \ref{lorenzlikemap}) for all $\delta \in [0,1)$ and $\{F_\delta\}_\delta$ satisfies the following assumptions:
	
	\begin{enumerate}
		\item [(UBV1):] there exist $0 < \lambda < 1$ and $D>0$ s.t. for all $f \in BV _{1,1}$ and for all $\delta \in [0,1)$ it holds $|\func {P}_{T_{\delta }} ^n f |_{1,1} \leq D \lambda ^n |f|_{1,1} + D|f|_1 \ \ \textnormal{for all} \ \ n \geq 1,$ where $\func{P}_{T_{\delta }}$ is the Perron-Frobenius operator of $T_{\delta }$.
	\end{enumerate}
	
	When $\delta $ is small
	\begin{enumerate} 
		\item [(UBV2):] $T_{0}$ and $T_{\delta }$ are close with the above Skorokhod-like distance. For some $n$ independent of $\delta$ it holds $\forall \delta$ $$d_{S,n}(T_{0},T_{\delta })\leq \delta. $$
		
		\item [(UBV3):] For each $\delta$ there is a set $A_{2}$ (depending on $\delta$) such that $A_{2}\in Int_{n_\delta}$ for some ${n_\delta}$ (depending on $\delta$)  furthermore $m_1(A_{2})\geq 1-\delta $ and for all $x\in A_{2},y\in I:$ $$|G_{0}(x,y)-G_{\delta }(x,y)|\leq \delta. $$ Let us furthermore suppose that the number of such intervals during the perturbation remains uniformly bounded: $\sup_\delta n_\delta <\infty $.
	\end{enumerate}
	
	For all $\delta \in [0,1)$, let $n_0 = n_0(\delta) \in \mathbb{N}$ be the first integer such that there exists $\lambda _1(\delta)>0$ satisfying $\left| {T_{\delta , i} ^{n_0} }^\prime (x) \right|  \geq \lambda _1 (\delta) >1$ 
	for all $x \in P_{\delta, i}$ and for each $i=1, \cdots, q(\delta)$, where $T_{\delta , i} ^{n_0}:=T_{\delta}{^{n_0}}|_{P_{\delta ,i}}$. 
	Also set $g_{i,\delta} = \dfrac{1}{|T_{\delta,i} '|}$
	and denote by $H_\delta >0$ and ${\mathcal{P}'}_\delta$ the \textquotedblleft Lipschitz\textquotedblright constant and the regularity partition associated  to $G_\delta$, see item (1) of Definition \ref{lorenzlikemap} and Definition \ref{defpiece123C1}. 
	\begin{enumerate}
		\item [(UBV4):] Suppose that:
		\begin{enumerate}
			\item [(1)] $\inf _\delta {\lambda _1 (\delta) } > 1$, $\sup _\delta {\lambda _1 (\delta) } < \infty $ and $\sup _{\delta \in [0,1)} \{n_0(\delta )\}<\infty $; 
			\item [(2)] there exists $C_4 > 0$ such that $\sup g_{\delta, i}\leq C_4$ and $\var g_{\delta, i} \leq C_4$ for all $i=1,\cdots, q(\delta)$ and all $\delta \in [0,1)$;
			\item [(3)] $\inf_{\delta \in [0,1)}\min _{i=1, \cdots, q(\delta)}\{m_1(P_{i,\delta})\} >0$;
			\item [(4)] $\sup_{\delta \in [0,1)} {H_\delta}  < \infty$, $\sup_{\delta \in [0,1)}\# \mathcal{P}'_\delta <\infty$
		\end{enumerate}

		
	\end{enumerate}
	
	

\end{definition}



\subsubsection{Measures with bounded variation}Here, we introduce a space of measures having bounded variation in some stronger sense, and prove that the invariant measure of a BV Lorenz-like map is in it. We use this fact in the proof of Proposition \ref{rr}, where we prove that the family of transfer operators $\{ \func {F_\delta{_*}}\}_{\delta \in [0,1)}$ induced by a Uniform BV Lorenz-like family $\{F_\delta\}_{\delta \in [0,1)}$ satisfies UF2.

We have seen that a positive measure on the square, $[0,1]^2$, can be disintegrated along the stable
leaves $\mathcal{F}^s$ in a way that we can see it as a family of positive measures on the interval, $\{\mu |_\gamma\}_{\gamma \in \mathcal{F}^s }$. Since there is a one-to-one correspondence between $\mathcal{F}^s$  and $[0,1]$, this defines a  path
in the metric space of positive measures, $[0,1] \longmapsto \mathcal{SB}(I)$, where $\mathcal{SB}(I)$ is endowed with the Wasserstein-Kantorovich like metric (see definition \ref{wasserstein}). 
It will be convenient to use a functional notation and denote such a path by  
$\Gamma_{\mu } : I \longrightarrow \mathcal{SB}(I)$  defined $\mu_x$-a.e. ($\mu_x = {\pi_x}_*\mu$) by $\Gamma_{\mu } (\gamma) = \mu|_\gamma = \pi _{\gamma, y} {_\ast}( \phi _x
(\gamma)\mu _\gamma)$, where $(\{\mu _{\gamma }\}_{\gamma \in I},\phi_{x})$ is some disintegration for $\mu$.
However, since such a disintegration is defined $\mu _x$-a.e. $\gamma \in [0,1]$, the path $\Gamma_\mu$ is not unique.  For this reason we define more precisely $\Gamma_{\mu } $ as the class of almost everywhere equivalent paths corresponding to $\mu$.

\begin{definition}
	Consider a positive Borel measure $\mu$ and a disintegration  $\omega=(\{\mu _{\gamma }\}_{\gamma \in I_\omega},\phi
	_{x})$, where $\{\mu _{\gamma }\}_{\gamma \in I_\omega }$ is a family of
	probabilities on $\Sigma$ defined for all $\gamma \in I_\omega$ (where $\mu _x = \phi _x m_1$), $\mu_x(I_\omega)=1$, and $\phi
	_{x}:I_\omega \longrightarrow \mathbb{R} $ is a non-negative marginal density. Denote by $\Gamma_{\mu }$ the class of equivalent paths associated to $\mu$ 
	\begin{equation*}
		\Gamma_{\mu }=\{ \Gamma^\omega_{\mu }\}_\omega,
	\end{equation*}
	where $\omega$ ranges on all the possible disintegrations of $\mu$ on the stable foliation and $\Gamma^\omega_{\mu }:I_\omega \longrightarrow \mathcal{SB}(I)$ is the path associated to a given disintegration, $\omega$:
	$$\Gamma^\omega_{\mu }(\gamma )=\mu |_{\gamma } = \pi _{\gamma, y} {_\ast} \phi _x
	(\gamma)\mu _\gamma .$$
\end{definition}



\begin{definition}
	Let $\mathcal{P}=\mathcal{P}(\Gamma_{\mu }^\omega)$ be a finite sequence $\mathcal{P}=\{x_{i}\}_{i=1}^{n}\subset I_{\omega }$ 
	and define the \textbf{variation of $\Gamma_{\mu }^\omega $ with respect to $\mathcal{P}$} as
	(denote $\gamma _{i}:=\gamma _{x_{i}}$) 
	\begin{equation*}
		\Var(\Gamma_{\mu }^\omega ,\mathcal{P})=\sum_{j=1}^{n}{||\Gamma_{\mu }^\omega  (\gamma _{j})-\Gamma_{\mu }^\omega (\gamma _{j-1})||_{W}},
	\end{equation*}%
	where we recall $|| \cdot ||_{W}$ is the Wasserstein-like norm defined by equation (\ref{WW}).
	Finally, we define the \textbf{variation of $\Gamma_{\mu }^\omega $} by taking the supremum over the set of finite sequences of any length, as 
	\begin{equation*}
		\Var(\Gamma_{\mu }^\omega ):=\sup_{\mathcal{P}}\Var(\Gamma_{\mu }^\omega ,\mathcal{P}).
	\end{equation*}
	\label{VarGmu}
\end{definition}

\begin{remark}
	For an interval $\eta \subset I$, we define 
	\begin{equation*}
		\displaystyle{\Var _{\overline{\eta}} (\Gamma_{\mu }^\omega ) := \Var (\Gamma_{\mu }^\omega  |_{\overline{%
					\eta}})},
	\end{equation*}where $\overline{\eta}$ is the closure of $\eta$. We also remark that $\displaystyle{\Var _{\overline{\eta}} (\Gamma_{\mu }^\omega ) = \Var (\Gamma_{\mu }^\omega  \cdot \chi_{\overline{%
				\eta}})},$ where $\chi_{\overline{%
			\eta}}$ is the characteristic function of $\overline{%
		\eta}$.
\end{remark}

\begin{remark}
	When no confusion can be done, to simplify the notation, we denote $\Gamma_{\mu }^\omega (\gamma )$ just by $\mu |_{\gamma } $.
\end{remark}

\begin{definition}
	Define the \textbf{variation of a positive measure} $\mu $ by 
	\begin{equation}
		\Var(\mu )=\displaystyle{\inf_{ \Gamma_{\mu }^\omega \in \Gamma_{\mu } }\{\Var(\Gamma_{\mu }^\omega )\}}.
	\end{equation}\label{isufgiueigud}
	\label{represents}
\end{definition}

We remark that,
\begin{equation*}
	||\mu ||_{1}=\int {W^0_{1}(0,\Gamma^\omega _{\mu }(\gamma ))}dm_1(\gamma ),\ \ \textnormal{for any}\ \ \Gamma^\omega_{\mu }\in \Gamma_\mu.
\end{equation*}

\begin{definition}
	From the definition \ref{VarGmu} we define the set of bounded variation positive measures $\mathcal{BV}^{+}$ as
	
	\begin{equation*}
		\mathcal{BV}^{+}=\{\mu \in \mathcal{AB}:\mu \geq 0,\Var(\mu )<\infty \}.
	\end{equation*}
\end{definition}

Now we are ready to state a proposition wich will provide an estimative for the regularity of the iterates $\func{F{_\ast }}^n(m)$. Next inequality (\ref{fljghlfjdgkdg}), is a Lasota-Yorke like inequality, where the variation, $\Var(\mu)$, defined in \ref{isufgiueigud}, plays the role of the strong semi-norm. This is our main tool to estimate the regularity of the invariant measure of a BV Lorenz-like map (Proposition \ref{reg}) and it is an immediate consequence of Theorem \ref{las123456hh} and Remark \ref{jjsdgjf} which are proved in Appendix 1.

\begin{proposition}
	\label{propvar}Let $F_L(x,y)=(T_L(x),G_L(x,y))$ be a BV Lorenz-like map. Then, there are $K_{0}$ and $0< \lambda _{0}< 1$ such that for all $\mu \in \mathcal{BV}^{+}$, all disintegration $\omega$ of $\mu$ and all $n\geq 1$ it holds 
	\begin{equation}\label{jdjfjdjfsds}
		\Var(\Gamma_{\operatorname{\func{F{_\ast }}^n \mu} }^\omega  )\leq K_{0}\lambda _{0}^{n}\Var(\Gamma_ \mu ^\omega)+K_{0}{| \phi _x|_{1,1}}. 
	\end{equation}
\end{proposition}

\begin{remark}
	Taking the infimum over all paths $\Gamma_{ \mu } ^\omega  \in \Gamma_{ \mu }$ on both sides of inequality (\ref{jdjfjdjfsds}), we get 
	
	\begin{equation}\label{fljghlfjdgkdg}
		\Var(\func{F{_\ast }}^n\mu )\leq K_{0}\lambda _{0}^{n}\Var(\mu)+K_{0}{| \phi _x|_{1,1}}. 
	\end{equation}
\end{remark}

A precise estimative for $K_{0}$ can be found  in equation (\ref{hdjfdjhuf}). Remember that, by Proposition \ref{belongs}, a Lorenz-like map has an invariant measure $\mu _{0}\in S^{\infty }$.

\begin{remark}\label{riirorpdf}
	Let $m$ be the Lebesgue measure on $\Sigma=I \times I$, i.e. $m=m_1\times m_1$, where $m_1$ is the Lebesgue measure on $I=[0,1]$. Besides that, consider its trivial disintegration $\omega_0 =(\{m_{\gamma}  \}_{\gamma}, \phi_x)$, given by $m_\gamma = \func{\pi _{y,\gamma}^{-1}{_*}}m_1$, for all $\gamma$ and $\phi _x \equiv 1$. According to this definition, it holds that 
	\begin{equation*}
		m|_\gamma = m_1, \ \ \forall \ \gamma.
	\end{equation*}In other words, the path $\Gamma ^{\omega _0}_m$ is constant: $\Gamma ^{\omega _0}_m (\gamma)= m_1$ for all $\gamma$.  Moreover, for each $n \in \mathbb{N}$, let $\omega_n$ be the particular disintegration for the measure $\func{F{_\ast }}^nm$, defined from $\omega_0$ as an application of Lemma \ref{transformula} and consider the path $\Gamma^{\omega_{n}}_{\func{F{_\ast }}^n m}$ associated with this disintegration. By Proposition \ref{niceformulaab} we have

	\begin{equation}
		\Gamma^{\omega_{n}}_{\func{F{_\ast }}^n m} (\gamma)  =\sum_{i=1}^{q}{\dfrac{\func{F^n%
					_{T_{i}^{-n}(\gamma )}}_{\ast  }m_1}{|\det DT^n_{i}\circ T_{i}^{-n}(\gamma ))|}\chi _{T^n_i(P _{i})}(\gamma )}\ \ \forall \ \ \gamma \in N_{1},  \label{niceformulaaw}
	\end{equation}where $P_i$, $i=1, \cdots, q=q(n)$, ranges over the partition $\mathcal{P}^{(n)}$ defined in the following way: for all $n \geq 1$, let $\mathcal{P}^{(n)}$ be the partition of $I$ s.t. $\mathcal{P}^{(n)}(x) = \mathcal{P}^{(n)}(y)$ if and only if $\mathcal{P}^{(1)}(T^j (x)) = \mathcal{P}^{(1)}(T^j(y))$ for all $j = 0, \cdots , n-1$, where $\mathcal{P}^{(1)} = \mathcal{P}$ (see definition \ref{defpiece123C1}). This path will be used in the proof of the next proposition.

\end{remark}

\begin{proposition}
	Let $F_L(x,y)=(T_L(x),G_L(x,y))$ be BV Lorenz-like map and suppose that $F_L$ has an unique invariant probability measure $\mu _{0}\in \mathcal{BV}^{\infty}_{1,1}$. Then $\mu _{0}\in \mathcal{BV}%
	^{+}$ and 
	\begin{equation*}
		\Var(\mu _{0})\leq 2K_{0}.
	\end{equation*}
	\label{reg}
\end{proposition}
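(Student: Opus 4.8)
The plan is to realize $\mu _{0}$ as a weak limit of transfer--operator iterates of a conveniently regular starting measure, to bound the variation of these iterates uniformly via the Lasota--Yorke inequality of Proposition \ref{propvar}, and then to push the bound onto $\mu _{0}$ by lower semicontinuity of $\Var(\cdot )$.

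First I would take the Lebesgue measure $m$ on $[0,1]^{2}$ as starting point. Disintegrated along $\mathcal{F}^{s}$, all its leaf measures equal the Lebesgue measure on $N_{2}$ and its marginal density is $\phi _{x}\equiv 1$, so the associated path is constant; hence $m\in \mathcal{AB}$, $m\ge 0$, $\Var(m)=0$, thus $m\in \mathcal{BV}^{+}$, and $|\phi _{x}|_{1,1}=\var _{1,1}(1)+|1|_{1}=1$. Moreover $m\in \mathcal{BV}_{1,1}$, and since $\mu _{0}\in \mathcal{BV}_{1,1}^{\infty }\subset \mathcal{BV}_{1,1}$ the signed measure $m-\mu _{0}$ has zero average and lies in $\mathcal{V}$, so Proposition \ref{5.10} (equivalently Proposition \ref{5.8}) gives $||\func{F}_{L}^{\ast n}m-\mu _{0}||_{1}\to 0$ and, in particular, $||\func{\widetilde{F}}^{\ast n}m-\mu _{0}||_{1}\to 0$. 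On the other hand, Proposition \ref{propvar} applied with $\mu =m$ yields $\Var(\func{\widetilde{F}}^{\ast n}m)\le C_{0}\lambda _{0}^{n}\Var(m)+C_{0}|\phi _{x}|_{1,1}$ for every $n\ge 1$; since $\Var(m)=0$ and the marginal term is uniformly bounded (it equals $C_{0}$ when $\phi _{x}$ denotes the marginal of $m$, and it stays below a fixed multiple of $C_{0}$ via (\ref{lasotaiiii2}) if one reads $\phi _{x}$ in Proposition \ref{propvar} as the marginal density of the iterate $\func{\widetilde{F}}^{\ast n}m$, which is $\func{P}_{T}^{nk}1$), one gets a uniform bound $\Var(\func{\widetilde{F}}^{\ast n}m)\le 2C_{0}$. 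Thus the iterates $\func{\widetilde{F}}^{\ast n}m$ lie in $\mathcal{BV}^{+}$ with uniformly bounded variation.

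It remains to pass to the limit, and this is where I expect the real work to lie. Writing $||\cdot ||_{1}=\int _{N_{1}}||\,\cdot \,|_{\gamma }||_{W}\,dm(\gamma )$, the convergence $||\func{\widetilde{F}}^{\ast n}m-\mu _{0}||_{1}\to 0$ should give, along a suitable subsequence, $(\func{\widetilde{F}}^{\ast n}m)|_{\gamma }\to \mu _{0}|_{\gamma }$ in $||\cdot ||_{W}$ for $m$-a.e.\ $\gamma $; since for each fixed finite set of base points $\mathcal{P}=\{x_{i}\}$ the functional $\nu \mapsto \sum _{j}||\nu |_{\gamma _{j}}-\nu |_{\gamma _{j-1}}||_{W}$ is continuous under this pointwise convergence, the classical lower semicontinuity argument then gives $\Var(\Gamma _{\mu _{0}},\mathcal{P})\le \liminf _{n}\Var(\func{\widetilde{F}}^{\ast n}m)\le 2C_{0}$, and taking the supremum over $\mathcal{P}$ and the infimum over disintegrations of $\mu _{0}$ yields $\Var(\mu _{0})\le 2C_{0}$, i.e. $\mu _{0}\in \mathcal{BV}^{+}$. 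The delicate points are: (i) justifying that convergence in the weak norm $||\cdot ||_{1}$ of the positive measures $\func{\widetilde{F}}^{\ast n}m$ to $\mu _{0}$ really forces $m$-a.e.\ convergence of the leaf measures in $||\cdot ||_{W}$, which uses the compatibility of the leaf--restriction operation with differences of measures together with an extraction of an a.e.\ convergent subsequence; and (ii) the measurability and a.e.-uniqueness bookkeeping (Lemma \ref{kv}) needed so that all the relevant disintegration paths can be taken to be defined on a common full-measure set, so that a single sequence of partitions $\mathcal{P}$ serves for all of them simultaneously when running the lower semicontinuity of $\Var$.
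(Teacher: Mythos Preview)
Your approach is essentially the same as the paper's: start from Lebesgue measure $m$, use Proposition \ref{propvar} to bound $\Var(\func{\widetilde F}^{\ast n}m)$ uniformly, and pass to the limit by lower semicontinuity of the variation along a fixed finite partition $\mathcal P$. The only notable difference is in how the a.e.\ convergence of leaf measures is obtained. You invoke the $\mathcal L^{1}$ convergence of Proposition \ref{5.10} and then extract a subsequence to get $(\func{\widetilde F}^{\ast n}m)|_{\gamma}\to \mu_0|_{\gamma}$ in $||\cdot||_W$ for $m$-a.e.\ $\gamma$; the paper instead uses the $\mathcal L^{\infty}$ spectral gap (Theorem \ref{lorgapinf}), so that $||\func{\widetilde F}^{\ast n}m-\mu_0||_{\infty}\to 0$ gives a.e.\ convergence directly along the full sequence, with no extraction needed. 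Your route works, but the $\mathcal L^{\infty}$ route is slightly cleaner and dispenses with the subsequence. Your worry (i) about compatibility of leaf restriction with differences is harmless: for positive measures in $\mathcal{AB}$ the restriction $\mu\mapsto \mu|_{\gamma}$ is additive (by uniqueness of disintegration), so for $\nu_1,\nu_2\ge 0$ one has $(\nu_1-\nu_2)|_{\gamma}=\nu_1|_{\gamma}-\nu_2|_{\gamma}$ a.e., and hence $||\cdot||_1$ (or $||\cdot||_\infty$) convergence of $\func{\widetilde F}^{\ast n}m-\mu_0$ does control $||(\func{\widetilde F}^{\ast n}m)|_{\gamma}-\mu_0|_{\gamma}||_W$.
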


\begin{proof}

	Consider the path $\Gamma^{\omega_n}_{\func{F{_\ast }}^n}m$, defined in Remark \ref{riirorpdf},  which represents the measure $\func{F{_\ast }}^nm$.

	
	According to Proposition \ref{rtyrtsyw}, let $\mu _{0}\in \mathcal{BV}^{\infty}_{1,1}$ be the unique $F_L$-invariant probability measure in $\mathcal{BV}^{\infty}_{1,1}$. Consider the Lebesgue measure $m$ and the iterates $\func{F{_\ast }}^n%
	(m)$. By Theorem \ref{lorgapinf}, these iterates converge to $\mu _{0}$
	in $\mathcal{L}^{\infty }$. It implies that the sequence $\{\Gamma_{\func{F{_\ast }}^n(m)} ^{\omega _n}\}_{n}$ converges $m$-a.e. to $\Gamma_{\mu _{0}}^\omega\in \Gamma_{\mu_0 }$ (in $\mathcal{SB}(I)$ with respect to the metric defined in definition \ref{wasserstein}),  where $\Gamma_{\mu _{0}}^\omega$ is a path given by the Rokhlin Disintegration
	Theorem and $\{\Gamma_{\func{F{_\ast }}^n(m)} ^{\omega_n}\}_{n}$ is given by equation (\ref{niceformulaaw}). It implies that $\{\Gamma_{\func{F{_\ast }}^n(m)} ^{\omega_n}\}_{n}$ converges pointwise to $\Gamma_{\mu _{0}}^\omega$ on a full measure set $\widehat{I}\subset I$. Let us denote $%
	\Gamma_{n}:=\Gamma^{\omega_n}_{\func{F{_\ast }}^n(m)}|_{%
		\widehat{I}}$ and $\Gamma:=\Gamma^\omega _{\mu _{0}}|_{\widehat{I}}$. Since $\{\Gamma_{n} \}_n $ converges pointwise to $\Gamma$ it holds $\Var(\Gamma_{n}, \mathcal{P}) \longrightarrow \Var(\Gamma, \mathcal{P})$ as $n \rightarrow \infty$ for all finite sequences $\mathcal{P} \subset \widehat{I}$. Indeed, let $\mathcal{P}=\{x_1, \cdots, x_k\} \subset \widehat{I}$ be a finite sequence. Then,

	\begin{equation*}
		\Var(\Gamma_{n} ,\mathcal{P})=\sum_{j=1}^{k}{||\Gamma_{n}  (x _{j})-\Gamma_{n}(x _{j-1})||_{W}},
	\end{equation*}taking the limit, we get
	
	\begin{eqnarray*}
		\lim _{n \longrightarrow \infty} {\Var(\Gamma_{n} ,\mathcal{P})} &= & \lim _{n \longrightarrow \infty} {\sum_{j=1}^{k}{||\Gamma_{n}  (x _{j})-\Gamma_{n}(x _{j-1})||_{W}}} \\&= & \sum_{j=1}^{k}{||\Gamma (x _{j})-\Gamma} (x _{j-1})||_{W} \\&= & \Var(\Gamma ,\mathcal{P}).
	\end{eqnarray*} On the other hand, $\Var(\Gamma_{n}, \mathcal{P})\leq \Var (\Gamma_{n}) \leq 2 K_0$  for all $n\geq 1$, where $K_0$ comes from Proposition \ref{propvar}. Then $\Var(\Gamma^\omega_{\mu _0}, \mathcal{P}) \leq  2 K_0$ for all partition $\mathcal{P}$. Thus, $\Var(\Gamma^\omega_{\mu _0}) \leq 2 K_0$ and hence  $\Var(\mu _0) \leq  2 K_0$.

\end{proof}

\begin{remark}
	We remark that, Proposition \ref{reg} is an estimation of the regularity of the disintegration of $\mu _{0}$. Similar results are presented in \cite{GP} and \cite{BM}.
\end{remark}

In Section \ref{decay} we proved exponential decay of corretation for Lorenz like maps and observables in the set $f \in \Theta _{\mu _0} ^1$. In this section we prove that for BV Lorenz like maps, the set $f \in \Theta _{\mu _0} ^1$ contains the set of Lipschitz functions.  Denote the space of the Lipschitz functions. $f:[0,1]^2\longrightarrow \mathbb{R}$ by $\lip(\Sigma)$.
As a consequence of Proposition \ref{reg}, next Proposition \ref{disisisi} yields $\lip(\Sigma) \subset \Theta _{\mu_0} ^1$ (defined in subsection \ref{decay}). In order to prove it, we need the next Lemma \ref{hdgfghddsfg} on disintegration of absolutely continuous measures with respect to a measure $\mu_0 \in \mathcal{AB}$, where its proof was postponed to the Appendix 4.

\begin{lemma}\label{hdgfghddsfg}
	Let $(\{\mu_{0, \gamma}\}_\gamma, \phi_x)$ be the disintegration of $\mu _0$, along the partition $\mathcal{F}^s:=\{\{\gamma\} \times N_2: \gamma \in N_1\}$, and for a $\mu_0$ integrable function $f:N_1 \times N_2 \longrightarrow \mathbb{R}$, denote by $\nu$ the measure $\nu:=f\mu_0$ ($ f\mu_0(E) := \int _E {f}d\mu _0$). If $(\{\nu_{ \gamma}\}_\gamma, \widehat{\nu} )$ is the disintegration of $\nu$, where $\widehat{\nu}:=\pi_x{_*} \nu$, then it holds $\widehat{\nu} \ll m_1$ and $\nu _\gamma \ll \mu_{0, \gamma}$. Moreover, denoting $\overline{f}:=\dfrac{d\widehat{\nu}}{dm_1}$, it holds 
	\begin{equation}\label{fjgh}
		\overline{f}(\gamma)=\int_{N_1}{f(\gamma, y)}d(\mu_0|_\gamma),
	\end{equation} and for $\widehat{\nu}$-a.e. $\gamma \in N_1$

	\begin{equation}\label{gdfgdgf}
		\dfrac{d\nu _{ \gamma}}{d\mu _{0, \gamma}}(y) =
		\begin{cases}
			\dfrac{f|_\gamma (y)}{\int{f|_\gamma(y)}d\mu_{0,\gamma}(y)} , \ \hbox{if} \ \gamma \in B ^c \\
			0, \ \hbox{if} \ \gamma \in B,
		\end{cases} \hbox{for all} \ \ y \in N_2,
	\end{equation}where $B :=  \overline{f} ^{-1}(0)$.
\end{lemma}

\begin{proposition}\label{disisisi}
	Let $F_L:[0,1]^2 \longrightarrow [0,1]^2$, $F_L(x,y) = (T_{L}(x), G_L(x,y))$,  be a BV Lorenz-like map and $\mu_0 \in \mathcal{BV}_{1,1}$ the unique $F_L$-invariant measure in $\mathcal{BV}_{1,1}$. Then, $\lip(\Sigma) \subset \Theta _{\mu_0} ^1$
\end{proposition}

\begin{proof}
	Let $(\{\mu_{0, \gamma}\}_\gamma, \phi_x)$ be the disintegration of $\mu _0$ and denote by $\nu$ the measure $\nu:=f\mu_0$ ($ f\mu_0(E) := \int _E {f}d\mu _0$). If $(\{\nu_{ \gamma}\}_\gamma, \widehat{\nu} )$ is the disintegration of $\nu$, then it holds $\widehat{\nu} \ll m_1$ and $\nu _\gamma \ll \mu_{0, \gamma}$ (see appendix 4, section \ref{disint}). Moreover, denoting $\overline{f}:=\dfrac{d\widehat{\nu}}{dm_1}$, it holds 
	\begin{equation*}
		\overline{f}(\gamma)=\int_{[0,1]}{f(\gamma, y)}d(\mu_0|_\gamma),
	\end{equation*} and

	\begin{equation*}
		\dfrac{d\nu _\gamma}{d\mu_{0, \gamma}} (y)=\dfrac{f(\gamma,y) }{\overline{f}(\gamma)}, \ \ \textnormal{if} \ \  	\overline{f}(\gamma) \neq 0.
	\end{equation*}and
	\begin{equation*}
		\dfrac{d\nu _\gamma}{d\mu_{0, \gamma}} (y) \equiv 0, \ \ \textnormal{if} \ \  	\overline{f}(\gamma) = 0.
	\end{equation*}It is immediate that $\nu \in \mathcal{L}^1$. Let us check that $\overline{f} \in BV_{1,1}$ by estimating the variation of $\overline{f}$. For an arbitrary partition $\mathbb{P} = \{0=\gamma_0, \gamma_1, \cdots, \gamma_n=1\}$ of $[0,1]$, we have

	\begin{eqnarray*}
		|\overline{f}(\gamma_{i}) - \overline{f}(\gamma_{i-1})| &\leq& \left|\int_{[0,1]}{f(\gamma _i, y)}d(\mu_0|_{\gamma_i}) - \int_{[0,1]}{f(\gamma _{i-1}, y)}d(\mu_0|_{\gamma_{i-1}}) \right| 
		\\&\leq& \left|\int_{[0,1]}{f(\gamma _i, y)}d(\mu_0|_{\gamma_i}) - \int_{[0,1]}{f(\gamma _{i}, y)}d(\mu_0|_{\gamma_{i-1}}) \right| 
		\\&+& \left|\int_{[0,1]}{f(\gamma _i, y)}d(\mu_0|_{\gamma_{i-1}}) - \int_{[0,1]}{f(\gamma _{i-1}, y)}d(\mu_0|_{\gamma_{i-1}}) \right| 
		\\&\leq& \left|\int_{[0,1]}{f(\gamma _i, y)}d(\mu_0|_{\gamma_i}-\mu_0|_{\gamma_{i-1}}) \right| 
		\\&+& \left|\int_{[0,1]}{f(\gamma _i, y)-f(\gamma _{i-1}, y)}d(\mu_0|_{\gamma_{i-1}}) \right|
		\\&\leq& ||f||_{\lip} ||\mu_0|_{\gamma_i}-\mu_0|_{\gamma_{i-1}}||_W + L(f)|\gamma_i - \gamma_{i-1}| \left|\phi _x \right|_{\infty}.
	\end{eqnarray*}Thus, $\var{\overline{f}} < \infty$ and $\overline{f} \in BV_{1,1}$ (since $\var_{1,1}{\overline{f}} \leq 2\var{\overline{f}}$). 
\end{proof}

The proof of the following proposition is postponed to the appendix.

\begin{proposition}\label{thshgf}
	Let $\{F_{\delta }\}_{\delta \in [0,1)}$ be a Uniform BV Lorenz-like family (definition (\ref{UFL})) and let $f _{\delta}$ be the unique $F_\delta$-invariant probability in $\mathcal{BV}_{1,1}$ (also in $\mathcal{BV}^{\infty}_{1,1}$). Then, there exists $B_u>0$ such that 
	\begin{equation*}
		\Var(f _{\delta})\leq 2B_u,
	\end{equation*}for all $\delta \in[0,1)$.
\end{proposition}

For the next proposition we will use the following notation. Given a probability measure $f_\delta$ on $I^2$ and a measurable set $E\subset I$, we define the measure $1_Ef_\delta$ on $I^2$, by 
\begin{equation*}
	1_Ef_\delta (A) := f_\delta (A \cap \pi _x ^{-1}(E)) \ \textnormal{for all measurable set} \ A \subset I^2.
\end{equation*}
We remark that, if $(\{f_{\delta, \gamma}\}_\gamma, \phi _{x,\delta})$ is a disintegration of $f_\delta$, then 
\begin{equation}
	\label{jnjjng}
	(\{ f_{\delta, \gamma} \}_\gamma, \chi _{E}\phi _{x,\delta} ),
\end{equation}is a disintegration of $1_Ef_\delta (A)$.

\begin{proposition}[to obtain UF2]\label{UF2ass}
	Let $\{F_\delta \}_{\delta \in [0,1)}$ be a family of BV Lorenz-like maps which  satisfies UBV2, UBV3 and UBV4 of definition \ref{UFL}. Denote by $\func{F_\delta{_\ast}}$ their transfer operators and by $%
	f_{\delta }$ their fixed points (probabilities) in $\mathcal{BV}_{1,1}$ (also in $\mathcal{BV}^{\infty}_{1,1}$). Suppose that $f_{\delta }$ has uniformly bounded variation, 
	$$\Var(f_{\delta })\leq M_{2}, \ \forall \delta.$$
	Then, there is a constant $C_{1}$ such that for $\delta $ small enough, it holds
	$$
	||(\func{F_0{_\ast }}-\func{F_\delta{_\ast }})f_{\delta }||_{{1}}\leq
	C_{1}\delta (M_{2}+1).$$
\end{proposition}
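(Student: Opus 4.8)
The plan is to compare $F_0$ and $F_\delta$ through the auxiliary map $\tilde F_\delta=(T_\delta,G_0)$ (the fibre maps of $F_0$, the base dynamics of $F_\delta$) and to estimate separately
\[
||(\func F_0^{\ast}-\func F_\delta^{\ast})f_\delta||_1\le ||(\func F_0^{\ast}-\func{\tilde F}_\delta^{\ast})f_\delta||_1+||(\func{\tilde F}_\delta^{\ast}-\func F_\delta^{\ast})f_\delta||_1,
\]
the first term being a pure base–map perturbation (fibres unchanged), the second a pure fibre perturbation (base unchanged). Two facts are used throughout: (i) $f_\delta$ is a probability in $\mathcal{BV}^{\infty}_{1,1}$, so its marginal density $\phi^{(\delta)}_x$ has $|\phi^{(\delta)}_x|_1=1$, and the hypothesis $\Var(f_\delta)\le M_2$ forces $\phi^{(\delta)}_x$ to have ordinary bounded variation $\le M_2$ (test the leaf path against $g\equiv 1$), hence $\|\phi^{(\delta)}_x\|_\infty\le 1+M_2$; (ii) restriction to a stable leaf is additive on positive measures, so for a difference $\alpha-\beta$ of positive measures in $\mathcal L^1$ one has $||\alpha-\beta||_1=\int_{N_1}||\alpha|_\gamma-\beta|_\gamma||_W\,dm_1(\gamma)$, which lets us argue leaf by leaf. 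I also use $||\lambda||_1\le|\lambda|(\Sigma)$ for $\lambda\in\mathcal L^1$ (total variation).

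For the fibre perturbation, $\tilde F_\delta$ and $F_\delta$ share the base branches $T_{\delta,i}$, so Proposition \ref{niceformulaab} gives, on each leaf $\gamma$ with $\beta_i=T_{\delta,i}^{-1}(\gamma)$,
\[
(\func{\tilde F}_\delta^{\ast}f_\delta)|_\gamma-(\func F_\delta^{\ast}f_\delta)|_\gamma=\sum_i\frac{\big(G_{0,\beta_i}^{\ast}-G_{\delta,\beta_i}^{\ast}\big)(f_\delta|_{\beta_i})}{|\det DT_{\delta,i}(\beta_i)|}\,\chi_{T_{\delta,i}(P_i)}(\gamma),
\]
where $G_{0,\beta},G_{\delta,\beta}:N_2\to N_2$ are the contractions $y\mapsto G_0(\beta,y),\ y\mapsto G_\delta(\beta,y)$. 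The triangle inequality for $||\cdot||_W$ and the change of variables $\gamma=T_{\delta,i}(\beta)$ collapse the sum and the Jacobian, leaving $\int_{N_1}||(G_{0,\beta}^{\ast}-G_{\delta,\beta}^{\ast})(f_\delta|_\beta)||_W\,dm_1(\beta)$. Testing against $g$ with $L(g)\le1$, $\|g\|_\infty\le1$, and using $|G_0(\beta,y)-G_\delta(\beta,y)|\le\delta$ on the set $A_2$ of (UBV3) bounds the integrand by $\delta\,\phi^{(\delta)}_x(\beta)$ for $\beta\in A_2$ and by $2\phi^{(\delta)}_x(\beta)$ on $A_2^c$ ($m(A_2^c)\le\delta$). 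Hence $||(\func{\tilde F}_\delta^{\ast}-\func F_\delta^{\ast})f_\delta||_1\le\delta+2\|\phi^{(\delta)}_x\|_\infty\delta\le(3+2M_2)\delta$.

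For the base–map perturbation I use the factorisation $F=(T\times\mathrm{id})\circ\Psi_0$ with $\Psi_0(x,y)=(x,G_0(x,y))$, valid for both $F_0$ and $\tilde F_\delta$; thus $\func F_0^{\ast}-\func{\tilde F}_\delta^{\ast}=\big((T_0\times\mathrm{id})^{\ast}-(T_\delta\times\mathrm{id})^{\ast}\big)\circ\Psi_0^{\ast}$, and it suffices to bound $||\big((T_0\times\mathrm{id})^{\ast}-(T_\delta\times\mathrm{id})^{\ast}\big)\nu||_1$ for the positive measure $\nu:=\Psi_0^{\ast}f_\delta$. This $\nu$ has the same $x$–marginal as $f_\delta$, and its leaf path $\gamma\mapsto\nu|_\gamma=G_{0,\gamma}^{\ast}(f_\delta|_\gamma)$ has finite variation: combining Lemma \ref{niceformulaac} (contractions do not increase $||\cdot||_W$) with the "Lipschitz in $x$" bound (\ref{rty}) for $G_0$ on each element of its partition $\mathcal P'$ gives $\Var(\nu)\le\Var(f_\delta)+(H+2\#\mathcal P')\|\phi^{(\delta)}_x\|_\infty\le C_{G_0}(M_2+1)$ with $C_{G_0}$ depending only on $F_0$. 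By (UBV2) there are $A_1\subseteq I$ with $m(A_1)\ge1-\delta$ and a diffeomorphism $\sigma$ of $I$ with $T_0|_{A_1}=T_\delta\circ\sigma|_{A_1}$, $|\sigma-\mathrm{id}|\le\delta$ and $|1/\sigma'-1|\le\delta$ on $A_1$. With $\hat\sigma(x,y)=(\sigma(x),y)$ one has $(T_0\times\mathrm{id})=(T_\delta\times\mathrm{id})\circ\hat\sigma$ on $A_1\times N_2$, so $(T_0\times\mathrm{id})^{\ast}\nu$ and $(T_\delta\times\mathrm{id})^{\ast}(\hat\sigma^{\ast}\nu)$ agree except for mass originating in $A_1^{c}\times N_2$; hence $||(T_0\times\mathrm{id})^{\ast}\nu-(T_\delta\times\mathrm{id})^{\ast}\hat\sigma^{\ast}\nu||_1\le 4\,\pi_x^{\ast}\nu(A_1^{c})\le4(1+M_2)\delta$. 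Since $(T_\delta\times\mathrm{id})^{\ast}$ does not increase $||\cdot||_1$ (Proposition \ref{l1}, whose proof applies to a skew product with trivial, hence $1$–contracting, fibre), it remains to estimate $||\hat\sigma^{\ast}\nu-\nu||_1$; using $(\hat\sigma^{\ast}\nu)|_{\gamma'}=|\sigma'(\sigma^{-1}\gamma')|^{-1}\nu|_{\sigma^{-1}\gamma'}$, the change of variables $\gamma'=\sigma(\gamma)$, and a split over $A_1$ (where $|1/\sigma'-1|\le\delta$, $|\sigma-\mathrm{id}|\le\delta$, and $\int_I\Var_{[\gamma-\delta,\gamma+\delta]\cap I}(\nu)\,dm_1\le2\delta\,\Var(\nu)$) and over $A_1^{c}$ (with $m(A_1^c)\le\delta$, $m(\sigma(A_1^{c}))\le2\delta$) yields $||\hat\sigma^{\ast}\nu-\nu||_1\le C'(M_2+1)\delta$ for $\delta\le\tfrac12$. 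Adding the three bounds gives $||(\func F_0^{\ast}-\func F_\delta^{\ast})f_\delta||_1\le C_1(M_2+1)\delta$.

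The main obstacle is the base–map part: since the $T_\delta$ may have unbounded derivatives, $T_0$ and $T_\delta$ cannot be compared pointwise but only via the Skorokhod reparametrisation $\sigma$, and translating this into an estimate for the two–dimensional transfer operators forces the factorisation above together with careful bookkeeping of the discarded set $A_1^{c}$ and of the variation of the auxiliary leaf path $\gamma\mapsto\Psi_0^{\ast}f_\delta|_\gamma$. Equivalently one could invoke a Banach–valued version of \cite{BG}, Lemma 11.2.1 for densities valued in $(\mathcal{SB}(N_2),||\cdot||_W)$, which is essentially what the argument above re‑proves by hand.
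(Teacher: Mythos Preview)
Your argument is correct and follows a genuinely different route from the paper's proof. The paper works entirely at the level of the branch formula of Proposition \ref{niceformulaab}: it restricts to $A=A_{1}\cap A_{2}$, writes out $(\func F_{0}^{\ast}\mu)|_{\gamma}$ and $(\func F_{\delta}^{\ast}\mu)|_{\gamma}$ branch by branch, then splits successively into pieces $O_{1},O_{2}$, $I,II$ and $I_{a},I_{b}$, each of which isolates one source of discrepancy (different fibre maps on the same leaf, different Jacobians, different preimage leaves). The variation hypothesis enters only at the very end, in the bound $\int\sup_{x,y\in B(\beta,\delta)}||\mu|_{x}-\mu|_{y}||_{W}\,dm(\beta)\le 2\delta(M_{2}+1)$.

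Your approach is more modular: the auxiliary map $\tilde F_{\delta}=(T_{\delta},G_{0})$ cleanly separates the fibre perturbation (handled directly by UBV3) from the base perturbation, and the factorisation through $\Psi_{0}$ reduces the latter to a Banach-valued analogue of the classical Skorokhod stability lemma \cite[Lemma 11.2.1]{BG} for densities with values in $(\mathcal{SB}(N_{2}),||\cdot||_{W})$. This makes the role of $\Var(f_{\delta})$ transparent: it is needed once to control $\Var(\Psi_{0}^{\ast}f_{\delta})$ (via Lemma \ref{doesnotvary12}) and once in the Fubini estimate $\int_{I}\Var_{[\gamma-\delta,\gamma+\delta]}(\nu)\,dm_{1}\le 2\delta\Var(\nu)$, which is exactly the ingredient the paper uses for its term $II(\gamma)$. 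The price you pay is having to check that Proposition \ref{l1} extends to the degenerate skew product $T_{\delta}\times\mathrm{id}$ (it does, since the identity is $1$-Lipschitz on leaves) and to control $m(\sigma(A_{1}^{c}))$, which you do correctly via $m(\sigma(A_{1}))\ge(1-\delta)m(A_{1})$. Both proofs ultimately use the same three ingredients (UBV2, UBV3, and $\Var(f_{\delta})\le M_{2}$ together with $\|\phi_{x}^{(\delta)}\|_{\infty}\le 1+M_{2}$); your packaging is arguably cleaner and more easily reused for other perturbation problems.
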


\begin{proof}

	Set $A=A_{1}\cap A_{2}$  where $A_1$ comes from de definition of $d_{S,n}$ (see equation (\ref{shock})) and $A_2$ is from (UBV3) (see definition \ref{UFL}). Remark that these sets depend on $\delta $. Let us
	estimate
	
	\begin{eqnarray*}\label{12112}
		||(\func{F_0{_\ast }}-\func{F_\delta{_\ast }})f_{\delta}||_{{1}}&\leq& \int_{I}{||{\func{F_0{_\ast }}(1}}_{A}{f_{\delta })|}_{\gamma }{-{\func{F_\delta{_\ast }}({1}}_{A}f_{\delta })|_{\gamma}||_{W}}dm_1(\gamma )\\&+&\int_{I}{||{\func{F_0{_\ast }}(1}}_{A^{c}}{f_{\delta })|}_{\gamma }{-{\func{F_\delta{_\ast }}({1}}_{A^{c}}f_{\delta })|_{\gamma }||_{W}}dm_1(\gamma ).
	\end{eqnarray*} 
	

	By the assumptions, for a.e. $\gamma$, $||{f_{\delta }|\gamma ||}_{W}\leq
	(M_{2}+1)$ and  $||{1}_{A^{c}}{f_{\delta }||}_{1}\leq 2\delta(M_{2}+1) .$ Indeed, since $\Var(f_{\delta })\leq M_{2}, \ \forall \delta$, we have (below, we denote $\phi_{x,\delta} =\dfrac{d\pi_x{_*}(f_\delta)}{dm_1}$) 
	
	\begin{eqnarray*}
		||f_{\delta }|_\gamma ||_W &\leq& ||f_{\delta }|_\gamma - f_{\delta }|_{\gamma_2}||_W + ||f_{\delta }|_{\gamma_2}||_W \\&=& ||f_{\delta }|_\gamma - f_{\delta }|_{\gamma_2}||_W + |\phi_{x,\delta}(\gamma_2)|.
	\end{eqnarray*}Integrating with respect to $\gamma_2$ we get 
	\begin{equation}\label{sksdjfn}
		||{f_{\delta }|\gamma ||}_{W}\leq
		(M_{2}+1).
	\end{equation}To prove the inequality $||{1}_{A^{c}}{f_{\delta }||}_{1}\leq 2\delta(M_{2}+1) $ we use the previous equation, $m_1(A^c) \leq 2\delta$ and the fact that (see equation (\ref{jnjjng})) $$||{1}_{A^{c}}{f_{\delta }||}_{1} = \int _{A^c}{||f_\delta |_\gamma||_W}dm_1.$$

	Since ${\func{F_\ast }}$ is a contraction for the weak norm, we have 
	\[
	\int_{I}{||{\func{F_0{_\ast }}(1}}_{A^{c}}{f_{\delta })|}_{\gamma }{-{\func{F_\delta{_\ast }}({1}}_{A^{c}}f_{\delta })|_{\gamma }||_{W}}dm_1(\gamma )\leq
	4\delta(M_{2}+1) .
	\]Now, let us estimate the first summand of (\ref{12112}) by estimating the integral 
	$$\int {||({\func{F_0{_\ast }}}\mu -{\func{F_\delta{_\ast }}}\mu )|_{\gamma }||_{W}}%
	dm_1(\gamma ),$$ where $\mu=1_{A}f_{\delta }$. Denote by $T_{0,i}$, with $0\leq i\leq q$, the branches of $T_{0}$
	defined in the sets $P_{i} \in \mathcal{P}$ and set $T_{\delta,i}=T_{\delta }|_{P_{i}\cap A}$. These functions will play the role of the branches for $T_{\delta }.$ Since in $A,$ $T_{0}=T_{\delta }\circ \sigma_{\delta }$ (where $\sigma _{\delta }$ is the diffeomorphism in the definition of the Skorokhod distance), then\ $T_{\delta ,i}$ are invertible.
	Then%
	\[
	({\func{F_0{_\ast }}}\mu -{\func{F_\delta{_\ast }}}\mu )|_{\gamma }=\sum_{i=1}^{q}%
	\frac{{\func{F}_{0,T_{0,i}^{-1}(\gamma )}{_\ast }}\mu |_{T_{0,i}^{-1}(\gamma )}\chi
		_{T_{0}(P_{i}\cap A)}}{|T_{0,i}^{\prime }(T_{0,i}^{-1}(\gamma ))|}%
	-\sum_{i=1}^{q}\frac{{\func{F}_{\delta ,T_{\delta ,i}^{-1}(\gamma )}{_\ast }}\mu
		|_{T_{\delta ,i}^{-1}(\gamma )}\chi _{T_{\delta }(P_{i}\cap A)}}{|T_{\delta
			,i}^{\prime }(T_{\delta ,i}^{-1}(\gamma ))|}\ \ \mu _{x}-a.e.\ \gamma \in I. 
	\]%
	Let us now consider $T_{0}(P_{i}\cap A)$, $T_{\delta }(P_{i}\cap A)$ and remark that $T_{0}(P_{i}\cap A)=\sigma _{\delta }(T_{\delta }(P_{i}\cap A))$
	where $\sigma _{\delta }$ is a diffeomorphism near to the identity. Let us
	denote $B_{i}=T_{0}(P_{i}\cap A)\cap T_{\delta }(P_{i}\cap
	A)$ and $C_{i}=T_{0}(P_{i}\cap A)\triangle T_{\delta }(P_{i}\cap A)$. Then, we have
	
	\begin{equation*}
		\int_{I}{||({\func{F_0{_\ast }}}\mu -{\func{F_\delta{_\ast }}}\mu )|_{\gamma }||_{W}}%
		dm_1(\gamma ) \leq O_1 + O_2,
	\end{equation*}where $$O_1 = \int_{I}\left\vert \left\vert \sum_{i=1}^{q}\frac{{\func{F}_{0,T_{0,i}^{-1}(\gamma )}{_\ast }}\mu |_{T_{0,i}^{-1}(\gamma )}\chi_{B_{i}}}{|T_{0,i}^{\prime }(T_{0,i}^{-1}(\gamma ))|}-\sum_{i=1}^{q}\frac{{\func{F}_{\delta ,T_{\delta ,i}^{-1}(\gamma )}{_\ast }}\mu |_{T_{\delta,i}^{-1}(\gamma )}\chi _{B_{i}}}{|T_{\delta ,i}^{\prime }(T_{\delta,i}^{-1}(\gamma ))|}\right\vert \right\vert _{W}dm_1$$ and 
	$$O_2 = \int_{I}\left\vert\left\vert \sum_{i=1}^{q}\frac{{\func{F}_{0,T_{0,i}^{-1}(\gamma )}{_\ast }}\mu |_{T_{0,i}^{-1}(\gamma )}\chi _{T_0(P_i\cap A)-B_i}}{|T_{0,i}^{\prime }(T_{0,i}^{-1}(\gamma ))|}-\sum_{i=1}^{q}\frac{{\func{F}_{\delta,T_{\delta ,i}^{-1}(\gamma )}{_\ast }}\mu |_{T_{\delta ,i}^{-1}(\gamma)}\chi _{T_\delta (P_i\cap A)-B_i}}{|T_{\delta ,i}^{\prime }(T_{\delta ,i}^{-1}(\gamma ))|}\right\vert \right\vert _{W}dm _1.$$
	And since  $m_1(C_{i})=O(\delta )$, we \footnote{Remark that $m_1(T_\delta(P_i\cap A)\triangle T_0(P_i\cap A))=O(\delta)$ because $T_\delta(P_i\cap A)=\sigma( T_0(P_i\cap A))$ where $\sigma$ is a diffeomorphism near to the identity as in the definition of the Skhorokod distance and $P_i\cap A$ is a finite  union of intervals whose number is uniformly bounded with respect to $\delta$.} 
	get that there is  $K_1\geq0$ such that $O_2  \leq qK_{1}(M_{2}+1)\delta .$ In order to estimate $O_1$, we note that
	

	\begin{eqnarray*}
		O_1 &=& \int_{I}\left\vert \left\vert \sum_{i=1}^{q}\frac{{\func{F}_{0,T_{0,i}^{-1}(%
					\gamma )}{_\ast }}\mu |_{T_{0,i}^{-1}(\gamma )}\chi _{B_{i}}}{%
			|T_{0,i}^{\prime }(T_{0,i}^{-1}(\gamma ))|}-\sum_{i=1}^{q}\frac{{\func{F}_{\delta
					,T_{\delta ,i}^{-1}(\gamma )}{_\ast }}\mu |_{T_{\delta ,i}^{-1}(\gamma
				)}\chi _{B_{i}}}{|T_{\delta ,i}^{\prime }(T_{\delta ,i}^{-1}(\gamma ))|}%
		\right\vert \right\vert _{W}dm_1 \\
		&\leq &\int_{I}\left\vert \left\vert \sum_{i=1}^{q}\frac{{%
				\func{F}_{0,T_{0,i}^{-1}(\gamma )}{_\ast }}\mu |_{T_{0,i}^{-1}(\gamma )}\chi
			_{B_{i}}}{|T_{0,i}^{\prime }(T_{0,i}^{-1}(\gamma ))|}-\sum_{i=1}^{q}\frac{{%
				\func{F}_{\delta ,T_{\delta ,i}^{-1}(\gamma )}{_\ast }}\mu |_{T_{0,i}^{-1}(\gamma
				)}\chi _{B_{i}}}{|T_{\delta ,i}^{\prime }(T_{\delta ,i}^{-1}(\gamma ))|}%
		\right\vert \right\vert _{W}dm_1 \\
		&+&\int_{I}\left\vert \left\vert \sum_{i=1}^{q}\frac{{\func{F}_{\delta ,T_{\delta
						,i}^{-1}(\gamma )}{_\ast }}\mu |_{T_{0,i}^{-1}(\gamma )}\chi _{B_{i}}}{%
			|T_{\delta ,i}^{\prime }(T_{\delta ,i}^{-1}(\gamma ))|}-\sum_{i=1}^{q}\frac{{%
				\func{F}_{\delta ,T_{\delta ,i}^{-1}(\gamma )}{_\ast }}\mu |_{T_{\delta
					,i}^{-1}(\gamma )}\chi _{B_{i}}}{|T_{\delta ,i}^{\prime }(T_{\delta
				,i}^{-1}(\gamma ))|}\right\vert \right\vert _{W}dm _1 \\
		&=&\int_{I}I(\gamma )~dm_1(\gamma )+\int_{I}II(\gamma )~dm_1(\gamma ).
	\end{eqnarray*}The two summands will be treated separately. Let us denote $\overline{\mu }%
	|_{\gamma }=\pi _{\gamma ,y}{_\ast }\mu _{\gamma }$ (note that $\mu
	|_{\gamma }=\phi _{\mu }(\gamma )\overline{\mu }|_{\gamma }$ and $\overline{%
		\mu }|_{\gamma }$ is a probability measure).%
	\begin{eqnarray*}
		I(\gamma ) &=&\left\vert \left\vert \sum_{i=1}^{q}\frac{{\func{F}_{0,T_{0,i}^{-1}(%
					\gamma )}{_\ast }}\mu |_{T_{0,i}^{-1}(\gamma )}\chi _{B_{i}}}{%
			|T_{0,i}^{\prime }(T_{0,i}^{-1}(\gamma ))|}-\sum_{i=1}^{q}\frac{{\func{F}_{\delta
					,T_{\delta ,i}^{-1}(\gamma )}{_\ast }}\mu |_{T_{0,i}^{-1}(\gamma )}\chi
			_{B_{i}}}{|T_{\delta ,i}^{\prime }(T_{\delta ,i}^{-1}(\gamma ))|}\right\vert
		\right\vert _{W} \\
		&\leq &\left\vert \left\vert \sum_{i=1}^{q}\frac{{\func{F}_{0,T_{0,i}^{-1}(\gamma
					)}{_\ast }}\mu |_{T_{0,i}^{-1}(\gamma )}\chi _{B_{i}}}{|T_{0,i}^{\prime
			}(T_{0,i}^{-1}(\gamma ))|}-\sum_{i=1}^{q}\frac{{\func{F}_{\delta ,T_{\delta
						,i}^{-1}(\gamma )}{_\ast }}\mu |_{T_{0,i}^{-1}(\gamma )}\chi _{B_{i}}}{%
			|T_{0,i}^{\prime }(T_{0,i}^{-1}(\gamma ))|}\right\vert \right\vert _{W} \\
		&+&\left\vert \left\vert \sum_{i=1}^{q}\frac{{\func{F}_{\delta ,T_{\delta
						,i}^{-1}(\gamma )}{_\ast }}\mu |_{T_{0,i}^{-1}(\gamma )}\chi _{B_{i}}}{%
			|T_{0,i}^{\prime }(T_{0,i}^{-1}(\gamma ))|}-\sum_{i=1}^{q}\frac{{\func{F}_{\delta
					,T_{\delta ,i}^{-1}(\gamma )}{_\ast }}\mu |_{T_{0,i}^{-1}(\gamma )}\chi
			_{B_{i}}}{|T_{\delta ,i}^{\prime }(T_{\delta ,i}^{-1}(\gamma ))|}\right\vert
		\right\vert _{W} \\
		&=&I_{a}(\gamma )+I_{b}(\gamma ).
	\end{eqnarray*}%
	Since $f_{\delta }$ is a probability measure it holds, posing $\beta
	=T_{0,i}^{-1}(\gamma )$%
	\begin{eqnarray*}
		\int I_{a}(\gamma )dm_1 &=&\int \left\vert \left\vert \sum_{i=1}^{q}\frac{{%
				\func{F}_{0,T_{0,i}^{-1}(\gamma )}{_\ast }}\mu |_{T_{0,i}^{-1}(\gamma )}\chi
			_{B_{i}}}{|T_{0,i}^{\prime }(T_{0,i}^{-1}(\gamma ))|}-\sum_{i=1}^{q}\frac{{%
				\func{F}_{\delta ,T_{\delta ,i}^{-1}(\gamma )}{_\ast }}\mu |_{T_{0,i}^{-1}(\gamma
				)}\chi _{B_{i}}}{|T_{0,i}^{\prime }(T_{0,i}^{-1}(\gamma ))|}\right\vert
		\right\vert _{W}dm_1(\gamma ) \\
		&\leq &\int \sum_{i=1}^{q}\left\vert \left\vert \frac{{\func{F}_{0,T_{0,i}^{-1}(%
					\gamma )}{_\ast }}\mu |_{T_{0,i}^{-1}(\gamma )}\chi _{B_{i}}}{%
			|T_{0,i}^{\prime }(T_{0,i}^{-1}(\gamma ))|}-\frac{{\func{F}_{\delta ,T_{\delta
						,i}^{-1}(\gamma )}{_\ast }}\mu |_{T_{0,i}^{-1}(\gamma )}\chi _{B_{i}}}{%
			|T_{0,i}^{\prime }(T_{0,i}^{-1}(\gamma ))|}\right\vert \right\vert _{W}dm_1 \\
		&\leq &\sum_{i=1}^{q}\int \left\vert \left\vert \frac{{\func{F}_{0,T_{0,i}^{-1}(%
					\gamma )}{_\ast }}\mu |_{T_{0,i}^{-1}(\gamma )}\chi _{B_{i}}}{%
			|T_{0,i}^{\prime }(T_{0,i}^{-1}(\gamma ))|}-\frac{{\func{F}_{\delta ,T_{\delta
						,i}^{-1}(\gamma )}{_\ast }}\mu |_{T_{0,i}^{-1}(\gamma )}\chi _{B_{i}}}{%
			|T_{0,i}^{\prime }(T_{0,i}^{-1}(\gamma ))|}\right\vert \right\vert _{W}dm_1 \\
		&\leq &\sum_{i=1}^{q}\int_{T_{0,i}^{-1}(B_{i})}\left\vert \left\vert {%
			\func{F}_{0,\beta }{_\ast }}\mu |_{\beta }-{\func{F}_{\delta ,T_{\delta
					,i}^{-1}(T_{0,i}(\beta ))}{_\ast }}\mu |_{\beta }\right\vert \right\vert
		_{W}dm_1(\beta ).
	\end{eqnarray*} We remark $T_{0,i}^{-1}(B_{i})\subseteq P_{i}\cap A$ and $T_{\delta
		,i}^{-1}(T_{0,i}(T_{0,i}^{-1}(B_{i})))\subseteq P_{i}\cap A$. Moreover, since $%
	|T_{\delta ,i}(\beta )-T_{0,i}(\beta )|\leq \delta $ and $T_{0,i}^{-1}$ \ is
	a contraction, then 
	
	\begin{equation}
		\label{3times}
		|T_{0,i}^{-1}\circ T_{\delta ,i}(\beta )-\beta |\leq
		\delta.
	\end{equation}Therefore
	\begin{eqnarray*}
		\left\vert \left\vert {\func{F}_{0,\beta }{_\ast }}\mu |_{\beta }-{\func{F}_{\delta
				,T_{\delta ,i}^{-1}(T_{0,i}(\beta ))}^{_\ast }}\mu |_{\beta }\right\vert
		\right\vert _{W} &\leq &\left\vert \left\vert {\func{F}_{0,\beta }{_\ast }}\mu
		|_{\beta }-{\func{F}_{\delta ,\beta }{_\ast }}\mu |_{\beta }\right\vert \right\vert
		_{W} \\
		&&+\left\vert \left\vert {\func{F}_{\delta ,\beta }{_\ast }}\mu |_{\beta }-{%
			\func{F}_{\delta ,T_{\delta ,i}^{-1}(T_{0,i}(\beta ))}{_\ast }}\mu |_{\beta
		}\right\vert \right\vert _{W}.
	\end{eqnarray*}%
	By (UBV3) and equation (\ref{sksdjfn}),%
	\[
	\left\vert \left\vert {\func{F}_{0,\beta }{_\ast }}\mu |_{\beta }-{\func{F}_{\delta ,\beta
		}{_\ast }}\mu |_{\beta }\right\vert \right\vert _{W}\leq \delta (M_{2}+1). 
	\]%
	
	Then, by (\ref{3times}), we have
	\[
	\left\vert \left\vert {\func{F}_{\delta ,\beta }{_\ast }}\mu |_{\beta }-{\func{F}_{\delta
			,T_{\delta ,i}^{-1}(T_{0,i}(\beta ))}{_\ast }}\mu |_{\beta }\right\vert
	\right\vert _{W}\leq H_\delta\delta (M_{2}+1)
	\]%
	
	when $d(\beta, \cup_i \partial J_i)\geq \delta $. For the other values of $\beta$ we remark that the set of points $\{x \ s.t. \ d(x,\cup_i \partial J_i)\leq \delta \}$ is of measure 
	bounded by
	$\delta (\sup_\delta \# \mathcal{P}'_\delta )$, thus 
	\[
	\int I_{a} dm_1= O (\delta ). 
	\]%
	To estimate $I_{b}(\gamma )$, we have%
	\begin{eqnarray*}
		I_{b}(\gamma ) &=&\left\vert \left\vert \sum_{i=1}^{q}\frac{{\func{F}_{\delta
					,T_{\delta ,i}^{-1}(\gamma )}{_\ast }}\mu |_{T_{0,i}^{-1}(\gamma )}\chi
			_{B_{i}}}{|T_{0,i}^{\prime }(T_{0,i}^{-1}(\gamma ))|}-\sum_{i=1}^{q}\frac{{%
				\func{F}_{\delta ,T_{\delta ,i}^{-1}(\gamma )}{_\ast }}\mu |_{T_{0,i}^{-1}(\gamma
				)}\chi _{B_{i}}}{|T_{\delta ,i}^{\prime }(T_{\delta ,i}^{-1}(\gamma ))|}%
		\right\vert \right\vert _{W} \\
		&\leq &\sum_{i=1}^{q}\left\vert \frac{\chi _{B_{i}}(\gamma )}{%
			|T_{0,i}^{\prime }(T_{0,i}^{-1}(\gamma ))|}-\frac{\chi
			_{B_{i}}(\gamma )}{|T_{\delta ,i}^{\prime }(T_{\delta ,i}^{-1}(\gamma ))|}%
		\right\vert \left\vert \left\vert \func{F}_{\delta ,T_{\delta ,i}^{-1}(\gamma
			)}{_\ast }\mu |_{T_{0,i}^{-1}(\gamma )}\right\vert \right\vert _{W}
	\end{eqnarray*}
	and
	\[
	\int I_{b}(\gamma)~dm_1(\gamma)\leq |({P}_{T_{0}}-{P}_{T_{\delta }}\left) (1)\right\vert
	(M_{2}+1). 
	\]By \cite{BG}, Lemma 11.2.1,
	\[
	\int_{A_{1}}I_{b}(\gamma )~dm_1(\gamma )\leq 14(M_{2}+1)\delta . 
	\]%
	Now, let us estimate the integral of the second summand%
	\[
	II(\gamma )=\left\vert \left\vert \sum_{i=1}^{q}\frac{{\func{F}_{\delta ,T_{\delta
					,i}^{-1}(\gamma )}{_\ast }}\mu |_{T_{0,i}^{-1}(\gamma )}\chi _{B_{i}}}{%
		|T_{\delta ,i}^{\prime }(T_{\delta ,i}^{-1}(\gamma ))|}-\sum_{i=1}^{q}\frac{{%
			\func{F}_{\delta ,T_{\delta ,i}^{-1}(\gamma )}{_\ast }}\mu |_{T_{\delta
				,i}^{-1}(\gamma )}\chi _{B_{i}}}{|T_{\delta ,i}^{\prime }(T_{\delta
			,i}^{-1}(\gamma ))|}\right\vert \right\vert _{W}. 
	\]
	
	Let us make the change of variable $\gamma =T_{\delta ,i}(\beta )$. 
	\begin{eqnarray*}
		\int_{I}II(\gamma )~dm_1(\gamma ) &=&\int_{I}\left\vert \left\vert
		\sum_{i=1}^{q}\frac{{\func{F}_{\delta ,T_{\delta ,i}^{-1}(\gamma )}{_\ast }}\mu
			|_{T_{0,i}^{-1}(\gamma )}\chi _{B_{i}}}{|T_{\delta ,i}^{\prime }(T_{\delta
				,i}^{-1}(\gamma ))|}-\sum_{i=1}^{q}\frac{{\func{F}_{\delta ,T_{\delta
						,i}^{-1}(\gamma )}{_\ast }}\mu |_{T_{\delta ,i}^{-1}(\gamma )}\chi _{B_{i}}}{%
			|T_{\delta ,i}^{\prime }(T_{\delta ,i}^{-1}(\gamma ))|}\right\vert
		\right\vert _{W}~dm_1(\gamma ) \\
		&\leq &\sum_{i=1}^{q}\int_{B_{i}}\frac{1}{|T_{\delta ,i}^{\prime }(T_{\delta
				,i}^{-1}(\gamma ))|}\left\vert \left\vert {\func{F}_{\delta ,T_{\delta
					,i}^{-1}(\gamma )}{_\ast }}\left( \mu |_{T_{0,i}^{-1}(\gamma )}-\mu
		|_{T_{\delta ,i}^{-1}(\gamma )}\right) \right\vert \right\vert _{W}dm_1(\gamma
		) \\
		&\leq &\sum_{i=1}^{q}\int_{B_{i}}\frac{1}{|T_{\delta ,i}^{\prime }(T_{\delta
				,i}^{-1}(\gamma ))|}\left\vert \left\vert \mu |_{T_{0,i}^{-1}(\gamma )}-\mu
		|_{T_{\delta ,i}^{-1}(\gamma )}\right\vert \right\vert _{W}dm_1(\gamma ) \\
		&\leq &\sum_{i=1}^{q}\int_{T_{\delta ,i}^{-1}(B_{i})}\left\vert \left\vert
		\mu |_{T_{0,i}^{-1}(T_{\delta ,i}(\beta ))}-\mu |_{\beta }\right\vert
		\right\vert _{W}dm_1(\beta ).
	\end{eqnarray*} Hence, by (\ref{3times}) 
	\[
	\int_{I}II(\gamma )~dm_1(\gamma )\leq \int \sup_{x,y\in B(\beta ,\delta
		)}(||\mu |_{x}-\mu |_{y}||_{W})dm_1(\beta ) 
	\]%
	and then 
	\[
	\int_{I}II(\gamma )~dm_1(\gamma )\leq 2\delta (M_{2}+1). 
	\]%
	Summing all, the statement is proved.

\end{proof}


\subsubsection{Proof of Theorem \ref{htyttigu}}Before to stablish Theorem \ref{htyttigu}, we need to prove the following proposition.

\begin{proposition}\label{rr}
	Let $\{F_\delta\}_{\delta \in [0,1)}$ be a Uniform BV Lorenz-like family and let $\{\func {F_\delta{_*}}\}_{\delta \in [0,1)}$ be the induced family of transfer operators. Then, $\{\func {F_\delta{_*}}\}_{\delta \in [0,1)}$ is a uniform family of operators with weak space $(\mathcal{L}^{1}, || \cdot ||_1)$ and strong space $(\mathcal{BV}_{1,1}, ||\cdot ||_{1,1})$.
\end{proposition}
\begin{proof}
	To prove UF1, note that, by (UBV1) there exist $0 < \alpha _1 < 1$ and $\overline{D}>0$ s.t. for all $\mu \in \mathcal{BV}_{1,1}$ and for all $\delta $ it holds $||\func{F_\delta{_\ast }}^n  \mu||_{1,1} \leq \overline{D} \alpha _1 ^n ||\mu||_{1,1} + \overline{D}||\mu||_1, \ \textnormal{for all} \ \ n \geq 1.$ Indeed, by Lemma \ref{l1} we have 
	\begin{eqnarray*}
		||\func{F {_\delta} {_*} ^n}  \mu||_{1,1} &=& |\func {P}_{T_{\delta }} ^n \phi _x |_{1,1} + ||\func{F_\delta{_\ast }} ^n \mu||_{1}  \\&\leq & D \lambda ^n |\phi _x|_{1,1} + D|\phi _x|_1 + || \mu||_{1} \\&\leq & D \lambda ^n ||\mu||_{1,1} + (D +1) || \mu||_{1}.
	\end{eqnarray*}Therefore, if $f_{\delta }$ is a fixed probability measure for the operator $\func{F_\delta {_*}}$, by the above inequality we get UF1 with $M=D+1$.
	
	Proposition \ref{UF2ass} and Proposition \ref{thshgf} immediately give UF2. The items UF3 and UF4 follow, respectively, from Proposition \ref{5.10} and Lemma \ref{l1} applied to each $F_\delta$.
\end{proof}

Once this is done, we apply the above result together with Proposition \ref{dlogd} and the proof of Theorem \ref{htyttigu} is established.


\section{Appendix 1: Proof of Propositions \protect\ref{propvar} and \ref{thshgf}}\label{appendix1}

In this section, we obtain Proposition \ref{propvar} as a particular case of Theorem \ref{las123456hh}. We also prove Proposition \ref{thshgf}.

Note that, for all $\mu \in \mathcal{BV}^+$ it holds $||\mu||_1 = |\phi _x|_1$ and $||\mu||_\infty = |\phi _x| _\infty$, where $\phi _x = \dfrac{d \pi _x {_\ast} \mu}{dm}$. We also remark, for each $\mu \in \mathcal{BV}^+$ we have $\phi _x \in BV_{1,1}$.



For a measurable map $F:[0,1]^2 \longrightarrow [0,1]^2 $, of the type $F(x,y)=(T(x),G(x,y))$, and a given $\gamma \in \mathcal{F}^s (\gamma = \{x\}\times [0,1])$, consider the function $F_{\gamma }:[0,1]\longrightarrow [0,1]$, defined by equation (\ref {ritiruwt}).

\begin{definition}
	Consider a function $f:[0,1]^2 \longrightarrow \mathbb{R}$ and let $x_1 \leq \cdots \leq x_n$ and $y_1 \leq \cdots \leq y_n$ be such that $(x_i)_{i=1} ^{n} \subset I$ and $(y_i)_{i=1} ^{n} \subset I$. We define $\var ^\diamond (f,(x_i)_{i=1} ^{n}, (y_i)_{i=1} ^{n})$ by $$\var ^\diamond (f,(x_i)_{i=1} ^{n}, (y_i)_{i=1} ^{n}):= \sum _{i=1} ^{n-1} {|f(x_{i +1},y_i) - f(x_{i},y_i)|},$$and 
	
	\begin{equation*}
		\var ^\diamond (f) := \sup _{(x_i)_{i=1} ^{n}, (y_i)_{i=1} ^{n}}{\var ^\diamond (f,(x_i)_{i=1} ^{n}, (y_i)_{i=1} ^{n})}.
	\end{equation*} If $\eta \subset I$ is an interval, we define $\var _\eta ^\diamond (f) = \var ^\diamond (f|_{\overline{\eta} \times I})$, where $\overline{\eta}$ is the closure of $\eta$.
\end{definition}

Since preliminaries results are necessary, we postponed the proof of the next theorem to the end of the section.

\begin{theorem}
	Let $F(x,y)=(T(x),G(x,y))$ be a measurable transformation such that
	
	\begin{enumerate}
		\item $\var ^\diamond (G) < \infty$
		
		\item $F$ satisfy property $G1$ (hence is
		uniformly contracting on each leaf $\gamma $ with rate of contraction $\alpha$);
		
		\item $T:[0,1]\rightarrow [0,1]$ is a piecewise expanding map satisfying the assumptions given in the definition \ref{defpiece123C1}.
	\end{enumerate}Then, there are $K_{0}$ and $0< \lambda _{0}< 1$ such that for all path $\Gamma_\mu$, where $\mu \in \mathcal{BV}^+$, and all $n\geq 1$ it holds 
	\begin{equation*}
		\Var(\Gamma_{\operatorname{F{_ \ast}^n \mu} } )\leq K_{0}\lambda _{0}^{n}\Var(\Gamma_ \mu )+K_{0}{| \phi _x|_{1,1}}. 
	\end{equation*}
	\label{las123456hh}
\end{theorem}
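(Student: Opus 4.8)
The plan is to prove a Lasota--Yorke inequality for the variation semi-norm $\Var(\cdot)$ under iteration of the transfer operator, in close parallel with the one-dimensional bounded-variation theory but carried out ``leafwise'' in the Wasserstein--Kantorovich norm $\|\cdot\|_W$. The heart of the matter is a single-step estimate: using the disintegration formula of Proposition \ref{niceformulaab}, $(\func{F}^{\ast}\mu)|_{\gamma}=\sum_{i=1}^{q}\frac{\func{F}_{T_i^{-1}(\gamma)}^{\ast}\mu|_{T_i^{-1}(\gamma)}}{|\det DT_i(T_i^{-1}(\gamma))|}\chi_{T_i(P_i)}(\gamma)$, I would bound $\Var_{\overline\eta}(\Gamma_{\func{F}^{\ast}\mu})$ for an interval $\eta$ by pulling back through each branch. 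Writing $g_i=1/|T_i'|$, a telescoping/triangle-inequality argument on a partition $\{\gamma_j\}$ splits each term into (a) a contribution where the ``weight'' $g_i\circ T_i^{-1}$ varies and the measure is held fixed, controlled by $\var_{T_i(P_i)\cap\eta}(g_i\circ T_i^{-1})\cdot \esssup_\beta\|\func{F}_\beta^{\ast}\mu|_\beta\|_W$, and (b) a contribution where the weight is fixed and the leafwise measure varies, i.e. $\sum_j g_i(T_i^{-1}\gamma_j)\,\|\func{F}_{\beta_j}^{\ast}\mu|_{\beta_j}-\func{F}_{\beta_{j-1}}^{\ast}\mu|_{\beta_{j-1}}\|_W$. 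For term (b) one inserts and removes $\func{F}_{\beta_{j-1}}^{\ast}\mu|_{\beta_j}$: the difference $\|\func{F}_{\beta_j}^{\ast}(\mu|_{\beta_j}-\mu|_{\beta_{j-1}})\|_W$ is handled by Lemma \ref{quasicontract} (contraction by $\alpha$, with the $\mu(N_2)$ term contributing a $\var(\phi_x)$-type quantity), giving a genuine $\alpha$-contraction of $\Var$, while the term $\|(\func{F}_{\beta_j}^{\ast}-\func{F}_{\beta_{j-1}}^{\ast})\mu|_{\beta_{j-1}}\|_W$ requires estimating how $\func{F}_\beta$ depends on the leaf $\beta$: here hypothesis (1), $\var^\diamond(G)<\infty$, enters — it bounds $\sum_j \|\func{F}_{\beta_j}^{\ast}\nu-\func{F}_{\beta_{j-1}}^{\ast}\nu\|_W$ for a probability $\nu$ in terms of $\var^\diamond(G)$ over the relevant $x$-range, since $\|\func{F}_{\beta}^{\ast}\nu-\func{F}_{\beta'}^{\ast}\nu\|_W=\sup_{\|g\|_\infty,L(g)\le1}|\int (g\circ F_\beta-g\circ F_{\beta'})d\nu|\le \sup_y|G(\beta,y)-G(\beta',y)|$.

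Next I would assemble the one-step bound into the form $\Var(\func{F}^{\ast}\mu)\le (\alpha+\theta)\Var(\mu)+C(\var g\ \text{terms})\big(|\phi_x|_\infty+|\phi_x|_{1,1}\big)$, where $\theta$ collects the ``weight-variation'' and ``branch-partition'' contributions coming from the finitely many endpoints $a_i$ of $\mathcal P$ and from $\var_{P_i}(g_i)$. Because the constant multiplying $\Var(\mu)$ need not be $<1$ after one step (the finitely-many-branch boundary terms can be large), I would then iterate $k$ times, exactly as in the passage from (P'2) to a uniform expansion $\inf|(T^{n_0})'|\ge\lambda_1>1$: choosing $k$ a multiple of $n_0$ large enough that $\alpha^k$ plus the accumulated boundary contribution (which grows at most like $\sum_{j<k}(\text{const})\alpha^{\,k-1-j}\cdot(\#\text{branches of }T^k)\cdot(\text{weight bound})$, and is ultimately dominated because the branch weights of $T^k$ sum to $|\func{P}_T^k 1|_\infty\le A_1^{\frac1p-1}|\func{P}_T^k1|_{1,1}$, which is uniformly bounded by \eqref{lasotaiiii2}) becomes $<1$. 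Setting $\widetilde F=F^k$ and $\lambda_0=$ this sub-unit constant gives $\Var(\widetilde F^{\ast}\mu)\le \lambda_0\Var(\mu)+C'|\phi_x|_{1,1}$ for a single step of $\widetilde F$; here I use $|\phi_x|_\infty\le A_1^{\frac1p-1}|\phi_x|_{1,1}$ (property (d) of \cite{Gk}) to absorb the $L^\infty$ marginal term into $|\phi_x|_{1,1}$.

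Finally I would iterate this $\widetilde F$-step. Since $\widetilde F$ is itself a contracting-fiber map of the type in Section \ref{sec2} with base map $T^k$, the marginal of $\widetilde F^{\ast n}\mu$ is $\func{P}_T^{kn}\phi_x$, and by \eqref{lasotaiiii2} $|\func{P}_T^{kn}\phi_x|_{1,1}\le B_3\beta_2^{kn}|\phi_x|_{1,1}+C_2|\phi_x|_1\le (B_3+C_2)|\phi_x|_{1,1}$ uniformly in $n$. Plugging this into the iterated one-step bound, $\Var(\widetilde F^{\ast n}\mu)\le \lambda_0^n\Var(\mu)+C'\sum_{j=0}^{n-1}\lambda_0^{\,j}|\func{P}_T^{k(n-1-j)}\phi_x|_{1,1}\le \lambda_0^n\Var(\mu)+\frac{C'(B_3+C_2)}{1-\lambda_0}|\phi_x|_{1,1}$, which is the claimed inequality with $C_0=\max\{1,\frac{C'(B_3+C_2)}{1-\lambda_0}\}$ (and one records the explicit value as in \eqref{C0}). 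The main obstacle I expect is the single-step leafwise estimate — specifically making the ``how does $F_\beta$ vary with $\beta$'' term precise enough that it is genuinely controlled by $\var^\diamond(G)$ and does not reintroduce an uncontrolled dependence, together with the bookkeeping at the finitely many branch endpoints when passing to $\widetilde F=F^k$ so that the coefficient of $\Var(\mu)$ is provably $<1$; the rest is routine iteration once those two points are in hand.
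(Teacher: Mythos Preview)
Your plan follows essentially the same architecture as the paper: establish a one-step Lasota--Yorke estimate for the leafwise $W$-variation via the disintegration formula (Proposition \ref{niceformulaab}), with a product-rule split into a weight-variation term and a measure-variation term, then iterate. This is precisely the content of Lemmas \ref{lemma123}, \ref{huthu} and \ref{almost123456}, and your final iteration step together with the formula for $C_0$ matches the paper's proof verbatim. One cosmetic difference: in Lemma \ref{doesnotvary12} the paper uses only the non-expansion of $\widetilde F_\gamma^*$ (Lemma \ref{weak1}), not the $\alpha$-contraction of Lemma \ref{quasicontract}; the contraction in the coefficient of $\Var(\mu)$ comes entirely from the base expansion. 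Your use of Lemma \ref{quasicontract} is a legitimate variant---it replaces the factor $\sup g$ by $\alpha\sup g$ at the price of an extra $\var(\phi_x)$ term absorbed into the weak part---but it is not what drives the coefficient below $1$.

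There is, however, a genuine confusion in your step 5. Simply iterating a bound of the form $\Var(F^{\ast}\mu)\le(\alpha+\theta)\Var(\mu)+C|\phi_x|_{1,1}$ $k$ times yields coefficient $(\alpha+\theta)^k$, which is useless when $\alpha+\theta\ge1$. What the paper does instead is \emph{first} fix $k$ so that, for every branch $P$ of $T^k$, one has $\var g_P^{(k)}+3\sup g_P^{(k)}\le\beta<1$ (equation \eqref{est1}; possible because both quantities decay like $C\lambda^k$, see Remark \ref{est2}), and \emph{then} run the entire single-step analysis directly for $\widetilde F=F^k$. The resulting coefficient of $\Var(\mu)$ is $\max_P(\var g_P^{(k)}+3\sup g_P^{(k)})\le\beta$: it is a maximum over branches, not a sum, so neither the number of branches of $T^k$ nor $|\func{P}_T^k 1|_\infty$ enters as your formula suggests. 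Carrying out the analysis for $\widetilde F$ also requires knowing $\var^\diamond(G^k)<\infty$, which you do not address; the paper handles this via a lemma from \cite{AGP} (see equation \eqref{finisv} and the lemma immediately after Remark \ref{est2}). Once these two points are corrected your plan coincides with the paper's proof.
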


\begin{remark}\label{jjsdgjf}
	If $F_L$ is a BV Lorenz-like map (definition \ref{lorenzlikemap}), a straightforward computation yields $$\var ^\diamond(G_L)\leq H,$$where $H$ comes from equation (\ref{rty}). This shows that Proposition \ref{propvar} is a direct consequence of Theorem \ref{las123456hh}.
\end{remark}

\subsection{Lasota-Yorke Inequality for positive measures}

Henceforth, we fix a positive measure $\mu \in \mathcal{BV}^+ \subset \mathcal{AB}$ and a path, $\Gamma^\omega_{\mu }$, which represents $\mu $ (i.e. a
pair $(\{\mu _{\gamma }\}_{\gamma },\phi _{x})$ s.t. $\Gamma^\omega_{\mu }(\gamma) := \mu |_\gamma$). To simplify, we will denote the path $\Gamma_{\mu }^\omega \in \Gamma_{\mu }$, just by $\Gamma_{\mu }$.

\begin{remark}
	Consider $T:[0,1] \longrightarrow [0,1]$ a piecewise expanding map from definition \ref{defpiece123C1} and $g_i = \dfrac{1}{|{T_{i}}'|}$.	For all $n \geq 1$, let $\mathcal{P}^{(n)}$ be the partition of $I$ s.t. $\mathcal{P}^{(n)}(x) = \mathcal{P}^{(n)}(y)$ if and only if $\mathcal{P}^{(1)}(T^j (x)) = \mathcal{P}^{(1)}(T^j(y))$ for all $j = 0, \cdots , n-1$, where $\mathcal{P}^{(1)} = \mathcal{P}$ (see definition \ref{defpiece123C1}). Given $P \in \mathcal{P}^{(n)}$, define $g_P ^{(n)} = \frac{1}{|{T^n}{ ^\prime}|_P|}$. Item 2) implies that there exists $C_1 > 0$ and $0 < \theta  <1$ s.t. 
	
	\begin{equation}
		\displaystyle{\sup \{g_P	^{(n)}\}\leq C_1 \theta ^n, \textnormal{ for all} \ P \in \mathcal{P}^{(n)} \ \textnormal{and all} \ n\geq 1.} 
		\label{item2}
	\end{equation} Moreover, equation (\ref{item2}) and some basic properties of real valued $BV$ functions imply (see \cite{V}, page 41, equation (3.1)) there exists $\lambda _{2}\in (\theta,1)$ and $C_{2}>0$ such that
	
	\begin{equation*}
		\var(g_{P }^{(n)})\leq C_{2}\lambda _{2}^{n},\ \textnormal{for all} \ P
		\in \mathcal{P}^{(n)} \ \textnormal{and all} \ n\geq 1.  
	\end{equation*}Then, there is an iterate of $F$, $\widetilde{F}:=F^k$, such that $T^k$ satisfies 
	
	\begin{equation}
		\beta_k: =\var g_{P }^{(k)} + 3 \sup g_{P}^{(k)}   <1,  \ \forall P \in \mathcal{P}^{(k)}. 
		\label{est1}
	\end{equation}We also remark that $G^k:= \pi _y \circ F^k$ also satisfies  
	\begin{equation}\label{finisv}
		\var ^\diamond (G^k) <\infty.
	\end{equation} Next lemma provides equation (\ref{finisv}) and its proof can be found in \cite{AGP}.
	\label{est2}
\end{remark}

\begin{lemma}If $F$ satisfy definition \ref{lorenzlikemap}, then for all $n\geq 1$ and all $f:[0,1]^2\longrightarrow \mathbb{R}$ such that $$\sup _{x,y_1,y_2 \in [0,1]} {\frac{|f(x,y_2) - f(x,y_1)|}{|y_2-y_1|}}< \infty$$ and $$|f|_\infty < \infty,$$ it holds \footnote{$ |f|_{lip'}= |f|_\infty + Lip _y(f)$, where $Lip _y(f) = \sup _{x,y_1,y_2 \in [0,1]} {\frac{|f(x,y_2) - f(x,y_1)|}{|y_2-y_1|}}$.} $$\var ^\diamond (f\circ F^n) \leq q^n \var ^\diamond (f) + \sum _{i=1}^{n-1}{q^i}\left( \var ^\diamond (G)|f|_{lip '} + 2q|f|_\infty\right),$$where $q$ is the number of branches of $T$ ($q:=\# \mathcal{P} $).
	\label{uthd}
\end{lemma}

Recalling equation (\ref{ritiruwt}), set
\begin{equation}
	\Gamma_{\mu_{\func{F}}} (\gamma ):= \func {F _\gamma {_\ast }}\Gamma_\mu(
	\gamma ).  \label{niceformulan1}
\end{equation}
With the above notation and following the strategy of the proof of Lemma \ref{transformula}, the path $\Gamma_{\func{F {_\ast}}\mu}$, defined on a full measure set by
\begin{equation*}
	\Gamma_{\func{F {_\ast}\mu}} (\gamma) =\sum_{i=1} ^{q}{\left(
		g_{i}\cdot {\Gamma_{\mu_{\func{F}}}}\right) \circ T_{L_i}^{-1}%
		(\gamma )\cdot \chi _{T_L(P_i )}(\gamma )}, \ \textnormal{where} \ g_i = \dfrac{1}{{|T_{L_i}'|}}, 
\end{equation*}represents the measure $\func{F{_\ast}} \mu$.

By equations (\ref{weak1}) and (\ref{niceformulan1}), it holds 
\begin{equation*}
	||\Gamma_{\mu_{\func{F}}} (\gamma )||_W \leq ||\Gamma_\mu (\gamma)||_W,
\end{equation*} for $m$-a.e. $\gamma \in I$. Then we have the following.

\begin{lemma}
	Let $\gamma_1$ and $\gamma _2$ be two leaves such that $G(\gamma _i, \cdot ): I \longrightarrow I$ is a contraction, $i=1,2$. Then for every path $\Gamma _\mu
	$, where $\mu \in \mathcal{AB}$, it holds
	
	\begin{equation*}
		||\Gamma_{\mu_{\func{F}}} (\gamma _1 ) - \Gamma_{\mu_{\func{F}}} (\gamma _2) ||_W \leq ||\Gamma_\mu (\gamma _1) - \Gamma_\mu (\gamma _2)
		||_W + |G(\gamma_1 , y_0) - G(\gamma _2,y_0)| |\phi _x|_\infty,
	\end{equation*}
	for some $y_0 \in I$.
	\label{doesnotvary12}
\end{lemma}

\begin{proof}
	Consider $g$ such that $|g|_{\infty }\leq 1$ and $Lip(g)\leq 1$ , and
	observe that since $G _{\gamma _1}-G _{\gamma _2}:I \longrightarrow I$ is continuous, it holds $$\sup _I {\left\vert G(\gamma _1,y)-G (\gamma _2,y)\right\vert} = \left\vert G (\gamma _1,y_0)-G(\gamma _2,y_0)\right\vert,$$ for some $y_0 \in I$. Moreover, by equations (\ref{weak1}) and (\ref{niceformulan1}), we have

	\begin{eqnarray*}
		\left\vert \int {g}d\func \Gamma_{\mu_{\func{F}}} (\gamma _1)-\int {g%
		}d\Gamma_{\mu_{\func{F}}} (\gamma _2)\right\vert 
		&=& \left\vert \int {g}d\func {F  _{\gamma _1 }{_\ast} }\Gamma _\mu(
		\gamma _1 )-\int {g%
		}d\func {F _{\gamma _2} {_\ast} }\Gamma _\mu(
		\gamma _2)\right\vert 
		\\&\leq &\left\vert \int {g}d\func {F  _{\gamma _1 }{_\ast} }\Gamma _\mu(
		\gamma _1 )-  \int {g}d\func {F  _{\gamma _1 }{_\ast} }\Gamma _\mu(
		\gamma _2 )\right\vert \\
		&+&\left\vert \int {g}d\func {F  _{\gamma _1 }{_\ast} }\Gamma _\mu(
		\gamma _2 )-  \int {g}d\func {F  _{\gamma _2 }{_\ast} }\Gamma _\mu(
		\gamma _2 )\right\vert
		\\&\leq &\left\vert \left\vert \func {F  }_{\gamma _1 }{_\ast} (\Gamma _\mu(\gamma _1 )-\Gamma _\mu(\gamma _2 ))\right\vert \right\vert _{W} \\
		&+&\int {\left\vert g(F_{\gamma _{1}})-g(F_{\gamma _{2}})\right\vert }d\mu |_{\gamma _{2}}
		\\&\leq &\left\vert \left\vert \Gamma _\mu(\gamma _1 )-\Gamma _\mu(\gamma _2 )\right\vert \right\vert _{W}
		\\&+&\int {\left\vert G(\gamma _1,y)-G (\gamma _2,y)\right\vert }d\mu |_{\gamma _{2} (y)}
		\\&\leq &\left\vert \left\vert \Gamma _\mu(\gamma _1 )-\Gamma _\mu(\gamma _2 )\right\vert \right\vert _{W}
		\\&+&\sup _I {\left\vert G(\gamma _1, y)-G (\gamma _2, y)\right\vert}\int {1 }d\mu |_{\gamma _{2} (y)}
		\\&= &\left\vert \left\vert \Gamma _\mu(\gamma _1 )-\Gamma _\mu(\gamma _2 )\right\vert \right\vert _{W} \\&+&\left\vert G(\gamma _1,y_0)-G(\gamma _2, y_0)\right\vert |\phi _x|_\infty.
	\end{eqnarray*}%
	Taking the supremum over $g$, we finish the proof.
\end{proof}

The proofs of the next three lemmas are straightforward and analogous to the
one dimensional $BV$ functions. So, we omit them (details can be found in \cite{L}).

\begin{lemma}
	Given paths $\Gamma_{\mu _0}, \Gamma_{\mu _1} \ \mathnormal{and} \ \Gamma_{\mu _2}$ (where $%
	\Gamma_{\mu _i} (\gamma) = \mu _i |_\gamma$) representing the positive measures $\mu_0, \mu _1, \mu _2 \in \mathcal{BV}^+$ respectively, a function 
	$\varphi : I \longrightarrow \mathbb{R}$, an homeomorphism 
	$h:\eta \subset I
	\longrightarrow h(\eta) \subset I$ and a subinterval $\eta \subset I$, then
	the following properties hold 
	
	\begin{enumerate}
		\item[P1)] If $\mathcal{P} $ is a partition of $I$ by intervals $\eta$, then 
		\begin{equation*}
			\Var (\Gamma_{\mu _0}) = \sum _\eta {\Var_{\overline{\eta}} (\Gamma_{\mu_0})};
		\end{equation*}
		
		\item[P2)] $\Var _{\overline{\eta}} (\Gamma_{\mu _1} + \Gamma_{\mu _2}) \leq \Var _{%
			\overline{\eta}} (\Gamma_{\mu _1} ) + \Var _{\overline{\eta}}(\Gamma_{ \mu _2})$
		
		\item[P3)] $\Var_{\overline{\eta}} (\varphi  \Gamma_ {\mu_0}) \leq \left( \sup _{\overline{\eta}%
		} |\varphi| \right) \left( \Var _{\overline{\eta}} (\Gamma_{\mu_0} ) \right)
		+ \left( \sup _ {\gamma \in _{\overline{\eta}}} ||\Gamma_{\mu_0} (\gamma)||_{W}
		\right) \var _{\overline{\eta}} (\varphi )$
		
		\item[P4)] $\Var _{\overline{\eta}}( \Gamma_ {\mu _0} \circ h) = \Var _{\overline{%
				h(\eta) }}( \Gamma_ {\mu_0})$.
	\end{enumerate}
\end{lemma}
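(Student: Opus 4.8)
The plan is to establish P1)--P4) one at a time, in each case mimicking the classical proof of the corresponding statement for real-valued functions of bounded variation. The only structural facts needed are that $||\cdot||_{W}$ is a genuine norm on $\mathcal{SB}(N_2)$ — hence it obeys the triangle inequality and is positively homogeneous, $||c\,\nu||_{W}=|c|\,||\nu||_{W}$ for $c\in\mathbb{R}$ — and that the three operations appearing in the statement are all defined pointwise on the leaves, i.e. $(\Gamma_{\mu_1}+\Gamma_{\mu_2})(\gamma)=\Gamma_{\mu_1}(\gamma)+\Gamma_{\mu_2}(\gamma)$, $(\varphi\cdot\Gamma_{\mu_0})(\gamma)=\varphi(\gamma)\,\Gamma_{\mu_0}(\gamma)$ and $(\Gamma_{\mu_0}\circ h)(\gamma)=\Gamma_{\mu_0}(h(\gamma))$.

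For P1) I would prove the two inequalities separately. For ``$\geq$'', given any partitions $\mathcal{P}_\eta$ of the closed intervals $\overline{\eta}$, $\eta\in\mathcal{P}$, their union $\mathcal{P}^{*}$ is an admissible finite increasing sequence in $I$ and, since a shared endpoint of two consecutive intervals contributes no extra term, $\Var(\Gamma_{\mu_0},\mathcal{P}^{*})=\sum_{\eta}\Var(\Gamma_{\mu_0}|_{\overline{\eta}},\mathcal{P}_\eta)\le\Var(\Gamma_{\mu_0})$; taking the supremum over the $\mathcal{P}_\eta$ gives $\sum_{\eta}\Var_{\overline{\eta}}(\Gamma_{\mu_0})\le\Var(\Gamma_{\mu_0})$. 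For ``$\leq$'', take any finite increasing sequence $\mathcal{P}\subset I$ and refine it to $\mathcal{P}'$ by adjoining all endpoints of the intervals $\eta\in\mathcal{P}$; inserting a point between two consecutive ones cannot decrease the variation sum, by the triangle inequality for $||\cdot||_{W}$, so $\Var(\Gamma_{\mu_0},\mathcal{P})\le\Var(\Gamma_{\mu_0},\mathcal{P}')=\sum_{\eta}\Var(\Gamma_{\mu_0}|_{\overline{\eta}},\mathcal{P}'\cap\overline{\eta})\le\sum_{\eta}\Var_{\overline{\eta}}(\Gamma_{\mu_0})$, and the supremum over $\mathcal{P}$ closes the argument.

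For P2), fix a finite increasing sequence $\{\gamma_j\}\subset\overline{\eta}$; since the sum of paths is pointwise, the triangle inequality applied termwise gives $||(\Gamma_{\mu_1}+\Gamma_{\mu_2})(\gamma_j)-(\Gamma_{\mu_1}+\Gamma_{\mu_2})(\gamma_{j-1})||_{W}\le||\Gamma_{\mu_1}(\gamma_j)-\Gamma_{\mu_1}(\gamma_{j-1})||_{W}+||\Gamma_{\mu_2}(\gamma_j)-\Gamma_{\mu_2}(\gamma_{j-1})||_{W}$, and summing and taking the supremum over $\{\gamma_j\}$ yields subadditivity. For P3), I would use the telescoping identity $\varphi(\gamma_j)\Gamma_{\mu_0}(\gamma_j)-\varphi(\gamma_{j-1})\Gamma_{\mu_0}(\gamma_{j-1})=\varphi(\gamma_j)\big(\Gamma_{\mu_0}(\gamma_j)-\Gamma_{\mu_0}(\gamma_{j-1})\big)+\big(\varphi(\gamma_j)-\varphi(\gamma_{j-1})\big)\Gamma_{\mu_0}(\gamma_{j-1})$; taking $||\cdot||_{W}$ of both sides, using homogeneity to pull out the scalars, and bounding $|\varphi(\gamma_j)|\le\sup_{\overline{\eta}}|\varphi|$ in the first term and $||\Gamma_{\mu_0}(\gamma_{j-1})||_{W}\le\sup_{\gamma\in\overline{\eta}}||\Gamma_{\mu_0}(\gamma)||_{W}$ in the second, then summing over $j$, gives $\Var(\varphi\cdot\Gamma_{\mu_0},\{\gamma_j\})\le(\sup_{\overline{\eta}}|\varphi|)\Var_{\overline{\eta}}(\Gamma_{\mu_0})+(\sup_{\gamma\in\overline{\eta}}||\Gamma_{\mu_0}(\gamma)||_{W})\var_{\overline{\eta}}(\varphi)$; the supremum over $\{\gamma_j\}$ finishes it.

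Finally P4) is immediate: a homeomorphism $h:\eta\to h(\eta)$ of subintervals of $I$ is a strictly monotone bijection, so $\{x_i\}\mapsto\{h(x_i)\}$ is a bijection between finite ordered sequences in $\overline{\eta}$ and in $\overline{h(\eta)}$ (order-reversing if $h$ is decreasing, which does not affect the sum), and since the path composes pointwise one has $||\Gamma_{\mu_0}\circ h(x_i)-\Gamma_{\mu_0}\circ h(x_{i-1})||_{W}=||\Gamma_{\mu_0}(h(x_i))-\Gamma_{\mu_0}(h(x_{i-1}))||_{W}$ termwise; taking suprema on both sides gives $\Var_{\overline{\eta}}(\Gamma_{\mu_0}\circ h)=\Var_{\overline{h(\eta)}}(\Gamma_{\mu_0})$. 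I do not expect any genuine obstacle here — the paper itself flags these as straightforward; the only two points requiring a word of care are the ``inserting a point does not decrease the variation'' step in P1), which is exactly the triangle inequality for $||\cdot||_{W}$, and the use of positive homogeneity of $||\cdot||_{W}$ in P3) to pull the scalar $\varphi(\gamma)$ out of the norm.
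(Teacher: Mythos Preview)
Your proposal is correct and matches exactly what the paper intends: the authors omit the proof entirely, stating that it is ``straightforward and analogous to the one dimensional $BV$ functions,'' and your argument is precisely that classical proof transported to paths valued in $(\mathcal{SB}(N_2),||\cdot||_W)$, using only the triangle inequality and positive homogeneity of the norm. The one minor slip is in the ``$\geq$'' half of P1), where you write $\Var(\Gamma_{\mu_0},\mathcal{P}^{*})=\sum_{\eta}\Var(\Gamma_{\mu_0}|_{\overline{\eta}},\mathcal{P}_\eta)$; in general this is $\geq$ rather than $=$ (the union $\mathcal{P}^{*}$ may pick up cross-boundary jumps if the $\mathcal{P}_\eta$ do not include the interval endpoints), but that inequality is the direction you need, so the conclusion is unaffected.
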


\begin{lemma}
	For every path $\Gamma_\mu$, $\mu \in \mathcal{AB}$ and an interval $\eta \subset I$, it holds 
	\begin{eqnarray*}
		\sup _{\gamma \in {\overline{\eta}}} ||\Gamma_\mu ( \gamma)|| _W &\leq &  \Var _{\overline{\eta}}(\Gamma _\mu ) + \dfrac{1}{m({\overline{\eta}})}
		\int _{\overline{\eta}} {||\Gamma_\mu ( \gamma)|| _W }dm_1 (\gamma),
	\end{eqnarray*}where $\overline{\eta}$ is the closure of $\eta$.
	\label{var123}
\end{lemma}

A straightforward application of Lemma \ref{doesnotvary12} yields the following.
\begin{lemma}\label{huthu}
	For all $\Gamma_\mu$, where $\mu \in \mathcal{BV}^+$, and all $P \in \mathcal{P}$ it holds 
	\begin{equation*}
		\Var  _{\overline{P}} (\Gamma_{\mu_{\func{F}}}) \leq \Var  _{\overline{P}} (\Gamma_{\mu}) + \var  _{\overline{P}} ^\diamond (G) |\phi _x|_\infty.
	\end{equation*}
\end{lemma}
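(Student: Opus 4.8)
The plan is to bound $\Var_{\overline{P}}(\Gamma_{\mu_{\func{\widetilde{F}}}})$ partition by partition, reducing each consecutive increment of the pushed‑forward path to the corresponding increment of $\Gamma_{\mu}$ plus an error controlled by the variation of the fibre map. Fix $P\in\mathcal{P}$ and an arbitrary ordered finite set $\mathcal{Q}=\{\gamma_{0}<\gamma_{1}<\cdots<\gamma_{n}\}\subset\overline{P}$. By Definition \ref{VarGmu} the variation of $\Gamma_{\mu_{\func{\widetilde{F}}}}$ over $\mathcal{Q}$ is
\begin{equation*}
\Var_{\overline{P}}(\Gamma_{\mu_{\func{\widetilde{F}}}},\mathcal{Q})=\sum_{j=1}^{n} || \func{\widetilde{F}}_{\gamma_{j}}^{\ast}\Gamma_{\mu}(\gamma_{j}) - \func{\widetilde{F}}_{\gamma_{j-1}}^{\ast}\Gamma_{\mu}(\gamma_{j-1}) ||_{W}.
\end{equation*}
Since $\gamma_{j-1}$ and $\gamma_{j}$ lie in the same element $P$ of $\mathcal{P}$, Lemma \ref{doesnotvary12} applies to each consecutive pair and produces, for every $j$, a point $y_{j}\in I$ with
\begin{equation*}
|| \func{\widetilde{F}}_{\gamma_{j}}^{\ast}\Gamma_{\mu}(\gamma_{j}) - \func{\widetilde{F}}_{\gamma_{j-1}}^{\ast}\Gamma_{\mu}(\gamma_{j-1}) ||_{W} \leq || \Gamma_{\mu}(\gamma_{j}) - \Gamma_{\mu}(\gamma_{j-1}) ||_{W} + |G(\gamma_{j},y_{j})-G(\gamma_{j-1},y_{j})|\,|\phi_{x}|_{\infty}.
\end{equation*}

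Summing over $j$, the first family of terms adds up to $\Var_{\overline{P}}(\Gamma_{\mu},\mathcal{Q})$, which is at most $\Var_{\overline{P}}(\Gamma_{\mu})$ by definition of the supremum; factoring $|\phi_{x}|_{\infty}$ out of the second family leaves $|\phi_{x}|_{\infty}\sum_{j=1}^{n}|G(\gamma_{j},y_{j})-G(\gamma_{j-1},y_{j})|$. As the nodes $\gamma_{0}<\cdots<\gamma_{n}$ are ordered in $\overline{P}$ and the $y_{j}$ lie in $I$, this last sum is of the form occurring in the definition of $\var_{\overline{P}}^{\diamond}(G)$, hence is bounded by $\var_{\overline{P}}^{\diamond}(G)$. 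Taking the supremum over all finite ordered subsets $\mathcal{Q}\subset\overline{P}$ then yields
\begin{equation*}
\Var_{\overline{P}}(\Gamma_{\mu_{\func{\widetilde{F}}}}) \leq \Var_{\overline{P}}(\Gamma_{\mu}) + \var_{\overline{P}}^{\diamond}(G)\,|\phi_{x}|_{\infty},
\end{equation*}
which is the assertion.

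The termwise use of Lemma \ref{doesnotvary12} and the rearrangement of the sum are routine; the only point that needs attention — and what I expect to be the main obstacle — is the final inequality $\sum_{j}|G(\gamma_{j},y_{j})-G(\gamma_{j-1},y_{j})|\leq\var_{\overline{P}}^{\diamond}(G)$. The $y_{j}$ delivered by Lemma \ref{doesnotvary12} are the locations where $|G(\gamma_{j-1},\cdot)-G(\gamma_{j},\cdot)|$ attains its supremum on $I$, and a priori they need not be increasing, whereas the definition of $\var^{\diamond}$ lists the second‑coordinate nodes in increasing order. To close this I would invoke the stronger output of Lemma \ref{doesnotvary12}, namely $|G(\gamma_{j},y_{j})-G(\gamma_{j-1},y_{j})|=\sup_{y\in I}|G(\gamma_{j},y)-G(\gamma_{j-1},y)|$, so that the sum equals $\sum_{j}\sup_{y\in I}|G(\gamma_{j},y)-G(\gamma_{j-1},y)|$; this quantity amounts to an independent choice of one optimal second‑coordinate node per subinterval $[\gamma_{j-1},\gamma_{j}]$ and is therefore dominated by $\var_{\overline{P}}^{\diamond}(G)$, the ordering constraint on those nodes being immaterial since no cancellation occurs in the second variable. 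With this observation the proof is complete.
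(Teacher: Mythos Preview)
Your proof follows exactly the same strategy as the paper's: apply Lemma \ref{doesnotvary12} to each consecutive pair in the partition, sum, and bound the resulting $G$-increments by $\var_{\overline{P}}^{\diamond}(G)$. The paper's argument is in fact terser --- it passes directly from $\sum_i |G(\gamma_{i+1},y_i)-G(\gamma_i,y_i)|$ to $\var_{\overline{P}}^{\diamond}(G)$ without commenting on the ordering of the $y_i$'s --- so your closing discussion of that point is more careful than the original.
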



\begin{lemma}
	For all path $\Gamma_\mu$, where $\mu \in \mathcal{BV}^+$, it holds
	
	\begin{equation*}
		\Var(\Gamma_{\func{F{_\ast}} \mu}) \leq \sum _{i=1}^q %
		\left[ \var _{\overline{P_i}} (g_i ) + 2\sup _{\overline{P_i}} {g_i}\right]
		\cdot \sup _{\gamma \in {\overline{P_i}}} {||\Gamma _\mu (\gamma) ||_ W} + \sup _{\overline{P_i}}{%
			g_i} \cdot \Var _{\overline{P_i}}(\Gamma_{\mu_{\func{F}}}),
	\end{equation*}where $\Gamma_{\mu_{\func{F}}}$ is defined by equation (\ref{niceformulan1}). 
	\label{lemma123}
\end{lemma}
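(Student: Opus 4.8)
The plan is to start from the explicit ``Perron--Frobenius'' formula (\ref{niceformula2}) for the path representing $\func{\widetilde{F}^{\ast}}\mu$, decompose it into a finite sum of paths, each supported on one image interval $T_L(P_i)$, and bound the variation of every piece separately by means of the one-dimensional calculus rules P1)--P4) together with the weak-contraction estimate of Lemma \ref{weak1}.

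Concretely, for $i=1,\dots ,q$ I would set
\[
h_i:=\bigl(g_i\cdot\Gamma_{\mu_{\func{\widetilde{F}}}}\bigr)\circ T_{L_i}^{-1}\cdot\chi_{T_L(P_i)},
\]
so that (\ref{niceformula2}) says $\Gamma_{\func{\widetilde{F}^{\ast}}\mu}=\sum_{i=1}^{q}h_i$ on a full-measure set of leaves, and hence $\Var(\Gamma_{\func{\widetilde{F}^{\ast}}\mu})\le\sum_{i=1}^{q}\Var(h_i)$ by P2). Since each $h_i$ vanishes off the interval $T_L(P_i)$, arguing at the level of finite sequences of leaves as in Definition \ref{VarGmu} gives
\[
\Var(h_i)\le\Var_{\overline{T_L(P_i)}}\!\bigl((g_i\cdot\Gamma_{\mu_{\func{\widetilde{F}}}})\circ T_{L_i}^{-1}\bigr)+2\sup_{\gamma\in\overline{T_L(P_i)}}\bigl\|\,(g_i\cdot\Gamma_{\mu_{\func{\widetilde{F}}}})\circ T_{L_i}^{-1}(\gamma)\bigr\|_W,
\]
the second term accounting for the (at most two) jumps from $0$ at the endpoints of $T_L(P_i)$. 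Applying the change of variable P4) with the homeomorphism $T_{L_i}^{-1}\colon T_L(P_i)\to P_i$ turns the first term into $\Var_{\overline{P_i}}(g_i\cdot\Gamma_{\mu_{\func{\widetilde{F}}}})$ and the supremum into $\sup_{\beta\in\overline{P_i}}g_i(\beta)\,\|\Gamma_{\mu_{\func{\widetilde{F}}}}(\beta)\|_W\le\bigl(\sup_{\overline{P_i}}g_i\bigr)\sup_{\gamma\in\overline{P_i}}\|\Gamma_{\mu_{\func{\widetilde{F}}}}(\gamma)\|_W$.

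Next I would use the product rule P3) with $\varphi=g_i$,
\[
\Var_{\overline{P_i}}(g_i\cdot\Gamma_{\mu_{\func{\widetilde{F}}}})\le\Bigl(\sup_{\overline{P_i}}g_i\Bigr)\Var_{\overline{P_i}}(\Gamma_{\mu_{\func{\widetilde{F}}}})+\Bigl(\sup_{\gamma\in\overline{P_i}}\|\Gamma_{\mu_{\func{\widetilde{F}}}}(\gamma)\|_W\Bigr)\var_{\overline{P_i}}(g_i),
\]
and then invoke Lemma \ref{weak1}: since $\Gamma_{\mu_{\func{\widetilde{F}}}}(\gamma)=\func{\widetilde{F}^{\ast}_{\gamma}}\Gamma_\mu(\gamma)$ by (\ref{niceformulan1}), we have $\|\Gamma_{\mu_{\func{\widetilde{F}}}}(\gamma)\|_W\le\|\Gamma_\mu(\gamma)\|_W$ for a.e.\ $\gamma$, so every occurrence of $\sup\|\Gamma_{\mu_{\func{\widetilde{F}}}}\|_W$ above is $\le\sup_{\gamma\in\overline{P_i}}\|\Gamma_\mu(\gamma)\|_W$. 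Collecting the three contributions to $\Var(h_i)$ and summing over $i$ then reproduces exactly the asserted inequality, with the coefficient $\var_{\overline{P_i}}(g_i)+2\sup_{\overline{P_i}}g_i$ in front of $\sup_{\gamma\in\overline{P_i}}\|\Gamma_\mu(\gamma)\|_W$ and $\sup_{\overline{P_i}}g_i$ in front of $\Var_{\overline{P_i}}(\Gamma_{\mu_{\func{\widetilde{F}}}})$.

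The step I expect to require the most care is the boundary bookkeeping used to extend $h_i$ by zero: one must check, directly from the finite-sequence definition of $\Var$, that adjoining the two endpoints of $T_L(P_i)$ to an arbitrary admissible sequence only increases the quantity $\Var(h_i,\mathcal{P})$ of Definition \ref{VarGmu}, and that the path values there are controlled by $\sup_{\overline{P_i}}g_i\cdot\sup_{\overline{P_i}}\|\Gamma_\mu\|_W$. Because the images $T_L(P_i)$ need not be pairwise disjoint, the decomposition $\Gamma_{\func{\widetilde{F}^{\ast}}\mu}=\sum_i h_i$ must be exploited only through subadditivity P2) before any support argument, and since all these identities hold merely on a full-measure subset of $I$ one works throughout with essential suprema, which is harmless. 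Everything else is the routine transcription of the classical bounded-variation Lasota--Yorke computation into the present ``path of measures'' setting.
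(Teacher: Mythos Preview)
Your proposal is correct and follows essentially the same route as the paper's proof. The only cosmetic difference is in the handling of the indicator $\chi_{T_L(P_i)}$: the paper applies the product rule P3) once more with $\varphi=\chi_{T(P_i)}$ (using $\sup|\chi|=1$ and $\var(\chi_{T(P_i)})=2$) to produce the $2\sup_{\overline{P_i}}g_i$ term, whereas you obtain the same boundary contribution by direct finite-sequence bookkeeping---these are equivalent arguments.
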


\begin{proof}
	Using the properties P2, P3, P4, $\displaystyle{\sup _{\gamma\in {\overline{P_i}}} {||
			\Gamma_{\mu_{\func{F}}}(\gamma)||_{W}} \leq \sup _{\gamma \in {\overline{P_i}}} 
		{|| \Gamma_\mu (\gamma)||_{W}} }$ and $\displaystyle{\sup _{\gamma\in {\overline{P_i}}} {|g_i|} = \sup _{\gamma\in {\overline{P_i}}} {g_i}}$,
	we have 
	\begin{eqnarray*}
		\Var (\Gamma_{\operatorname{F{_\ast}  \mu }}) &\leq & \sum _{i=1}^q {\Var _{\overline{T_i(P _i)}} \left[ \left( g_i \cdot \Gamma_{\mu_{\func{F}}} \right) \circ T _i^{-1} \cdot \chi
			_{T(P_i)}\right] } 
		\\&\leq & \sum _{i=1}^q \Var _{\overline{T_i(P_i) }%
		}\left[ \left( g_i \cdot \Gamma_{\mu_{\operatorname{F}}} \right) \circ T_i^{-1} \right] \cdot \sup {|\chi _{T(P_i)}|} \\
		&+& \sum _{i=1}^q \sup _{\overline{{T_i(P_i)}}} 
		{||\left( g_i \cdot \Gamma_{\mu_{\operatorname{F}}} \right) \circ T_i^{-1}||_{W}} \cdot \var (\chi _{T(P_i)}) 
		\\&\leq & \sum _{i=1}^q {\Var _{{\overline{P_i}}} \left(
			g_i \cdot \Gamma_{\mu_{\operatorname{F}}} \right) + 2\cdot \sup _{T_i(P_i)} {||\left( g_i \cdot \Gamma_{\mu_{\operatorname{F}}} \right)
				\circ T_i^{-1}||_{W}}} 
		\\&\leq & \sum _{i=1}^q \var _{\overline{P_i}} \left(
		g_i \right) \cdot \sup _{\overline{P _i}} {||\Gamma_{\mu_{\operatorname{F}}}||_{W}} + \Var
		_{{\overline{P _i}}} (\Gamma_{\mu_{\operatorname{F}}})\cdot \sup _{\overline{P _i}} {%
			|g_i|} \\
		&+& 2\cdot \sum _{i=1}^q \sup _{\overline{P _i}} {%
			|g_i |}  \sup _{\overline{P _i}} {||\Gamma_{\mu_{\operatorname{F}}}  ||_{W}}  		
		\\&\leq & \sum _{i=1}^q \var _{\overline{P _i}} \left(
		g_i \right) \cdot \sup _{\gamma\in {\overline{P _i}}} {||\Gamma_\mu (\gamma) ||_{W} } + \Var _{\overline{P _i}} (\Gamma_{\mu_{\operatorname{F}}} )\cdot \sup _{%
			\overline{P _i}} {|g_i|} \\
		&+& 2\cdot \sum _{i=1}^q \sup_{\gamma \in \overline{P _i}} {|| \Gamma_\mu (\gamma)||_{W}}\cdot \sup _{\overline{P _i}} | g_i| 
		\\&\leq & \sum _{i=1}^q \left[ \var _{\overline{P _i}}
		(g_i ) + 2\sup _{\overline{P _i}} {g_i}\right] \cdot
		\sup _{\gamma \in {\overline{P _i}}} {||\Gamma_\mu (\gamma) ||_ {W}} + \sup _{%
			\overline{P _i}} {g_i} \cdot \Var _{\overline{P _i}} (\Gamma_{\mu_{\operatorname{F}}}).
	\end{eqnarray*}
\end{proof}

\begin{lemma}\label{fdlkfldfl}
	For all path $\Gamma_\mu$, where $\mu \in \mathcal{BV}^+$, it holds
	
	\begin{equation}\label{lasotaingt234}
		\Var(\Gamma_{\func{F {_\ast}\mu}}) \leq \beta  \Var (\Gamma_{\mu}) +
		K_3 |\phi _x|_{1,1}.
	\end{equation}%
	Where $$\beta := \max _{i=1, \cdots, q} \{\var _{\overline{P _i}} (g_i) + 3\sup_{\overline{P_i}} {g_i} \}$$ and $$K_3 = \max_{i=1, \cdots, q} \{\sup _{\overline{P_i}} g_i\} \var  ^\diamond (G) +  \max _{i=1, \cdots, q} \left \lbrace\dfrac{ \var _{\overline{P _i}} (g_i) + 2\sup_{\overline{P_i}} {g_i}}{m(P _i)} \right \rbrace.$$ \label{almost123456}
\end{lemma}

\begin{proof}
	By lemma \ref{huthu}, remark \ref{var123}, lemma \ref{lemma123}, P1, equation (\ref{est1}) of remark \ref{est2} and by $\sum _{i=1} ^{q} {\var _{\overline{P }_i} ^\diamond {G}} = \var ^\diamond (G)$, we get
	
	\begin{eqnarray*}
		\Var(\Gamma_{\operatorname{F{_\ast} \mu} }) 
		&\leq & \sum _{i=1}^q %
		\left[ \var _{\overline{P _i}} (g_i) + 2\sup_{\overline{P_i}} {g_i}\right]
		\sup _{\gamma \in {\overline{P_i}}} {||\mu |_\gamma ||_ W} + \sup_{\overline{P_i}} {g_i} \cdot \Var_{\overline{P_i}}(\Gamma_{\mu_{\operatorname{F}}})
		\\&\leq & \sum _{i=1}^q \left[ \var _{\overline{P _i}} (g_i) + 2\sup_{\overline{P_i}} {g_i}\right] \left( \Var _{\overline{P_i}}(\Gamma_{\mu} ) + \dfrac{1}{m_1({%
				\overline{P_i}})} \int _{\overline{P_i}} {||\mu |_\gamma ||_W }dm_1 (\gamma)
		\right) \\
		&+& \sum _{i=1}^q \sup_{\overline{P_i}} {g_i} \left(\Var  _{\overline{P_i}} (\Gamma_{\mu}) + \var  _{\overline{P_i}} ^\diamond (G) |\phi _x|_\infty \right)
		\\&\leq & \sum _{i=1}^q \left[ \var _{\overline{P _i}} (g_i) + 3\sup_{\overline{P_i}} {g_i}\right]  \Var _{\overline{P_i}}(\Gamma_{\mu} ) 
		\\&+& \sum _{i=1}^q \left[ \var _{\overline{P _i}} (g_i) + 2\sup_{\overline{P_i}} {g_i}\right] \dfrac{1}{m_1({%
				\overline{P_i}})} \int _{\overline{P_i}} {||\mu |_\gamma ||_W }dm_1 (\gamma)
		\\
		&+& |\phi _x|_\infty \max _{i=1, \cdots, q } \{\sup_{\overline{P_i}} {g_i}\}   \var  ^\diamond (G)  
		\\&\leq & \sum _{i=1}^q \left[ \var _{\overline{P _i}} (g_i) + 3\sup_{\overline{P_i}} {g_i}\right]  \Var _{\overline{P_i}}(\Gamma_{\mu} ) 
		\\&+&   \max _{i=1, \cdots, q } \{\dfrac{\var _{\overline{P _i}} (g_i) + 2\sup_{\overline{P_i}} {g_i} }{m_1({%
				\overline{P_i}})} \}|\phi _x|_1
		\\
		&+& |\phi _x|_\infty \max _{i=1, \cdots, q } \{\sup_{\overline{P_i}} {g_i}\}   \var  ^\diamond (G)  
		\\&\leq & \beta\Var ( \Gamma_{\mu} ) + K_3
		|\phi _x|_\infty
		\\&\leq & \beta \Var ( \Gamma_{\mu} ) + K_3
		|\phi _x|_{1,1} .
	\end{eqnarray*}
\end{proof}

\begin{remark}
	Remember that, the coefficients of inequality (\ref{lasotaingt234}) are given by the formulas 
	\begin{equation*}
		\beta = \max _{i} \{\var _{\overline{P _i}} (g_i) + 3\sup_{\overline{P_i}} {g_i} \}
	\end{equation*} and
	\begin{equation*}
		K_3 = \max_{i} \{\sup _{\overline{P_i}} g_i\} \var  ^\diamond (G) +  \max _{i} \left \lbrace\dfrac{ \var _{\overline{P _i}} (g_i) + 2\sup_{\overline{P_i}} {g_i}}{m_1(P _i)} \right \rbrace.
	\end{equation*}We will use these expressions in the next result and later on.
	\label{remmm1}
\end{remark}

From Lemma \ref{fdlkfldfl} and taking the infimum over the paths $\Gamma _\mu$ we have the following. 
\begin{corollary}\label{las123}
	If $F:[0,1]^2\longrightarrow[0,1]^2$ satisfies all the hypothesis of Theorem {\ref{las123456hh}}. Then, for all $\mu \in \mathcal{BV}^+$, it holds
	
	\begin{equation*}
		\Var(\operatorname{F{_\ast} \mu} ) \leq  \beta \Var ( \Gamma_\mu ) + K_3 |\phi _x|_{1,1},  
	\end{equation*}where $\beta$ and $K_3$ were given by Remark \ref{remmm1}.
\end{corollary}

\begin{proposition}\label{rafhssh}
	If $F:[0,1]^2\longrightarrow[0,1]^2$ satisfies all the hypothesis of Theorem {\ref{las123456hh}}. Then, there exist $k \in \mathbb{N}$, $0<\beta_k<1$ and $C_k>0$ such that for all path $\Gamma_\mu$, where $\mu \in \mathcal{BV}^+$, it holds
	
	\begin{equation*}
		\Var(\Gamma_{\operatorname{F{_\ast}^k \mu} }) \leq  \beta_k \Var ( \Gamma_\mu ) + C_k |\phi _x|_{1,1}.  
	\end{equation*}
	\label{las123fjkghdhfg}
\end{proposition}
\begin{proof}
	The proof is a straightforward consequence of the above Remark \ref{remmm1} and Remark \ref{est2}, where $\beta _k$ was defined by equation (\ref{est1}).
\end{proof}

\begin{proposition}
	If $F:[0,1]^2\longrightarrow[0,1]^2$ satisfies all the hypothesis of Theorem {\ref{las123456hh}}. Then, there exist $k \in \mathbb{N}$, $C_{0}$ and $0< \beta _k< 1$
	such that for all path $\Gamma_\mu$, where $\mu \in \mathcal{BV}^+$, and all $n\geq 1$ it holds 
	\begin{equation*}
		\Var(\Gamma_{\operatorname{F{_\ast}^{kn} \mu} })\leq C_{0}\beta _k^{n}\Var(\Gamma_\mu)+C_{0}{%
			| \phi _x|_{1,1}}. 
	\end{equation*}
	\label{las123456hhjdshfkjshkdjfh}
\end{proposition}

\begin{proof} Inequality (\ref{lasotaiiii2}) gives us
	\begin{equation*}
		|\func{P}_{T}^{n}f|_{1,1} \leq B_3 \beta _2 ^n | f|_{1,1}  +  C_2|f|_{1}, \ \ \forall n , \ \ \forall f \in BV_{1,1},
	\end{equation*}for $B_3, C_2 > 0$ and $0<\beta _2 <1$. Then, since $|f|_{1} \leq | f|_{1,1}$,  it holds
	
	\begin{equation}
		|\func{P}_{T}^{n}f|_{1,1} \leq K_2 | f|_{1,1}, \ \ \forall n , \ \ \forall f \in BV_{1,1},
		\label{boundingu} 
	\end{equation}where 
	
	\begin{equation*}
		K_2 = B_3 + C_2. 
	\end{equation*}In particular, inequality (\ref{boundingu}) holds if we replace $f$ by $\phi _x = \dfrac{d(\func {\pi _x {_\ast} \mu })}{dm_1}$ for each $\mu \in \mathcal{BV}^+$.
	
	By inequality (\ref {boundingu}), Proposition \ref{rafhssh} and a straightforward induction we have

	\begin{equation*}
		\Var(\Gamma_{\operatorname{F{_\ast}^{kn} \mu} })\leq \beta_k^{n}\Var(\Gamma_\mu )+ C_k \max \{K_2,1\} \sum _{i=0} ^{n-1}{\beta _k ^i} {%
			|\phi _x|_{1,1}}, \ \ \forall n \geq 0.  
	\end{equation*} We finish the proof by setting 
	
	\begin{equation*}
		C_0 := \max \left \lbrace1,\dfrac{C_k \max \{K_2, 1\}}{1- \beta _k}\right \rbrace.
	\end{equation*}
	
\end{proof}

\begin{proof}(of Theorem \ref{las123456hh})
	
	Let $k \in \mathbb{N}$ be from Proposition \ref{las123456hhjdshfkjshkdjfh}. For a given $n$, we set $n=kq_n + r_n$, where $0 \leq r_n < k$. Applying Proposition \ref{fdlkfldfl} and iterating $r_n$ times the inequality (\ref{lasotaingt234}) we have

	\begin{equation}\label{igjgj}
		\Var(\Gamma_{\operatorname{F{_\ast}^{r_n}\mu} }) \leq \max _{i=0, \cdots, k} \{\beta ^i\} \Var(\Gamma_\mu) + K_3K_2   \sum _{j=0} ^{k} \beta ^j |\phi _x|_{1,1}, 
	\end{equation}where $K_2$ was defined in equation (\ref {boundingu}). Thus, by Proposition \ref{las123456hhjdshfkjshkdjfh} and the above inequality (\ref{igjgj}), we have

	\begin{eqnarray*}
		\Var(\Gamma_{\operatorname{F{_\ast}^n \mu} })&=&\Var(\Gamma_{\operatorname{F{_ \ast}^{k q_n +r_n} \mu} }) \\ &\leq&  C_{0}\beta _k^{q_n}\Var(\Gamma_{\operatorname{F{_\ast}^{r_n} \mu} })+C_{0}{| \phi _x|_{1,1}}
		\\ &\leq&  C_{0} \max _{i=0, \cdots, k} \{\beta ^i\}\beta _k^{q_n}\Var( \Gamma_\mu )+  \left[C_{0} \beta _k^{q_n}K_3K_2 \sum _{j=0} ^{k} \beta ^j  +C_0 \right]{| \phi _x|_{1,1}}
		\\ &\leq &  C_{0} \max _{i=0, \cdots, k} \{\beta ^i\}\beta _k^{\frac{n-r_n}{k}}\Var( \Gamma_\mu )+  \left[C_{0} K_3K_2 \sum _{j=0} ^{k} \beta ^j  +C_0 \right]{| \phi _x|_{1,1}}
		\\ &\leq &  K_{0} \lambda _0 ^n\Var(\Gamma_\mu )+ K_0 | \phi _x|_{1,1},
	\end{eqnarray*}where 
	
	\begin{equation}\label{hdjfdjhuf}
		K_0 = \max \left\{ \frac{C_{0} \max _{i=0, \cdots, k}\{\beta^i\} }{\beta _k}, C_{0} K_3K_2 \sum _{j=0} ^{k} \beta ^j  +C_0\right\}
	\end{equation}and
	
	\begin{equation}
		\lambda _0 = (\beta _k)^{\frac{1}{k}}.
	\end{equation}

\end{proof}

\subsubsection{Uniform Lasota-Yorke like inequality}
\begin{proposition} If $\{F_\delta\}_{\delta \in [0,1)}$ is a Uniform BV Lorenz-like family. Then, there exist uniform constants $\beta_u>0$ and $K_u>0$ such that for every $\mu \in \mathcal{BV}^+$, it holds
	
	\begin{equation}
		\Var(\func{F_\delta{_\ast} \mu} ) \leq  \beta_u \Var ( \mu ) + K_u |\phi _x|_{1,1}, \ \forall \delta \in [0,1).  \label{lasotaingt234dffggdgh}
	\end{equation}

	\label{las123rtryrdfd}
\end{proposition}

\begin{proof}
	Since $\var  ^\diamond (G_\delta) \leq H_\delta$, we can apply Corollary \ref{las123} to each $F_\delta$ to get (see Remark \ref{remmm1})

	\begin{equation*}
		\Var(\func{F_\delta{_\ast} \mu} ) \leq  \beta_\delta \Var ( \mu ) + K_{3, \delta} |\phi _x|_{1,1}, \ \forall \delta \in [0,1),
	\end{equation*}where
	\begin{equation*}
		\beta_\delta = \max _{i=1, \cdots, q} \{\var _{\overline{P _i}} (g_{i\delta}) + 3\sup_{\overline{P_i}} {g_{i\delta}} \}
	\end{equation*} and
	\begin{equation*}K_{3, \delta} = \max_{i} \{\sup _{\overline{P_i}} g_{i,\delta}\} \var  ^\diamond (G_\delta) +  \max _{i} \left \lbrace\dfrac{ \var _{\overline{P _i}} (g_{i,\delta}) + 2\sup_{\overline{P_i}} {g_{i,\delta}}}{m(P _i)} \right \rbrace.
	\end{equation*}Since $\var  ^\diamond (G_\delta) \leq H_\delta$, UBV4 ((2), (3), (4)) yields the existence of uniforms constants $\beta _u:= \sup _{\delta\in [0,1)} \beta _\delta <\infty$ and $K _u:= \sup _{\delta\in [0,1)} K_{3,\delta} <\infty$. 
\end{proof}

Note that, we do not necessarily have $\beta _u<1$. In what follows, we will prove that there exists a uniform $k \in \mathbb{N}$ such that this property is satisfied for the map $F_\delta ^k$, for all $\delta \in [0,1)$. We also remark that, if $\{F_\delta\}_{\delta \in [0,1)}$ is a BV Lorenz-like family, then $F_\delta ^n$ also satisfies the hypothesis of Theorem {\ref{las123456hh}}, for all $n\geq 1$ and all $\delta$, in a way that we can apply Lemma \ref{almost123456} to $F_\delta ^n$, for all $n \geq 1$.

\begin{lemma}\label{jshgd}
	Let $\{T_\delta\}_{\delta \in [0,1)}$ be a family of piecewise expanding maps satisfying Definition \ref{defpiece123C1}, item (1), item (2), item (3) and item (4) of UBV4 (see Definition \ref{UFL}). Then, there is $k$ (which does not depends on $\delta$) such that 
	\begin{equation*}
		\sup _{\delta \in [0,1)}  \max _i \{\var g_{i,\delta} ^{(k)} + 3 \sup g_{i, \delta }^{(k)} \} <1.
	\end{equation*}
\end{lemma}
\begin{proof}(of the Lemma)

	First of all, consider a piecewise expanding map, $T:[0,1]\longrightarrow [0,1]$ satisfying Definition \ref{defpiece123C1}. For all $n \geq 1$, let $\mathcal{P}^{(n)}$ be the partition of $I$ s.t. $\mathcal{P}^{(n)}(x) = \mathcal{P}^{(n)}(y)$ if and only if $\mathcal{P}^{(1)}(T ^j(x)) = \mathcal{P}^{(1)}(T ^j(y))$ for all $j = 0, \cdots , n-1$, where $\mathcal{P}^{(1)} = \mathcal{P}$. For each $n$ define $T^n _i = T^n|{P_i}$ and $g^{(n)}_i = \dfrac{1}{|T _i^{n}|}$, for all $P_i \in \mathcal{P}^{(n)}$.

	Let us consider $n_0$ and $\lambda_1$ from item $2)$ of Definition \ref{defpiece123C1}: $\inf {|{T_{L}^{n_0}}^{\prime }|} \geq \lambda _1 >1$. For a given $n \geq 1$, we write $n=n_0 q_n + r_n$, where $0\leq r_n < n_0$.  Thus, for all $x \in P _i \in \mathcal{P}^{(n)} = \{P_1, \cdots, P_{q(n)}\}$, we have
	
	\begin{eqnarray*}
		\left|{T_i  ^n }'(x)\right| &=&  |\left(T_i^{n_0q_n+r_n} \right)'(x) | 
		\\  &= & |\left(T_i^{n_0q_n} \right)'(T_i^{r_n}(x)) | |(T_i^{r_n})'(x)|
		\\  &\geq & \left(\lambda _1 \right)^{q_n} |(T_i^{r_n})'(x)|.
	\end{eqnarray*}Then,
	
	\begin{eqnarray*}
		g_i ^{(n)} (x) &\leq& \left( \dfrac{1}{\lambda _1}\right)^{q_n} \dfrac{1}{|(T_i^{r_n})'(x)|} 
		\\&\leq& \left( \dfrac{1}{\lambda _1}\right)^{\frac{n}{n_0} -1} \max _{ 0 \leq j \leq n_0}\sup \{g_i\} ^j
		\\&\leq& \lambda _4 ^n C_5,
	\end{eqnarray*}where $\lambda _4 = \dfrac{1}{\sqrt[n_0]{\lambda _1}} <1$ and $C_5 =  \lambda _1\max _{0\leq i \leq q} \{\max _{ 0 \leq j \leq n_0}\sup \{g_i\} ^j \}$. Therefore,
	
	\begin{equation*}
		\sup \{g_i ^{(n)} \} \leq \lambda _4 ^n C_5,
	\end{equation*}for all $n \geq 1$ and all $i$.
	
	Now, set $C_6:=\max \{ C_5, \max _{i}\{\var(g_i) \}\}$. Thus, for all $n \geq 1$ it holds (see \cite{V}, page 41, equation (3.1))

	\begin{equation*}
		\var g_i ^{(n)} \leq \frac{nC_6 ^3}{\lambda _4} \lambda _4^n  ~\forall \delta \in [0,1)~\textnormal{and}~\forall i = 1,\cdots q.
	\end{equation*}Then,
	
	\begin{equation*}
		\var g_i ^{(n)}  \leq C_7 \lambda _5 ^n, \ \forall n \geq 1, \ \forall i,
	\end{equation*}where $\lambda _5 \in (\lambda_4 , 1) $ and $C_7:= \sup _{n \geq 1} \left \{\dfrac{C_ 6 ^3}{\lambda_4 } n \left(\dfrac{\lambda _4 }{\lambda _5} \right) ^n \right \}$.
	
	Now, let us consider a family of piecewise expanding maps, $\{T_\delta\}_{\delta \in [0,1)}$, satisfying Definition \ref{defpiece123C1}, item (1), item (2), item (3) and item (4) of UBV4 (see Definition \ref{UFL}). Applying the above equations to $T_\delta$ we get, for all $i$ and all $\delta$

	\begin{equation*}
		\sup \{g_{i, \delta} ^{(n)} \} \leq \lambda _{4,\delta} ^n C_{5,\delta},
	\end{equation*}

	where $\lambda _{4,\delta}  = \dfrac{1}{\sqrt[n_{0}(\delta)]{\lambda _1(\delta)}}$ and $C_{5,\delta} =  \lambda _1(\delta)\max _{i} \{\max _{ 0 \leq j \leq n_{0}(\delta)}\sup \{g_{i, \delta}\} ^j \}$. By item (1) of UBV4, we get $$\lambda _{4,u}:= \sup _{\delta \in [0,1)} \{ \lambda_{4,\delta}\} = \sup _\delta \{ \dfrac{1}{\sqrt[n_{0}(\delta)]{\lambda _1(\delta)}} \} < 1$$ and by items (1) and (2) of UBV4 it holds $$C_{5,u} := \sup _{\delta \in [0,1)} C_{5,\delta}< \infty.$$Then, we get the uniform estimate 
	
	\begin{equation*}
		\sup \{g_{i, \delta} ^{(n)} \} \leq \lambda _{4,u} ^n C_{5,u},
	\end{equation*}for all $\delta$, all $i$ and all $n \geq 1$.  
	
	By item (2) of UBV4, set $C_{6,u}:=\max \{ C_{5,u}, \sup_{\delta}\max _{i}\{\var(g_{i,\delta}) \}\}$. Thus, for all $n \geq 1$ it holds

	\begin{equation*}
		\var g_{i, \delta} ^{(n)} \leq \frac{nC_{6,u} ^3}{\lambda _{4,u}} \lambda _{4,u}^n  ~\forall i~\textnormal{and}~\forall \delta \in [0,1)~.
	\end{equation*}Then,
	
	\begin{equation*}
		\var g_{i, \delta} ^{(n)}  \leq C_{7,u} \lambda _{5,u} ^n, \ \forall n \geq 1, \ \forall i, \forall \delta 
	\end{equation*}where $\lambda _{5,u} \in (\lambda_{4,u} , 1) $ and $C_{7,u}:= \sup _{n \geq 1} \left \{\dfrac{C_ {6,u} ^3}{\lambda_{4,u} } n \left(\dfrac{\lambda _{4,u} }{\lambda _{5,u}} \right) ^n \right \}$.

\end{proof}

\begin{proposition} If $\{F_\delta\}_{\delta \in [0,1)}$ is a BV Lorenz-like family. Then, there exist uniform constants $0<\lambda_u<1$, $C_u>0$ and $k \in \mathbb{N}$ such that for every $\mu \in \mathcal{BV}^+$, it holds
	
	\begin{equation}
		\Var(\func{F_\delta{_\ast}^k \mu} ) \leq  \lambda_u \Var ( \mu ) + C_u |\phi _x|_{1,1}, \ \forall \delta \in [0,1).  \label{lasotaingt234sdry}
	\end{equation}
	\label{las12efwsef3}
\end{proposition}
\begin{proof}
	
	Consider the iterate $F_\delta ^k$, where $k \in \mathbb{N}$ is from Lemma \ref{jshgd}. Applying Corollary \ref{las123}, we get 
	
	\begin{equation*}
		\Var(\func{F_\delta{_\ast}^k \mu} ) \leq  \beta _\delta \Var ( \mu ) + K_{3,\delta} |\phi _x|_{1,1}  
	\end{equation*}where

	\begin{equation*}
		\beta _\delta :=  \max _i \{\var g_{i,\delta} ^{(k)} + 3 \sup g_{i, \delta }^{(k)} \},
	\end{equation*}and
	
	\begin{equation*}
		K_{3,\delta} := \max_{i} \{\sup _{\overline{P_i}} g_{i,\delta} ^{(k)}\} \var  ^\diamond (G_\delta ^k) +  \max _{i} \left \lbrace\dfrac{ \var _{\overline{P _i}} (g_{i,\delta} ^{(k)}) + 2\sup_{\overline{P_i}} {g_{i,\delta} ^{(k)}}}{m_1(P _i)} \right \rbrace.
	\end{equation*} By Lemma \ref{uthd}, replacing $f$ by $\pi _y$, we have

	\begin{eqnarray*}
		\var  ^\diamond (G_\delta ^k) &\leq& q^k \sum _{j=1} ^{k}{q^j} \{2\var  ^\diamond (G_\delta) + 2q \}
		\\&\leq& q^k \sum _{j=1} ^{k}{q^j} \{2H_\delta + 2q \}. 
	\end{eqnarray*}Since by item (4) of UBV4 we have $\sup _{\delta \in [0,1)}{H_\delta} < \infty$, we get $\sup _{\delta \in [0,1)}{\var  ^\diamond (G_\delta ^k)} < \infty$. By the previous comments, item (2) and item (3) of UBV4, we define

	\begin{equation*}
		C_u := \sup _{\delta \in [0,1)} \{K_{3,\delta}\} <\infty.
	\end{equation*} We also set 
	
	\begin{equation*}
		\lambda _u := \sup _{\delta \in [0,1)} \{\beta _\delta \},
	\end{equation*}where, by Lemma \ref{jshgd}, it holds $\lambda _u <1 $. With these definitions we arrive at inequality (\ref{lasotaingt234sdry}).

\end{proof}

\begin{proposition} If $\{F_\delta\}_{\delta \in [0,1)}$ is a BV Lorenz-like family. Then, there exist uniform constants $0<\xi_u<1$, $B_u>0$ such that for every $\mu \in \mathcal{BV}^+$, all $\delta \in [0,1)$ and all $n \geq 1$, it holds
	
	\begin{equation*}
		\Var(\func{F_\delta{_\ast}^n \mu} ) \leq  \xi_u ^n B_u \Var ( \mu ) + B_u |\phi _x|_{1,1}. 
	\end{equation*}
	\label{dfgdffgb}
\end{proposition}

\begin{proof}
	By UBV1 we have gives us
	\begin{equation*}
		|\func{P}_{T_\delta}^{n}f|_{1,1} \leq D \lambda ^n | f|_{1,1}  +  D|f|_{1}, \ \ \forall n , \ \ \forall f \in BV_{1,1},
	\end{equation*}where $D> 0$ and $0<\lambda <1$. Then, since $|f|_{1} \leq | f|_{1,1}$,  it holds
	
	\begin{equation}
		|\func{P}_{T_\delta}^{n}f|_{1,1} \leq 2D| f|_{1,1}, \ \ \forall n , \ \ \forall f \in BV_{1,1},
		\label{boundingsllldsdu} 
	\end{equation}where $2D \geq 1$. In particular, (\ref{boundingsllldsdu}) holds if we replace $f$ by $\phi _x = \dfrac{d(\func {\pi _x {_\ast} \mu })}{dm_1}$ for each $\mu \in \mathcal{BV}^+$.
	
	By Proposition \ref{las12efwsef3} and a straightforward induction we have

	\begin{equation*}
		\Var(\func{F_\delta{_\ast}^{nk}}\mu )\leq \lambda _u^{n}\Var(\mu )+ 2DC_u \sum _{i=0} ^{n-1}{\lambda _u ^i} {%
			|\phi _x|_{1,1}}, \ \ \forall n \geq 0.  
	\end{equation*}Then,
	
	\begin{equation*}
		\Var(\func{F_\delta {_\ast}^{nk}}\mu )\leq \lambda _u^{n}\Var(\mu )+ \dfrac{2DC_u}{1 - \lambda _u}  {%
			|\phi _x|_{1,1}}, \ \ \forall n \geq 0.  
	\end{equation*}

	Consider $D$ ($2D \geq 1$) from equation (\ref{boundingsllldsdu}) and set $n=kq_n + r_n$, where  $0 \leq r_n < k$. Applying Proposition \ref{las123rtryrdfd} iterating $r_n$ times the inequality (\ref{lasotaingt234dffggdgh}) we get
	
	\begin{equation*}
		\Var(\func{F_\delta {_\ast}^{r_n} \mu} ) \leq \max _{i=0, \cdots, k} \{\beta _u ^i\} \Var(\mu) + 2DK_u   \sum _{j=0} ^{k} \beta _u ^j |\phi _x|_{1,1}. 
	\end{equation*}Thus,

	\begin{eqnarray*}
		\Var(\func{F_\delta {_\ast}^{n} \mu} ) &\leq& \lambda_u ^{q_n} \Var(\func{F_\delta {_\ast}^{r_n} \mu}) + \dfrac{2DC_u}{1-\lambda _u}  |\func{P^{r_n}_{T_\delta }}(\phi _x)|_{1,1}
		\\&\leq& \lambda_u ^{q_n} \left[\max _{i=0, \cdots, k} \{\beta _u ^i\} \Var(\mu) + 2DK_u   \sum _{j=0} ^{k} \beta _u ^j |\phi _x|_{1,1} \right] + \dfrac{4D^2C_u}{1-\lambda _u}  |\phi _x|_{1,1}
		\\&\leq& \lambda_u ^{q_n}\max _{i=0, \cdots, k} \{\beta _u ^i\} \Var(\mu) + \left[ 2DK_u   \sum _{j=0} ^{k} \beta _u ^j |\phi _x|_{1,1}  + \dfrac{4D^2C_u}{1-\lambda _u} \right] |\phi _x|_{1,1}
		\\&\leq& \lambda_u ^{\frac{n}{k} -\frac{r_n}{k}}\max _{i=0, \cdots, k} \{\beta _u ^i\} \Var(\mu) + \left[ 2DK_u   \sum _{j=0} ^{k} \beta _u ^j |\phi _x|_{1,1}  + \dfrac{4D^2C_u}{1-\lambda _u} \right] |\phi _x|_{1,1}
		\\&\leq& \left(\sqrt[k]{\lambda_u}\right) ^{n} \dfrac{\max _{i=0, \cdots, k} \{\beta _u ^i\}}{\lambda_u} \Var(\mu) + \left[ 2DK_u   \sum _{j=0} ^{k} \beta _u ^j |\phi _x|_{1,1}  + \dfrac{4D^2C_u}{1-\lambda _u} \right] |\phi _x|_{1,1}
		\\&\leq& \xi_u ^n B_u \Var ( \mu ) + B_u |\phi _x|_{1,1}, 
	\end{eqnarray*}where $B_u:= \max \left\{ \dfrac{\max _{i=0, \cdots, k} \{\beta _u ^i\}}{\lambda_u} ,  2DK_u   \sum _{j=0} ^{k} \beta _u ^j |\phi _x|_{1,1}  + \dfrac{4D^2C_u}{1-\lambda _u} \right\}$ and $\xi_u: =\sqrt[k]{\lambda_u} $.
	
\end{proof}



With all results established in this section, the proof of Proposition \ref{thshgf} is analogous to the Proposition \ref{reg}, where $B_u$ comes from Proposition \ref{dfgdffgb}.

\section{Appendix 2: Linearity of the restriction \label{remmm}}

Let us consider the measurable spaces $(N_1, \mathcal{N}_1)$ and $(N_2, \mathcal{N}_2)$, where $\mathcal{N}_1$ and $\mathcal{N}_2$ are the Borel's $\sigma$-algebra of $N_1$ and $N_2$ respectively.
Let $\mu \in \mathcal{AB}$ be a positive measure on the measurable space $(\Sigma, \mathcal{B})$, where $\Sigma = N_1 \times  N_2$ and $\mathcal{B} = \mathcal{N}_1 \times \mathcal{N}_2 $ and consider its disintegration $(\{\mu_{\gamma}\} _\gamma, \mu_x )$ along $\mathcal{F}^s$, where $\mu_x = \pi _x {_\ast} \mu$ and $d(\pi _x {_*}\mu) = \phi _ xdm_1$, for some $\phi _ x \in L^1(N_1, m_1)$. We will suppose that the $\sigma$-algebra  $\mathcal{B}$ has a countable generator.

\begin{proposition}
	Suppose that $\mathcal{B}$ has a countable generator, $\Gamma$. If $\{\mu_\gamma\}_\gamma$ and $\{\mu'_\gamma\}_\gamma$ are disintegrations of a positive measure $\mu$ relative to $\mathcal{F}^s$, then $\phi_x(\gamma)\mu_\gamma = \phi_x(\gamma)\mu'_\gamma$ $m_1$-a.e. $\gamma \in N_1$.
	\label{againa}
\end{proposition}
\begin{proof}
	Let $\mathcal{A}$ be the algebra generated by $\Gamma$. $\mathcal{A}$ is countable and $\mathcal{A}$ generates $\mathcal{B}$. For each $A \in \mathcal{A}$ define the sets $$G_A=\{\gamma \in N_1|  \phi_x(\gamma)\mu_\gamma (A) < \phi_x(\gamma)\mu'_\gamma (A) \}$$ and $$R_A=\{\gamma \in N_1|  \phi_x(\gamma)\mu_\gamma(A) > \phi_x(\gamma)\mu'_\gamma (A) \}.$$ If $\gamma \in G_A$ then $\gamma \subset \pi_x^{-1}(G_A)$ and $\mu_\gamma(A)= \mu _\gamma (A \cap \pi_x^{-1}(G_A))$. Otherwise, if $\gamma \notin G_A$ then $\gamma \cap \pi_x^{-1}(G_A) = \emptyset$ and $\mu _\gamma (A \cap \pi_x^{-1}(G_A)) =0 $. The same holds for $\mu'_\gamma$. Then, it holds

	\begin{equation*}
		\mu(A \cap \pi_x^{-1}(G_A))=
		\begin{cases}
			\int{\mu_\gamma (A\cap \pi ^{-1}(Q_A))\phi _x(\gamma)}dm_1 = \int_{Q_A}{\mu_\gamma (A)\phi _x(\gamma)}dm_1 \\ \int{\mu'_\gamma (A\cap \pi ^{-1}(Q_A))\phi _x(\gamma)}dm_1 = \int_{Q_A}{\mu'_\gamma (A)\phi _x(\gamma)}dm_1.
		\end{cases}
	\end{equation*}Since $\phi _x(\gamma)\mu_\gamma (A) <\mu'_\gamma (A)\phi _x(\gamma)$ for all $\gamma \in G_A$, we get $m_1(G_A)=0$. The same holds for $R_A$. Thus $$m_1 \left( \bigcup_{A \in \mathcal{A}} {R_A\cup G_A} \right)=0.$$It means that, $m_1$-a.e. $\gamma \in N_1$ the positive measures $\phi _x(\gamma)\mu_\gamma$ and $\mu'_\gamma \phi _x(\gamma)$ coincides for all measurable set $A$ of an algebra which generates $\mathcal {B}$. Therefore $\phi _x(\gamma)\mu_\gamma = \mu'_\gamma \phi _x(\gamma)$ for $m_1$-a.e. $\gamma \in N_1$.
	
\end{proof}

\begin{proposition}\label{hytre}
	Let $\mu_1,\mu_2 \in \mathcal{AB}$ be two positive measures and denote their marginal densities by $d({\mu_1}_x)= \phi_x dm_1$ and $d({\mu _2} _x) = \psi _x dm_1 $ , where $\phi_x, \psi_x \in L^1(m_1)$ respectively. Then $(\mu_1 + \mu_2)|_\gamma = \mu_1|_\gamma + \mu_2|_\gamma$ $m_1$-a.e. $\gamma \in N_1$.
\end{proposition}
\begin{proof}
	Note that $d(\mu_1 + \mu_2)= (\phi _x + \psi_x)dm_1$. Moreover, consider the disintegration of $\mu _1 + \mu_2$ given by $$(\{(\mu_1 + \mu_2) _\gamma\}_\gamma, (\phi _x + \psi_x) m_1),$$ where
	
	\begin{equation*}
		(\mu_1 + \mu_2) _\gamma =
		\begin{cases}
			\dfrac{\phi _x(\gamma)}{\phi _x(\gamma) + \psi _x(\gamma)} \mu _{1,\gamma}+ \dfrac{\psi _x(\gamma)}{\phi _x(\gamma) + \psi _x(\gamma)} \mu _{2,\gamma}, \ \textnormal{if} \ \phi _x(\gamma) + \psi _x(\gamma) \neq 0  \\ 0,\ \textnormal{if} \ \phi _x(\gamma) + \psi _x(\gamma) = 0.
		\end{cases}
	\end{equation*}Then, by Proposition \ref{againa} for $m_1$-a.e. $\gamma \in N_1$, it holds $$(\phi _x + \psi_x)(\gamma)(\mu_1 + \mu_2) _\gamma = \phi _x(\gamma) \mu _{1,\gamma} + \psi _x(\gamma) \mu _{2,\gamma}.$$Therefore, $(\mu_1 + \mu_2)|_\gamma = \mu_1|_\gamma + \mu_2|_\gamma$ $m_1$-a.e. $\gamma \in N_1$.

\end{proof}

\begin{definition}\label{disj}
	We say that a positive measure $\lambda_1$ is disjoint from a positive measure $\lambda _2$ if $(\lambda_1 - \lambda _2)^+ = \lambda _1$ and $(\lambda_1 - \lambda _2)^- = \lambda _2$.
\end{definition}

\begin{remark}
	A straightforward computations yields that if $\lambda _1 + \lambda _2$ is disjoint from $\lambda _3$, then both $\lambda _1$ and $\lambda _2$ are disjoint from $\lambda _3$, where $\lambda_1, \lambda_2$ and $\lambda_3$ are all positive measures. 
\end{remark}

\begin{lemma}Suppose that $\mu =\mu ^{+}-\mu ^{-}$ and $\nu =\nu ^{+}-\nu ^{-}$ are the Jordan decompositions of the signed measures $\mu$ and $\nu$. Then, there exist positive measures $\mu_1$, $\mu_2$, $\mu ^{++}$, $\mu ^{--}$, $\nu ^{++}$ and $\nu ^{--}$ such that $\mu ^{+}=\mu ^{++}+\mu_{1}$ $\mu ^{-}=\mu ^{--}+\mu_{2}$ and  $\nu ^{+}=\nu ^{++}+\mu_{2}$, $\nu ^{-}=\nu ^{--}+\mu_{1}$.
	\label{fhdj}
\end{lemma}
\begin{proof}
	Suppose $\mu = \nu_1 - \nu_2$ with $\nu_1$ and $\nu_2$ positive measures. Let $\mu^+ $ and $\mu^-$ be the Jordan decomposition of $\mu$.
	Let $\mu' = \nu_1 - \mu^+$, then $\nu_1 = \mu^- + \mu'$. Indeed $\mu^+ - \mu ^- = \nu _1 - \nu _2$ which implies that $\mu^+ - \nu_1 = \mu^- - \nu_2$.
	Thus if $\nu_1, \nu_2$ is a decomposition of $\mu$, then $\nu_1 = \mu^+ + \mu '$ and $\nu_2 = \mu^- + \mu '$ for some positive measure $\mu'$.
	Now, consider $\mu = \mu ^+ - \mu ^-$ and $\nu = \nu ^+ - \nu ^-$. Since the pairs of positive measures $\mu^+, \nu^-$ and $(\mu^+ - \nu^-)^+$, $(\mu ^+ - \nu ^-)^-$ are both decompositions of $\mu^+ - \nu ^-$, by the above comments, we get that $\mu^+ =(\mu^+ - \nu^-)^+ + \mu_1$ and $\nu^- =(\mu^+ - \nu^-)^- + \mu_1$, for some positive measure $\mu_1$. Analogously, since the pairs of positive measures $\mu^-, \nu^+$ and $(\nu^+ - \mu^-)^+$, $(\nu ^+ - \mu ^-)^-$ are both decompositions of $\nu^+ - \mu ^-$, by the above comments, we get that $\nu^+ =(\nu^+ - \mu^-)^+ + \mu_2$ and $\mu^- =(\nu^+ - \mu^-)^- + \mu_2$, for some positive measure $\mu_2$.
	By definition \ref{disj}, $\mu+$ and $\mu^-$ are disjoint, and so are $(\mu ^+ - \nu^-)^+$ and $(\nu ^+ - \mu^-)^-$. Analogously, $\nu+$ and $\nu^-$ are disjoint, and so are $(\mu ^+ - \nu^-)^-$ and $(\nu ^+ - \mu^-)^+$. Moreover, since $(\mu ^+ - \nu^-)^+$ and $(\mu ^+ - \nu^-)^-$ are disjoint, so are $(\nu ^+ - \mu^-)^+$ and $(\nu ^+ - \mu^-)^-$. This gives that, the pair $(\mu ^+ - \nu^-)^+ + (\nu ^+ - \mu^-)^+$, $(\nu ^+ - \mu^-)^- + (\mu ^+ - \nu^-)^-$ is a Jordan decomposition of $\mu + \nu$ and we are done.

\end{proof}

\begin{proposition}
	Let $\mu,\nu \in \mathcal{AB}$ be two signed measures. Then $(\mu + \nu)|_\gamma = \mu|_\gamma + \nu|_\gamma$ $m_1$-a.e. $\gamma \in N_1$.
\end{proposition}
\begin{proof}

	Suppose that $\mu =\mu ^{+}-\mu ^{-}$ and $\nu =\nu ^{+}-\nu ^{-}$ are the Jordan decompositions of $\mu$ and $\nu$ respectively. By definition, $\mu |_{\gamma }=\mu ^{+}|_{\gamma }-\mu ^{-}|_{\gamma }$, $\nu |_{\gamma }=\nu ^{+}|_{\gamma }-\nu ^{-}|_{\gamma }$. 
	
	By Lemma \ref{fhdj}, suppose that $\mu ^{+}=\mu ^{++}+\mu_{1}$, $\mu ^{-}=\mu ^{--}+\mu_{2}$ and  $\nu^{+}=\nu ^{++}+\mu_{2}$, $\nu ^{-}=\nu ^{--}+\mu_{1}$. In a way that $(\mu +\nu )^{+}=\mu ^{++}+\nu ^{++}$ and $(\mu +\nu )^{-}=\mu ^{--}+\nu ^{--}$. By Proposition \ref{hytre}, it holds $\mu ^{+}|_{\gamma }=\mu ^{++}|_{\gamma }+\mu_{1}|_{\gamma }$, $\mu ^{-}|_{\gamma }=\mu ^{--}|_{\gamma }+\mu_{2}|_{\gamma }$, $\nu ^{+}|_{\gamma }=\nu ^{++}|_{\gamma }+\mu_{2}|_{\gamma }$ and $\nu ^{-}|_{\gamma }=\nu ^{--}|_{\gamma }+\mu_{1}|_{\gamma }$.
	
	Moreover,
	
	$(\mu +\nu )^{+}|_{\gamma }=\mu ^{++}|_{\gamma }+\nu ^{++}|_{\gamma }$
	
	$(\mu +\nu )^{-}|_{\gamma }=\mu ^{--}|_{\gamma }+\nu ^{--}|_{\gamma }$
	
	Putting all together, we get:
	
	\begin{eqnarray*}
		(\mu +\nu )|_{\gamma } &=&(\mu +\nu )^{+}|_{\gamma }-(\mu +\nu
		)^{-}|_{\gamma } \\
		&=&\mu ^{++}|_{\gamma }+\nu ^{++}|_{\gamma }-(\mu ^{--}|_{\gamma }+\nu
		^{--}|_{\gamma }) \\
		&=&\mu ^{++}|_{\gamma }+\mu_{1}|_{\gamma }+\nu ^{++}|_{\gamma }+\mu_{2}|_{\gamma
		}-(\mu ^{--}|_{\gamma }+\mu_{2}|_{\gamma }+\nu ^{--}|_{\gamma }+\mu_{1}|_{\gamma
		}) \\
		&=&\mu ^{+}|_{\gamma }-\mu ^{-}|_{\gamma }+\nu ^{+}|_{\gamma }-\nu
		^{-}|_{\gamma } \\
		&=&\mu |_{\gamma }+\nu |_{\gamma }.
	\end{eqnarray*}
\end{proof}We immediately arrive at the following
\begin{corollary}\label{lasttttt}
	Let $\mu \in \mathcal{AB}$ be a signed measure and $\mu = \mu^+ - \mu ^-$ its Jordan decomposition. If $\mu_1$ and $\mu_2$ are positive measures such that $\mu = \mu_1 - \mu _2$, then $ \mu|_\gamma =\mu_1|_\gamma - \mu_2|_\gamma$. It means that, the restriction does not depends on the decomposition of $\mu$.
\end{corollary}

\section{Appendix 3: Uniform Family of Operators}\label{jsdhjfnsd}

In this section, we prove the main results on uniform families of operators stated in Section \ref{unifop}. We
state a general lemma on the stability of fixed points satisfying certain assumptions. Consider two operators $\func{L} _{0}$ and $\func{L}  _{\delta }$ preserving a normed
space of signed measures $\mathcal{B\subseteq }\mathcal{SB}(X)$ with norm $||\cdot ||_{\mathcal{B}%
}$. Suppose that $f_{0},$ $f_{\delta }\in \mathcal{B}$ are fixed
points of $\func{L}  _{0}$ and $\func{L}  _{\delta }$, respectively.

\begin{lemma}
	\label{gen}Suppose that:
	
	\begin{enumerate}
		\item[a)] $||\func{L}  _{\delta }f_{\delta }-\func{L} _{0}f_{\delta }||_{\mathcal{B}}<\infty $;
		
		\item[b)] For all $i\geq 1$, $\func{L} _{0}^{ i}$ is continuous on $\mathcal{B}$: for each $ i \geq 1$, $\exists
		\,C_{i}~s.t.~\forall g\in \mathcal{B},~||\func{L}  _{0}^{ i}g||_{\mathcal{B}}\leq
		C_{i}||g||_{\mathcal{B}}.$
	\end{enumerate}
	
	Then, for each $N \geq 1$, it holds%
	\begin{equation*}
		||f_{\delta }-f_{0}||_{\mathcal{B}}\leq ||\func{L} _{0}^{ N}(f_{\delta }-f_{0})||_{%
			\mathcal{B}}+||\func{L}  _{\delta } f_{\delta }-\func{L}  _{0}f_{\delta }||_{\mathcal{B}%
		}\sum_{i\in \lbrack 0,N-1]}C_{i}.  
	\end{equation*}
\end{lemma}

\begin{proof}
	The proof is a direct computation. First note that,%
	\begin{eqnarray*}
		||f_{\delta }-f_{0}||_{\mathcal{B}} &\leq &||\func{L}_{\delta }^{N}f_{\delta
		}-\func{L}_{0}^{ N}f_{0}||_{\mathcal{B}} \\
		&\leq &||\func{L}_{0}^{ N}f_{0}-\func{L}_{0}^{ N}f_{\delta }||_{\mathcal{B}%
		}+||\func{L}_{0}^{ N}f_{\delta }-\func{L}_{\delta }^{ N}f_{\delta }||_{\mathcal{B}} \\
		&\leq &||\func{L}_{0}^{N}(f_{0}-f_{\delta })||_{\mathcal{B}}+||\func{L}_{0}^{ N}f_{\delta
		}-\func{L}_{\delta }^{N}f_{\delta }||_{\mathcal{B}}.
	\end{eqnarray*}%
	Moreover,%
	\begin{equation*}
		\func{L}_{0}^{ N}-\func{L}_{\delta }^{ N}=\sum_{k=1}^{N}\func{L}_{0}^{(N-k)}(\func{L}_{0}-\func{L} _{\delta
		})\func{L}_{\delta }^{ (k-1)}
	\end{equation*}%
	hence%
	\begin{eqnarray*}
		(\func{L}_{0}^{ N}-\func{L}_{\delta }^{ N})f_{\delta} &=&\sum_{k=1}^{ N}\func{L}_{0}^{ (N-k)}(\func{L}_{0}-\func{L}_{\delta
		})\func{L}_{\delta }^{ (k-1)}f_{\delta } \\
		&=&\sum_{k=1}^{N}\func{L}_{0}^{ (N-k)}(\func{L}_{0}-\func{L}  _{\delta })f_{\delta }
	\end{eqnarray*}%
	by item b), we have%
	\begin{eqnarray*}
		||(\func{L}_{0}^{ N}-\func{L}_{\delta }^{ N})f_{\delta }||_{\mathcal{B}} &\leq
		&\sum_{k=1}^{N}C_{N-k}||(\func{L}_{0}-\func{L} _{\delta })f_{\delta }||_{\mathcal{B}} \\
		&\leq &||(\func{L}_{0}-\func{L}_{\delta })f_{\delta }||_{\mathcal{B}}\sum_{i\in \lbrack
			0,N-1]}C_{i}
	\end{eqnarray*}%
	and then%
	\begin{equation*}
		||f_{\delta }-f_{0}||_{\mathcal{B}}\leq ||\func{L}_{0}^{ N}(f_{0}-f_{\delta })||_{%
			\mathcal{B}}+||(\func{L} _{0}-\func{L}  _{\delta })f_{\delta }||_{\mathcal{B}}\sum_{i\in
			\lbrack 0,N-1]}C_{i}.
	\end{equation*}
\end{proof}

Now, let us apply the statement to our family of operators satisfying
assumptions UF1--UF4, supposing $\ B_{w}=\mathcal{B}$. We have the
following

\begin{proposition}
	\label{dlogd2}Suppose $\{\func{L}_{\delta }\}_{\delta \in \left[0, 1 \right)}$ is a uniform family of operators as in Definition \ref{UF}, where $f_{0}$ is the unique fixed point of $\func{L}_{0}$ in $B_{w}$ and $%
	f_{\delta }$ is a fixed point of $\func{L} _{\delta }$. Then, there is a $\delta _0\in (0,1)$ such that for all $\delta \in (0, \delta_0]$ it holds %
	\begin{equation*}
		||f_{\delta }-f_{0}||_{w}=O(\delta \log \delta ).
	\end{equation*}
\end{proposition}

\begin{proof}
	
	
	First note that, if $\delta \geq 0$ is small enough, then $\delta \leq -\delta \log {\delta} $. Moreover, $x -1 \leq \lfloor  x \rfloor$, for all $x \in \mathbb{R}$.

	By UF2,%
	\begin{equation*}
		||\func{L}_{\delta }f_{\delta }-\func{L}_{0}f_{\delta }||_{w}\leq \delta C
	\end{equation*}%
	(see Lemma \ref{gen}, item a) ) and UF$4$ yields $C_{i}\leq M_{2}.$ 
	
	Hence, by Lemma \ref{gen} we have 
	\begin{equation*}
		||f_{\delta }-f_{0}||_{w}\leq \delta CM_{2}N+||\func{L}_{0}^{ N}(f_{0}-f_{\delta
		})||_{w}.
	\end{equation*}%
	By the exponential convergence to equilibrium of $\func{L}_{0}$ (UF3), there exists $0<\rho _2 <1$ and $C_2 >0$ such that (recalling that
	by UF1 $||(f_{\delta }-f_{0})||_{s}\leq 2M$)
	\begin{eqnarray*}
		||\func{L} _{0}^{ N}(f_{\delta }-f_{0})||_{w} &\leq &C_{2}\rho _{2}^{N}||(f_{\delta
		}-f_{0})||_{s} \\
		&\leq &2C_{2}\rho _{2}^{N}M
	\end{eqnarray*}%
	hence%
	\begin{equation*}
		||f_{\delta }-f_{0}||_{\mathcal{B}}\leq \delta CM_{2}N+2C_{2}\rho _{2}^{N}M.
	\end{equation*}%
	Choosing $N=\left\lfloor \frac{\log \delta }{\log \rho _{2}}\right\rfloor $, we have%
	\begin{eqnarray*}
		||f_{\delta }-f_{0}||_{\mathcal{B}} &\leq &\delta CM_{2}\left\lfloor \frac{%
			\log \delta }{\log \rho _{2}}\right\rfloor +2C_{2}\rho _{2}^{\left\lfloor 
			\frac{\log \delta }{\log \rho _{2}}\right\rfloor }M \\
		&\leq &\delta \log \delta CM_{2}\frac{1}{\log \rho _{2}}+2C_{2}\rho _2 ^ {  \frac{\log \delta }{\log \rho _{2}} -1} M \\
		&\leq & \delta \log \delta CM_{2}\frac{1}{\log \rho _{2}}+\frac{2C_{2}\rho _2 ^ { \frac{\log \delta }{\log \rho _{2}}} M}{\rho _2} \\
		&\leq & \delta \log \delta CM_{2}\frac{1}{\log \rho _{2}}+\frac{2C_{2}\delta M}{\rho _2} \\
		&\leq & \delta \log \delta CM_{2}\frac{1}{\log \rho _{2}}-\frac{2C_{2}\delta \log \delta M}{\rho _2}\\
		&\leq & \delta \log \delta \left( \frac{CM_{2}}{\log \rho _{2}}-\frac{2C_{2} M}{\rho _2} \right).
		\notag
	\end{eqnarray*}
\end{proof}

\section{Appendix 4: On Disintegration of Measures}\label{disint}

In this section, we prove some results on disintegration of absolutely continuous measures with respect to a measure $\mu_0 \in \mathcal{AB}$. Precisely, we are going to prove Lemma \ref{hdgfghddsfg}. 

Let us fix some notations. Denote by $(N_1,m_1)$ and $(N_2,m_2)$ the spaces defined in section \ref{sec2}. For a $\mu_0$-integrable function $f: N_1 \times N_2 \longrightarrow \mathbb{R}$ and a pair $(\gamma,y) \in N_1 \times N_2$ ($\gamma \in N_1$ and $y \in N_2$) we denote by $f_\gamma : N_2 \longrightarrow \mathbb{R}$, the function defined by $f_\gamma(y) = f(\gamma,y)$ and $f|_\gamma$ the restriction of $f$ on the set $\{\gamma\} \times N_2$. Then $f_\gamma = f|_\gamma \circ \pi _{y,\gamma}^{-1}$ and $f_\gamma \circ \pi_{y,\gamma} = f|_\gamma$, where $\pi _{y,\gamma}$ is restriction of the projection $\pi _y(\gamma,y):=y$ on the set $\{\gamma\} \times N_2$. When no confusion can be done, we will denote the leaf $\{\gamma\} \times N_2$, just by $\gamma$.

From now and ahead, for a given positive measure $\mu \in \mathcal{AB}$, on $N_1 \times N_2$, $\widehat{\mu}$ stands for the measure $\pi_x {_*} \mu$. Where $\pi_x$ is the projection on the first coordinate, $\pi_x(x,y)=x$.

For each measurable set $A \subset N_1$, define $g: N_1 \longrightarrow \mathbb{R}$, by $$g(\gamma)= \phi_x(\gamma)\int{\chi _{ \pi _x ^{-1}(A)}|_\gamma (y) f|_\gamma(y)}d\mu_{0,\gamma}(y)$$and note that

\begin{equation*}
	g(\gamma)=
	\begin{cases}
		\phi_x(\gamma) \displaystyle{\int {f|_\gamma (y)}d \mu_{0,\gamma}}, \ \hbox{if} \ \gamma \in A \\
		0, \ \hbox{if} \ \gamma \notin A.
	\end{cases}
\end{equation*}Then, it holds $$g(\gamma) = \chi _A (\gamma) \displaystyle{\phi_x(\gamma)  \int {f|_\gamma (y)}d \mu_{0,\gamma}}.$$

\begin{proof}{(of Lemma \ref{hdgfghddsfg})}

	For each measurable set $A \subset N_1$, we have
	\begin{eqnarray*}
		\int_A{\dfrac{\pi _x ^* (f\mu_0)}{dm_1}}dm_1 &=&\int{\chi _A \circ \pi _x}d(f\mu_0)
		\\&=&\int{\chi _{ \pi _x ^{-1}(A)} f}d\mu_0
		\\&=&\int \left[\int{\chi _{ \pi _x ^{-1}(A)}|_\gamma (y) f|_\gamma(y)}d\mu_{0,\gamma}(y)\right]d(\phi_x m_1)(\gamma)
		\\&=&\int \left[\phi_x(\gamma)\int{\chi _{ \pi _x ^{-1}(A)}|_\gamma (y) f|_\gamma(y)}d\mu_{0,\gamma}(y)\right]d(m_1)(\gamma)
		\\&=&\int{g(\gamma)}d(m_1)(\gamma)
		\\&=&\int_A {\left[\int{f_\gamma(y)}d\mu_{0}{|_\gamma}(y)\right]}d(m_1)(\gamma).
	\end{eqnarray*}Thus, it holds 
	
	\begin{equation*}
		\dfrac{\pi _x {_*} (f\mu_0)}{dm_1} (\gamma) =  \int{f_\gamma(y)}d\mu_{0}{|_\gamma}, \ \hbox{for} \ m_1-\hbox{a.e.} \ \gamma \in N_1.
	\end{equation*}And by a straightforward computation 
	
	\begin{equation}\label{gh}
		\dfrac{\pi _x {_*} (f\mu_0)}{dm_1}(\gamma) = \phi_x (\gamma) \int{f|_\gamma(y)}d\mu_{0,\gamma}, \ \hbox{for} \ m_1-\hbox{a.e.} \ \gamma \in N_1.
	\end{equation}Thus, equation (\ref{fjgh}) is established.
	
	\begin{remark}\label{ghj}
		Setting,
		\begin{equation}\label{tyu}
			\overline{f}:= \dfrac{\pi _x {_*} (f\mu_0)}{dm_1},
		\end{equation}we get, by equation (\ref{gh}), $\overline{f}(\gamma)=0$ iff $\phi_x (\gamma) = 0$ or $\displaystyle{\int{f|_\gamma(y)}d\mu_{0,\gamma} (y)=0}$, for  $m_1$-a.e. $\gamma \in N_1$. 
	\end{remark}

	Now, let us see that, by the $\widehat{\nu}$-uniqueness of the disintegration, equation (\ref{gdfgdgf}) holds. To do it, define, for $m_1$-a.e. $\gamma \in N_1$, de function $h_\gamma : N_2 \longrightarrow \mathbb{R}$, in a way that
	
	\begin{equation}\label{jri}
		h_\gamma (y) =
		\begin{cases}
			\dfrac{f|_\gamma (y)}{\int{f|_\gamma(y)}d\mu_{0,\gamma}(y)} , \ \hbox{if} \ \gamma \in B ^c \\
			0, \ \hbox{if} \ \gamma \in B.
		\end{cases}
	\end{equation}Let us prove equation (\ref{gdfgdgf}) by showing that, for all measurable set $E \subset N_1 \times N_2$, it holds $$f \mu _0 (E)  = \int _{N_1} {\int _{E \cap \gamma} {h_\gamma(y)}}d\mu _{0, \gamma} (y)d (\pi_x {_*}(f \mu_0))(\gamma).$$In fact, by equations (\ref{gh}), (\ref{tyu}), (\ref{jri}) and remark \ref{ghj}, we get
	
	\begin{eqnarray*}
		f\mu_0 (E) &=& \int _E {f} d\mu_0
		\\&=& \int _{N_1} \int _{E\cap \gamma} {f|_\gamma} d\mu_{0, \gamma}d (\phi_x m_1)(\gamma)
		\\&=& \int _{B^c} \int _{E\cap \gamma} {f|_\gamma} d\mu_{0, \gamma}d (\phi_x m_1)(\gamma)
		\\&=& \int _{B^c} \int{f|_\gamma(y)}d\mu_{0,\gamma}(y)\phi_x (\gamma)\left[ \dfrac{1}{\int{f|_\gamma(y)}d\mu_{0,\gamma}(y)}\int _{E\cap \gamma} {f|_\gamma}  d\mu_{0, \gamma}\right]d m_1(\gamma)
		\\&=& \int _{B^c} \overline{f}(\gamma)\left[ \dfrac{1}{\int{f|_\gamma(y)}d\mu_{0,\gamma}(y)}\int _{E\cap \gamma} {f|_\gamma}  d\mu_{0, \gamma}\right]d m_1(\gamma)
		\\&=& \int _{B^c} \left[ \dfrac{1}{\int{f|_\gamma(y)}d\mu_{0,\gamma}(y)}\int _{E\cap \gamma} {f|_\gamma}  d\mu_{0, \gamma}\right]d \overline{f}m_1(\gamma)
		\\&=& \int _{B^c} {\int _{E \cap \gamma} {h_\gamma(y)}}d\mu _{0, \gamma} (y)d (\pi_x {_*}(f \mu_0))(\gamma)
		\\&=& \int _{N_1} {\int _{E \cap \gamma} {h_\gamma(y)}}d\mu _{0, \gamma} (y)d (\pi_x {_*}(f \mu_0))(\gamma).
	\end{eqnarray*}And we are done.
	
\end{proof}

\end{document}